\title[Combination theorem: Amalgams]{Klein--Maskit combination theorem for Anosov subgroups: Amalgams}
\author{Subhadip Dey}
\address{Max Planck Institute for Mathematics in the Sciences in Leipzig,
    Inselstraße 22,
    04103 Leipzig,
    Germany
}
\email{subhadip.dey@mis.mpg.de}
\author{Michael Kapovich}
\address{Department of Mathematics,
   University of California, Davis,
   One Shields Ave, Davis, CA 95616}
\email{kapovich@ucdavis.edu}
\date{August 6, 2024}
\subjclass{22E40, 20F65, 53C35, 14M15}
\numberwithin{equation}{section} 
\newtheorem{theorem}{Theorem}[section]
\newtheorem{proposition}[theorem]{Proposition}
\newtheorem{lemma}[theorem]{Lemma}
\newtheorem{corollary}[theorem]{Corollary}
\newtheorem*{claim}{Claim}
\newtheorem{maintheorem}{Theorem}
\theoremstyle{remark}
\newtheorem{remark}[theorem]{Remark}
\newtheorem{notation}[theorem]{Notation}
\newtheorem{example}[theorem]{Example}
\theoremstyle{definition}
\newtheorem{definition}[theorem]{Definition}
\newtheorem{assumption}[theorem]{Assumption}
\newtheorem{case}{Case}
\def\C{\mathbf{C}}
\def\H{\mathbb{H}}
\def\N{\mathbf{N}}
\def\R{\mathbf{R}}
\def\Z{\mathbf{Z}}
\def\<{\langle}
\def\>{\rangle}
\def\acts{\curvearrowright}
\def\a{\alpha}
\def\b{\beta}
\def\e{\varepsilon}
\def\E{\mathcal{E}}
\def\G{\Gamma}
\def\bG{{\overline\Gamma}}
\def\g{\gamma}
\def\tg{{\tilde\g}}
\def\hb{\hat\beta}
\def\hg{\hat\gamma}
\def\hp{{\hat\mu}}
\def\ho{\hat\omega}
\def\H{\mathrm H}
\def\HH{\mathbb{H}}
\def\Fn{\mathrm{Flag}({\nmod})}
\def\Fs{\mathrm{Flag}({\sigma_{\rm mod}})}
\def\Ft{\mathrm{Flag}({\tau_{\rm mod}})}
\def\nmod{\nu_{\rm mod}}
\def\o{\circ}
\def\P{{\mathrm M}}
\def\PP{\mathbf{P}}
\def\Pf{{{\mathrm M}\star_{\phi}}}
\def\p{\mu}
\def\tp{{\tilde\mu}}
\def\tof{\xrightarrow{\rm flag}}
\def\toC{\xrightarrow{\rm Cay}}
\def\smod{{\sigma_{\rm mod}}}
\def\tits{\partial_{\rm Tits}}
\def\tmod{{\tau_{\rm mod}}}
\def\vi{\partial_{\rm I}}
\def\vii{\partial_{\rm II}}
\DeclareMathOperator{\cl}{cl}
\DeclareMathOperator{\isom}{Isom}
\DeclareMathOperator{\pr}{pr}
\DeclareMathOperator{\PSL}{PSL}
\DeclareMathOperator{\rl}{rl}
\DeclareMathOperator{\St}{St_{Fu}}
\def\geo{\partial_{\infty}}
\newcommand{\LT}[1]{\Lambda_{\tau_{\rm mod}}({#1})}
\newcommand{\coloneqq}{\mathrel{\mathop:}=}
\begin{document}
\maketitle
\begin{abstract}
The classical Klein--Maskit combination theorems provide sufficient conditions to construct new Kleinian groups using old ones.
There are two distinct but closely related combination theorems: The first deals with amalgamated free products, whereas the second deals with HNN extensions. 
This article gives analogs of both combination theorems for Anosov subgroups.
\end{abstract}

\setcounter{tocdepth}{1}
\tableofcontents

\section{Introduction}

In the theory of Kleinian groups (discrete isometry groups of $\HH^3$),
the {\em Klein--Maskit combination theorems} provide techniques to construct new Kleinian groups.
The history of combination theorems dates back to Klein's 1883 paper \cite{klein1883neue}, which gave sufficient conditions for a subgroup $\G$ of $G = \isom(\HH^3)$ generated by two discrete subgroups $\G_1$ and $\G_2$ of $G$ to be discrete and isomorphic to the free product $\G_1\star\G_2$.
Subsequently, Maskit \cite{maskit1965klein,maskit1968klein,maskit1971klein,maskit1993klein}, dealing with the cases of amalgamated free products and HNN extensions, gave far-reaching generalizations of the {\em Klein combination theorem}. Maskit's combination theorems also furnish sufficient conditions for the combined group to have certain geometrical properties, such as {\em convex-cocompactness} or {\em geometric-finiteness}.
See Maskit's book \cite{maskit:book} for an account of those results.
Later on, Ivascu \cite{Ivascu} and, more recently, Li-Ohshika-Wang \cite{LOW1,LOW2} extended the Klein--Maskit combination theorems to the setting of discrete isometry groups of higher dimensional hyperbolic spaces.
The combination theorems were further generalized in the context of group actions on Gromov-hyperbolic spaces; see \cite{baker2008combination,GITIK199965,pedroza2008combination,pedroza2009,MR2994828,MM,TW}.
We refer to \cite{MR4472055} for a recent survey of combination theorems.

In the recent years, {\em Anosov subgroups} of semisimple Lie groups have emerged as a higher-rank generalization of convex-cocompact Kleinian groups.
Our motivation for this article is to provide suitable analogs of the Klein--Maskit combination theorems in the context of Anosov subgroups.
See \Cref{mainthm:amalgam} and \Cref{mainthm:HNN} below, where we state our versions of the {\em first} and {\em second} Klein--Maskit combination theorems, respectively.
We note that the general strategy of our proof of these results is quite different from Maskit's proof and its subsequent generalization by Ivascu, and Li-Ohshika-Wang.
The main distinction comes from the fact that  Kleinian groups act as {\em discrete convergence groups} on the ideal boundary of the hyperbolic space. In contrast, such convergence property of general discrete subgroups is absent in the higher rank setting. 
However, the special class of discrete subgroups, called {\em regular subgroups}, which includes {\em Anosov subgroups}, exhibits a (more technical) form of convergence action on suitable partial flag varieties.
A significant part of the technical core of this paper is devoted to developing machinery to verify that our combined group retains the higher rank convergence property.
After that, the main difficult step is to construct an equivariant {\em boundary map} and demonstrate its continuity.  Furthermore, in contrast to our arguments, Maskit's proof and its subsequent generalizations for Kleinian groups relied upon proving that certain limit points are conical and we currently do not know how to do this for higher rank {\em regular antipodal} subgroups without constructing boundary maps in the, more special, Anosov setting.  

Previously, in our paper with Bernhard Leeb \cite{MR4002289}, we gave an earlier form of our Combination Theorem for Anosov subgroups; in that paper, using the {\em local-to-global principle} for {\em Morse quasigeodesics}, we proved a version of the Klein Combination Theorem for Anosov subgroups.
Moreover, we conjectured a {\em sharper} form of the Combination Theorem in that paper, which was recently confirmed in \cite{DK22}. The discussions in \cite{DK22,MR4002289} were limited only to the case of free products. 
The purpose of the present article is to deal with the case of {\em amalgams} (amalgamated free products and HNN extensions).
Nevertheless, our work partly builds on our earlier work in \cite{DK22}.
The generalization to amalgamated free products and HNN extensions presented in this paper requires overcoming many challenges not present in the case of free products. 

\subsection{Main results}  
Let $G$ be a real semisimple Lie group of noncompact type and with a finite center. We will assume some mild conditions on $G$ (see \Cref{rem:assumptionG}).
Let $X = G/K$ be the associated symmetric space, where $K$ is a maximal compact subgroup of $G$.
Let $\smod$ be a model spherical chamber in the Tits building $\tits X$ of $X$ and let $\iota : \smod \to \smod$ be the opposition involution.
We consider the class of {\em $\tmod$-Anosov subgroups} of $G$, where $\tmod$ is an $\iota$-invariant face of $\smod$.
For a discrete subgroup $\G$ of $G$, the {\em $\tmod$-limit set} of $\G$ in $\Ft$, the partial flag manifold associated to the face $\tmod$, is denoted by $\LT{\G}$. 
See \S\ref{sec:prelim} for the definitions.

Given discrete subgroups $\Gamma_A, \Gamma_B$ of $G$, a pair of compact subsets $A, B \subset \Ft$ with nonempty interiors is called an {\em interactive pair} for the triple $(\Gamma_A, \Gamma_B;\, \H)$, where $ \H = \Gamma_A \cap \Gamma_B$, if the interiors $A^\circ$ and $B^\circ$ of $A$ and $B$ are disjoint, and $\H$ leaves the sets $A$ and $B$ {\em precisely invariant}. This means that for all $\eta \in \H$, $\eta A = A$ and $\eta B = B$, and for all $\alpha \in \Gamma_A \setminus \H$ and $\beta \in \Gamma_B \setminus \H$, we have $\alpha B \subset A^\circ$ and $\beta A \subset B^\circ$.
We borrow this concept from Maskit's work. Cf. \Cref{pic2} for an illustration.

The first main result of this paper, which provides an analog of the first Klein--Maskit combination theorem \cite[Theorem VII.C.2]{maskit:book}, is as follows:

\begin{maintheorem}[Amalgamated free products]\label{mainthm:amalgam}
  Let $\G_A$ and $\G_B$ be $\tmod$-Anosov subgroups of $G$. Assume that $\H \coloneqq \G_A\cap \G_B$ is quasiconvex (see \Cref{def:quasiconvex}) in $\G_A$ or in $\G_B$.
 Suppose that there exists an {\em interactive pair} $(A,B)$  in $\Ft$ for $(\G_A, \G_B;\, \H)$ 
 such that the following conditions are satisfied:
\begin{enumerate}[(i)]
 \item The interiors of $A$ and $B$ are {\em antipodal} to each other (see \Cref{def:antipodality}).
 \item The pairs of subsets $(A,\,\LT{\G_B} \setminus \LT{\H})$ and $(B,\,\LT{\G_A} \setminus \LT{\H})$ of $\Ft$ are antipodal to each other.
\end{enumerate}
Then, the subgroup $\G = \< \G_A, \G_B\>$ of $G$ is $\tmod$-Anosov and is naturally isomorphic to the abstract amalgamated free product $\G_A \star_\H \G_B$.
\end{maintheorem}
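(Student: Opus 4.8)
\medskip
\noindent\textbf{Proof strategy.}
I would prove the two assertions of the theorem --- the algebraic isomorphism $\G\cong\G_A\star_\H\G_B$ and the geometric statement that $\G$ is $\tmod$-Anosov --- more or less independently, the second being the substantial one.

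For the isomorphism, run a ping-pong argument on the action $\G\acts\Ft$. The inclusions $\G_A,\G_B\hookrightarrow G$ agree on $\H=\G_A\cap\G_B$, so the universal property of the amalgamated product provides an epimorphism $\Phi\colon\G_A\star_\H\G_B\twoheadrightarrow\G$, and it remains to see that $\Phi$ is injective, i.e.\ that every reduced word represents a nontrivial element of $G$. This is exactly the conclusion of the ping-pong lemma for amalgams, whose hypotheses are by design built into the notion of an interactive pair $(A,B)$ for $(\G_A,\G_B;\,\H)$: $\H$ preserves $A$ and $B$, every element of $\G_A\setminus\H$ maps $B$ into $A$, and every element of $\G_B\setminus\H$ maps $A$ into $B$. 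So this part is essentially formal and uses neither antipodality hypothesis.

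For the Anosov statement I would proceed as in \cite{MR4002289,DK22}, via Morse quasigeodesics. First upgrade the hypothesis on the edge group: assuming (without loss of generality) that $\H$ is quasiconvex in $\G_A$, the orbit map of $\G_A$ sends geodesics to uniform Morse quasigeodesics in $X$, so $\H\to\G_A\to X$ is a Morse embedding and $\H$ is $\tmod$-Anosov; and comparing, for $h,h'\in\H$, a $\G_B$-geodesic with an $\H$-geodesic from $h$ to $h'$ --- whose orbit images in $X$ are Morse quasigeodesics with the same endpoints, hence uniformly fellow-travel --- and pulling the comparison back through the quasi-isometrically embedded orbit map of $\G_B$ shows that $\H$ is quasiconvex in $\G_B$ as well, with Morse data uniformly controlled by that of $\G_A$ and $\G_B$. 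Next, bring in the Bass--Serre tree $T$ of the splitting: each $g\in\G$ determines an edge-path in $T$ from the base vertex to its $g$-translate, and the normal form $g=g_1\cdots g_m$ (coset representatives lying alternately in $\G_A\setminus\H$ and $\G_B\setminus\H$) exhibits, after transport through the orbit maps, a broken path in $X$ from $o$ to $g\cdot o$ that is a concatenation of uniformly Morse quasigeodesic segments --- one per syllable --- with consecutive segments meeting along the Morse-embedded orbit of an $\H$-coset. The point is to show every such broken path is a uniform Morse quasigeodesic; by the local-to-global principle for Morse quasigeodesics this reduces to a uniform lower bound on the ``turn'' at each junction, that is, to the uniform antipodality of the pair of flags at which the incoming and outgoing segments are asymptotic. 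Conditions (i) and (ii), together with the ping-pong dynamics of the interactive pair, are precisely what supply this: between them they cover all the flag configurations occurring at the junctions --- (i) the generic ones, between an $A$-type and a $B$-type region, and (ii) the ones produced by interaction with the edge group, involving a limit flag of $\LT{\G_A}\setminus\LT{\H}$ or $\LT{\G_B}\setminus\LT{\H}$. Carrying this out for all $g$ shows the orbit map $\G\to X$ is a Morse embedding, whence $\G$ is word-hyperbolic and $\tmod$-Anosov by the Morse characterization of Anosov subgroups (alternatively, hyperbolicity of $\G$ can be obtained first, from a combination theorem for hyperbolic groups applied to the quasiconvex edge group $\H$); the induced continuous $\G$-equivariant map $\partial_\infty\G\to\Ft$, assembled from the vertex-group limit maps along $T$, has image the limit set $\LT{\G}=\overline{\bigcup_{g\in\G}g\bigl(\LT{\G_A}\cup\LT{\G_B}\bigr)}$.

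The main obstacle, and the feature with no counterpart in the free-product case of \cite{DK22}, is controlling exactly the junctions that meet the edge group. Because $\H$ is nontrivial, two consecutive segments of the broken path need not issue from a single point but run along a common coset of the $\H$-orbit, and one must rule out the configuration in which the path enters that coset in a direction nearly non-antipodal to the direction in which it leaves it. Handling this needs the uniform Morse/quasiconvexity data for $\H$ inside each factor --- so that the coset orbit is a controlled Morse-embedded subspace, along which ``entering'' and ``leaving'' flags are well defined --- combined with the interactive-pair dynamics (which place the relevant flags in $A$ or $B$, resp.\ in $\LT{\G_A}\setminus\LT{\H}$ or $\LT{\G_B}\setminus\LT{\H}$) and then the mixed antipodality of hypothesis (ii). Since the syllables $g_i$ may be short, one must work throughout with these uniform estimates rather than with asymptotic data of individual segments, and track the Morse gauges, quasigeodesic constants, and antipodality angle uniformly over the whole (unbounded) family of normal forms --- which is exactly the regime required for the Morse local-to-global principle to apply.
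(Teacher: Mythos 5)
The isomorphism half of your proposal coincides with the paper's (ping-pong on $\Ft$ using the interactive pair; this is \Cref{prop:faithful_amalgam} and indeed uses no antipodality). The Anosov half, however, has a genuine gap: you propose to run the Morse local-to-global argument of \cite{MR4002289}, but the whole point of the present theorem (and of \cite{DK22} in the free-product case) is that its hypotheses are too weak for that machinery, and the paper's proof deliberately avoids it. The local-to-global principle requires the broken orbit path to be a Morse quasigeodesic \emph{locally at a sufficiently large, fixed scale with fixed Morse data}. Under hypotheses (i)--(ii) you have no such quantitative input: syllables $\g_i$ may have bounded (even generator-length) size, so at the required scale a local subpath already crosses many junctions and its Morse quality is exactly what you are trying to prove; and there is no uniform lower bound on the ``turn'' at junctions, because $A$ and $B$ are \emph{not} antipodal as compact sets --- they share boundary points (indeed $\LT{\H}\subset A\cap B$ by \Cref{lem:ping-pong}), so the pairs of flags arising at junctions that interact with the edge group can degenerate arbitrarily close to non-antipodal configurations. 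Your final paragraph names this difficulty but then only asserts that one must ``track the Morse gauges\dots uniformly''; no mechanism is offered for extracting uniform constants from the purely qualitative antipodality of $(A^\o,B^\o)$ and of $(A,\LT{\G_B}\setminus\LT{\H})$, $(B,\LT{\G_A}\setminus\LT{\H})$. This is a restatement of the obstruction, not a proof, and it is precisely where the approach of \cite{MR4002289} needed its stronger separation hypotheses.

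The paper's route is different and qualitative: it first gets hyperbolicity of $\G$ from the Bestvina--Feighn theorem, for which quasiconvexity of $\H$ alone is \emph{not} enough --- weak malnormality is needed and is deduced from precise invariance in the interactive pair (\Cref{cor:malnormal}); your parenthetical ``combination theorem for hyperbolic groups applied to the quasiconvex edge group'' omits this ingredient. It then verifies the asymptotically embedded characterization (\Cref{def:asymptotically_embedded}) directly: the boundary $\geo\G$ is split into type I and type II points via the Bass--Serre tree, a $\G$-equivariant map $\xi:\geo\G\to\Ft$ is built on type II points as the intersection of nested images $\omega_{2n}A$ along alternating sequences (\Cref{lem:shrinks}), and the regularity, flag-convergence and antipodality statements are proved by compactness/contradiction arguments (\Cref{lem:mainlemma}, \Cref{lem:special1}, \Cref{prop:antipodal_amalgam}) together with the projection lemmas for the quasiconvex subgroup $\H$ (\Cref{lem:proj}) --- with hypothesis (ii) entering exactly to handle leftmost letters whose $\H$-reduced inverses accumulate in $\LT{\G_B}\setminus\LT{\H}$ or $\LT{\G_A}\setminus\LT{\H}$. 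If you want to salvage your outline, you would either have to add quantitative separation hypotheses (recovering only the weaker theorem of \cite{MR4002289}) or replace the local-to-global step by an argument of this dynamical type.
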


\begin{figure}[h]
\centering
\begin{overpic}[scale=1,tics=8]{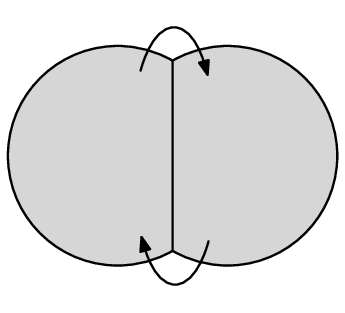}
\put(22,42){\huge $A$}
\put(66,42){\huge $B$}
\put(40,85){\footnotesize $(\G_B \setminus \H)$}
\put(40,1.5){\footnotesize $(\G_A \setminus \H)$}
\end{overpic}
\caption{An interactive pair.}
\label{pic2}
\end{figure}

Given a discrete subgroup  $\P$ of $G$ and an element $f\in G$, 
a triple $(A, B_\pm)$ of compact subsets  of $\Ft$ with nonempty interiors
is called an {\em interactive triple} for the quadruple $(\P;\,\H_\pm;\,f)$, where 
\begin{equation}\label{def:Hpm}
\H_+ \coloneqq \P\cap ({f\P f^{-1}}) \quad\text{and}\quad
\H_- \coloneqq ({f^{-1}\P f}) \cap \P,
\end{equation}
if the interiors $A^\circ$, $B_-^\circ$, and $B_+^\circ$ of $A$, $B_-$, and $B_+$, respectively, are nonempty and pairwise disjoint, $B_- \cap B_+ = \emptyset$, $f^{\pm 1}(A) \subset B_\pm$, $f^{\pm 1}(B_\pm) \subset B_\pm^\circ$, and $\H_\pm$ leaves $B_\pm$ {\em precisely invariant}. This means $\H_\pm B_\pm = B_\pm$ and $\mu(B_\pm) \subset A^\circ$ for all $\mu \in \P\setminus \H_\pm$. Cf. \Cref{pic3}.

Our second main result, stated below, gives an analog of the second Klein--Maskit combination theorem \cite[Theorem VII.E.5]{maskit:book}.

\begin{maintheorem}[HNN extensions]\label{mainthm:HNN}
 Let $\P$ be a $\tmod$-Anosov subgroup of $G$, let $f\in G$, and let $\H_\pm$ be as in \eqref{def:Hpm}.
 Assume that $\H_+$ or $\H_-$ is quasiconvex in $\P$.
 Suppose that there exists an 
 {\em interactive triple} $(A,B_\pm)$ in $\Ft$ for $(\P; \,\H_\pm; \, f)$ 
 such that the following conditions are satisfied:
 \begin{enumerate}[(i)]
  \item The pairs of subsets $(A^\o,B_\pm^\o)$, $(B_+,B_-)$ of $\Ft$ are antipodal to each other. 
  \item $\LT{\P} \setminus \LT{\H_\pm}$ is antipodal to $B_\pm$.
 \end{enumerate}
 Then, the subgroup $\G = \< \P,f\>$ of $G$ is $\tmod$-Anosov and is naturally isomorphic to the abstract HNN extension $\Pf$ of $\P$ by $\phi$, where the isomorphism $\phi :\H_- \to{\H_+}$ is given by $\phi(\eta) = f\eta f^{-1}$, for all $\eta\in\H_-$.
\end{maintheorem}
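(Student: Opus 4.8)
The plan for proving \Cref{mainthm:HNN} is to follow the same three-step strategy as in the free-product case \cite{DK22} and in \Cref{mainthm:amalgam}, adapting it to the HNN structure: (1) a Maskit-style ping-pong on the flag manifold $\Ft$ showing that the natural epimorphism $\rho\colon\Pf\to\G$ is an isomorphism; (2) a local-to-global argument realizing the orbit map $\G\to X$ as a Morse embedding, which will give at the same time that $\G$ is word-hyperbolic and $\tmod$-Anosov; and (3) an identification of $\LT\G$ and of the limit map from the data produced in (1)--(2). I would use throughout that $\Pf$ is the fundamental group of a graph of groups with one vertex group $\P$, one edge with associated subgroups $\H_\pm$ glued by $\phi$, and stable letter $f$, so that $\G$ acts on the Bass--Serre tree $T$ with vertex stabilizers the $\G$-conjugates of $\P$ and edge stabilizers the conjugates of $\H_\pm$. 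After replacing $f$ by $f^{-1}$ if necessary (which interchanges $\H_+$ and $\H_-$), I may assume $\H_+$ is quasiconvex in $\P$; then $\H_+$ is itself $\tmod$-Anosov, hence so is its $G$-conjugate $\H_-=f^{-1}\H_+f$, and $\H_-$, being $\tmod$-Anosov and contained in the QI-embedded subgroup $\P$, is also quasiconvex in $\P$. Thus both edge groups are $\tmod$-Anosov and quasiconvex in $\P$.

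For Step (1), by Britton's lemma a nontrivial element of $\Pf$ is represented either by a nontrivial element of $\P$, whose image under $\rho$ is nontrivial since $\P\hookrightarrow G$, or by a reduced word $w=g_0 f^{\e_1}g_1\cdots f^{\e_n}g_n$ with $n\ge1$ and no pinch. I would invoke the defining properties of the interactive triple $(A,B_\pm)$ from \Cref{def:interactive_triple} --- with $B_\pm$ precisely invariant under $\H_\pm$ in $\P$, with $A$ meeting no proper $\P$-translate of $B_\pm$, and with the transfer relations $f(\Ft\setminus B_-^\o)\subseteq B_+^\o$ and $f^{-1}(\Ft\setminus B_+^\o)\subseteq B_-^\o$ --- which are calibrated so that, reading $\rho(w)$ from the right, starting at a flag $\sigma\in A^\o$, every occurrence of $f^{\pm1}$ carries the running flag out of $B_\mp^\o$ into $B_\pm^\o$, while the intervening $\P$-factor, which by the no-pinch condition lies outside the relevant edge group, keeps it inside the same $B_\pm$. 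An induction on $n$ then gives $\rho(w)\sigma\in B_{\e_1}$, which is disjoint from $A^\o\ni\sigma$, so $\rho(w)\ne1$. Hence $\rho$ is an isomorphism, and in particular every $\g\in\G$ acquires a well-defined syllable length.

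Step (2) is the heart of the matter. I would fix a base point $x_0\in X$, equip $\G$ with a word metric for a finite generating set consisting of generators of $\P$ together with $f$, and to each $\g\in\G$ associate the canonical word obtained from a Britton-reduced form by realizing each vertex syllable by a $\P$-geodesic and each $f^{\e_i}$ by the single letter $f^{\e_i}$; the canonical path of $\g$ is the image of this word under the orbit map $o\colon\G\to X$. Since $\P$ is $\tmod$-Anosov, the images of the vertex syllables are uniformly Morse quasigeodesics in $X$, with constants depending only on $\P$, and each letter $f^{\e_i}$ contributes a uniformly bounded jump. The decisive point is that consecutive pieces meet at a uniformly non-degenerate angle: a $\P$-geodesic running deep into a coset $g\P$ toward the next crossing points to a flag near $g\,\LT\P$ and, because $w$ is reduced, away from $g\,\LT{\H_\pm}$, whereas after the crossing by $f^{\pm1}$ the continuation is forced into $B_\pm$; hypothesis (ii) says precisely that $\LT\P\setminus\LT{\H_\pm}$ is antipodal to $B_\pm$, and hypothesis (i) controls two successive crossings through the antipodal pairs $(B_+,B_-)$ and $(A^\o,B_\pm^\o)$. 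This transversality makes each canonical path uniformly Morse on every bounded scale, so the local-to-global principle for Morse quasigeodesics of \cite{MR4002289} --- in the form used in \cite{DK22} --- upgrades it to a uniform global Morse quasigeodesic in $X$. Comparing lengths, $o$ is then a Morse embedding, the canonical paths are $\G$-quasigeodesics, and $\P$ and $\H_\pm$ are undistorted in $\G$; by the characterization of Anosov subgroups as Morse subgroups (see \cite{MR4002289,DK22}), $\G$ is therefore word-hyperbolic and $\tmod$-Anosov, with $\P$ and $\H_\pm$ quasiconvex in $\G$. Step (3) is afterwards routine: $\LT\G$ is the closure of $\bigcup_{\g\in\G}\g\,\LT\P$, the limit map $\geo\G\to\Ft$ restricts on the vertex pieces of $\geo\G$ to translates of the limit map of $\P$ and on the ends of $T$ to limits of nested translates of $B_\pm$, and the isomorphism $\G\cong\Pf$ of Step (1) intertwines the two $T$-actions.

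The step I expect to be the main obstacle is the non-degenerate-angle estimate in Step (2), together with a phenomenon absent for free products: the vertex and edge groups could a priori be distorted in $\G$ --- as they are in the Baumslag--Solitar group $\mathrm{BS}(1,2)$, an HNN extension of $\Z$ --- which would prevent $o$ from being a quasi-isometric embedding. It is exactly hypothesis (ii), separating $\LT{\H_\pm}$ from the ping-pong regions $B_\pm$, that rules this out, and undistortedness should not be established beforehand but should emerge from the Morse local-to-global argument itself, now applied to \emph{long} concatenated Morse quasigeodesics coming from $\P$ rather than to the bounded building blocks available for free products. A further point needing care, also new relative to \cite{DK22}, is that the edge groups are nontrivial, so one must check that the strips over the edges of $T$ are uniformly thin and that $\H_\pm$ is quasiconvex in $\G$; both should follow from hypothesis (ii) together with the quasiconvexity of $\H_\pm$ in $\P$ established at the outset.
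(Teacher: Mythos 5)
Your Step (1) is essentially the paper's \Cref{prop:faithful_HNN}, but the core of your plan, Step (2), has a genuine gap, and it is exactly the gap that forced the authors away from the Morse local-to-global method. First, a misattribution: the local-to-global principle for Morse quasigeodesics is the method of \cite{MR4002289}, not of \cite{DK22}; the sharper combination theorem conjectured in \cite{MR4002289} was proved in \cite{DK22} by a different (boundary-map) argument precisely because local-to-global needs quantitative input that the purely qualitative hypotheses of \Cref{mainthm:HNN} do not supply. Concretely, your ``uniformly non-degenerate angle'' estimate fails: hypothesis (ii) gives antipodality of $\LT{\P}\setminus\LT{\H_\pm}$ with $B_\pm$, but $\LT{\P}\setminus\LT{\H_\pm}$ is not compact --- its closure meets $\LT{\H_\pm}$, which lies in $A\cap B_\pm$ --- so antipodality yields no uniform lower bound on the transversality between the direction of a $\P$-syllable lying outside $\H_\pm$ and the set $B_\pm$; a syllable whose attracting flag is close to (but not in) $\LT{\H_\pm}$ produces an arbitrarily degenerate configuration. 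In addition, vertex syllables can be arbitrarily short, so the concatenated paths are not locally Morse at any fixed, sufficiently large scale with uniform constants, which is what the local-to-global principle requires; saying that undistortedness will ``emerge from the Morse local-to-global argument itself'' is circular, since the applicability of that principle is exactly what is in doubt, and nothing in your outline excludes a Baumslag--Solitar-type collapse ($\mathrm{BS}(1,2)$) by a uniform estimate.

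For contrast, the paper's route avoids any uniform estimate of this kind. Hyperbolicity of $\G$ is obtained combinatorially: the interactive triple forces $\H_\pm$ to be weakly malnormal in $\P$ (\Cref{cor:malnormalHNN}), quasiconvexity of one of $\H_\pm$ upgrades to both being $\tmod$-Anosov and quasiconvex, and then the Bestvina--Feighn theorem (\Cref{thm:Bestvina-Feign}) gives hyperbolicity of $\Pf$ (\Cref{cor:hyp_HNN}). The Anosov property is then proved via the asymptotically-embedded characterization (\Cref{def:asymptotically_embedded}): one builds a $\G$-equivariant boundary map $\geo\G\to\Ft$ separately on type I points (translates of $\geo\P$, using the limit map of $\P$) and on type II points (ends of the Bass--Serre tree, using alternating sequences and the nested sets $\omega_n(A\cup B_{\epsilon_n})$ of \Cref{prop:convHNN}), and verifies antipodality and preservation of convergence dynamics. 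Where your plan needs uniform transversality, the paper instead runs subsequence dichotomies: either the relevant syllable sequence diverges away from $\H_\pm$, in which case the projection trick (\Cref{lem:proj}) places its limit points in $\LT{\P}\setminus\LT{\H_\pm}$ and hypothesis (ii) applies, or it stays in a fixed coset, in which case the nesting criterion \Cref{lem:mainlemma} gives regularity. These non-uniform extraction arguments are the substitute for the estimate your proposal presupposes but cannot obtain from the stated hypotheses.
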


\begin{figure}[h]
\centering
\begin{overpic}[scale=1,tics=8]{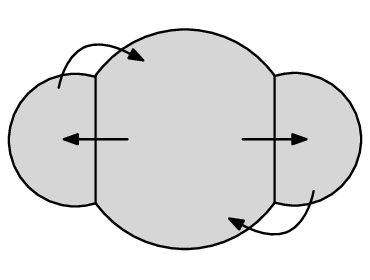}
\put(44,36){\huge $A$}
\put(85,36){\large$B_+$}
\put(7,36){\large$B_-$}
\put(27,41) {\small $f^{-1}$}
\put(67,41) {\small $f$}
\put(68,6){\footnotesize $(\P \setminus \H_+)$}
\put(10,67){\footnotesize $(\P \setminus \H_-)$}
\end{overpic}
\caption{An interactive triple.}
\label{pic3}
\end{figure}

See \S\ref{sec:CT_A}, resp. \S\ref{sec:CT_B}, for the proof of \Cref{mainthm:amalgam}, resp. \Cref{mainthm:HNN}.

\begin{remark}\label{rem:relaxation}
 In the statement of \Cref{mainthm:amalgam}, one may relax the second condition (ii) if it is known that $\LT{\G_A}\cap \partial A = \LT{\H}=\LT{\G_B}\cap \partial B$.
 Similarly, in the statement of \Cref{mainthm:HNN}, one may relax the second condition (ii) if it is known that $\LT{\P}\cap \partial A = \LT{\H_+}\cup\LT{\H_-}$. 
 See \Cref{rem:relaxation_amalgam,rem:relaxation_HNN}.
\end{remark}

\begin{remark}
The ``converse'' of \Cref{mainthm:HNN} (and  \Cref{mainthm:amalgam}) does not hold in general: More precisely,  let $\P$ be a $\tmod$-Anosov subgroup of $G$ and $f \in G$. Suppose $\H_+ \coloneqq \P \cap (f \P f^{-1})$ or $\H_- \coloneqq (f^{-1} \P f) \cap \P$ is quasiconvex in $\P$. If the subgroup $\Gamma = \langle \P, f \rangle$ of $G$ is $\tmod$-Anosov and is naturally isomorphic to $\P *_\phi$, where $\phi: \H_- \to \H_+$ is an isomorphism given by $\phi(\eta) = f \eta f^{-1}$, $\eta \in \H_-$, it does not necessarily imply the existence of an interactive triple $(A, B_\pm)$ as stated in \Cref{mainthm:HNN}.
Here we discuss an example of such a failure when $G = \PSL(2,\C)$.
\end{remark}

\begin{example} Start with a compact hyperbolic 3-manifold  $M$ with a single totally geodesic boundary component of genus $2$ and let $M_1, M_2$ be two copies of $M$. Pick essential separating annuli $A_i\subset \partial M_i$ for $i=1, 2$, and glue $M_1, M_2$ along $A_1, A_2$.  The result is a compact 3-manifold $N$ with two homeomorphic boundary components $S_1, S_2$, both of genus $2$. The manifold $N$ has an essential properly embedded annulus $A$ (corresponding to $A_1, A_2$) with the boundary circles $C_i \subset S_i$, $i=1,2$. The manifold $N$ is irreducible and contains no essential annuli other than $A$. By Thurston's Hyperbolization Theorem (see \cite{MR2553578}, \cite{MR1677888} or \cite[Theorem A']{morgan84}), $N$ admits a convex-cocompact hyperbolic structure. Glue the two boundary surfaces of $N$ by a homeomorphism $\phi: S_1\to S_2$ such that  $\phi(C_1)$ is not homotopic  to $C_2$ in $S_2$. Then, the resulting closed manifold $K$ is irreducible, Haken, and atoroidal, and hence admits a hyperbolic structure, again by Thurston's theorem. The surfaces $S_1, S_2$ are incompressible and $\pi_1(S_1)$, $\pi_1(S_2)$ are quasiconvex in $\pi_1(K)$.

Although $\pi_1(K) = \pi_1(N)\star_{\phi_*}$ is a cocompact (hence Anosov) subgroup of $G=\PSL(2,\C)$, the quadruple $(\pi_1(N); \, \pi_1(S_1),\pi_1(S_2);\, f)$, where $f\in G$ corresponds to $\phi$, does not  admit an interactive triple in $\geo \mathbb{H}^3$: To see this, note that $C_1\subset S_1$ is homotopic $C_2 \subset S_2$ in $N$, and so $\H_+=\pi_1(S_1)$ and $\H_-=\pi_1(S_2)$  violate \Cref{cor:malnormalHNN}, a conclusion arising from the existence of an interactive triple.
\end{example}

Before concluding the introduction, let us illustrate the hypotheses and conclusions of \Cref{mainthm:amalgam} and \Cref{mainthm:HNN} in the following examples:

\subsection{Illustrative examples}\label{example}
Let $S$ be a closed, orientable surface of genus $g\ge 2$ and let $\G \coloneqq \pi_1(S)$.
Labourie's \cite{MR2221137} pioneering work showed that {\em Hitchin representations} $\rho: \G \to \PSL(d,\R)$ form a rich class of $\smod$-Anosov representations.
Let us denote by $\xi_\rho: \geo\G \to \Fs$ the associated $\G$-equivariant boundary map onto the limit set $\Lambda_{\rho(\G)} \subset \Fs$.
According to Fock-Goncharov \cite{MR2233852}, an important characteristic of the Hitchin representations is a certain {\em positivity property} of the limit map $\xi_\rho$; cf. \S\ref{sec:positive}.

\medskip
Given an essential {\em separating} simple closed curve $s \subset S$, let $[\eta]$ denote the conjugacy class in $\G$ representing $s$.
The curve $s$ cuts the surface into two subsurfaces, $S_A$ and $S_B$.
The group $\G$ can be written as an amalgamated free product
\[
 \G = \G_A \star_{\H} \G_B,
\]
where $\G_A = \pi_1(S_A)$, $\G_B = \pi_1(S_B)$, $\H = \< \eta \>$, for some $\eta\in [\eta]$, equipped with natural monomorphisms $\phi_A : \H \to \G_A$ and $\phi_B : \H \to \G_B$ induced by the inclusion homeomorphisms $s\hookrightarrow \partial S_A$ and $s\hookrightarrow \partial S_B$.
The image of $\H = \<\eta\>$ under a Hitchin representation $\rho : \G \to \PSL(d,\R)$ is a cyclic $\smod$-Anosov subgroup of $\PSL(d,\R)$. Let $\sigma_\pm\in\Lambda_{\rho(\G) }= \xi_\rho (\geo\G)$ denote the attractive/repulsive fixed points of $\rho(\eta)$ in $\Fs$.
The 2-point set $\{\sigma_+, \sigma_-\}$ cuts $\Lambda_{\rho(\G)}$ in a pair of closed arcs $c_A$ and $c_B$; we chose the names in such a way that $\Lambda_{\rho(\G_A)} \subset c_A$ and $\Lambda_{\rho(\G_B)} \subset c_B$.
See the left side of \Cref{pic1}.

Let $\Omega$ be the set of all points in $\Fs$ antipodal to both $\sigma_\pm$; this is the intersection of a pair of opposite maximal Schubert cells in $\Fs$:
\[
 \Omega = C(\sigma_+) \cap C(\sigma_-).
\]
There are two distinguished  connected components of $\Omega$, denoted by $A^\o$ and $B^\o$, whose closures $A$ and $B$, respectively, contain $c_A$ and $c_B$.
Both $A$ and $B$ are preserved by $\H$ because $\H$ preserves $c_A$ and $c_B$.
Using the positivity property of $\xi$ and the fact that for all $\alpha\in \rho(\G_A \setminus \H)$ and $\beta\in \rho(\G_B \setminus \H)$, 
$\a c_B \subset c_A \setminus \{ \sigma_\pm\}$ and $\beta c_A \subset c_B\setminus \{ \sigma_\pm\}$, it follows that
\[
 \a B \subset A^\o
 \quad\text{and}\quad
 \b A \subset B^\o,
\]
see \cite[Corollary 3.9]{Guichard:2021aa}.
Moreover, the interiors $A^\o$ and $B^\o$ are antipodal to each other, see \cite[Proposition 2.5(3)]{Guichard:2021aa}.
Therefore, $(A,B)$ is an interactive pair for $(\rho(\G_A),\rho(\G_B);\,\rho(\H))$, which satisfies the hypothesis of \Cref{mainthm:amalgam}.

Let us describe a continuous family of Hitchin representations $\rho_t: \G \to \PSL(d,\R)$  obtained by {\em bending} the given Hitchin representation  $\rho$ along $s$, a type of deformation that generalizes the classical Dehn twists along simple closed geodesics in a hyperbolic surface.
This family is parametrized by $t\in Z_0(\eta) $, where $Z_0(\eta) \cong \R^{d-1}$ is the identity component of the centralizer of $\eta$ in $\PSL(d,\R)$, and the $\smod$-Anosov property of this family can be verified by a simple application of \Cref{mainthm:amalgam}: 
Each connected component of $\Omega$ is also preserved by $Z_0(\eta)$. In particular, $Z_0(\eta)$ preserves $A$ and $B$.
For $t\in Z^0(\eta)$, let
\[
 \rho_t : \G_B \to \PSL(d,\R), \quad
 \rho_t(\beta) = t \rho(\b) t^{-1}.
\]
Clearly, $\rho_t\vert_\H = \rho\vert_\H$.
Moreover, we observe that $(A,B)$ is still an interactive pair for the triple $(\rho(\G_A),\rho_t(\G_B);\,\rho(\H))$.
Therefore,  \Cref{mainthm:amalgam} directly verifies that 
\[
 \rho_t : \G = \G_A \star_\H \G_B \to \<\rho(\G_A),{t\rho(\G_B)t^{-1}}\> <  \PSL(d,\R),
\]
is injective, and its image is a $\smod$-Anosov subgroup of $G$.

\medskip
Similarly, if $s$ is an essential {\em non-separating} simple closed curve in $S$,
then the fundamental group $\G = \pi_1(S)$ can be written as an HNN extension
\[
 \G = \Pf, \quad \phi: \H_- \xrightarrow{\cong} \H_+,
\]
where $\P$ is the fundamental group of the surface $S'$ obtained by cutting $S$ along $s$, $\H_- = \<\eta\>$ and $\H_+ = \<\eta'\>$ are the images of the homomorphisms of the fundamental groups induced by the two inclusion homeomorphisms $s\hookrightarrow \partial_- S'$ and $s \hookrightarrow \partial_+ S'$, and $\phi: \H_- \to \H_+$ is the isomorphism induced by the conjugation by some element $f\in \G$. 

Let $\rho: \G\to \PSL(d,\R)$ be a Hitchin representation with associated $\G$-equivariant limit map $\xi_\rho: \geo\G \to \Fs$.
The attractive/repulsive fixed point sets $\{\sigma_+, \sigma_-\}$ and $\{\sigma'_+, \sigma'_-\}$ of $\eta$ and $\eta'$, respectively, cut the limit set $\Lambda_{\rho(\G)} = \xi_\rho(\geo\G)$ in four closed arcs $c_{A_+}$, $c_{A_-}$, $c_{B_+}$, and $c_{B_-}$ (see the right side of \Cref{pic1}).
We have $\rho(f)\{\sigma_\pm\} = \{\sigma_\pm'\}$, $\rho(f)(c_{A_+}\cup c_{A_-} \cup c_{{B_+}})\subset c_{{B_+}}$, and $\rho(f)^{-1}(c_{A_+}\cup c_{A_-} \cup c_{B_-})\subset c_{B_-}$.

\begin{figure}[h]
\centering
\begin{overpic}[scale=.8,tics=5]{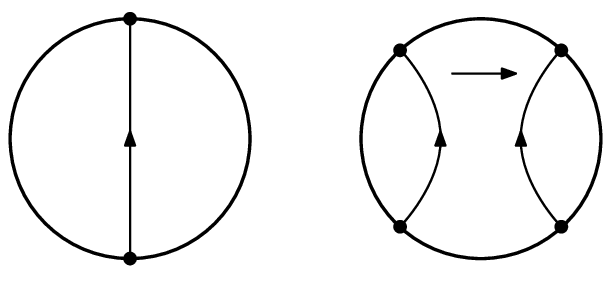}
\put(42,24){$c_{B}$}
\put(-4,24){$c_{A}$}
\put(20,45.5){\small$\sigma_+$}
\put(20,1){\small$\sigma_-$}
\put(17,24){$\eta$}
\put(77,45.5){$c_{A_+}$}
\put(77,1){$c_{A_-}$}
\put(51,24){$c_{B_-}$}
\put(100,24){$c_{B_+}$}
\put(60,40){\small$\sigma_+$}
\put(93.5,40){\small$\sigma'_+$}
\put(60,7){\small$\sigma_-$}
\put(93.5,7){\small$\sigma'_-$}
\put(77,36.4){$f$}
\put(68,24){$\eta$}
\put(87,24){$\eta'$}
\end{overpic}
\caption{}
\label{pic1}
\end{figure}

Define $B_-,{B_+} \subset \Fs$ to be the closure of connected component of $C(\sigma_+)\cap C(\sigma_-)$ containing the arc $c_{B_-}$ and $c_{{B_+}}$, respectively.
Lastly, define $A$ to be the union of $A_+$ and $A_-$, where $A_\pm$ are  the closures of the  connected component of $C(\sigma_\pm)\cap C(\sigma_\pm)$ containing the arcs $c_{A_\pm}$, respectively.
 Then, using the positivity of  $\xi_\rho$, one can show that $(A,B_\pm)$ is an interactive triple for $(\rho(\P);\, \rho(\H_\pm);\, \rho(f))$ (compare with the amalgamated free product case discussed above), thus verifying  the hypothesis of \Cref{mainthm:HNN}.  

As before, we obtain a continuous family of Hitchin representations $\rho_t: \G \to \PSL(d,\R)$, parametrized by $t\in Z_0(\eta)$, by bending $\rho$ along $s$:
Observe that for any $t\in Z_0(\eta)$, $(A,{B_\pm})$ is an interactive triple for $(\rho(\P);\, \rho({\H_\pm});\, \rho(f)\cdot t)$.
Therefore, \Cref{mainthm:HNN} directly verifies  that \[\rho_t : \G = \Pf \to \<\rho({\P}), \rho(f)\cdot t\> < \PSL(d,\R)\] is injective with a $\smod$-Anosov image.

\subsection*{Organization of the paper}
In \S\ref{sec:prelim_grp}, we present some preliminary material and results on amalgamated free products and HNN extensions of hyperbolic groups. This section presents some results (e.g., \Cref{lem:proj}, \Cref{thm:alternating_one,prop:reln}) crucial in the  proof of our main theorems.
Then, in \S\ref{sec:prelim} we review some background material on discrete groups acting on symmetric spaces of noncompact type and present some lemmas (see \S\ref{sec:regularsequences}), which are also frequently used in the  proof of our main results to verify regularity and flag-convergence of certain sequences.
In \S\ref{sec:CT_A}, resp. \S\ref{sec:CT_B}, we prove \Cref{mainthm:amalgam}, resp. \Cref{mainthm:HNN}.
Finally, in \S\ref{sec:positive}, using the Combination Theorems, we give a proof of a recent result of Guichard-Labourie-Wienhard \cite{Guichard:2021aa} stating that $\Theta$-positive representations are $\Theta$-Anosov.

\subsection*{Acknowledgements}
We are grateful to our referees for their valuable feedback.
S.~D. completed this work at Yale University and extends his gratitude to the mathematics department for providing an exceptional working environment.

\section{Preliminaries on amalgams of hyperbolic groups}\label{sec:prelim_grp}

\subsection{Amalgamated free products}\label{sec:reduced}

Let $\G_A$, $\G_B$, and $\H$ be abstract groups together with monomorphisms $\phi_A : \H \to \G_A$, $\phi_B : \H \to \G_B$.
 The {\em free product of $\G_A$ and $\G_B$ amalgamated along $\H$}, denoted by $\G_A \star_{\H} \G_B$, has a presentation
 \[
 \< S_A,\, S_B \mid R_A,\, R_B,\, \phi_A (\eta)\phi_B(\eta)^{-1}, \eta \in \H \>,
 \]
 where $\< S_A \mid R_A\>$ and $\< S_B \mid R_B\>$ are presentations of $\G_A$ and $\G_B$, respectively.
 We will identify $\H$ with $\phi_A(\H)<\G_A$ and $\phi_B(\H)<\G_B$ via the monomorphisms $\phi_A$ and $\phi_B$, respectively.

Let $\G = \G_A \star_{\H}\G_B$.
A {\em normal form} of an element $\g\in \G$ is an expression
\begin{equation}\label{eqn:reduced}
 \g = \g_1\g_2\cdots \g_l,
\end{equation}
such that the following conditions are satisfied:
\begin{enumerate}[(i)]
 \item 
 Each $\g_i$ lies either in $\G_A$, or in $\G_B$.
 Moreover, if $l\ge 2$, then none of the letters $\g_i$ belong to $\H$.

 \item No two successive letters $\g_i,\g_{i+1}$ above lie in the same group $\G_A$ or $\G_B$.
\end{enumerate}

Unless $\H$ is trivial, the normal form of $\g$ given by \eqref{eqn:reduced} is not necessarily unique.
However, any other normal form of $\g$ is obtained by a sequence of finite moves consisting of replacing a consecutive pair $\g_i \g_{i+1}$ in \eqref{eqn:reduced} by $(\g_i\eta_i)(\eta_i^{-1}\g_{i+1})$, where $\eta_i\in \H$.
This is a consequence of the Normal Form Theorem; see \cite[Theorem IV.2.6]{Lyndon-Schupp}.
It follows that any two normal forms of $\g$ have the same number of letters.
 For $\g\in\G\setminus \H$, the {\em relative length} of $\g$, denoted by $\rl(\g)$, is 
 the number of letters in a(ny) normal form of $\g$.
 If $\g\in\H$, then define $\rl(\g) = 0$.

\begin{definition}[Alternating sequences]\label{def:alternating}
 A sequence $(\omega_k)$ in $\G$ is called {\em type A alternating} if there exists a pair of sequences, $(\a_n)$ in $\G_A \setminus \H$ and $(\b_n)$ in $\G_B\setminus \H$, such that
\begin{equation}\label{eqn:alternating_A}
   \omega_k =
\begin{cases}
 \a_1\b_1\cdots \b_{m-1} \a_{m}, & k=2m-1,\\
 \a_1\b_1\cdots \a_m\b_m, & k=2m.
\end{cases}
\end{equation}
 Similarly, a sequence $(\omega_k)$ in $\G$ is called {\em type B alternating} if there exists a pair of sequences, $(\a_n)$ in $\G_A \setminus \H$ and $(\b_n)$ in $\G_B\setminus \H$, such that
 \[
  \omega_k =
\begin{cases}
 \b_1\a_1\cdots \a_{m-1} \b_{m}, & k=2m-1,\\
 \b_1\a_1\cdots \b_m\a_m, & k=2m.
\end{cases}
 \]
\end{definition}
 
\subsection{HNN extensions}
 Let $\P$ be an abstract group and $\H_\pm < \P$ be a pair of subgroups.
 Suppose that $\phi:\H_-\to\H_+$ is an isomorphism.
 The {\em HNN extension of $\P$ by $\phi$}, denoted by $\Pf$,
 has a presentation 
 \[
 \< S_\P,f \mid R_\P,  f\eta f^{-1}(\phi(\eta))^{-1}, \eta\in\H_-\>,
 \]
 where $\< S_\P\mid R_\P\>$ is a presentation of $\P$.
 
Every element $\g\in\G = \Pf$ can be  written  in the form
\begin{equation}\label{eqn:normal_form_HNN}
  \g = \p_{0} f^{\epsilon_{1}}\p_1 \cdots f^{\epsilon_n} \p_n,
\end{equation}
for some $n\ge 0$, where each $\p_i$ is in $\P$, and each $\epsilon_i$ is either $1$ or $-1$, such that the following conditions are satisfied:
\begin{enumerate}[(i)]
 \item For $i=1,\dots,n$, if $\epsilon_{i} =-1$ and $\p_i \in \H_+$, then $\epsilon_{i+1} = -1$.
 \item For $i=1,\dots,n$, if $\epsilon_{i} =1$ and $\p_i \in \H_-$, then $\epsilon_{i+1}=1$.
\end{enumerate}
Such an expression \eqref{eqn:normal_form_HNN} is called a {\em normal form} of $\g$.

Although the normal form of an element is not unique, {\em Britton's Lemma} (see \cite[p.181]{Lyndon-Schupp}) establishes certain uniqueness properties 
of the decomposition \eqref{eqn:normal_form_HNN} of $\g$. In particular, the {\em relative length} $\rl(\g)$ of $\g$,
i.e., the total number of letters $f$ and $f^{-1}$ appearing in \eqref{eqn:normal_form_HNN},
is unique.

\begin{definition}[Alternating sequences]\label{defn:alt_HNN}
 A sequence $(\omega_n)$ in $\Pf$ is called {\em alternating} if
 there exist  sequences $(\p_n)$ in $\P$ and $(\epsilon_n)$ in $\{\pm1\}$, satisfying
\begin{equation}\label{eqn:alt_HNNsign}
 \epsilon_n=-1, \p_n\in \H_+ \implies \epsilon_{n+1}=-1,
 \quad\text{and}\quad
 \epsilon_n=1, \p_n\in \H_- \implies \epsilon_{n+1}=1
\end{equation}
for all $n\in\N$,
and some element $\p_0\in\P$ such that
\begin{equation}\label{eqn:alt_HNN}
   \omega_n = \p_0 f^{\epsilon_1} \p_1 f^{\epsilon_2}\p_2 \cdots f^{\epsilon_{n-1}}\p_{n-1}f^{\epsilon_n}.
\end{equation}
\end{definition}
 
\begin{remark}\label{rem:altstring}
The condition \eqref{eqn:alt_HNNsign} simply implies that the word in \eqref{eqn:alt_HNN}
is a normal form.
It is also useful to think about an alternating sequence $(\omega_n)$ in $\Pf$ as an infinite string of letters:
\[
 \p_0, f^{\epsilon_1}, \p_1, f^{\epsilon_2}, \p_2, f^{\epsilon_3}, \p_3 \dots,
\]
where $\p_i$ and $\epsilon_i$ satisfy the condition \eqref{eqn:alt_HNNsign}.
\end{remark}

\subsection{Bass-Serre trees}\label{sec:BStree}
  
In this paper, we will be using the formalism of {\em Bass-Serre trees} $T$ associated with amalgamated free products and HNN extensions. A detailed treatment of this material can be found in Serre's book  \cite{trees}; see also \cite{Drutu-Kapovich} for a more topological viewpoint on the construction.

\medskip
(i) Consider an amalgamated free product $\G= \G_A\star_{\H} \G_B$. The vertex set $V(T)$ of the tree $T$ is the set of cosets $\g\G_A, \g\G_B$, $\g\in \G$. Accordingly, the vertex set $V(T)$ is bicolored, with one color corresponding to the cosets $\g\G_A$ and the other color corresponding to the cosets $\g\G_B$. 
The group $\G$ acts on $V(T)$ by the left multiplication.

Edges of $T$ are defined so that $T$ is a bipartite graph, i.e., vertices of the same color are never connected by an edge. The cosets $\g_1\G_A, \g_2\G_B $ are connected by  an edge whenever there exists $\alpha\in \G_A$ such that 
$$
\g_1\G_B =\g_2\alpha\G_B, 
$$ 
 equivalently, there exists $\beta\in \G_B$ such that
 $$
 \g_1\beta\G_A =\g_2\G_A. 
 $$ 
 For instance, the vertices represented by the cosets $\G_A$, $\G_B$ are connected by an edge $e\in E(T)$ since there exists an element $\eta\in \H< \G_A$ such that $\G_B=\eta\G_B $. Moreover, all elements $\alpha\in \G_A$ such that 
 $\alpha\G_B=\G_B $ necessarily belong to $\H$. We, thus, label the edge $e$ by the coset $1_\G\cdot \H=\H$. 
 
 The left multiplication by the elements of $\Gamma$ preserves the incidence relation between the vertices of $T$. Accordingly, the edges of $T$ are labeled by the cosets $\g\H $, $\g\in G$. 
The $\G$-stabilizer of the vertex $\g\G_A $ equals $\g\G_A \g^{-1}$, while the  
$\G$-stabilizer of the vertex $\g\G_B $ equals $\g\G_B \g^{-1}$ and the $\G$-stabilizer of an edge labeled $\g\H$ equals 
$\g\H \g^{-1}$. 

\medskip

(ii) Consider an HNN extension $\G = \P\star_{\phi:\H_-\to\H_+}$. The vertex set of $T$ consists of left cosets of $\P$ in $\G$.
The edge set of $T$ consists of edges of two types: The left $\H_\pm$-cosets in $\G$. The edge 
$\g\H_+$ connects $\g\P$ to $\g f\P$, while the edge $\g\H_-$ connects $\g\P$ and $\g f^{-1}\P$.  The 
$\G$-action on $T$ is defined by: $\g: \g'\P\mapsto  \g\g'\P$. The $\G$-action is transitive on the set of vertices and edges of $T$. 

\subsection{Hyperbolic groups}
Let $\G$ be a finitely-generated group equipped with a left-invariant word metric, denoted by $d_\G$.
We use the notation $|\cdot|$ to denote the word length of elements of $\G$, i.e., $|\g| = d_\G(1_\G,\g)$.
Recall that $\G$ is called {\em (word) hyperbolic} if there exists $\delta\ge0$ such that
$(\G,d_\G)$ is a {\em $\delta$-hyperbolic} metric space: That is,
for all $f,g,h,w\in\G$,
\[
 (f,g)_w \ge \min\{(f,h)_w,(g,h)_w\} - \delta,
\]
where $(f,g)_w$ denotes the {\em Gromov product} of $f$ and $g$ with respect to $w$:
\[
 (f,g)_w = \frac{1}{2}(d_\G(f,w) + d_\G(g,w) - d_\G(f,g)).
\]
It is a basic fact that the property of being hyperbolic does not depend on the choice of a word metric $d_\G$, but the constant $\delta$ possibly depends on the chosen metric $d_\G$.

Let $\G$ be a hyperbolic group equipped with a word metric $d_\G$.
A {\em (discrete) geodesic} in $\G$ is an isometric embedding $c: I \to \G$ of an interval $I\subset \Z$.
Such a geodesic $c: I \to \G$ is called a {\em segment}, {\em ray}, or {\em line} when $I$ is bounded, $I$ is only bounded below, or $I = \Z$, respectively.

\medskip
The {\em Gromov boundary}  of $\G$, denoted by $\geo\G$, is the set of equivalence classes of asymptotic geodesic rays in $\G$, which gives a natural compactification of $(\G, d_\G)$,
\[
 \bG = \G \sqcup \geo\G.
\]
The topology of $\bG$ is understood as follows:
A pair of sequences $(\g_n)$ and $(\g_n')$ in $\G$ are said to {\em fellow travel} if 
\[
 (\g_n,\g_n')_{1_\G} \to \infty,
 \quad\text{as } n\to\infty.
\] 
A sequence $(\g_n)$ in $\G$ {\em converges} to a point $\e\in\geo\G$, which is denoted by
\[
 \g_n \toC \e,
\] 
if  $(\g_n)$ fellow-travels the image sequence $(c(n))$ of a(ny) geodesic ray $c:\N\to\G$ asymptotic to $\e$.

The following result can be checked easily using the  definitions above:

\begin{lemma}\label{lem:ft}
 Fellow traveling sequences in $\G$ have the same accumulation set in $\geo\G$.
\end{lemma}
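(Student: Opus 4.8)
The statement to prove is \Cref{lem:ft}: two fellow-traveling sequences in a hyperbolic group $\G$ have the same accumulation set in $\geo\G$. The plan is to unwind the definitions and use the stability of the Gromov product under the fellow-traveling relation, together with the characterization of convergence to a boundary point via fellow-traveling a geodesic ray. First I would observe that it suffices, by symmetry, to show one inclusion: if $(\g_n)$ and $(\g'_n)$ fellow travel and $\e \in \geo\G$ is an accumulation point of $(\g_n)$, then $\e$ is also an accumulation point of $(\g'_n)$. Passing to a subsequence, I may assume $\g_n \toC \e$; the goal is then to produce a subsequence of $(\g'_n)$ converging to $\e$, and in fact I expect the whole sequence $(\g'_n)$ to converge to $\e$.

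The key step is to fix a geodesic ray $c : \N \to \G$ asymptotic to $\e$ and to show $(\g'_n, c(m))_{1_\G} \to \infty$ appropriately, i.e.\ that $(\g'_n)$ fellow travels $(c(n))$ in the sense required by the definition of $\toC$. By hypothesis $(\g_n, \g'_n)_{1_\G} \to \infty$, and since $\g_n \toC \e$ we have that $(\g_n)$ fellow travels $(c(n))$. The mechanism is $\delta$-hyperbolicity: for each fixed $m$, the inequality
\[
 (\g'_n, c(m))_{1_\G} \ge \min\{(\g'_n,\g_n)_{1_\G}, (\g_n, c(m))_{1_\G}\} - \delta
\]
shows that $(\g'_n, c(m))_{1_\G} \to \infty$ as $n \to \infty$, because the first term in the minimum tends to $\infty$ by the fellow-traveling hypothesis and the second term, for fixed $m$, is eventually large since $(\g_n)$ fellow-travels the ray. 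Some care is needed here because the definition of ``fellow travels a geodesic ray'' as given in the excerpt is phrased via $(\g_n, c(n))_{1_\G} \to \infty$ rather than via a uniform statement over all $m$; I would reconcile these by using the standard elementary fact (a consequence of $\delta$-hyperbolicity and the definition of $c$ being a geodesic) that along a geodesic ray $(c(m), c(n))_{1_\G} \to \min(m,n)$ up to bounded error, so that the condition $(\g_n, c(n))_{1_\G}\to\infty$ is equivalent to $(\g_n, c(m))_{1_\G}\to\infty$ for every fixed $m$ and also to $(\g_n, \e)$-convergence in $\bG$. Once this is in hand, applying the displayed $\delta$-inequality with $\g_n$ as the ``middle'' point delivers the claim.

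Concretely, the steps in order are: (1) reduce to one inclusion by symmetry of the fellow-traveling relation; (2) pass to a subsequence so that $\g_n \toC \e$ for an accumulation point $\e$; (3) fix a geodesic ray $c$ asymptotic to $\e$ and record that $(\g_n, c(m))_{1_\G} \to \infty$ for every fixed $m$; (4) apply $\delta$-hyperbolicity in the form of the displayed three-point inequality with middle point $\g_n$ to conclude $(\g'_n, c(m))_{1_\G} \to \infty$ for every fixed $m$; (5) conclude $\g'_n \toC \e$, hence $\e$ lies in the accumulation set of $(\g'_n)$; (6) since the original subsequence of $(\g_n)$ realizing $\e$ was arbitrary, every accumulation point of $(\g_n)$ is one of $(\g'_n)$, and by symmetry the two accumulation sets coincide.

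The main obstacle is purely bookkeeping rather than conceptual: matching the ``fellow travels a geodesic ray'' condition as literally stated (Gromov product of $\g_n$ with $c(n)$, both index $n$) against the diagonal-free version one actually needs in step (4) (Gromov product of $\g'_n$ with $c(m)$, $m$ fixed). This is resolved by the elementary growth estimate for Gromov products along a single geodesic ray, which is standard $\delta$-hyperbolic geometry; no new ideas are required. I do not anticipate any genuine difficulty.
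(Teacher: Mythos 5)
The paper does not actually spell out a proof of \Cref{lem:ft} (it is dismissed as an easy check from the definitions), and the mechanism you propose --- the $\delta$-hyperbolic three-point inequality with $\g_n$ as the middle point --- is exactly the intended argument. So the approach is right. However, as literally written, your steps (3)--(4) contain a false claim: for a \emph{fixed} $m$ one always has $(\g_n, c(m))_{1_\G} \le d_\G(1_\G, c(m))$, so neither $(\g_n, c(m))_{1_\G}$ nor $(\g'_n, c(m))_{1_\G}$ can tend to infinity as $n\to\infty$, and your ``reconciliation'' statement (that $(\g_n,c(n))_{1_\G}\to\infty$ is \emph{equivalent} to $(\g_n,c(m))_{1_\G}\to\infty$ for every fixed $m$) is false for the same reason. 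The correct fixed-$m$ statement is that $(\g_n,c(m))_{1_\G}$ is eventually $\ge m - O(\delta)$, i.e.\ as large as it can be, and convergence to $\e$ is captured by a double limit (or a $\liminf \ge m - O(\delta)$ for every $m$), not by a limit at fixed $m$.

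The repair is immediate and in fact makes the fixed-$m$ detour unnecessary. After passing to a subsequence with $\g_{n_k}\toC\e$ and fixing a geodesic ray $c$ asymptotic to $\e$, apply your inequality along the diagonal:
\[
 (\g'_{n_k}, c(k))_{1_\G} \;\ge\; \min\bigl\{(\g'_{n_k},\g_{n_k})_{1_\G},\, (\g_{n_k}, c(k))_{1_\G}\bigr\} - \delta \;\longrightarrow\; \infty,
\]
since the first term in the minimum tends to infinity by the fellow-traveling hypothesis (inherited by the subsequence) and the second does by the definition of $\g_{n_k}\toC\e$. This is precisely the definition of $\g'_{n_k}\toC\e$, and the rest of your outline (symmetry, arbitrariness of the accumulation point) then goes through verbatim. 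So: right idea, but the limit bookkeeping in your steps (3)--(4) must be replaced by this diagonal application of the inequality.
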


\subsection{Nearest point projections to quasiconvex subgroups}
Let $\G$ be a hyperbolic group equipped with a left-invariant word metric $d_\G$.

\begin{definition}[Quasiconvexity]\ \label{def:quasiconvex}
\begin{enumerate}
\item A subset $Y\subset \G$ is called {\em quasiconvex} 
if there exists $K\ge 0$ such that for all $y_1, y_2 \in Y$, any geodesic segment in $\G$ connecting $y_1$ and $y_2$ lies in the $K$-neighborhood of $Y$.

\item A subgroup $\H <\G$ is called a {\em quasiconvex} if $\H$, as a subset of $\G$, is quasiconvex.
\end{enumerate}
\end{definition}

For a quasiconvex subset $Y\subset \G$, we choose
a nearest point projection map
\[
 \pr_Y : \G \to Y.
\]
Note that nearest point projections are not necessarily unique, but since $Y$ is quasiconvex, any two such maps are a finite distance apart.

\begin{lemma}\label{lem:proj_fellow_travel}
Let $Y\subset \G$ be a quasiconvex subset.
 For every $\g\in\G$, $\pr_Y(\g) \in Y$
 lies in a uniform neighborhood
 of any geodesic segment in $\G$ connecting $\g$ to $Y$. 
\end{lemma}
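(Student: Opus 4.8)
\textbf{Proof plan for \Cref{lem:proj_fellow_travel}.} The statement asserts that for a quasiconvex subset $Y \subset \G$ and any $\g \in \G$, the nearest point projection $\pr_Y(\g)$ lies within a uniform (independent of $\g$) neighborhood of any geodesic segment $[\g, y]$ joining $\g$ to a point $y \in Y$. The plan is to run the standard $\delta$-hyperbolic thin-triangle argument, tracking constants to make sure they only depend on $\delta$ and the quasiconvexity constant $K$ of $Y$.

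First I would fix notation: let $p = \pr_Y(\g)$, let $y \in Y$ be arbitrary, and consider the geodesic triangle with vertices $\g$, $p$, $y$ and sides $[\g, p]$, $[p, y]$, $[\g, y]$. Since $p$ and $y$ both lie in $Y$, quasiconvexity gives that $[p,y]$ lies in the $K$-neighborhood of $Y$. Now take any point $z$ on the side $[\g, p]$. By $\delta$-thinness of the triangle (in a $\delta$-hyperbolic space, say after passing to the standard constant $\delta'$ controlling slimness), $z$ lies within distance $\delta'$ of a point $w$ on $[\g, y] \cup [p, y]$. The core dichotomy is: either $w$ lies on the side $[\g, y]$, in which case $z$ is within $\delta'$ of $[\g, y]$ and we are happy; or $w$ lies on $[p, y]$, hence within distance $\delta' + K$ of some point $y' \in Y$, so $d(z, Y) \le \delta' + K$.

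The remaining step is to rule out the second alternative producing points $z$ far along $[\g, p]$ from $p$, which is where the minimality of $p$ enters. If $z \in [\g, p]$ is at distance $> \delta' + K$ from $p$ along this geodesic but satisfies $d(z, Y) \le \delta' + K$, pick $y' \in Y$ with $d(z, y') \le \delta' + K$; then $d(\g, y') \le d(\g, z) + d(z, y') = d(\g, p) - d(z, p) + d(z, y') < d(\g, p)$, contradicting $p = \pr_Y(\g)$ being a nearest point. Hence every $z \in [\g,p]$ within $\delta' + K$ of $Y$ is in fact within $2(\delta' + K)$ of $p$. Combining: every point of $[\g, p]$ is within $\delta'$ of $[\g, y]$, or within $2(\delta' + K)$ of $p$; in either case within $C := 2(\delta' + K) + \mathrm{diam}\,\text{(projection width)}$ of $[\g,y]$ — more carefully, since $p$ is an endpoint of $[\g,p]$ and at distance $0$ from $[\g,y]$'s context, the point $p$ itself is within $C = 2\delta' + 2K$ of some point of $[\g, y]$ (trace $[\g,p]$ from $p$: the first stretch of length $\le 2(\delta'+K)$ is near $p$, and beyond that we are within $\delta'$ of $[\g,y]$; a point at the boundary of these regimes is simultaneously close to both, giving $p$ within $\approx 3(\delta' + K)$ of $[\g,y]$).

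I expect the main obstacle to be purely bookkeeping: ensuring the constant genuinely depends only on $\delta$ and $K$ and not on $\g$ or the particular choice of geodesics, since geodesics in $\G$ need not be unique. This is handled because $\delta$-hyperbolicity gives uniform $\delta'$-slimness of all geodesic triangles and the quasiconvexity constant $K$ is uniform over $Y$; the projection $\pr_Y$ is well-defined up to bounded error, which only inflates the constant. One should also note the trivial case where $\g \in Y$ or $[\g,y]$ already passes near $Y$ everywhere, which needs no argument. No genuinely hard inequality arises; the content is entirely the thin-triangle comparison plus the nearest-point minimality.
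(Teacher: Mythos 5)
Your proof is correct and follows essentially the same route as the paper's: a thin-triangle argument on the triangle with vertices $\g$, $\pr_Y(\g)$, and a point of $Y$, combined with quasiconvexity applied to the side joining the two points of $Y$ and the nearest-point minimality of $\pr_Y(\g)$. The only difference is bookkeeping — you apply slimness to the side $[\g,\pr_Y(\g)]$ and run a threshold/dichotomy argument, while the paper applies it to $[y_0,\g]$ and uses that $\pr_Y(\g)$ is also a nearest point of $Y$ to points of $[\g,\pr_Y(\g)]$ — and both yield the same uniform constant up to additive slop.
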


\begin{proof}
 Let $K\ge 0$ be a quasiconvexity constant for $Y$ and $\delta\ge 0$ be a hyperbolicity constant for $(\G,d_\G)$.
 Let $[y_0,\g]$ be a geodesic segment in $\G$ from $y_0\in Y$ to $\g$.
 
\begin{figure}[h]
\centering
\begin{overpic}[scale=.9,tics=8]{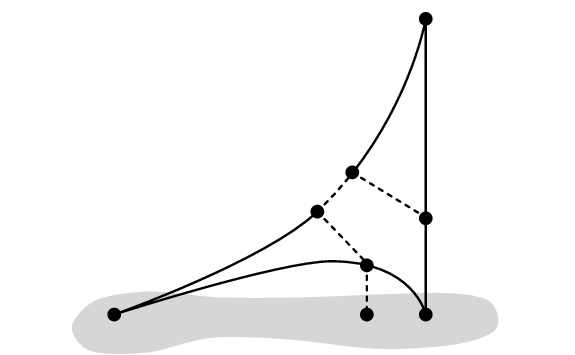}
\put(17,3){\small$y_0$}
\put(62,3){\small$y'$}
\put(65,17){\small$y$}
\put(73,3){\small$\g' = \pr_Y(\g)$}
\put(75.5,22.5){\small$x$}
\put(75.5,58){\small$\g$}
\put(55,11.3){\Small$K\ge$}
\put(52,18.5){\Small$2\delta\ge$}
\put(66.3,29.5){\Small$\le 2\delta$}
\put(52,28){\small$1=$}
\put(45,5){\large$Y$}
\end{overpic}
\caption{}
\label{pic5}
\end{figure}

 Consider a geodesic triangle in $\G$ with vertices $y_0$, $\g$, and $\g'\coloneqq \pr_Y(\g)$, whose edge connecting $y_0$ to $\g$ is $[y_0,\g]$.
 Since geodesic triangles in $\G$ are $2\delta$-thin, we have
 \[
  [y_0,\g] \subset N_{2\delta}([y_0,\g']\cup [\g,\g']),
 \]
 where $N_r(\cdot)$ denotes the $r$-neighborhood in $(\G,d_\G)$.
 In particular, there exist points $x\in [\g,\g']$ and $y\in [y_0,\g']$ such that
 $d_\G(x,[y_0,\g]) \le 2\delta$, $d_\G(y,[y_0,\g]) \le 2\delta$, and $d_\G(x,y) \le 4\delta + 1$.
 See \Cref{pic5} for an illustration of the points in the Cayley graph of $\G$.
 Since $[y_0,\g']$ lies in the $K$-neighborhood of $Y$, there exists $y'\in Y$, which is at most $K$ distance away from $y$.
 
Since $\g'$ is also a nearest point in $Y$ to $x$, $d_\G(x,\g') \le d_\G(x,y') \le K + 4\delta + 1$.
Therefore, $\g'$ is at most $K+6\delta+1$ distance away from $[y_0,\g]$.
\end{proof}

For a subset $Y\subset \G$, let $\geo Y \subset \geo\G$ denote the set of all accumulation points of $Y$ in $\bG = \G \sqcup \geo\G$.

\begin{corollary}\label{cor:proj_fellow_travel}
Let $Y \subset \G$ be a quasiconvex subset.
 A sequence $(\g_n)$ in $\G$ has an accumulation point
in $\geo Y$ if and only if $(\pr_Y(\g_n))$ is unbounded in $Y$.
\end{corollary}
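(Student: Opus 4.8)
The plan is to prove both directions of the equivalence using \Cref{lem:proj_fellow_travel} together with the compactness of $\bG = \G \sqcup \geo\G$, and the characterization of convergence to a boundary point in terms of fellow traveling.

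First, suppose $(\pr_Y(\g_n))$ is bounded in $Y$. Then it takes only finitely many values, so after passing to a subsequence we may assume $\pr_Y(\g_n) = y$ is constant. We want to show that no subsequence of $(\g_n)$ accumulates at a point of $\geo Y$. Suppose, for contradiction, that after further passing to a subsequence, $\g_n \toC \e$ for some $\e \in \geo Y$. Pick a sequence $(y_m)$ in $Y$ with $y_m \toC \e$; by \Cref{lem:ft} it suffices to derive a contradiction from the fact that $(\g_n)$ and $(y_m)$ both converge to $\e$, i.e.\ $(\g_n, y_m)_{1_\G}$ can be made arbitrarily large. Concretely, fix a geodesic segment $[y, \g_n]$ from $y$ to $\g_n$. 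Since $\e \in \geo Y$ and $Y$ is quasiconvex, for large $m$ the point $y_m$ lies far out along a direction that forces any geodesic $[y, \g_n]$ with $\g_n$ close to $\e$ to pass within bounded distance of $y_m$; more precisely, $d_\G(1_\G, y) + d_\G(1_\G, \g_n) - d_\G(y, \g_n)$ stays bounded (since $\pr_Y(\g_n) = y$ is the nearest point of $Y$, the Gromov product $(\g_n, y')_{1_\G}$ for $y' \in Y$ is controlled), contradicting that $(\g_n)$ and the $Y$-sequence defining $\e$ fellow travel. The cleanest way to organize this is: if $\g_n \toC \e \in \geo Y$, then a geodesic ray from $1_\G$ (or from a basepoint in $Y$) to $\e$ lies in a bounded neighborhood of $Y$, and $[y, \g_n]$ must fellow travel an initial segment of it of length $\to \infty$; by \Cref{lem:proj_fellow_travel}, $\pr_Y(\g_n)$ lies near $[y,\g_n]$, hence near that long segment inside $Y$, forcing $d_\G(y, \pr_Y(\g_n)) \to \infty$ — contradicting that $\pr_Y(\g_n)$ is constant.

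Conversely, suppose $(\pr_Y(\g_n))$ is unbounded in $Y$. Passing to a subsequence, we may assume $\pr_Y(\g_n) \to \infty$ in $Y$, and (since $\bG$ is compact and $Y \subset \G$) that $\pr_Y(\g_n) \toC \e$ for some $\e \in \geo Y$. Now I claim $\g_n \toC \e$ as well, which shows $(\g_n)$ accumulates at the point $\e \in \geo Y$. By \Cref{lem:ft} it is enough to show $(\g_n)$ fellow travels $(\pr_Y(\g_n))$, i.e.\ $(\g_n, \pr_Y(\g_n))_{1_\G} \to \infty$. This follows from a standard thin-triangle estimate: writing $\g_n' = \pr_Y(\g_n)$, in the geodesic triangle with vertices $1_\G, \g_n, \g_n'$, the Gromov product $(\g_n, \g_n')_{1_\G}$ equals (up to $O(\delta)$) the distance from $1_\G$ to the side $[\g_n, \g_n']$; since $\g_n'$ is the nearest point of $Y$ to $\g_n$ and $Y$ is quasiconvex, while $1_\G$ is at bounded distance from $Y$ only if $Y$ passes near $1_\G$ — here one uses instead that $d_\G(1_\G, \g_n')$ and $d_\G(1_\G,\g_n)$ are both large and comparable up to the length $d_\G(\g_n', \g_n)$. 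I would make this precise by noting $d_\G(1_\G, \g_n) \geq d_\G(1_\G, \g_n') - C$ is false in general, so instead: pick $y_0 \in Y$ a fixed nearest point to $1_\G$; then $(\g_n, \g_n')_{y_0} \to \infty$ because $[y_0, \g_n']$ is a geodesic in (a neighborhood of) $Y$ of length $\to\infty$ and by \Cref{lem:proj_fellow_travel} the segment $[y_0, \g_n]$ fellow travels it for length $d_\G(y_0, \g_n')$; changing basepoint from $y_0$ to $1_\G$ changes the Gromov product by a bounded amount, so $(\g_n, \g_n')_{1_\G} \to \infty$ as well.

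The main obstacle is the forward direction (bounded projection $\Rightarrow$ no accumulation in $\geo Y$): one must rule out that $\g_n$ escapes ``past'' $Y$ while its projection stays put, and the honest argument needs the observation that for a quasiconvex $Y$, a point at bounded projection distance is within bounded Gromov-product distance (with respect to $1_\G$) of that fixed projection point, so it cannot be asymptotic to any end of $Y$. I expect the bookkeeping of basepoints and the precise form of \Cref{lem:proj_fellow_travel} to be where care is needed, but no deep idea beyond thin triangles and \Cref{lem:ft} is required.
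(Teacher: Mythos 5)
Your ``if'' direction (unbounded projections force an accumulation point in $\geo Y$) is correct and is essentially the paper's argument: pass to a subsequence with $\pr_Y(\g_n)\toC\e\in\geo Y$, use \Cref{lem:proj_fellow_travel} to see that $(\g_n)$ and $(\pr_Y(\g_n))$ fellow travel, and conclude with \Cref{lem:ft}.

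The ``only if'' direction, as written, has a genuine gap. The step ``by \Cref{lem:proj_fellow_travel}, $\pr_Y(\g_n)$ lies near $[y,\g_n]$, hence near that long segment inside $Y$, forcing $d_\G(y,\pr_Y(\g_n))\to\infty$'' is a non sequitur: the lemma only places $\pr_Y(\g_n)$ within uniform distance of the segment $[y,\g_n]$, and under your standing normalization $\pr_Y(\g_n)=y$ it sits at an endpoint of that very segment, so no contradiction is produced. Likewise, the quantities you do bound precisely are the wrong ones: $d_\G(1_\G,y)+d_\G(1_\G,\g_n)-d_\G(y,\g_n)=2(\g_n,y)_{1_\G}$ is automatically at most $2\,d_\G(1_\G,y)$, and ``bounded Gromov-product distance from the fixed projection point'' is similarly trivial; neither statement prevents $(\g_n)$ from converging to a point of $\geo Y$. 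What is actually needed---and what the paper extracts from \Cref{lem:proj_fellow_travel}---is a bound uniform over all of $Y$, namely $\sup_{\g\in\G,\, y'\in Y}(\g,y')_{\pr_Y(\g)}<\infty$: if $\pr_Y(\g)$ lies within $C$ of a point of a geodesic $[y',\g]$, then $(\g,y')_{\pr_Y(\g)}\le C$. Your parenthetical ``the Gromov product $(\g_n,y')_{1_\G}$ for $y'\in Y$ is controlled'' gestures at exactly this claim, but it is never justified, and the precise statements you substitute for it are either trivial or false. Once that uniform bound is in place, boundedness of $(\pr_Y(\g_n))$ and a basepoint change give $\sup_{n,m}(\g_n,y_m)_{1_\G}<\infty$ for any sequence $(y_m)$ in $Y$, contradicting $\g_n\toC\e$ and $y_m\toC\e$ for $\e\in\geo Y$. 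So the fix is short and stays within your intended framework, but the forward direction as you wrote it does not close.
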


\begin{proof}
 For the ``if'' part, suppose that $(\pr_Y(\g_n))$ is unbounded.
 After passing to a subsequence, we assume that $|\pr_Y(\g_n)| \to\infty$, as $n\to\infty$. Applying \Cref{lem:proj_fellow_travel}, it follows
 that $(\g_n)$ and $(\pr_Y(\g_n))$ fellow travel.
 Thus, by \Cref{lem:ft}, $(\g_n)$ has an accumulation point in $\geo Y$.
 
 For the ``only if'' part, we prove the contrapositive statement. Let us assume that $(\pr_Y(\g_n))$ is bounded. We show that $(\g_n)$ has no accumulation points in $\geo Y$.
 We argue by contradiction:
 Suppose that $\e\in\geo Y$ is an accumulation point of $(\g_n)$.
 After extraction, we may assume that $\g_n\toC \e$.
 Let $(y_n)$ be any sequence in $Y$ such that $y_n\toC \e$.
 Consequently, we must have $(\g_n,y_n)_{1_\G} \to \infty$, as $n\to\infty$.
 However, \Cref{lem:proj_fellow_travel} can be restated as
 \[
  \sup_{\g\in\G, y\in Y}(\g,y)_{\pr_Y(\g)} < \infty.
 \]
 Since $(\pr_Y(\g_n))$ is bounded, the above implies
 \[
  \sup_{m,n}(\g_n,y_m)_{1_\G} < \infty,
 \]
a contradiction.
\end{proof}

\begin{lemma}\label{lem:proj_one}
Let $\H< \G$ be a quasiconvex subgroup.
For any sequence $(\g_n)$ in $\G$,
consider the sequence $(\hg_n)$ given by $\hg_n = \pr_\H(\g_n)^{-1} \g_n$.
\begin{enumerate}[(i)]
 \item
 Regarded as a sequence in the compact space $\bG = \G \sqcup \geo\G$, $(\hg_n)$ has no accumulation points in $\geo \H$.

 \item
 Suppose that $(\g_n)$ diverges away from $\H$, i.e.,
 $
  d_\G(\H,\g_n) \to\infty,
 $
 as $n\to\infty$.
 Then, the accumulation set of $(\g_n^{-1})$ in $\geo\G$ is the same as that of $(\hg_n^{-1})$.
\end{enumerate}
\end{lemma}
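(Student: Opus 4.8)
The plan is to deduce both statements from two results already available: \Cref{cor:proj_fellow_travel}, which characterizes accumulation in $\geo\H$ via boundedness of the projection sequence, and \Cref{lem:ft}, which says that fellow-traveling sequences have the same accumulation set in $\geo\G$. Throughout I write $h_n \coloneqq \pr_\H(\g_n)$, so that $\hg_n = h_n^{-1}\g_n$ and $\hg_n^{-1} = \g_n^{-1}h_n$; recall also that $1_\G\in\H$ and that any two nearest point projection maps to $\H$ differ by a uniformly bounded amount.

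For part (i), I would show that the projection sequence $(\pr_\H(\hg_n))$ is bounded and then invoke \Cref{cor:proj_fellow_travel} with $Y=\H$. Boundedness is a manifestation of the left-$\H$-equivariance of nearest point projections: since $h_n\in\H$ and left translation by $h_n$ is an isometry of $\G$ preserving $\H$, for every $h\in\H$ we have $d_\G(\hg_n,h) = d_\G(\g_n,h_n h)$, and as $h$ ranges over $\H$ so does $h_n h$. The minimum is attained at $h = 1_\G$, because $h_n\cdot 1_\G = h_n = \pr_\H(\g_n)$ realizes $d_\G(\g_n,\H)$. Hence $1_\G$ is a nearest point of $\H$ to $\hg_n$, so $(\pr_\H(\hg_n))$ lies within a bounded neighborhood of $1_\G$, and \Cref{cor:proj_fellow_travel} yields that $(\hg_n)$ has no accumulation point in $\geo\H$.

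For part (ii), I would show that $(\g_n^{-1})$ and $(\hg_n^{-1})$ fellow travel and then apply \Cref{lem:ft}. Using left-invariance of $d_\G$, the relevant Gromov product is $(\g_n^{-1},\hg_n^{-1})_{1_\G} = \tfrac12\bigl(|\g_n| + |\hg_n| - |h_n|\bigr)$, since $d_\G(\g_n^{-1},\hg_n^{-1}) = d_\G(1_\G,h_n) = |h_n|$. By the hypothesis that $(\g_n)$ diverges away from $\H$, $|\hg_n| = d_\G(h_n,\g_n) = d_\G(\H,\g_n)\to\infty$, so it suffices to bound $|\g_n| - |h_n|$ from below by $|\hg_n| - 2C_0$ for a constant $C_0$ independent of $n$: this gives $(\g_n^{-1},\hg_n^{-1})_{1_\G}\ge |\hg_n| - C_0\to\infty$. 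For the lower bound I would apply \Cref{lem:proj_fellow_travel} to the geodesic $[1_\G,\g_n]$, which is a geodesic segment connecting $\g_n$ to $\H$ precisely because $1_\G\in\H$; it provides a uniform constant $C_0$ and a point $p_n\in[1_\G,\g_n]$ with $d_\G(h_n,p_n)\le C_0$. Then $|\g_n| = |p_n| + d_\G(p_n,\g_n)\ge |p_n| + |\hg_n| - C_0$ while $|h_n|\le |p_n| + C_0$, so $|\g_n| - |h_n|\ge |\hg_n| - 2C_0$, as needed.

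Everything here is routine once the correct tools are plugged in; the only step requiring a little care is the lower bound $|\g_n| - |h_n|\ge |\hg_n| - 2C_0$ in part (ii), where one must feed \Cref{lem:proj_fellow_travel} the geodesic from $\g_n$ to the basepoint $1_\G\in\H$ (rather than an arbitrary geodesic to $\H$) and genuinely use the divergence hypothesis to control the error term. This is the main—and fairly mild—obstacle.
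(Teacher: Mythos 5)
Your proof is correct and follows essentially the same route as the paper: part (i) is the identical computation showing that $1_\G$ is a nearest point of $\H$ to $\hg_n$ followed by \Cref{cor:proj_fellow_travel}, and part (ii) is the same Gromov-product estimate $(\g_n^{-1},\hg_n^{-1})_{1_\G}=\tfrac12(|\g_n|+|\hg_n|-|\pr_\H(\g_n)|)$ combined with \Cref{lem:proj_fellow_travel} and \Cref{lem:ft}. Your explicit bound $|\g_n|-|h_n|\ge|\hg_n|-2C_0$ is just a quantified version of the paper's observation that $|\g_n|-|\pr_\H(\g_n)|\to\infty$.
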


\begin{proof}
We first observe that for all $\g\in\G$, the identity element is a nearest point in $\H$ to $\hg \coloneqq \pr_\H(\g)^{-1}\g$:
Indeed, for all $\eta\in\H$,
\begin{equation}\label{eqn:projection_ineq}
  d_\G (\eta ,\hg) = d_\G (\pr_\H(\g)\eta ,\g) \ge d_\G (\pr_\H(\g),\g),
\end{equation}
where the inequality follows from the fact that $\pr_\H(\g)$ is a nearest point in $\H$ to $\g$.
Moreover, $d_\G (1_\G ,\hg) = d_\G (\pr_\H(\g),\g)$. 
Hence, \eqref{eqn:projection_ineq} implies that $d_\G (\H ,\hg) \ge d_\G (1_\G ,\hg)$.

Therefore, for any sequence $(\g_n)$ in $\G$,
$
 \{ \pr_\H(\hg_n) \mid  n\in\N \} \subset \H
$
is bounded.
Part (i) now follows by applying \Cref{cor:proj_fellow_travel}.

\medskip
We prove part (ii):
Since $\pr_\H(\g_n)$ lies uniformly close to any geodesic in $\G$ connecting $1_\G$ to $\g_n$ (see \Cref{lem:proj_fellow_travel})
and $d_\G(\H,\g_n) \to\infty$, it follows that
\[
 |\g_n| - |\pr_\H(\g_n)| \to\infty,
\]
as $n\to\infty$.
Thus
\begin{align*}
 (\g_n^{-1},\hg_n^{-1})_{1_\G} &= 
 \frac{1}{2}(|\g_n^{-1}| + |\hg_n^{-1}| - d_\G(\g_n^{-1},\hg_n^{-1}))\\
 &=\frac{1}{2}(|\g_n| + |\hg_n| - |\g_n\hg_n^{-1}|)\\
 &=\frac{1}{2}(|\g_n| + |\hg_n| - |\pr_\H(\g_n)|) \to\infty,
\end{align*}
as $n\to\infty$.
In other words, $(\g_n^{-1})$ and $(\hg_n^{-1})$ fellow travel.
By \Cref{lem:ft}, $(\g_n^{-1})$ and $(\hg_n^{-1})$ have the same accumulation sets in $\geo\G$.
\end{proof}

An equivalent statement of the above result, which we will often use, is as follows:

\begin{lemma}\label{lem:proj}
Let $\H< \G$ be a quasiconvex subgroup.
For any sequence $(\g_n)$ in $\G$,
consider the sequence $(\hg_n)$ given by $\hg_n =  \g_n\pr_\H(\g_n^{-1})$.
\begin{enumerate}[(i)]
 \item
 $(\hg_n^{-1})$ has no accumulation points in $\geo \H$.

 \item
 If $(\g_n^{-1})$ diverges away from $\H$,
 then the accumulation sets of $(\g_n)$ and $(\hg_n)$ in $\geo\G$ coincide.
\end{enumerate}
\end{lemma}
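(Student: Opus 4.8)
The plan is to obtain \Cref{lem:proj} directly from \Cref{lem:proj_one} by feeding in the inverse sequence, rather than re-running any estimates. Concretely, apply \Cref{lem:proj_one} to the sequence $(\delta_n)$ defined by $\delta_n \coloneqq \g_n^{-1}$. The auxiliary sequence attached to $(\delta_n)$ in \Cref{lem:proj_one} is $\hat\delta_n \coloneqq \pr_\H(\delta_n)^{-1}\delta_n$, and a one-line computation identifies it:
\[
 \hat\delta_n = \pr_\H(\g_n^{-1})^{-1}\g_n^{-1} = \bigl(\g_n\,\pr_\H(\g_n^{-1})\bigr)^{-1} = \hg_n^{-1},
\]
so the auxiliary sequence for $(\delta_n)$ is exactly $(\hg_n^{-1})$ in the notation of \Cref{lem:proj}.

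With this identification in hand, part (i) is immediate from \Cref{lem:proj_one}(i): the auxiliary sequence $(\hg_n^{-1})$ has no accumulation points in $\geo\H$. For part (ii), note that the hypothesis that $(\g_n^{-1})$ diverges away from $\H$ is literally the statement $d_\G(\H,\delta_n)\to\infty$, so \Cref{lem:proj_one}(ii) applies to $(\delta_n)$ and gives that the accumulation set of $(\delta_n^{-1})$ in $\geo\G$ coincides with that of $(\hat\delta_n^{-1})$. Since $\delta_n^{-1} = \g_n$ and $\hat\delta_n^{-1} = \hg_n$, this is precisely the asserted coincidence of the accumulation sets of $(\g_n)$ and $(\hg_n)$ in $\geo\G$.

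I do not expect a genuine obstacle: the argument is a formal substitution, and the only point requiring care is the bookkeeping of inverses (together with the harmless fact that $\pr_\H$ is a fixed but non-canonical choice — any two choices differ by a bounded amount, which alters neither accumulation set, by \Cref{lem:ft}, so the statement is insensitive to this choice). Should one prefer a self-contained argument, the statement can instead be reproved from scratch by repeating the two steps in the proof of \Cref{lem:proj_one}: the nearest-point inequality combined with \Cref{cor:proj_fellow_travel} yields (i), and the Gromov-product computation based on \Cref{lem:proj_fellow_travel} yields (ii); but the substitution route is shorter and is the one I would write down.
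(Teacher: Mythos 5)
Your substitution argument is exactly what the paper intends: it states \Cref{lem:proj} as ``an equivalent statement'' of \Cref{lem:proj_one} with no separate proof, the equivalence being precisely the replacement $\g_n \mapsto \g_n^{-1}$, under which the auxiliary sequence becomes $\hg_n^{-1} = \pr_\H(\g_n^{-1})^{-1}\g_n^{-1}$ as you computed. The bookkeeping of inverses and the hypotheses match, so the proposal is correct and takes the same route as the paper.
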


\subsection{Amalgams of hyperbolic groups}
For the rest of this section,
we restrict our discussion to amalgams (amalgamated free products and HNN extensions) of hyperbolic groups. 
The  Bestvina-Feighn Combination Theorem, \cite{Bestvina-Feighn}, provides some sufficient conditions for the hyperbolicity of amalgams. 
We review this theorem in the weakly malnormal case (although their actual result is much more general):

\begin{definition}\label{def:malnormal}
A subgroup $\H$ of a group $\G$ is said to be  {\em weakly malnormal} if, for every $\g\in \G \setminus \H$, the subgroup 
$\g\H\g^{-1}\cap \H$ is finite.  
\end{definition} 

\begin{theorem}[Bestvina-Feighn, \cite{Bestvina-Feighn}]\label{thm:Bestvina-Feign}
Let $\G_A$, $\G_B$, and $\P$ be hyperbolic groups.
\begin{enumerate}[(i)]
 \item If $\H< \G_A$ and $\H< \G_B$ are quasiconvex, weakly malnormal subgroups, then $\G_A \star_\H \G_B$ is hyperbolic. 
 \item If $\H_\pm < \P$ are isomorphic (by $\phi:\H_-\to \H_+$), quasiconvex, weakly malnormal subgroups such that for all $\p\in\P$, $\H_-\cap \p\H_+\p^{-1}$ is finite, then $\Pf$ is hyperbolic.
\end{enumerate}
\end{theorem}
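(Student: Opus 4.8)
The statement is the special case of the general Bestvina--Feighn combination theorem for trees of spaces in which the underlying graph of groups is finite and the edge-to-vertex inclusions are quasi-isometric embeddings. I would prove it by reducing to that general result and checking its hypotheses, the only genuinely nontrivial point being the \emph{flaring} (``hallways flare'') condition.

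\emph{Step 1: the tree of spaces.} In case (i) let $T$ be the Bass--Serre tree of $\G = \G_A \star_\H \G_B$, and in case (ii) let $T$ be that of $\G = \Pf$, as described in \S\ref{sec:BStree}. Fixing finite generating sets, build a tree of spaces $X \to T$: over each vertex put a copy of the Cayley graph of the corresponding vertex group ($\G_A$, $\G_B$, or $\P$), over each edge put a copy of the Cayley graph of the corresponding edge group ($\H$, resp.\ $\H_\pm$), and glue via the inclusions of edge groups into vertex groups. Since $\H$ is quasiconvex in $\G_A$ and in $\G_B$ (resp.\ $\H_\pm$ are quasiconvex in $\P$), these inclusions are quasi-isometric embeddings of hyperbolic spaces, so the edge spaces are uniformly qi-embedded in the incident vertex spaces. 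The graph of groups is finite (two vertices and one edge, resp.\ one vertex and one edge), so $\G$ acts cocompactly on $X$ and $X$ is $\G$-equivariantly quasi-isometric to $\G$ with a word metric; hence it suffices to show that $X$ is Gromov hyperbolic.

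\emph{Step 2: acylindricity from weak malnormality.} I claim there is a uniform $k$ (in fact one may take every segment of length $\ge 2$) such that the pointwise $\G$-stabilizer of a geodesic segment of length $\ge 2$ in $T$ is finite. Using the stabilizer descriptions of \S\ref{sec:BStree}: in case (i), the pointwise stabilizer of a length-$2$ segment whose midpoint is a conjugate of $\G_A$ (resp.\ of $\G_B$) is conjugate to $\H\cap\alpha\H\alpha^{-1}$ with $\alpha\in\G_A\setminus\H$ (resp.\ $\alpha\in\G_B\setminus\H$), which is finite by weak malnormality of $\H$ in $\G_A$ (resp.\ in $\G_B$); in case (ii), it is conjugate either to $\H_\pm\cap p\,\H_\pm\, p^{-1}$ with $p\in\P\setminus\H_\pm$, or to $\H_+\cap p\,\H_-\,p^{-1}$ with $p\in\P$, and all of these are finite by weak malnormality of $\H_\pm$ in $\P$ together with the hypothesis that $\H_-\cap p\H_+p^{-1}$ is finite for every $p\in\P$. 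Since the pointwise stabilizer of a longer segment is contained in that of any length-$2$ subsegment, the claim follows. Equivalently, $X\to T$ contains no ``long essential annuli'': there is no infinite-order element fixing a segment of length $>k$ in $T$. (The fact that weak malnormality permits finite, rather than trivial, intersections only forces the relevant annuli to be \emph{bounded}, which is still within the scope of \cite{Bestvina-Feighn}.)

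\emph{Step 3: flaring and conclusion.} It remains to verify the flaring hypothesis of the Bestvina--Feighn combination theorem for $X\to T$; this is the technical heart of \cite{Bestvina-Feighn}, and I would quote their argument rather than reprove it. The point is that, for a tree of hyperbolic spaces with uniformly qi-embedded edge spaces over a tree carrying a uniformly acylindrical action, a failure of flaring yields, after passing to a limit of rescaled ``hallways,'' a bi-infinite hallway of sublinear girth; via a nearest-point-projection argument of the kind in \Cref{lem:proj_fellow_travel} and \Cref{cor:proj_fellow_travel} this produces segments of arbitrarily large length in $T$ with infinite pointwise stabilizer, contradicting Step 2. Once flaring holds, \cite{Bestvina-Feighn} gives that $X$ is Gromov hyperbolic, and then Step 1 shows $\G$ is a hyperbolic group, establishing (i) and (ii) simultaneously. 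The main obstacle is exactly Step 3 --- the passage from the combinatorial ``no long annuli'' statement to the metric flaring inequality --- which is precisely the content of \cite{Bestvina-Feighn} and which I would invoke as a black box.
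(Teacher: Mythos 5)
The paper gives no proof of this statement at all---it is imported from Bestvina--Feighn, with \cite[Theorems 1 \& 2]{MR1390041} cited for exactly this weakly malnormal formulation---so your strategy of reducing to the general combination theorem of \cite{Bestvina-Feighn} and invoking its flaring step as a black box is essentially the same route the paper takes. Your reduction is correct: quasiconvexity of $\H$ (resp. $\H_\pm$) yields the qi-embedded condition, and your computation of pointwise stabilizers of length-two segments in the Bass--Serre tree (including the mixed intersections $\p\H_+\p^{-1}\cap \p'\H_-\p'^{-1}$, which is precisely why the hypothesis in (ii) is quantified over all elements of $\P$) gives the uniform bound on essential annuli that the quoted machinery requires.
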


See also \cite[Theorems 1 \& 2]{MR1390041}.

This theorem has several addendums that we will use in what follows:

\begin{theorem}[Mitra, {\cite{Mitra1998}}]\label{thm:mj}
  Under the assumptions, the subgroups $\G_A$ and $\G_B$ (resp. $\P$) in \Cref{thm:Bestvina-Feign} are quasiconvex in $\G_A \star_\H \G_B$ (resp. $\Pf$). 
\end{theorem}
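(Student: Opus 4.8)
The plan is to show that the inclusions $\G_A\hookrightarrow\G$ and $\G_B\hookrightarrow\G$ (resp.\ $\P\hookrightarrow\Pf$) are quasi-isometric embeddings; since $\G$ is hyperbolic by \Cref{thm:Bestvina-Feign}, quasiconvexity then follows from the stability of quasigeodesics in hyperbolic spaces (the Morse Lemma) applied to the images of $\G_A$-geodesics, the upper Lipschitz bound for the inclusion being automatic. To organize the argument I would pass to the \emph{tree of spaces} $X$ over the Bass--Serre tree $T$ of \S\ref{sec:BStree}(i): the vertex spaces are the Cayley graphs of $\G_A$ and $\G_B$ indexed by the cosets $\g\G_A$ and $\g\G_B$, the edge spaces are copies of the Cayley graph of $\H$ indexed by the cosets $\g\H$, and the edge-to-vertex maps are the left translates of the inclusions $\H\hookrightarrow\G_A$ and $\H\hookrightarrow\G_B$. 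Then $\G$ acts geometrically on $X$, so $X$ is quasi-isometric to $\G$ (hence hyperbolic), and under this quasi-isometry the vertex space $X_{v_A}$ over $v_A=\G_A$ corresponds to $\G_A<\G$; it therefore suffices to prove that $X_{v_A}$ (and symmetrically $X_{v_B}$) is quasiconvex in $X$.

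The tool I would use is a uniformly coarsely Lipschitz retraction $r\colon X\to X_{v_A}$. Given $z$ in a vertex space $X_v$, let $e_1,\dots,e_k$ be the edges of the $T$-geodesic $[v,v_A]$ listed starting from $v$, and define $r(z)$ by composing the nearest-point projections onto the edge spaces $X_{e_1},\dots,X_{e_k}$ (each performed inside the appropriate vertex space), stopping once one lands in $X_{v_A}$; set $r=\mathrm{id}$ on $X_{v_A}$. Each individual projection is coarsely Lipschitz with constants independent of the edge, because every vertex space is hyperbolic and $\H$ is quasiconvex in both $\G_A$ and $\G_B$, so \Cref{lem:proj_fellow_travel} and \Cref{cor:proj_fellow_travel} apply inside each vertex space. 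Once $r$ is known to be uniformly coarsely Lipschitz, $X_{v_A}$ is quasi-isometrically embedded in $X$ (apply $r$ to $X$-geodesics between points of $X_{v_A}$), and we are done. Equivalently, and more concretely, one must bound the depth of the excursions an $X$-geodesic joining two points of $X_{v_A}$ makes into $T$ away from $v_A$: decomposing such a geodesic at its successive returns to $X_{v_A}$ and projecting each excursion back with the maps above yields the required linear lower bound $|\g|_\G\gtrsim|\g|_{\G_A}$ for $\g\in\G_A$; this decomposition is the geometric counterpart of the normal-form/relative-length structure of \S\ref{sec:reduced}.

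The hard part — really the whole content of the statement — is the uniform bound on these tree-excursions, equivalently the uniform coarse Lipschitz property of $r$. This is exactly where the two hypotheses enter: hyperbolicity of $X$ (from \Cref{thm:Bestvina-Feign}) forces a long excursion of a geodesic to be ``thin'', i.e.\ to run for a long time within a bounded neighborhood of a single edge space $X_e\cong\g\H$, while weak malnormality of $\H$ in $\G_A$ and $\G_B$ makes the $\G$-action on $T$ acylindrical and thereby bounds how long an essentially geodesic path can fellow-travel a single edge space; combining the two caps the depth of the excursion. Making this quantitative is precisely the Bestvina--Feighn ``hallways flare'' analysis together with Mitra's subsequent refinement, so at this point I would invoke \cite{Bestvina-Feighn} and \cite{Mitra1998} (see also \cite[Theorems 1 \& 2]{MR1390041}). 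The HNN case $\G=\Pf$ runs along the same lines, using the Bass--Serre tree of \S\ref{sec:BStree}(ii) and the extra hypothesis that $\H_-\cap\p\H_+\p^{-1}$ is finite for all $\p\in\P$ (which again makes the action on $T$ acylindrical), to conclude that $\P$ is quasiconvex in $\Pf$.
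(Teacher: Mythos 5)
The paper offers no proof of this statement at all: it is quoted verbatim as a result of Mitra \cite{Mitra1998} (building on \cite{Bestvina-Feighn}, cf.\ \cite{MR1390041}), so there is nothing internal to compare your argument against. Your sketch is the standard tree-of-spaces/coarse-retraction strategy, and since you yourself defer the only genuinely hard step (the uniform bound on tree-excursions, i.e.\ the flaring/acylindricity estimate) to exactly these references, your treatment is in substance the same as the paper's, namely a citation of Mitra; the surrounding reduction you describe is plausible and consistent with the Bass--Serre formalism of \S\ref{sec:BStree}.
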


\subsection{Boundary of an amalgam} \label{sec:boundary}

Our next goal is to describe the Gromov boundaries of the  amalgamated free products $\G=\G_A\star_\H \G_B$ and 
HNN extensions $\G = \P\star_{\phi: \H_-\to \H_+}$ under certain extra assumptions. We will be assuming that the groups  $\G_A$, $\G_B$, and $\P$ are hyperbolic, $\H$ is weakly malnormal and quasiconvex in $\G_A, \G_B$ (in the amalgamated free product case) and,  $\H_\pm$ are weakly malnormal and quasiconvex in $\P$ and the intersection $\H_-\cap \p\H_+\p^{-1}$ is finite, for all $\p\in\P$ (in the HNN extension case). 
Under these assumptions, $\G$ is hyperbolic (see \Cref{thm:Bestvina-Feign}).
Moreover, in the case of HNN extensions, we let $f\in \G$ denote the {\em stable letter}, the element corresponding to the subgroup isomorphism $\phi: \H_-\to \H_+$:
$$
f \eta f^{-1}= \phi(\eta), \quad \eta\in \H_-. 
$$

Our description of the boundary follows \cite[7.3]{Kapovich-Sardar}, to which we refer the reader for details and proofs. We will describe the boundary (mainly) in the case of amalgamated free products since the HNN extension case is similar.

\medskip
Let $T$ denote the Bass-Serre tree associated with the  amalgamated free product $\G=\G_A\star_\H \G_B$ (or the HNN extension); see \S\ref{sec:BStree}. The group 
$\G$ acts on $T$ with vertex-stabilizers (the {\em vertex-subgroups} $\G_v$) that are conjugates of $\G_A, \G_B$ (or $\P$ in the HNN extension case) and edge-stabilizers ({\em edge-subgroups} $\G_e$) which are conjugates of $\H$.

Define a {\em tree of topological spaces} as follows:
To each  $v\in V(T), e\in E(T)$, we associate the Gromov boundary $\geo \G_v$, $\geo \G_e$. Whenever $v$ is a vertex of an edge $e$, we have the inclusion homomorphism $\G_e\to \G_v$, which induces a topological embedding 
$f_{ev}: \geo \G_e\to \geo \G_v$. This data yields a tree of topological spaces. The tree gives rise to a topological space 
$\vi\G$, 
the {\em topological realization} of the tree of topological spaces, 
by taking the push-out of the maps $f_{ev}$: The topological space $\vi\G$ is a union of Gromov boundaries  $\geo \G_v$, $v\in V(T)$. More precisely, it is the quotient of the disjoint union of these boundaries by the equivalence relation defined as follows. For every edge $e=[v,w]$ of $T$, we have $\xi\in \geo \G_v$ is equivalent to $\eta\in \geo \G_w$ whenever there exists $\zeta\in \geo \G_e$ such that $f_{ev}(\zeta)=\xi, f_{ew}(\zeta)=\eta$. The group $\G$ acts on $\vi\G$ via the projection of the natural $\G$-action on 
$$
\coprod_{v\in V(T)} \geo \G_v. 
$$
In particular,  $\vi\G$ is either the union of the $\G$-orbits of $\geo \G_A\cup \geo \G_B$ (in the amalgamated free product case) or $\geo \P$ (in the HNN extension case). 

The weak malnormality assumption on the amalgam implies that whenever $e\ne e'$ are distinct edges of $T$, $\geo \G_e\cap \geo \G_{e'}=\emptyset$ in $\vi\G$.
 Accordingly, whenever the distance between vertices $v, w$ is $>1$, 
\begin{equation}\label{boundary_disjoint}
 \geo \G_v\cap \geo \G_w=\emptyset. 
\end{equation}
Moreover, the weak malnormality assumption also implies that all vertex-subgroups $\G_v$ and edge-subgroups $\G_e$ are quasiconvex in $\G$. Hence, one obtains a natural $\G$-equivariant inclusion map  
$\vi\G\to \geo \G$. It turns out that this map is injective and continuous. However, in general, this map need not be a topological embedding. In what follows, we will identify $\vi\G$ with its image in $\geo \G$. 

The Gromov boundary $\geo\G$ of $\G$ is the disjoint union $\G$-invariant subsets
\[
 \geo\G = \vi\G \sqcup \vii\G,
\]
where $\vii\G \coloneqq \geo\G \setminus\vi\G$.
Elements of $\vi\G$, resp. $\vii\G$, are called {\em type I}, resp. {\em type II}, (ideal) boundary points of $\G$.

The second part of the boundary, $\vii\G$, of $\geo \G$ admits a $\G$-equivariant continuous bijection to $\geo T$. (For instance, 
$\geo \<f\>$ is the 2-point subset of $\vii\G$ corresponding to the fixed-point set of $f$ in $\geo T$.)  

\begin{proposition}\label{thm:alternating_one}
For every point $\e\in\vii \G$, there exists an alternating sequence $(\omega_n)$ in $\G$ converging (in $\bG$) to $\e$ such that the following holds:
If a sequence $(\g_n)$ in $\G$ converges to $\e$, then there exists 
a function $F: \N \to \N$ diverging to infinity and $n_1\in\N$ such that for all integers $n\ge n_1$, there exists a normal form of $\g_n$ containing $\omega_{F(n)}$ as a left subword.
\end{proposition}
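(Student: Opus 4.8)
The plan is to work in the Bass–Serre tree $T$ and track how a sequence $(\g_n)$ converging to a type II point $\e$ behaves under the projection $\G \to T$. Since $\e \in \vii\G$ corresponds (via the $\G$-equivariant continuous bijection $\vii\G \to \geo T$) to an ideal point $\bar\e \in \geo T$, fix a geodesic ray $r \colon \N \to T$ from the base vertex $v_0 = \G_A$ (in the amalgamated case; $v_0 = \P$ in the HNN case) with $r(k) \to \bar\e$. Reading off the vertex- and edge-labels along $r$ produces a canonical infinite string of letters: each consecutive pair of vertices $r(k-1), r(k)$ is joined by an edge whose label determines a coset, and choosing coset representatives in alternating fashion yields an alternating sequence $(\omega_k)$ in $\G$ with $\omega_k \to \e$ in $\bG$ — this is the candidate sequence. (That $\omega_k \to \e$ follows because the projections of the $\omega_k$ to $T$ march out along $r$ toward $\bar\e$, and one invokes the standard fact that divergence in $T$ along a ray asymptotic to $\bar\e$, together with boundedness of the ``vertical'' displacement, forces $\toC$-convergence to the corresponding type II point; cf. \Cref{lem:ft}.)

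Next, suppose $(\g_n)$ is an arbitrary sequence in $\G$ with $\g_n \toC \e$. I would first argue that the projections $\pi(\g_n) \in V(T)$, or rather the geodesic segments $[v_0, \g_n v_0]$ in $T$, fellow-travel the ray $r$ for longer and longer initial portions: since $\g_n \to \e$ and the map $\vii\G \to \geo T$ is continuous and equivariant, the images of $\g_n$ in $T$ converge to $\bar\e$, so for any $K$ there is $n_K$ such that for $n \ge n_K$ the segment $[v_0, \g_n v_0]$ agrees with $r$ on $[0,K]$ (trees have no fellow-traveling slack). Define $F(n)$ to be the largest such $K$ for the given $n$; then $F(n) \to \infty$. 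The point $r(0), \dots, r(F(n))$ lying on $[v_0, \g_n v_0]$ means, in Bass–Serre terms, that $\g_n$ admits a normal form whose first $F(n)$ ``syllables'' trace out exactly the same edge-path as $\omega_{F(n)}$ — i.e., up to the edge-group ambiguity in normal forms (the moves $\g_i\g_{i+1} \mapsto (\g_i\eta)(\eta^{-1}\g_{i+1})$, $\eta \in \H$), one can choose the normal form of $\g_n$ so that $\omega_{F(n)}$ is literally a left subword. This is where the discussion of normal forms and the Normal Form Theorem / Britton's Lemma in \S\ref{sec:reduced} is used: the edge-path in $T$ from $v_0$ determines the left portion of a normal form up to the edge-group moves, and one uses that freedom to sync the two words.

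The main obstacle I expect is the precise passage from ``$[v_0, \g_n v_0]$ shares an initial segment with $r$ in $T$'' to ``some normal form of $\g_n$ has $\omega_{F(n)}$ as a genuine left subword.'' The subtlety is that a single element $\g_n$ has many normal forms differing by edge-group insertions, and one must show the shared-initial-segment condition in $T$ pins down the left part of *some* choice, not merely up to a uniformly bounded error. I would handle this by a careful bookkeeping argument: moving along the shared edges $e_1 = r(0)r(1), \dots, e_{F(n)} = r(F(n)-1)r(F(n))$, at the $i$-th step both $\omega_{F(n)}$ and (some normal form of) $\g_n$ cross $e_i$, so their partial products differ by an element of the edge-stabilizer of $e_i$; absorbing that element into the next syllable (legitimately, since it lies in the relevant conjugate of $\H$) and proceeding inductively produces the desired synchronized normal form of $\g_n$. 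A secondary technical point, which I would address when defining $F$, is ensuring $F(n) \to \infty$ rather than merely being unbounded: this needs that *every* subsequential limit of the $T$-images of $\g_n$ is $\bar\e$, which follows from $\g_n \toC \e$ plus injectivity/continuity of $\vii\G \to \geo T$, together with the disjointness statement \eqref{boundary_disjoint} to rule out the $T$-images accumulating at a finite vertex.
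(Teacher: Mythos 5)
Your overall strategy (work in the Bass--Serre tree, let $F(n)$ measure how long $[v_0,\g_n v_0]$ agrees with the ray $r$, then synchronize normal forms) is reasonable, and the synchronization step you worry about is in fact the easy part: if the edge-path of a normal form of $\g_n$ passes through $r(0),\dots,r(F(n))$, the partial products differ from those of $\omega_{F(n)}$ by elements of $\G_A\cap\G_B=\H$ (resp.\ the relevant edge groups), and your inductive absorption is exactly a sequence of legal $\H$-moves. The genuine gap is the bridge you treat as automatic: from $\g_n\toC\e$ in $\bG$ you conclude that the tree-projections $\g_n v_0$ converge to $\bar\e$ in $T\cup\geo T$, ``since the map $\vii\G\to\geo T$ is continuous and equivariant.'' That map is defined only on boundary points; its continuity says nothing about how a sequence of group elements converging to $\e$ in the Cayley-graph compactification projects to $T$. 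What you actually need is that the orbit map $\G\to T$ extends continuously at type II points, equivalently a coarse comparison between the word metric on $\G$ and the tree (e.g.\ that edge-group cosets coarsely separate $\G$ according to the halftrees they bound). This is precisely where the quasiconvexity hypotheses must enter, and it is the content the paper supplies through its normal-form machinery (\Cref{prop:good_form}, \Cref{prop:reln}): without it, your definition of $F(n)$, and the claim $F(n)\to\infty$, are unsupported. The same gap appears at the very start: that your tree-constructed alternating sequence $(\omega_k)$ converges to $\e$ in $\bG$ is not a ``standard fact'' following from bounded vertical displacement (the letters read off the ray need not be bounded); it again requires comparing tree-convergence with Cayley-graph convergence.

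For contrast, the paper sidesteps both issues by working directly in the Cayley graph with the adapted generating set (\Cref{rem:ddgdt}): it extracts the alternating sequence $(\omega_k)$ from a uniform quasigeodesic ray $c$ asymptotic to $\e$, so $\omega_k\toC\e$ is immediate; then, given $\g_n\toC\e$, it chooses quasigeodesics from $1_\G$ to $\g_n$ agreeing with $c$ on longer and longer initial segments, reads off $D$-normal forms along them (\Cref{prop:good_form}), and uses \Cref{prop:reln} (elements whose normal forms have relative length $\ge 3$ have uniformly bounded projections to the vertex groups) to force the leftmost letters of these normal forms to eventually agree with those of $\omega_k$, concluding by induction. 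If you want to keep your tree-based route, you must prove the missing comparison statement (essentially: if $\g_n\toC\e\in\vii\G$, then the Gromov products $(\g_n v_0,\bar\e)_{v_0}$ in $T$ tend to infinity), and that proof will need the quasiconvexity of the edge groups, e.g.\ via \Cref{cor:proj_fellow_travel} and \eqref{boundary_disjoint}, in much the same way the paper uses them.
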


We prove this result in \S\ref{sec:proofofaltthm}.

\begin{proposition}\label{prop:reln}
Fix  a word metric $d_\G$ on $\G$.
 For all $\omega\in\G$ satisfying $\rl(\omega)\ge 3$, there exists a constant $D = D(\omega)\ge0$ such that the following holds:
 If $\g\in\G$ is any element such that some normal form of $\g$ contains some normal form of $\omega$ as a left subword, then
\begin{enumerate}[(i)]
 \item in the case $\G = \G_A\star_\H\G_B$, we have $d_\G(\pr_{\G_A}(\g),1_\G) \le D$ and $d_\G(\pr_{\G_B}(\g),1_\G) \le D$.
 \item in the case $\G = \Pf$, we have $d_\G(\pr_\P(\g),1_\G) \le D$.
\end{enumerate}
\end{proposition}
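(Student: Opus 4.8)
\emph{Strategy.} I will prove the case $\G=\G_A\star_\H\G_B$ in detail; the HNN case is parallel and is indicated at the end. By \Cref{thm:Bestvina-Feign} and \Cref{thm:mj}, $\G$ is hyperbolic and $\G_A$, $\G_B$, and all edge subgroups of the Bass--Serre tree $T$ are quasiconvex in $\G$, so $\G$ carries the tree-of-spaces geometry reviewed in \S\ref{sec:boundary}. For $a,b\in\G$ write $a\doteq b$ to mean that $d_\G(a,b)$ is bounded by a constant depending only on $\G$ and $d_\G$. I will use two coarse properties, both standard consequences of weak malnormality: \textbf{(a)} if $\g\in\G$, $Y\subset\G$ is quasiconvex, and an edge space $\G_e$ separates $\g$ from $Y$ in the tree of spaces, then $\pr_Y(\g)\doteq\pr_Y(\pr_{\G_e}(\g))$; in particular $\pr_{\G_v}(\g)\doteq\pr_{\G_e}(\g)$ when $e$ is the first edge of the $T$-geodesic from the vertex $v$ towards $\g$. \textbf{(b)} For distinct edges $e\ne e'$ of $T$, the set $\pr_{\G_e}(\G_{e'})\subset\G$ has finite diameter; equivalently, $\pr_\H(g\H)$ is a bounded subset of $\G$ for every $g\in(\G_A\cup\G_B)\setminus\H$.

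\emph{Locating the first two edge spaces.} Fix a normal form $\omega=\omega_1\omega_2\cdots\omega_k$ with $k=\rl(\omega)\ge 3$; by symmetry assume $\omega_1\in\G_A\setminus\H$, so $\omega_2\in\G_B\setminus\H$. Let $\g\in\G$ be such that some normal form of $\g$ has a normal form $\nu_1\cdots\nu_k$ of $\omega$ as a left subword; say $\g=\nu_1\cdots\nu_l$ with $l\ge k$. The key observation is that the structure of normal forms forces $\nu_1\in\omega_1\H$ and $\nu_1\nu_2\in\omega_1\omega_2\H$: two normal forms of $\omega$ differ by moves changing the $j$-th letter only by left/right multiplication by elements of $\H$ and changing each proper prefix only by a right factor in $\H$, so the first letter of every normal form of $\omega$ lies in $\omega_1\H$ and the product of its first two letters lies in $\omega_1\omega_2\H$. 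Hence, in $T$, the geodesic from the base vertex $\G_A$ towards the vertex space containing $\g$ has first edge with edge space $E:=\nu_1\H=\omega_1\H$ and second edge with edge space $E':=\nu_1\nu_2\H=\omega_1\omega_2\H$; both are determined by $\omega$ alone, and they are distinct (the first joins $\G_A$ to $\omega_1\G_B$, the second joins $\omega_1\G_B$ to $\omega_1\omega_2\G_A$).

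\emph{The estimate.} Now apply (a) and (b). Since $E$ is the first edge of the $T$-geodesic from $\G_A$ towards $\g$, $\pr_{\G_A}(\g)\doteq\pr_E(\g)$; since $E'$ separates $\g$ from $E$, $\pr_E(\g)\doteq\pr_E(\pr_{E'}(\g))$. As $\pr_{E'}(\g)\in E'=\omega_1\omega_2\H$, this point lies in $\pr_E(E')=\pr_{\omega_1\H}(\omega_1\omega_2\H)=\omega_1\,\pr_\H(\omega_2\H)$, a subset of $\G$ of finite diameter by (b) (here $\omega_2\notin\H$). Therefore $d_\G(\pr_{\G_A}(\g),1_\G)$ is bounded in terms of $|\omega_1|$ and $\diam_\G(\{1_\G\}\cup\pr_\H(\omega_2\H))$ --- a constant $D(\omega)$. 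For $\pr_{\G_B}(\g)$: the normal form $\nu_1\cdots\nu_l$ begins in $\G_A\setminus\H$, so the edge $\H$ (between $\G_A$ and $\G_B$) is the first edge of the $T$-geodesic from $\G_B$ towards $\g$ and is separated from $\g$ by $\G_A$; by (a), $\pr_{\G_B}(\g)\doteq\pr_\H(\g)\doteq\pr_\H(\pr_{\G_A}(\g))$, and since nearest-point projection to the quasiconvex set $\H$ is coarsely $1$-Lipschitz, $d_\G(\pr_{\G_B}(\g),1_\G)\le d_\G(\pr_{\G_A}(\g),1_\G)+O(1)\le D(\omega)+O(1)$. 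Enlarging $D(\omega)$ gives both bounds; the case $\omega_1\in\G_B\setminus\H$ is symmetric. The HNN case runs the same way: for a normal form $\omega=\p_0 f^{\epsilon_1}\p_1\cdots f^{\epsilon_m}\p_m$ with $m=\rl(\omega)\ge 3$ (and $\H_{+1}:=\H_+$, $\H_{-1}:=\H_-$), the moves relating normal forms of $\omega$ change $\p_0$ only by a right factor in $\H_{\epsilon_1}$ and keep $\p_0 f^{\epsilon_1}\p_1$ fixed up to a right factor in $\H_{\epsilon_2}$, so the first two edges of the $T$-geodesic from $\P$ towards $\g$ have edge spaces $E:=\p_0\H_{\epsilon_1}$ and $E':=\p_0 f^{\epsilon_1}\p_1\H_{\epsilon_2}$, determined by $\omega$ and distinct; then $\pr_\P(\g)\doteq\pr_E(\pr_{E'}(\g))\in\pr_E(E')$, bounded by (b) --- which here uses the weak-malnormality hypotheses of the HNN setting, in particular that $\H_-\cap\p\H_+\p^{-1}$ is finite for all $\p\in\P$.

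\emph{Main obstacle.} The delicate point is securing (a) and (b) in exactly the form used: that nearest-point projection onto a vertex or edge space coarsely factors through each separating edge space, and that $\pr_\H(g\H)$ is bounded for $g\notin\H$. Both are standard for quasiconvex, weakly-malnormal amalgams and should be quotable (see \cite{Bestvina-Feighn}, \cite{Mitra1998}, \cite{Kapovich-Sardar}; for (b) also the theory of quasiconvex almost-malnormal subgroups). Failing a verbatim reference, (a) follows from $2\delta$-thinness of triangles together with quasiconvexity of the edge subgroups (each edge space is a coarse cut set of $\G$), and (b) from the standard argument promoting a long coarse coincidence of $\H$ and $g\H$ to infinitely many elements of a single conjugate $z\H z^{-1}\cap\H$ with $z\notin\H$, contradicting weak malnormality.
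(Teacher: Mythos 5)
Your proposal is essentially correct, but it reaches the bound by a different mechanism than the paper. The paper fixes a $d$-geodesic $c$ from $\G_*$ to $\g$ (with $d$ the adapted word metric), shows via the adjacency claim that the sets $\bar c(k)=\{c(k)\G_A,c(k)\G_B\}$ sweep out a connected subtree of $T$, concludes that $c$ must pass through the single vertex coset $v_\omega$ determined by $\omega$, and then bounds $\pr_{\G_*}(v_\omega)$ using $d_T(v_\omega,\G_*)\ge 2$ (from $\rl(\omega)\ge 3$ via \Cref{lem:reln}), the disjointness \eqref{boundary_disjoint}, and \Cref{cor:proj_fellow_travel}. You instead pin down the first two edge cosets $E=\omega_1\H$ and $E'=\omega_1\omega_2\H$ (correctly observing that they are independent of the chosen normal forms, since prefixes of normal forms are well defined up to right multiplication by $\H$, resp.\ $\H_{\epsilon_i}$ in the HNN case), and chain two coarse projection facts: (a) projections factor, up to bounded error, through a separating edge coset, and (b) distinct edge cosets have bounded projections to one another. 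Your (b) is fine: it is exactly what \eqref{boundary_disjoint} together with \Cref{cor:proj_fellow_travel} gives, using the weak malnormality supplied by \Cref{cor:malnormal} and \Cref{cor:malnormalHNN}, and the bound is allowed to depend on $\omega$ anyway.

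The one weak joint is your fallback justification of (a). The statement is true, but it does not follow from ``$2\delta$-thinness plus quasiconvexity of the edge subgroups'' alone; the assertion in parentheses, that each edge coset is a coarse cut set separating the two sides of the corresponding edge of $T$, is precisely the nontrivial input, and it is proved by the same combinatorial device the paper uses: in the adapted Cayley graph each generator moves an element within a single $\G_A$- or $\G_B$-coset (resp.\ $\P$-coset or an $f^{\pm1}$-step), so the shadow of any path in $T$ moves across adjacent cells and must cross the coset labelling any separating edge; only after that does hyperbolicity (via \Cref{lem:proj_fellow_travel} and coarse uniqueness of almost-nearest points) upgrade ``the path meets $E$'' to ``$\pr_Y(\g)\doteq\pr_Y(\pr_E(\g))$''. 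If you quote (a) from a source such as \cite{Kapovich-Sardar} or supply this Bass--Serre argument, your proof is complete; note also that for the $\pr_{\G_B}$ bound you invoke (a) with the vertex space $\G_A$ as the separator, which is not the version you stated -- it is cleaner to apply (a) once more with the edge coset $E$ separating $\g$ from $\G_B$ (or from $\H$). In the end the two arguments use the same two ingredients (the tree-shadow/cut-set argument and bounded projections coming from disjoint limit sets); the paper applies them directly to the vertex coset $v_\omega$, which keeps the proof self-contained, while your version packages them as reusable projection lemmas, at the cost that those lemmas still need the same combinatorial proof if no verbatim reference is found.
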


Using this result, it can be shown that alternating sequences cannot have type I accumulation points in the boundary of $\G$.
See \S\ref{sec:prop:reln} for a proof of \Cref{prop:reln}.

\begin{notation}\label{rem:ddgdt}
We set up some notation for the rest of this section: We let $d_\G$ denote an arbitrary word metric on $\G$. Moreover, we reserve the notation $d$ to denote a word metric on $\G$ induced by a finite symmetric generating set $S$ of $\G$, where, (i) in the case of $\G = \G_A\star_\H\G_B$, $S$ is the union of some chosen finite symmetric generating sets of $\G_A$ and $\G_B$, and (ii) in the case of $\G = \Pf$, $S$ is the union of some chosen finite symmetric generating set of $\P$ and $\{f,f^{-1}\}$.
Recall that the identity map $(\G,d)\to(\G,d_\G)$ is a quasiisometry.
Finally, we reserve the notation $d_T$ to denote the distance function on the corresponding Bass-Serre tree $T$ induced by declaring that all the edges of $T$ are of unit length.
\end{notation}

\subsection{Proof of \Cref*{thm:alternating_one}}\label{sec:proofofaltthm}
The following result demonstrates the existence of an alternating sequence $(\omega_k)$ converging to $\e\in\vii\G$ in the statement of \Cref{thm:alternating_one}.

\begin{lemma}\label{prop:Dalternating}
There exists $D_0\ge0$ such that for every $\e\in\vii\G$, there exists a {\em $D_0$-alternating} sequence $(\omega_n)$ in $(\G,d_\G)$ converging to $\e$.
\end{lemma}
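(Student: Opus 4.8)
The plan is to exploit the action of $\G$ on the Bass--Serre tree $T$ together with the identification of $\vii\G$ with $\geo T$ (via a $\G$-equivariant continuous bijection). Fix the word metric $d$ as in \Cref{rem:ddgdt} and let $v_0\in V(T)$ be the base vertex stabilized by $\G_A$ (resp. $\P$). For $\e\in\vii\G$ the image point $\bar\e\in\geo T$ is the endpoint of a geodesic ray $r:\N\to T$ based at $v_0$. Since $\G$ acts cocompactly on $T$ with finitely many orbits of edges, and since the identity map $(\G,d)\to(\G,d_\G)$ is a quasiisometry, it suffices to construct the alternating sequence with respect to $d$ (and then the constant $D_0$ for $d_\G$ follows by the quasiisometry, absorbing the multiplicative and additive constants). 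So first I would reduce to working with $d$ and with the combinatorial geodesic ray $r$ in $T$.

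Next I would read off, step by step along $r$, an alternating word. In the amalgamated product case, the edges $[r(n),r(n+1)]$ of the ray are labeled by cosets $g_n\H$ with $g_n\in\G$, and consecutive vertices alternate colors; choosing coset representatives one obtains an expression $\omega_n = \g_1\g_2\cdots\g_n$ with $\g_i$ alternately in $\G_A\setminus\H$ and $\G_B\setminus\H$ (the letters can be taken outside $\H$ precisely because $r$ is a geodesic in $T$, so it does not backtrack — a backtrack would correspond to a letter in $\H$ that could be absorbed). This is exactly a type A or type B alternating sequence in the sense of \Cref{def:alternating}. In the HNN case one does the analogous thing, reading off letters $f^{\pm1}$ from the edges and elements $\p_i\in\P$ from the vertices, where the normal-form sign conditions \eqref{eqn:alt_HNNsign} are equivalent to non-backtracking of $r$; this gives an alternating sequence in the sense of \Cref{defn:alt_HNN}. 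Then $(\omega_n)$ converges in $\bar T$ to $\bar\e$, hence — pushing forward under the equivariant map and using that vertex-subgroups are quasiconvex so that geodesics in $\G$ tracking the tree geodesic converge correctly — $(\omega_n)$ converges in $\bG$ to $\e$.

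The remaining, and I expect the main, point is the \emph{uniformity} of the constant $D_0$: the individual letters $\g_i$ (or $\p_i$) may have arbitrarily large word length, so one cannot naively call $(\omega_n)$ ``$D_0$-alternating'' with $\omega_n=\g_1\cdots\g_n$. The fix is to choose the coset representatives carefully: at each step $i$, after forming the partial product $\omega_{i-1}$, one picks $\g_i$ in the relevant coset $\omega_{i-1}^{-1}\cdot(\text{edge/vertex datum})$ to minimize $|\omega_{i-1}\g_i|$, equivalently one chooses $\omega_i$ to be a \emph{nearest point} to $\omega_{i-1}$ among the lifts of $r(i)$ in $(\G,d)$. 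Because there are only finitely many $\G$-orbits of edges and the edge-subgroups $\G_e$ are quasiconvex, the ``jump'' $d(\omega_{i-1},\omega_i)$ and the ``overlap'' controlling the Gromov product $(\omega_{i-1}^{-1},\omega_i^{-1}\omega_{i+1})_{1_\G}$ are bounded by a constant depending only on the orbit representatives and their quasiconvexity constants — hence by a single $D_0$ independent of $\e$. Making this precise will require one local estimate: that, for a geodesic ray in $T$, the corresponding nearest-point lifts in $\G$ form a uniform quasigeodesic, which follows from the quasiconvexity of vertex- and edge-subgroups in $\G$ (\Cref{thm:mj} and the weak-malnormality hypotheses) via a standard patching argument along the tree, of the type used to prove quasiconvexity of vertex-subgroups. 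This uniform-quasigeodesic property then simultaneously yields convergence to $\e$ and the $D_0$-alternating bound, completing the proof.
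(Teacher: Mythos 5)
Your route through the Bass--Serre tree is genuinely different from the paper's, and as written it has a real gap at its central step. First, the intermediate claim that the ``jump'' $d(\omega_{i-1},\omega_i)$ is uniformly bounded is false: the letters of a normal form have unbounded word length (already for the free product of two infinite cyclic groups), and no choice of coset representatives --- nearest-point or otherwise --- makes consecutive partial products uniformly close to each other. What you actually need is not bounded jumps but the assertion that the breakpoints $\omega_i$ stay uniformly close to every geodesic ray from $1_\G$ asymptotic to $\e$, i.e.\ that the greedy nearest-point lifts of the tree ray form a uniform quasigeodesic sequence with constants independent of $\e$. That assertion is essentially the whole content of the lemma, and you delegate it to ``a standard patching argument''; it is not standard in the form you need (errors from the greedy choices can a priori accumulate, and controlling them is exactly where quasiconvexity of the edge cosets, bounded coset overlaps, and some flaring-type bookkeeping would have to enter). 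Second, even granting that $(\omega_n)$ converges in $\bG$, you still must identify the limit with $\e$: the map from $\vii\G$ to $\geo T$ is only known to be a continuous bijection, not a homeomorphism, so knowing that the tree-shadow of $(\omega_n)$ tends to the end $\bar\e$ does not by itself identify the limit point in $\geo\G$; one needs an argument inside $\G$ itself (of the type of \Cref{prop:reln}) to exclude type I accumulation points and pin down the end.

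For comparison, the paper's proof avoids the tree entirely: take a uniform quasigeodesic ray $c$ in $(\G,d)$ from $1_\G$ asymptotic to $\e$, written in the adapted generating set of \Cref{rem:ddgdt}, and set $\omega_k=c(i_k)$, where $i_1<i_2<\cdots$ are chosen greedily so that each $c(i_{k-1})^{-1}c(i_k)$ is a \emph{maximal} prefix lying in $\G_A\cup\G_B$ (resp.\ in $\P$ or $\{f,f^{-1}\}$ in the HNN case). Maximality forces the letters to avoid $\H$ (resp.\ forces the sign conditions \eqref{eqn:alt_HNNsign}), so $(\omega_k)$ is alternating; and since the $\omega_k$ are literally points on $c$, both the convergence $\omega_k\toC\e$ and the uniform constant $D_0$ are immediate, the Morse lemma handling the passage to an arbitrary word metric $d_\G$. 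If you want to salvage your approach, you would in effect have to prove the uniform-quasigeodesic lifting statement for tree geodesics, which is more work than the lemma itself.
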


In the statement above, ``{$D_0$-alternating}'' means that $(\omega_n)$ is alternating and  lies within distance $D_0$ from any geodesic ray in $(\G,d_\G)$ emanating from $1_\G$ asymptotic to $\e$.

In the proof of \Cref{prop:Dalternating}, we work with the word-metric $d$; see \Cref{rem:ddgdt}.
The general case, i.e., when $d_\G$ is an arbitrary word metric, would then follow by applying the Morse lemma.

\begin{proof}[Proof of \Cref*{prop:Dalternating} in the case of amalgamated free products]
Let us consider a uniform quasigeodesic $c:\N\cup\{0\} \to (\G,d)$ emanating from $c(0)= 1_\G$ asymptotic to $\e$; such a ray is described by a sequence $(s_i)$ in $S$ such that for all $k\in\N$,
\[
 c(k) = s_1\cdots s_k.
\]
Note that the sequence $(c(k))$ cannot entirely lie in $\G_A\cup\G_B$ since, otherwise, the sequence $(c(k))$ would converge to a type I ideal boundary point.
Let $i_1$ be the largest number such that $c(i_1)$ lies in $\G_A\cup \G_B$.
For the same reason as above, the sequence $(c(i_1)^{-1} c(k))_{k\in\N}$ cannot lie in $\G_A\cup\G_B$.
Thus, let $i_2>i_1$ be the largest number such that $c(i_1)^{-1}c(i_2)$ lies in $\G_A\cup \G_B$.
Similarly, let $i_3>i_2$ be the largest number such that $c(i_2)^{-1}c(i_3)$ lies in $\G_A\cup\G_B$.
Proceeding inductively, we find a sequence $(c(i_{k})^{-1}c(i_{k+1}))_k$ which alternates between $\G_A$ and $\G_B$; the elements of that sequence are the letters for our alternating sequence $(\omega_k)$:
\[
 \omega_k = c(i_1) [c(i_1)^{-1}c(i_2)] \cdots [c(i_{k-1})^{-1}c(i_k)] = c(i_k).
\]
Since the sequence $(\omega_k)$ lies in the quasigeodesic ray $c$, it converges to $\e$. \end{proof}

\begin{proof}[Proof of \Cref*{prop:Dalternating} in the case of HNN extensions]
For $\e\in\vii\G$, pick any uniform quasigeodesic ray $c: \N\cup\{0\} \to \G$ that emanates  from $c(0)= 1_\G$ and is asymptotic to $\e$. Such a ray is described by a sequence $(s_i)$ in $S$ such that for all $k\in\N$,
\[
 c(k) = s_1\cdots s_k.
\]
We find an infinite  string
\begin{equation}\label{eqn:altstringone}
  \hat{\rm S} : \quad\p_0, f^{\epsilon_1}, \p_1, f^{\epsilon_1}, \p_2, \dots,
\end{equation}
which has the property that $\p_i\in\P$, $\epsilon_i=\pm1$, and, for every $n$, \begin{equation}\label{eqn:altstringtwo}
\rl(\p_0 f^{\epsilon_1}\p_1\cdots  f^{\epsilon_{n-1}}\p_{n-1}f^{\epsilon_n} ) = n.
\end{equation}
Let $i_0\in\N\cup\{0\}$ be the largest number such that $c(i_0)\in\P$; set $\p_0 = c(i_0)$.
Let $i_1\ge i_0$ be the largest number such that $c(i_0)^{-1}c(i_1)\in\{f,f^{-1}\}$; define $\epsilon_1$ in the  obvious way.
Let $i_2\ge i_1$ to be  the largest number such that $c(i_1)^{-1}c(i_2)\in\P$; set $\p_1 = c(i_1)^{-1}c(i_2)$.
Let $i_3\ge i_2$ be the largest number such that $c(i_2)^{-1}c(i_2)\in\{f,f^{-1}\}$; define $\epsilon_2$ accordingly.
We proceed inductively to obtain the above string.
It is a straightforward check that $\rl(\p_0 f^{\epsilon_1}\p_1\cdots  f^{\epsilon_{n-1}}\p_{n-1}f^{\epsilon_n} ) = n.$

We observe that if $\epsilon_i =-1$ and $\p_i\in\H_+$ (resp. $\epsilon_i=1$ and $\p_{i+1}\in\H_-$), then
\eqref{eqn:altstringtwo} forces $\epsilon_i = -1$ (resp. $\epsilon_i = 1$).
Thus, $(\omega_k)$, where $\omega_k \coloneqq \p_0 f^{\epsilon_1}\p_1\cdots  f^{\epsilon_{n-1}}\p_{n-1}f^{\epsilon_n} $, is an alternating sequence.
Since the sequence $(\omega_k)$ lies in the quasigeodesic ray $c$, it converges to $\e$.
\end{proof}

\medskip
The same construction used to prove \Cref{prop:Dalternating} can be applied to ``uniform quasigeodesic segments'' in $\G$ to show the following result:

\begin{definition}
For $D\ge 0$, a {\em $D$-normal form} of $\g\in\G$ satisfying $\rl(\g)\ge 1$ is a normal form of $\g$ such that all left subwords in that normal form lie in a $D$-neighborhood of any geodesic segment in $(\G,d_\G)$ connecting $1_\G$ and $\g$.
\end{definition}

\begin{lemma}\label{prop:good_form}
 There exists $D_0\ge 0$ such that every element $\g\in\G$ satisfying $\rl(\g)\ge 1$
 has a $D_0$-normal form.
\end{lemma}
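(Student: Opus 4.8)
The plan is to dispose first of the trivial case $\rl(\g)=1$: there the unique left subword of the one-letter normal form of $\g$ is $\g$ itself, which lies on every geodesic from $1_\G$ to $\g$, so any $D_0\ge 0$ works. For $\rl(\g)\ge 2$ I would run the index-extraction argument from the proof of \Cref{prop:Dalternating}, but applied to a \emph{geodesic segment} of $(\G,d)$ instead of a quasigeodesic ray; here $d$ is the preferred word metric of \Cref{rem:ddgdt}. Since the identity map $(\G,d)\to(\G,d_\G)$ is a quasiisometry with constants independent of $\g$, it suffices to produce, for each such $\g$, a normal form all of whose left subwords lie on a fixed $d$-geodesic from $1_\G$ to $\g$; the Morse lemma then converts this into the required uniform bound for an arbitrary word metric $d_\G$.

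Concretely, in the amalgamated free product case I would write $\g$ as a $d$-geodesic word $s_1\cdots s_N$ (so each $s_i\in\G_A\cup\G_B$), put $c(k)=s_1\cdots s_k$, set $i_0=0$, let $i_1$ be the largest index with $c(i_1)\in\G_A\cup\G_B$, and inductively, while $i_k<N$, let $i_{k+1}$ be the largest index with $c(i_k)^{-1}c(i_{k+1})\in\G_A\cup\G_B$. Because $c(i_k)^{-1}c(i_k+1)=s_{i_k+1}\in\G_A\cup\G_B$, the indices strictly increase and the process terminates at $i_l=N$; putting $\g_k=c(i_{k-1})^{-1}c(i_k)$ gives $\g=\g_1\cdots\g_l$ with each $\g_k\in\G_A\cup\G_B$ and every left subword $\g_1\cdots\g_k=c(i_k)$ lying on $c$. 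For the HNN extension I would instead take a $d$-geodesic word for $\g$ (so each letter lies in $\P\cup\{f^{\pm1}\}$) and extract maximal $\P$-blocks and maximal $\{f^{\pm1}\}$-blocks exactly as in the HNN part of the proof of \Cref{prop:Dalternating}.

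The step I expect to be the real obstacle is checking that the extracted factorization is a genuine \emph{normal form}; this is where maximality of the chosen indices is essential. In the amalgam case: if two consecutive letters $\g_k,\g_{k+1}$ lay in the same factor, then $c(i_{k-1})^{-1}c(i_{k+1})=\g_k\g_{k+1}$ would still lie in that factor, contradicting the maximality defining $i_k$; and if some $\g_k$ lay in $\H\subset\G_A\cap\G_B$, then one of the products $c(i_2)=\g_1\g_2$, $c(i_{l-2})^{-1}c(i_l)=\g_{l-1}\g_l$, or $c(i_{k-2})^{-1}c(i_{k+1})=\g_{k-1}\g_k\g_{k+1}$ (according to whether $k=1$, $k=l$, or $1<k<l$) would lie in a single factor, again contradicting maximality. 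For the HNN extension the Britton-type conditions are verified the same way — e.g. $\epsilon_i=-1$, $\p_i\in\H_+$, $\epsilon_{i+1}=1$ would give $f^{\epsilon_i}\p_i f^{\epsilon_{i+1}}=\phi^{-1}(\p_i)\in\P$, so the preceding $\P$-block could be lengthened, contradicting its maximality. Once the factorization is known to be a normal form, the distance estimate is immediate: its left subwords lie on the $d$-geodesic $c$, whose image in $(\G,d_\G)$ is a uniform quasigeodesic from $1_\G$ to $\g$, hence lies within a uniform Hausdorff distance $D_0$ of every $d_\G$-geodesic between those points; this $D_0$ is the constant claimed by the lemma.
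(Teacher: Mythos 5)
Your proposal is correct and is essentially the paper's own proof: the paper disposes of the lemma by saying that the construction in the proof of \Cref{prop:Dalternating} applies verbatim to (quasi)geodesic segments, which is exactly your index-extraction on a $d$-geodesic word from $1_\G$ to $\g$ followed by the quasiisometry/Morse-lemma step to pass to an arbitrary word metric $d_\G$. Your maximality check that the extracted factorization is a genuine normal form is the detail the paper leaves implicit, and it is sound --- the only cosmetic remark is that in the case $1<k<l$ with $\g_k\in\H$ the two-letter product $\g_{k-1}\g_k$ (or $\g_k\g_{k+1}$, via your consecutive-letter argument) already contradicts maximality, so the three-letter product $\g_{k-1}\g_k\g_{k+1}$, which need not lie in a single factor when $\g_{k-1}$ and $\g_{k+1}$ lie in different factors, is not needed.
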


Now, we prove \Cref{thm:alternating_one}.

\begin{proof}[Proof of \Cref*{thm:alternating_one}]
We give a proof in the amalgamated free product case; the HNN extension case is similar.

We continue with the proof of \Cref{prop:Dalternating} above (the amalgamated free product case).
Let $(\g_n)$ be any sequence in $\G$ converging to $\e\in\vii\G$.
Since $(\g_n)$ fellow travels the uniform quasigeodesic $c$, we may pick a divergent sequence $(t_n)$ in $\N\cup\{0\}$ and, for each $n$,
 a uniform quasigeodesic segment
\[
 c_n : [0,l_n]\cap \Z \to \G,\quad
 c_n(0) = 1_\G, c_n(l_n) = \g_n,
\]
such that for all $n\in\N$, $c_n\vert_{[0,t_n]\cap\Z} = c\vert_{[0,t_n]\cap\Z}$.
See \Cref{pic4} for an illustration of $c$ and $c_n$ in (the Cayley graph) of $\G$.
We apply the same procedure used in the proof of \Cref{prop:Dalternating}  on $c_n$ to yield 
a $D$-normal form for $\g_n$.

\begin{figure}[h]
\centering
\begin{overpic}[scale=1,tics=8]{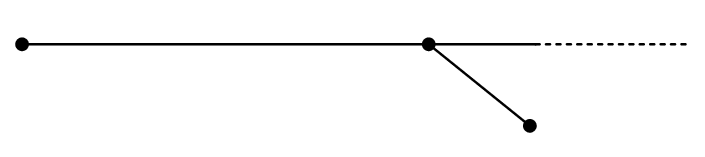}
\put(2,12){$1_\G$}
\put(76.5,3.9){$\g_n = c_n(l_n)$}
\put(99,15.5){$\e$}
\put(56.5,11.8){\small $c(t_n)$}
\put(80,18){$c$}
\end{overpic}
\caption{}
\label{pic4}
\end{figure}

\begin{lemma}
 There exists $n_1\in\N$ such that for all $n\ge n_1$, the prescribed normal form of $\g_n$ contains $\omega_1$ as a leftmost subword.
\end{lemma}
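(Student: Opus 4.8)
The plan is to prove the equivalent statement that, for all large $n$, the first letter of the prescribed normal form of $\g_n$ is precisely $\omega_1 = c(i_1)$; this is what the lemma asserts, since $\rl(\omega_1)=1$, so ``$\omega_1$ as a leftmost subword'' just means ``$\omega_1$ is the first letter''. Recall that this normal form is obtained by running the selection procedure from the proof of \Cref{prop:Dalternating} on the segment $c_n$: if $j_1^{(n)}$ denotes the largest index in $[0,l_n]\cap\Z$ with $c_n(j_1^{(n)})\in\G_A\cup\G_B$, then the first letter is $c_n(j_1^{(n)})$. Since $c_n$ agrees with $c$ on $[0,t_n]\cap\Z$ and $t_n\to\infty$, I would first reduce everything to a \emph{uniform} bound: it suffices to produce $B\ge 0$, independent of $n$, with $j_1^{(n)}\le B$ for every $n$. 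Granting such a $B$, pick $n_1$ with $t_n>\max(B,i_1)$ for $n\ge n_1$; then $c$ and $c_n$ agree at the indices $j_1^{(n)}\le B$ and $i_1$, so $c(j_1^{(n)})=c_n(j_1^{(n)})\in\G_A\cup\G_B$ forces $j_1^{(n)}\le i_1$ by maximality of $i_1$ for $c$, while $c_n(i_1)=c(i_1)\in\G_A\cup\G_B$ forces $j_1^{(n)}\ge i_1$; hence $j_1^{(n)}=i_1$ and the first letter is $c(i_1)=\omega_1$.

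Next I would prove the uniform bound. Fix $n$ and an index $j$ with $c_n(j)\in\G_A$ (the case $c_n(j)\in\G_B$ being symmetric). The sub-path of $c_n$ from $c_n(j)\in\G_A$ to $\g_n=c_n(l_n)$ is a uniform quasigeodesic, so by the Morse Lemma in the hyperbolic group $\G$ it lies within a uniform Hausdorff distance of the geodesic $[c_n(j),\g_n]$. By \Cref{thm:mj} the subgroup $\G_A$ is quasiconvex in $\G$, so \Cref{lem:proj_fellow_travel} applies and shows that $\pr_{\G_A}(\g_n)$ lies within a uniform distance of $[c_n(j),\g_n]$, hence within a uniform distance of $c_n(j'')$ for some index $j''\in[j,l_n]\cap\Z$. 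On the other hand, $\e\in\vii\G$, so $\e$ is not an accumulation point of $\G_A$ in $\bG$ (the accumulation set of $\G_A$ is $\geo\G_A\subset\vi\G$, which is disjoint from $\vii\G$); hence, by \Cref{cor:proj_fellow_travel}, the sequence $(\pr_{\G_A}(\g_n))$ is bounded. Combining, $|c_n(j'')|$ is bounded independently of $n$, and since $c_n$ is a uniform quasigeodesic issuing from $1_\G$, the index $j''$ — and therefore $j\le j''$ — is bounded independently of $n$. Taking the larger of the bounds obtained for $\G_A$ and for $\G_B$ gives the desired $B$.

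The main obstacle is exactly this uniform bound: a priori a quasigeodesic from $1_\G$ to $\g_n$ could re-enter $\G_A$ or $\G_B$ at an index comparable to $l_n$, which would make the selection procedure return a wrong first letter; ruling this out is precisely where the quasiconvexity of the vertex subgroups (\Cref{thm:mj}), the Morse Lemma, and the boundedness of the nearest-point projections of $(\g_n)$ to $\G_A$ and $\G_B$ — forced by $\e$ being of type II — all enter. Finally, I would remark that the HNN case is handled in the same way, with $\G_A\cup\G_B$ replaced throughout by the vertex subgroup $\P<\Pf$ (quasiconvex by \Cref{thm:mj}), with $i_1$ replaced by $i_0$, and with $\omega_1$ replaced by $\p_0 f^{\epsilon_1}$.
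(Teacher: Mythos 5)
Your argument is correct, and it establishes the lemma by a route whose key input differs from the paper's, even though the overall scheme (rule out a ``late'' return of $c_n$ to $\G_A\cup\G_B$, then use $c_n\vert_{[0,t_n]\cap\Z}=c\vert_{[0,t_n]\cap\Z}$ and the maximality of $i_1$) is the same. The paper argues by contradiction: if the first letter of the prescribed form of $\g_{n_i}$ were not $\omega_1$, it would be $c_{n_i}(r_i)$ with $r_i>t_{n_i}$, so the whole initial segment $c_{n_i}([0,r_i]\cap\Z)$ — hence the point $c(t_{n_i})$ — would lie in a uniform neighborhood of $\G_A\cup\G_B$, which is impossible because $\rl(c(t_{n_i}))\to\infty$ (cf. \Cref{prop:reln}); the obstruction is thus located on the shared ray $c$. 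You instead prove the stronger, direct statement that the index of \emph{any} visit of $c_n$ to $\G_A\cup\G_B$ is bounded uniformly in $n$, by pushing the information to the endpoint $\g_n$: a visit at index $j$ places $\pr_{\G_A}(\g_n)$ (or $\pr_{\G_B}(\g_n)$) coarsely on $c_n$ at an index at least $j$ (\Cref{lem:proj_fellow_travel}, the Morse lemma, and quasiconvexity of the vertex groups via \Cref{thm:mj}), while these projections are bounded because $\g_n\toC\e$ with $\e\in\vii\G$, so that by \Cref{lem:ft} the accumulation set of $(\g_n)$ is $\{\e\}$, disjoint from $\geo\G_A\cup\geo\G_B$, and \Cref{cor:proj_fellow_travel} applies. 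So the paper exploits divergence of relative length along $c$ (a Bass--Serre tree fact), whereas you exploit the type II hypothesis on $\e$ directly through projection bounds for $(\g_n)$; both inputs are available in context, both arguments use quasiconvexity of $\G_A,\G_B$ in $\G$ plus Morse, and your version buys a clean uniform bound at the price of invoking the projection lemmas for $(\g_n)$ rather than only properties of $c$. Two minor points: when citing \Cref{cor:proj_fellow_travel} you should state explicitly that the accumulation set of $(\g_n)$ in $\geo\G$ equals $\{\e\}$ (this is what excludes accumulation in $\geo\G_A$); and the parenthetical claim $\rl(\omega_1)=1$ could degenerate if $c(i_1)\in\H$, but this is immaterial since what you actually prove — that the first letter of the prescribed form equals $c(i_1)=\omega_1$ — is exactly what the lemma requires.
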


\begin{proof}
 We argue by contradiction: Suppose that the assertion is false. Then, a divergent sequence $(n_i)$ exists in $\N$ such that for all $i$,
 the leftmost letter $\lambda_i$ of the prescribed normal form of $\g_{n_i}$ is different from $\omega_1$.
 Since $c_{n_i}\vert_{[0,t_{n_i}]\cap\Z} = c\vert_{[0,t_{n_i}]\cap\Z}$, for all $i$ large enough
 it must hold that $\lambda_i = c_{n_i}(r_i)$, where $r_i> t_{n_i}$.
 However, $\lambda_i\in\G_A\cup\G_B$.
 Since $c_{n_i}$ are uniform quasigeodesics with $c_{n_i}(0)=1_\G \in\G_A\cup\G_B$, it follows that $c_{n_i}({[0,r_i]}\cap\Z)$, which contains $c({[0,t_{n_i}]}\cap\Z)$ as a subset, lies in a uniform neighborhood of $\G_A\cup\G_B$.
 However, since $t_{n_i}\to\infty$, $\rl(c(t_{n_i}))$ goes to infinity.
 Thus, $c(t_{n_i})$ cannot stay in a uniform neighborhood of $\G_A\cup\G_B$ (cf. \Cref{prop:reln}), yielding a contradiction.
\end{proof}

We now finish the proof of \Cref{thm:alternating_one} by induction.
Suppose that $c(i_1) = \omega_1$.
Since $c_n\vert_{[0,t_n]\cap\Z} = c\vert_{[0,t_n]\cap\Z}$, by the claim above, it holds that for all sufficiently large $n$, say $n\ge n_1$, $c_n(i_1) = c(i_1) = \omega_1$.
Applying the same argument to the quasigeodesic ray/segments
\[\begin{array}{ll}
 \bar c: [0,\infty)\cap \Z\to \G,&  c(t) = \omega_1^{-1}c(t+i_1),\\
 \bar c_n:[0,l_n-i_1]\cap\Z  \to \G,&  c_n(t) = \omega_1^{-1}c_n(t+i_1),
\end{array}\]
shows that there exists $n_2> n_1$ such that 
for all $n\ge n_2$, the prescribed normal form of $\g_n$ contains $\omega_2$ as a leftmost subword.
Arguing inductively, 
it follows that there exists an increasing sequence $(n_k)$ of natural numbers such that for all $n\ge n_k$, the prescribed normal form of $\g_n$ contains $\omega_k$ as a leftmost subword.
Thus, the desired function $F : \N \to \N$ in \Cref{thm:alternating_one}
can be defined as $F(n) \coloneqq k$, if $n\in[n_k,n_{k+1})$.
\end{proof}

\subsection{Proof of \Cref*{prop:reln}}\label{sec:prop:reln}
Let us first consider the case of amalgamated free products: For $\g\in \G = \G_A\star_\H\G_B$, consider a normal form of $\g$:
\[
 \g = \g_1\g_2\cdots \g_l.
\]
If $\g_1\in\G_A\setminus\H$, then normal form given above yields a finite sequence of points in $T$:
\begin{equation}\label{eqn:relnnormalform}
 \G_B,\quad \G_A,\quad \g_1 \G_B,\quad \g_{1}\g_2 \G_A,\quad \g_{1}\g_2\g_3 \G_B,\quad \dots,\quad \g_1\g_2\cdots \g_l \G_*,
\end{equation}
where $* = A$ if $l$ is even, or $* = B$ if $l$ is odd.
We observe that any two consecutive points in the above are adjacent vertices in $T$ (cf. \S\ref{sec:BStree}), and the sequence does not {\em backtrack}, i.e., for any point in \eqref{eqn:relnnormalform}, the vertices on the left and right to it are different:
For even $i$, let us examine the portion of the path
\[
 \g_{1}\cdots\g_{i-1} \G_B,\quad \g_{1}\cdots\g_i \G_A, \quad \g_{1}\cdots\g_{i+1} \G_B.
\]
Note that  $\g_i\in\G_B\setminus\H$ and $\g_{i+1}\in\G_A\setminus\H$.
Applying $(\g_{1}\cdots\g_{i})^{-1}$ to the above, we obtain
\[
 \G_B = \g_i^{-1}\G_B,\quad \G_A, \quad \g_{i+1} \G_B.
\]
However, since $\g_{i+1}\not\in\G_B$, $\G_B \ne \g_{i+1} \G_B$.
A similar analysis can be done when $i$ is odd.

Therefore, if we connect  each pair of consecutive vertices in \eqref{eqn:relnnormalform} by the unique edge in $T$ determined by the pair, we obtain a geodesic path in $T$.

\begin{lemma}\label{lem:reln}
If $\g \in\G = \G_A\star_\H\G_B$ lies in the coset represented by $v \in V(T)$, then $|d_T(v,\G_A) -\rl(\g)| \le 1$, $|d_T(v,\G_B) -\rl(\g)| \le 1$.
\end{lemma}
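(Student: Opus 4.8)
The plan is to use the geometric picture established just before the lemma: a normal form $\g = \g_1\g_2\cdots\g_l$ of an element $\g$ gives rise to the sequence of vertices \eqref{eqn:relnnormalform} in the Bass--Serre tree $T$, which (after joining consecutive vertices by edges) is a geodesic path in $T$ by the no-backtracking argument given above. First I would observe that $\rl(\g)$ equals the number of letters $l$ in the normal form, and that the path in \eqref{eqn:relnnormalform} starts at $\G_B$ (if $\g_1\in\G_A\setminus\H$) — or at $\G_A$ (if $\g_1\in\G_B\setminus\H$) — and ends at $v = \g\G_*$, where the total number of edges in the path is exactly $\rl(\g)$ when the path starts at $\G_A$ and $\rl(\g)+1$ when it starts at $\G_B$ (with the symmetric statement exchanging $A$ and $B$). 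Hence $d_T(v,\G_A)$ and $d_T(v,\G_B)$ are each within $1$ of $\rl(\g)$.

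The key steps, in order, would be: (1) Handle the base cases $\rl(\g)=0$ and $\rl(\g)=1$ directly: if $\g\in\H$ then $v = \G_A = \G_B$ as vertices need not be distinct, but $d_T(\G_A,\G_A)=0$ and $d_T(\G_A,\G_B)=1$, so the estimate holds; if $\rl(\g)=1$, then $\g\in\G_A\setminus\H$ or $\g\in\G_B\setminus\H$ and one computes $d_T(v,\G_A), d_T(v,\G_B)\in\{0,1,2\}$ as appropriate, again within $1$ of $1$. (2) For $\rl(\g)=l\ge 2$, pick a normal form and form the path \eqref{eqn:relnnormalform}; invoke the no-backtracking computation already carried out in the text to conclude this is a geodesic in $T$ of combinatorial length ($l+1$ or $l$, depending on whether the first letter is in $\G_A$ or $\G_B$) connecting a vertex of the appropriate color to $v$. (3) Read off $d_T(v,\G_A)$ and $d_T(v,\G_B)$: since $\G_A$ and $\G_B$ are adjacent vertices of $T$, and the geodesic from $v$ passes through one of them just before (or ends at one of them), the two distances differ by exactly $1$, and one of them equals the path length; conclude $|d_T(v,\G_A)-\rl(\g)|\le 1$ and $|d_T(v,\G_B)-\rl(\g)|\le 1$. (4) Finally note that the estimate is independent of the choice of representative of the coset $v$: moving $\g$ within $v\G_*$ changes $\g$ by right-multiplication by an element of a vertex group, which by the Normal Form Theorem changes $\rl(\g)$ by at most... — actually, the cleanest route is to fix $\g$ and observe that $\rl$ is already well-defined on $\G$, while $v$ is determined by $\g$ together with a color choice; since the claim is symmetric under the two color choices it suffices to treat one, say $v = \g\G_A$ when $l$ is even, and handle $v=\g\G_B$ analogously.

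I would then also observe that the two inequalities together are equivalent to the single assertion $|d_T(v,\{\G_A,\G_B\})-\rl(\g)|\le 1$ combined with the fact that $\{\G_A,\G_B\}$ is an edge of $T$, which may be a more economical way to phrase the proof.

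The main obstacle I anticipate is purely bookkeeping: carefully tracking the off-by-one coming from whether the path \eqref{eqn:relnnormalform} begins at $\G_A$ or $\G_B$, which vertex $v$ one is measuring to (the coset $\g\G_A$ versus $\g\G_B$, which may or may not coincide as endpoints of the path), and the small cases $l\in\{0,1\}$ where the path degenerates. None of this is deep — the geometric content (that the normal form traces out a non-backtracking, hence geodesic, path in the tree) has already been established in the paragraph preceding the lemma — so the proof should be short, essentially a matter of counting edges in \eqref{eqn:relnnormalform} and using that $T$ is a tree so that geodesics realize distances.
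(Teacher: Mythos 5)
Your proposal is correct and follows essentially the same route as the paper: read off from the normal form the non-backtracking (hence geodesic) vertex sequence \eqref{eqn:relnnormalform} in $T$, note that $v$ is one of its two rightmost vertices, and count edges, with the degenerate case $\g\in\H$ checked directly. One tiny slip: when $\g\in\H$ the vertices $\G_A$ and $\G_B$ are still distinct (they are adjacent), so $v$ is one of the two, but your distance estimates in that case are nevertheless correct.
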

\begin{proof}
This claim is easily checked when $\g\in\H$. So, suppose that $\g\not\in\H$.
Let $\g_1\g_2\cdots \g_l$ be a normal form of $\g$ such that $\g_1\in\G_A\setminus \H$; the case $\g_1\in\G_B\setminus \H$ follows by a similar argument.
 Note that $v$ is one of the two rightmost entries in the sequence \eqref{eqn:relnnormalform}.
 Moreover, the discussion above shows that the sequence \eqref{eqn:relnnormalform} is a geodesic sequence of vertices in $T$.
 Thus, $\rl(\g) \ge d_T(v,\G_A) \ge\rl(\g) -1$ and $\rl(\g)+1\ge d_T(v,\G_B) \ge \rl(\g)$.
\end{proof}

Similar discussion holds in the case of  HNN extensions:
For $\g\in\G = \Pf$, let 
\[
 \g = \p_{0} f^{\epsilon_{1}}\p_1 \cdots f^{\epsilon_n} \p_n
\]
be a normal form of $\g$.
The above normal form produces a finite sequence in $T$:
\[
 \P, \quad 
 \p_{0} f^{\epsilon_{1}}\P, \quad
 \p_{0} f^{\epsilon_{1}}\p_1 f^{\epsilon_2}\P, 
 \quad\dots, \quad 
 \g \P = \p_{0} f^{\epsilon_{1}}\p_1 \cdots f^{\epsilon_n} \P.
\]
Similarly to the amalgamated free product case discussed above, one can check that the above sequence does not backtrack and that consecutive vertices in the path are adjacent in $T$.
This yields the following:

\begin{lemma}\label{lem:relnHNN}
 For all $\g\in\G = \Pf$,  $d_T(v,\P) = \rl(\g)$.
\end{lemma}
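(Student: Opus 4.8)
The plan is to imitate the proof of \Cref{lem:reln} from the amalgamated case: attach to a normal form of $\g$ an explicit edge-path in the Bass--Serre tree $T$ running from the vertex $\P$ to the vertex $v = \g\P$, show that this path is reduced (non-backtracking), and then read off $d_T(v,\P)$ as its length. Here there is only a single $\G$-orbit of vertices in $T$ (the left cosets of $\P$), which is why one obtains an exact equality rather than an equality up to $\pm 1$. Concretely, fix a normal form $\g = \p_0 f^{\epsilon_1}\p_1\cdots f^{\epsilon_n}\p_n$; by Britton's Lemma the integer $n = \rl(\g)$ is independent of the chosen normal form. Put $w_0 = \P$ and $w_i = \p_0 f^{\epsilon_1}\p_1\cdots f^{\epsilon_i}\P$ for $1\le i\le n$, so $w_n = \g\P = v$ (using $\p_n\in\P$). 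Setting $g_i \coloneqq \p_0 f^{\epsilon_1}\p_1\cdots f^{\epsilon_{i-1}}\p_{i-1}$ one has $w_{i-1} = g_i\P$ and $w_i = g_i f^{\epsilon_i}\P$, so by the description of $E(T)$ in \S\ref{sec:BStree} the vertices $w_{i-1}$ and $w_i$ are joined by the edge $g_i\H_+$ if $\epsilon_i = 1$, and by the edge $g_i\H_-$ if $\epsilon_i = -1$. Thus $(w_0,\dots,w_n)$ is an edge-path of length $n$ in $T$ from $\P$ to $v$.

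The one substantive step is to check that this path does not backtrack; once that is done, a reduced edge-path in a tree is embedded and realizes the distance between its endpoints, so $d_T(v,\P) = d_T(w_n,w_0) = n = \rl(\g)$ (the case $\g\in\P$, where $n = 0$ and $v = \P$, being trivial). To verify the absence of backtracking, suppose $w_{i-1} = w_{i+1}$ for some $1\le i\le n-1$. Since $w_{i-1} = g_i\P$ and $w_{i+1} = g_i f^{\epsilon_i}\p_i f^{\epsilon_{i+1}}\P$, this is equivalent to $f^{\epsilon_i}\p_i f^{\epsilon_{i+1}}\in\P$, which I rule out using Britton's Lemma, the normal-form conditions (i), (ii), and the defining relation $f\eta f^{-1} = \phi(\eta)$ for $\eta\in\H_-$ (equivalently $f\H_- f^{-1} = \H_+$). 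If $\epsilon_i = \epsilon_{i+1}$, then $f^{\epsilon_i}\p_i f^{\epsilon_{i+1}}$ contains no pinch, hence has relative length $2$ and cannot lie in $\P$. If $\epsilon_i = 1$ and $\epsilon_{i+1} = -1$, then $f\p_i f^{-1}\in\P$ would force $\p_i\in\H_-$, whence condition (ii) yields $\epsilon_{i+1} = 1$, a contradiction. Symmetrically, if $\epsilon_i = -1$ and $\epsilon_{i+1} = 1$, then $\p_i\in\H_+$, contradicting condition (i).

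This is exactly what the discussion preceding the lemma records (``the above sequence does not backtrack and that consecutive vertices in the path are adjacent''), so the lemma follows as outlined. The only place where genuine content enters is the non-backtracking verification of the second paragraph, i.e.\ Britton's Lemma read against the normal-form conditions; the remainder is bookkeeping with cosets in $T$.
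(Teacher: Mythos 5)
Your proof is correct and follows exactly the route the paper intends: the discussion preceding the lemma already records the coset path and its non-backtracking property, and the paper's proof simply says it is similar to \Cref{lem:reln} and omits the details, which you have supplied accurately (including the Britton's Lemma check against the normal-form conditions).
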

\begin{proof}
 The proof is similar to that of \Cref{lem:reln}. We omit the details.
\end{proof}

We prove \Cref{prop:reln}.

\begin{proof}[Proof of \Cref*{prop:reln}]
We discuss the proof in the case of amalgamated free products $\G = \G_A\star_\H\G_B$; the case of HNN extensions $\G = \Pf$ is similar.

 Consider a normal form of $\g$ which contains some normal form of $\omega$ as a left subword:
 \[
  \g = \underbrace{\g_1\cdots\g_m}_{=\omega}\g_{m+1}\cdots \g_l.
 \]
 We further assume that $\g_1\in\G_A\setminus \H$; the case $\g_1\in\G_B\setminus \H$ follows similarly.
 The above normal form induces a geodesic path in $T$ (see the paragraph before \Cref{lem:reln}) from the vertex $\G_B$ to the vertex $v_\g \coloneqq \g\G_*$, where $* = A$ if $l$ is even, or $* = B$ if $l$ is odd.
  This path also contains the vertex $v_\omega \coloneqq \omega\G_*$,  where $* = A$ if $m$ is even, or $* = B$ if $m$ is odd.
 The vertex $\G_A$ also lies in that path, see \eqref{eqn:relnnormalform}.
 Thus, any path in $T$ connecting $v_\g$ to $\G_A$ or $\G_B$ must contain $v_\omega$.
 
 For the rest of the proof, let $*$ denote either $A$ or $B$.
  Let $c : [0,n]\cap \Z \to \G$ be a shortest geodesic in $(\G,d)$\footnote{See \Cref{rem:ddgdt} for our notation.} such that $c(0)\in\G_*$ and $c(n) = \g$.
  By definition, $c(0)$ is a closest point in $\G_*$ to $c(k)$, for any $k$ in the domain of $c$.
  For $k\in [0,n]\cap\Z$, let $\bar c(k)\coloneqq \{c(k)\G_A,c(k)\G_B\}$.
  
\begin{claim}
 For $k= 0,\dots,n-1$, we have that
 $\bar c(k)\cap\bar c(k+1) \ne \emptyset$.
\end{claim}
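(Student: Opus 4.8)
The claim to prove is that for consecutive points $c(k), c(k+1)$ along a $d$-geodesic in $\G = \G_A \star_\H \G_B$, the associated pairs of vertices $\bar c(k) = \{c(k)\G_A, c(k)\G_B\}$ and $\bar c(k+1)$ in the Bass--Serre tree $T$ intersect. The key point is that $c(k)$ and $c(k+1)$ differ by a single generator $s \in S$, where (by our choice in \Cref{rem:ddgdt}) $S$ is the union of chosen generating sets of $\G_A$ and $\G_B$. So $c(k+1) = c(k)s$ with $s \in \G_A$ or $s \in \G_B$.

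\medskip
\emph{Step 1: Reduce to the single-generator step.} Since $c$ is a $d$-geodesic, $c(k+1) = c(k)s$ for a single generator $s$. By the definition of $S$, either $s \in \G_A$ or $s \in \G_B$. By symmetry assume $s \in \G_A$. Then $c(k+1)\G_A = c(k)s\G_A = c(k)\G_A$, since $s \in \G_A$. Hence $c(k)\G_A = c(k+1)\G_A \in \bar c(k) \cap \bar c(k+1)$, and in particular the intersection is nonempty. The case $s \in \G_B$ is identical with $\G_B$ in place of $\G_A$. This already proves the claim.

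\medskip
\emph{Remark on the role of the claim.} The point of stating the claim this way is that it shows the sequence of vertices $\bar c(0), \bar c(1), \dots, \bar c(n)$, read as a sequence of \emph{edges or vertices} of $T$ (each $\bar c(k)$ being either a single vertex, when $c(k)\G_A = c(k)\G_B$ — impossible here unless the cosets literally coincide, so really a pair of adjacent vertices spanning an edge, or two vertices), fellow-travels a path in $T$. Combined with \Cref{lem:reln}, which controls $d_T(v_\g, \G_A)$ and $d_T(v_\g, \G_B)$ in terms of $\rl(\g)$, one deduces that $d_T(v_{c(0)}, v_{c(n)}) \le d_\G$-length of $c$ up to a bounded additive error, while the non-backtracking geodesic in $T$ through $v_\omega$ forces $v_\omega$ to lie within bounded $T$-distance of the geodesic $[\,\G_*, v_\g\,]$. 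Since $c(0) \in \G_*$ realizes the nearest-point projection of every $c(k)$ to $\G_*$, the vertex $v_\omega$ is then within bounded distance of the whole path $\bar c$, and in particular $d_T(v_\omega, \G_*)$ is bounded, which (again by \Cref{lem:reln}) bounds $\rl(\pr_{\G_*}(\g))$ and hence $d_\G(\pr_{\G_*}(\g), 1_\G)$. That is the endgame of \Cref{prop:reln}.

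\medskip
\emph{Expected main obstacle.} The claim itself is essentially immediate once one unwinds that $S$ respects the vertex groups; there is no real obstacle in this step. The genuine work in \Cref{prop:reln} is the \emph{next} step: converting the $T$-geodesic/fellow-traveling picture into the quantitative statement that $v_\omega$ is close to $[\,\G_*, v_\g\,]$ and that therefore $\pr_{\G_*}(\g)$ is boundedly close to $1_\G$. This requires using that $\rl(\omega) \ge 3$ (so that $v_\omega$ is strictly separated in $T$ from $\G_A$ and $\G_B$), that any $T$-geodesic from $v_\g$ to $\{\G_A, \G_B\}$ passes through $v_\omega$ by the non-backtracking property, and that nearest-point projection in $\G$ to the quasiconvex subgroup $\G_*$ corresponds, up to bounded error, to nearest-point projection in $T$ to the vertex $\G_*$ — a $\G$-to-$T$ comparison that leans on the quasiconvexity of $\G_A, \G_B, \H$ in $\G$ (\Cref{thm:mj}) and the coarse Lipschitz property of $\G \to T$.
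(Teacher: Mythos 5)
Your proof is correct and follows essentially the same route as the paper: since consecutive points of the $d$-geodesic differ by a generator $s\in S\subset \G_A\cup\G_B$, the corresponding coset ($c(k)\G_A$ or $c(k)\G_B$) is unchanged, so $\bar c(k)\cap\bar c(k+1)\neq\emptyset$. Your direct computation $c(k)s\G_A=c(k)\G_A$ even streamlines the paper's argument, which phrases the same observation via a normal form of $c(k)$.
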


\begin{proof}
 We observe that $c(k+1) = c(k) \cdot s_k$, for some generator $s_k\in S \subset \G_A\cup \G_B$.
 Let $c(k) = \tg_1\cdots\tg_r$
 be a normal form of $c(k)$.
 Assume that $\tg_r \in\G_A$; the other possibility $\tg_r \in\G_B$ can be similarly treated.
 If $s_k\in\G_A$, then $c(k)\G_A = \tg_1\cdots\tg_{r-1} \G_A \in \bar c(k+1)$ since $c(k+1) = \tg_1\cdots\tg_{r-1} (\tg_r s_k)\in \tg_1\cdots\tg_{r-1} \G_A$.
 Similarly, if $s_k\in\G_B$, then $c(k) \G_B \in \bar c(k+1)$.
 The claim follows.
\end{proof}

Let $V' = \bigcup_{k=0}^n \bar c(k)\subset V(T)$. Consider the induced subtree of $T'\subset T$ determined by $V'$.
By the claim above, $T'$ is connected.
Since $c(0)\in \G_*$, we get that $\G_*\in V'$.
Obviously, $v_\g\in V'$ as well.
Since $T'$ is connected, by the first paragraph of this proof, $v_\omega\in V'$.
Therefore, there exists some $k_0$ in the domain of $c$ such that $v_\omega\in \bar c(k_0)$.
In other words, $c(k_0)$ lies in (the coset represented by) $v_\omega$.
Since $c(0)$ is also a closest point in $\G_*$ to $c(k_0)$, it follows that $c(0)$ is uniformly close to $\pr_{\G_*}(v_\omega)$.
However, since $\rl(\omega)\ge 3$, by \Cref{lem:reln}, $d_T(v_\omega,\G_*) \ge 2$.
Thus, by \eqref{boundary_disjoint},\footnote{Note that $\geo\G_{v_\omega} = \geo v_\omega$.} $\geo\G_* \cap \geo v_\omega =\emptyset$ and, hence,
 $\pr_{\G_*}(v_\omega)$ is bounded in $\G_*$ (see \Cref{cor:proj_fellow_travel}).
 This completes the proof of this result when $d_\G = d$.
 
 In the general case, i.e., when $d_\G$ is an arbitrary word metric, then the result follows by the fact that the identity map $(\G,d)\to(\G,d_\G)$ is a quasiisometry.
\end{proof}

\section{Preliminaries on discrete isometry groups of symmetric spaces}\label{sec:prelim}

We recall some preliminary facts on symmetric spaces, mainly to set up some notations.
Then, we refer to \cite[Appendix 5]{MR823981} for a quick discussion and to \cite{eberlein} for a more detailed exposition.

Let $G$ be a real semisimple Lie group of noncompact type with a finite center.
We impose some mild assumptions on $G$; see \Cref{rem:assumptionG} below.
Let $X = G/K$ denote the (globally) symmetric space of $G$, where $K$ is a maximal compact subgroup of $G$.
Then, $X$ is a nonpositively curved $G$-homogeneous space such that $X$ has no compact or Euclidean de Rham factors, i.e. $X$ is a symmetric space of {\em noncompact type}. 

The ideal boundary $\geo X$ is the set of equivalence classes of asymptotic rays in $X$ on which $G$ acts naturally.
The point stabilizers in $G$ of the action $G\acts \geo X$ are called {\em parabolic subgroups} of $G$.
The ideal boundary $\geo X$ carries a natural $G$-invariant spherical building structure, called the {\em Tits building} of $X$, and denoted by $\tits X$.
The top-dimensional simplices in $\tits X$ are called {\em chambers}.

\medskip
In this paper, we impose the following assumption on $G$, which are standing assumptions in the papers of Kapovich-Leeb-Porti \cite{MR3811766,MR3736790,KLP:Morse} we rely upon in this work:

\begin{assumption}\label{rem:assumptionG}
The group $G$ has a finitely many connected components such that for the associated symmetric space $X = G/K$, the spherical Tits building $\tits X$ is {\em thick}: That is, every {\em panel} (i.e., a codimension one simplex) in $\tits X$ is contained in at least three distinct  chambers. 
\end{assumption}

\begin{remark}
 \Cref{rem:assumptionG} is satisfied if $G$ is connected, for instance, when $G = {\rm SL}(n, \mathbb{R})$. However, there are cases where this condition is not met. For instance, if $X = {\rm SL}(3, \mathbb{R})/{\rm SO}(3)$ and $G$ is the {\em full} isometry group of $X$, the associated spherical Tits building is not thick. Such semisimple Lie groups are excluded from this theory.
\end{remark}

Each chamber in $\geo X$ is also a fundamental domain for the action $G \acts \geo X$.
The stabilizer in $G$ of a chamber $\sigma$ (which is the same as the stabilizer in $G$ of any interior point of $\sigma$) is called a {\em minimal parabolic subgroup} of $G$.
The group $G$ also acts on the set of chambers in $\tits X$ transitively, so any two minimal parabolic subgroups are conjugate. Therefore, the space of all chambers in $\tits X$ can be identified with the analytic manifold
\[
 \Fs \coloneqq G/P_{\smod},
\]
where $\smod$ is a chosen chamber in $\tits X$ and $P_{\smod}$ is the minimal parabolic subgroup stabilizing $\smod$.

More generally, for a face $\nmod \subset\smod$,
the space of  simplices $\nu$ in $\tits X$ of {\em type $\nmod$}, i.e., those simplices $\nu$ in $\tits X$ which can be brought to $\nmod$ by the action $G\acts\tits X$, can be identified with the analytic manifold
\[
 \Fn \coloneqq G/P_{\nmod},
\]
where $P_{\nmod}$ is the parabolic subgroup of $G$ stabilizing $\nmod$.

A pair of simplices $\nu_\pm$ in $\tits X$ is called {\em antipodal} if there is a complete geodesic line in $X$, which is forward (resp. backward) asymptotic to some interior point of the simplex $\nu_+$ (resp. $\nu_-$).
If $\nmod^\pm \subset \smod$ denote the types of $\nu_\pm$, then they satisfy
\[
 \nmod^+ = \iota (\nmod^-),
\]
where $\iota : \smod\to\smod$ denotes the {\em opposition involution}.
For a simplex $\nu_-$ in $\tits X$ of type $\nmod^-$,
the set of simplices in $\tits X$ (regarded as points in $\mathrm{Flag}({\nmod^+})$, where $\nmod^+ = \iota (\nmod^-)$) antipodal to $\nu_-$, 
\[
 C(\nu_-) \coloneqq
 \{
  \nu_+ \in \mathrm{Flag}({\nmod^+}) \mid
  \nu_+ \text{ is antipodal to } \nu_-
 \} \subset \mathrm{Flag}({\nmod^+}),
\] 
is an open dense $P_{\nmod^-}$-homogeneous subset of $\mathrm{Flag}({\nmod^+})$.

\begin{definition}[Antipodality]\label{def:antipodality}
 A pair $(A,B)$, where $A \subset \mathrm{Flag}({\nmod^+})$ and $B \subset \mathrm{Flag}({\nmod^-})$, is said to be {\em antipodal} to each other (or, $A$ is {\em antipodal} to $B$)
 if, for all $\nu_+\in A$ and all $\nu_-\in B$, $\nu_+$ is antipodal to $\nu_-$.
\end{definition}

\subsection{Regular sequences}\label{sec:regularsequences}

Let $\nmod\subset \smod$ be a face.
A sequence $(g_n)$ in $G$ is said to {\em $\nmod$-converge} to a point $\nu\in \Fn$, which is denoted by
\[
 g_n \tof \nu,
\]
if every subsequence of $(g_n)$ has a further subsequence $(g_{n_k})$ such that there exists $\nu_- \in {\rm Flag}(\iota\nmod)$  such that
\[
 {g_{n_k}}\vert_{C(\nu_-)} \to \nu,\quad
 \text{uniformly on compacts.}
\]
In this situation, $(g_n)$ is called a {\em $\nmod$-convergent} sequence in $G$ and $\nu\in\Fn$ is called the {\em $\nmod$-limit point} of $(g_n)$.
For a $\nmod$-convergent sequence $(g_n)$ in $G$, its $\nmod$-limit point $\nu$ is unique.
See \cite[\S 4]{MR3736790} for more details.

For a discrete subgroup $\G$ of $G$, the set of all $\nmod$-limit points in $\Fn$ of $\nmod$-convergent subsequences of $\G$ is called the {\em $\nmod$-limit set} of $\G$, which is denoted by
\[
 \Lambda_{\nmod}(\G).
\]
The limit set $\Lambda_{\nmod}(\G)$ is a $\G$-invariant compact subset of $\Fn$; however; it may be empty, even if $\G$ is infinite.

A sequence $(g_n)$ in $G$ is {\em $\nmod$-regular} if every subsequence of $(g_n)$ contains a $\nmod$-convergent subsequence.
Moreover, if $(g_n)$ is $\nmod$-regular, then the inverse sequence $(g_n^{-1})$ is $\iota\nmod$-regular.
Similarly, a subgroup $\G$ of $G$ is called  {\em $\nmod$-regular}
if every sequence in $\G$ is $\nmod$-regular.
Such subgroups are necessarily discrete.
Clearly, $\nmod$-regular subgroups are also $\iota\nmod$-regular.

The following results help verify the regularity and flag-convergence of certain sequences considered in this paper.

Let $(g_n)$ be a sequence in $G$.

\begin{lemma}\label{prop:regulairty}
 If there exists a compact subset $A \subset \Fn$ with a nonempty interior such that the sequence $(g_n A)$ of compact subsets of $\Fn$ converges to a point $\nu \in \Fn$, then $(g_n)$ is $\nmod$-regular and $g_n \tof \nu$.
\end{lemma}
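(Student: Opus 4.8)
\textbf{Proof proposal for \Cref{prop:regulairty}.}

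The plan is to unwind the definition of $\nmod$-convergence and reduce everything to the hypothesis that $(g_n A) \to \nu$ as a sequence of compact subsets of the flag manifold $\Fn$. First I would recall what must be shown: that every subsequence of $(g_n)$ has a further subsequence $(g_{n_k})$ for which there exists an antipodal chamber $\nu_- \in \mathrm{Flag}(\iota\nmod)$ with $g_{n_k}|_{C(\nu_-)} \to \nu$ uniformly on compact subsets of the open dense cell $C(\nu_-)$; this yields simultaneously the $\nmod$-regularity of $(g_n)$ and the identity $g_n \tof \nu$ (uniqueness of the $\nmod$-limit point then forces the limit to be $\nu$ for the whole sequence). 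So it suffices to produce, after passing to a subsequence, a suitable $\nu_-$.

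The key step is to choose $\nu_-$ using the nonempty interior of $A$. Fix a point $\nu_0$ in the interior $A^{\circ}$, so that a whole neighborhood $U$ of $\nu_0$ lies in $A$. The natural guess is to take $\nu_- = \iota(\nu_0)$, i.e. a simplex of type $\iota\nmod$ whose opposite cell $C(\nu_-) \subset \Fn$ contains $\nu_0$; more robustly, one can use the fact that, since $\mathrm{Flag}(\iota\nmod) \times \Fn$ has an open dense antipodality relation and $C(\cdot)$ is a nonempty open $P_{\iota\nmod}$-homogeneous subset, there exists some $\nu_- \in \mathrm{Flag}(\iota\nmod)$ whose opposite cell $C(\nu_-)$ meets $A^{\circ}$. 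Having fixed such $\nu_-$, I would argue as follows. Any compact subset $L \subset C(\nu_-)$ can be moved inside $A^{\circ}$ by an element $h \in P_{\iota\nmod}$ (since $P_{\iota\nmod}$ acts transitively on $C(\nu_-)$ and $C(\nu_-)$ is connected, a standard compactness/covering argument lets one cover $L$ by finitely many translates $h_i A^{\circ}$, and then a single $h$ works after possibly enlarging $A$ — here it is cleanest to instead prove the statement directly for $L$ contained in one translate and then patch, or simply note that for the uniform-convergence conclusion it suffices to check convergence on the specific compact sets $h^{-1}(\text{compact} \subset A^\circ)$ which exhaust $C(\nu_-)$). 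Then $g_{n_k} h (h^{-1} L) \subset g_{n_k} A$, and by hypothesis the compact sets $g_{n_k} A$ shrink to the point $\nu$; hence $g_{n_k}|_{hL'} \to \nu$ uniformly, and composing with the fixed homeomorphism $h$ gives $g_{n_k}|_{L'} \to \nu$ uniformly on $L' = h^{-1}L$. Exhausting $C(\nu_-)$ by such compact sets gives uniform-on-compacts convergence of $g_{n_k}|_{C(\nu_-)}$ to the constant map $\nu$, which is exactly the required witness. No subsequence extraction is even needed here, so in fact the conclusion holds for the full sequence, but passing to subsequences is harmless and matches the definition.

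The main obstacle is the bookkeeping needed to exhaust the open cell $C(\nu_-)$ by compact sets each of which can be pushed into the given set $A$ by a single element of the parabolic, and to make the ``uniformly on compacts'' statement come out cleanly rather than just pointwise; this is where one must use that $C(\nu_-)$ is a homogeneous space under $P_{\iota\nmod}$ together with the fact that $A$ has nonempty interior (so that after a parabolic translation $A$ contains a prescribed compact piece of $C(\nu_-)$). Everything else is a direct translation of definitions: the convergence $g_nA\to\nu$ of subsets is used verbatim, and the uniqueness of $\nmod$-limit points (recalled just before the lemma) promotes the subsequential statement to the assertion $g_n \tof \nu$ for the whole sequence. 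I expect the write-up to be short once the right compact exhaustion of $C(\nu_-)$ is set up.
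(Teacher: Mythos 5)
There is a genuine gap, and it sits exactly at the heart of the lemma. Your key step claims that because $hL\subset A^\o$ for some $h\in P_{\iota\nmod}$, the convergence $g_{n_k}A\to\nu$ yields uniform convergence of $g_{n_k}$ on the arbitrary compact $L\subset C(\nu_-)$. What the hypothesis actually gives is that $g_{n_k}$ converges to the constant $\nu$ uniformly on $hL$ (a subset of $A$), equivalently that the \emph{right-translated} sequence $(g_{n_k}h)$ converges to $\nu$ uniformly on $L$. Pre-composing with $h$ changes the sequence: there is no identity expressing $g_{n_k}|_L$ in terms of $g_{n_k}|_{hL}$ and $h$, so "composing with the fixed homeomorphism $h$" gives no information about $g_{n_k}$ on $L$. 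Moreover $h=h_L$ depends on $L$, so you have not exhibited uniform-on-compacts convergence on a full cell $C(\nu_-)$ for \emph{any} single sequence. A second problem is the choice of $\nu_-$: it cannot be an arbitrary simplex whose cell meets $A^\o$ (that is automatic, since $C(\nu_-)$ is open and dense); the $\nu_-$ in the definition must arise, after extraction, as an $\iota\nmod$-limit of $(g_{n_k}^{-1})$, and for a generic $\nu_-$ the cell $C(\nu_-)$ meets the exceptional set where attraction to $\nu$ fails, so uniform convergence on its compacts is false even for regular sequences. Your remark that "no subsequence extraction is even needed" is a symptom of this.

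What is really being asserted by the lemma is that a sequence which contracts a set with nonempty interior to a point must be $\nmod$-regular, and this is precisely what your argument never addresses: a non-regular sequence (after extraction, an $\eta_{\rm mod}$-pure one with $\nmod\not\subset\eta_{\rm mod}$, e.g. ${\rm diag}(e^n,e^n,e^{-2n})$ acting on flags of $\R^3$) does not shrink any open set to a point, but ruling this out needs an actual argument — in this paper the analogous step is the proof of \Cref{lem:mainlemma}, which invokes the purity/attractor result \cite[Proposition 9.5]{MR3811766}; the present lemma itself is quoted from \cite[Lemma 1.9]{DK22} rather than proved here. A correct route is: first establish $\nmod$-regularity by contradiction via purity (or via Cartan projections); then, given any subsequence, extract a further subsequence with $g_{n_k}|_{C(\nu_-)}\to\nu'$ uniformly on compacts for some $\nu_-$, pick a point $\tau\in C(\nu_-)\cap A^\o$ (nonempty by density) to see $g_{n_k}\tau\in g_{n_k}A\to\nu$ and hence $\nu'=\nu$; uniqueness of $\nmod$-limits then gives $g_n\tof\nu$. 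Your final identification step is fine, but without the regularity input the argument does not close.
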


See \cite[Lemma 1.10]{DK22}.

Let $d$ be a distance function on $\Fn$ compatible with the manifold topology of $\Fn$.
Let $(A_n)$ a sequence of nonempty subsets of $\Fn$.
We say that $(A_n)$ {\em shrinks},
if the diameter of $A_n$ converges to zero as $n\to \infty$.
Moreover, we say that $(A_n)$ {\em converges} a point $\nu\in\Fn$, written as $A_n\to\nu$, if the the diameters of the subsets $A_n\cup\{\nu\}$ converge to zero as $n\to \infty$.
Since $\Fn$ is compact, these notions do not depend on the chosen distance function.

\begin{lemma}\label{prop:regulairty_two}
 If there exists a compact subset $A\subset \Fn$ with a nonempty interior such that $(g_n A)$ shrinks, 
 then $(g_n)$ is $\nmod$-regular.
\end{lemma}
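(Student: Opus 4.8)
The plan is to reduce this statement to the already-established Lemma \ref{prop:regulairty} by showing that, after passing to a subsequence, a shrinking sequence $(g_nA)$ actually converges to a point of $\Fn$. The point is that ``shrinking'' gives no control on \emph{where} the sets $g_nA$ go, only on how small they are; but compactness of $\Fn$ supplies the missing control along subsequences. So first I would take an arbitrary subsequence of $(g_n)$, which I relabel as $(g_n)$ itself. Pick any basepoint $a_0$ in the interior of $A$ (which is nonempty by hypothesis); since $\Fn$ is compact, the sequence $(g_na_0)$ in $\Fn$ has a convergent subsequence $g_{n_k}a_0\to\nu$ for some $\nu\in\Fn$. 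Because $\diam(g_{n_k}A)\to0$ and $g_{n_k}a_0\in g_{n_k}A$, every point of $g_{n_k}A$ lies within $\diam(g_{n_k}A)$ of $g_{n_k}a_0$; hence $\diam(g_{n_k}A\cup\{\nu\})\le \diam(g_{n_k}A)+d(g_{n_k}a_0,\nu)\to0$, i.e.\ $g_{n_k}A\to\nu$ in the sense defined just before the statement.

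Now apply Lemma \ref{prop:regulairty} to the subsequence $(g_{n_k})$: since $A$ is compact with nonempty interior and $g_{n_k}A\to\nu$, that lemma yields that $(g_{n_k})$ is $\nmod$-regular and $g_{n_k}\tof\nu$. In particular $(g_{n_k})$, being $\nmod$-regular, has a $\nmod$-convergent subsequence (indeed it is $\nmod$-convergent itself). Retracing the logic: we started with an arbitrary subsequence of the original $(g_n)$ and extracted from it a $\nmod$-convergent subsequence. Since this can be done for every subsequence, the original sequence $(g_n)$ is $\nmod$-regular by definition. This completes the argument.

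The only mild subtlety — and the closest thing to an obstacle — is bookkeeping the two layers of subsequence extraction and making sure the definition of $\nmod$-regularity is applied correctly: $\nmod$-regularity of $(g_n)$ requires that \emph{every} subsequence contain a $\nmod$-convergent further subsequence, so it is essential that the point $a_0$ and the limit $\nu$ be chosen \emph{after} passing to the given arbitrary subsequence, not before. A second, even smaller point is the elementary triangle-type estimate $\diam(S\cup\{\nu\})\le \diam(S)+\sup_{s\in S}d(s,\nu)$ for a nonempty set $S$, which is immediate from the triangle inequality and is what lets ``shrinking plus one convergent orbit point'' upgrade to ``converges to a point.'' Neither of these is a real difficulty, so the proof is short; the content is entirely in citing Lemma \ref{prop:regulairty}.
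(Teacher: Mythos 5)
Your proposal is correct and matches the intended argument: the paper gives no proof here, citing \cite[Corollary 1.10]{DK22}, and that corollary is obtained from \cite[Lemma 1.9]{DK22} (the paper's \Cref{prop:regulairty}) in exactly the way you describe — pass to an arbitrary subsequence, use compactness of $\Fn$ to make $(g_{n_k}A)$ converge to a point, then invoke \Cref{prop:regulairty} and the subsequence characterization of $\nmod$-regularity. Your care in choosing the basepoint and the limit after the first extraction is the right bookkeeping, so nothing is missing.
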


See \cite[Corollary 1.11]{DK22}.

The next result can be extracted from the proof of 
\cite[Lemma 3.4]{DK22}.
For completeness, we give a proof,

\begin{lemma}\label{lem:mainlemma}
 If there exist compact subsets $B,B'\subset\Fn$ with nonempty interior such that $B' \subset B^\o$, $B\subset C(\nu)$ for some $\nu\in\Fn$, and for all $n\in\N$,
 $
  g_{n+1}g_n^{-1} (B) \subset B',
 $
 then $(g_n)$ is $\nmod$-regular.
\end{lemma}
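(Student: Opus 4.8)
My plan is to reduce the statement to \Cref{prop:regulairty_two}: it is enough to produce a single compact set $A\subset\Fn$ with nonempty interior such that the sets $g_nA$ shrink. Write $h_n\coloneqq g_{n+1}g_n^{-1}$, so that $g_n=h_{n-1}h_{n-2}\cdots h_1\,g_1$ and the hypothesis reads $h_n(B)\subset B'$ for every $n$. First I would iterate this containment: since $B'\subset B$, each $h_i$ maps $B'$ back into $B'$, so an easy induction on $n$ (with $m$ fixed) shows $g_ng_m^{-1}(B)=h_{n-1}\cdots h_m(B)\subset B'$ for all $1\le m<n$. Taking $A\coloneqq g_1^{-1}(B)$ --- a compact set with nonempty interior, since $g_1$ acts on $\Fn$ as a diffeomorphism --- we obtain $g_nA=g_ng_1^{-1}(B)\subset B'$ for all $n\ge 2$; in particular the sets $g_nA$ all lie in the fixed compactum $B'$.

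It then remains to prove the crucial estimate $\diam(g_nA)\to 0$. Here I would exploit a ping-pong/alignment structure hidden in the hypothesis: because $B'\subset B^\o$, the compacta $U^+\coloneqq B'$ and $U^-\coloneqq\overline{\Fn\setminus B}$ are disjoint --- hence a positive distance apart for a fixed metric on $\Fn$ --- while each $h_n$ maps $\Fn\setminus U^-=B$ into $U^+$ and $h_n^{-1}$ maps $\Fn\setminus U^+$ into $U^-$. Thus the $h_n$ form a uniform family of ``aligned contractions'' whose attracting regions lie in $U^+$ and repelling regions in $U^-$. The point to establish is that a single $h\in G$ with $h(B)\subset B'$ cannot be close to the identity (otherwise $h(B)$ would be close to $B\not\subset B'$) and in fact contracts the fixed pair $(B,B')$ by an amount bounded below purely in terms of $B$ and $B'$; I would then argue that, thanks to the alignment of their attracting and repelling regions, these contractions compose, so that $\diam\bigl(h_{n-1}\cdots h_1(B)\bigr)\to 0$. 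Feeding $\diam(g_nA)\to 0$ into \Cref{prop:regulairty_two} with the set $A$ above yields that $(g_n)$ is $\nmod$-regular.

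The hard part is exactly this diameter-decay step. The iterated containment only tells us that $g_nA$ stays inside the fixed set $B'$; it does not by itself force shrinking, and since the new factor $h_{n-1}$ is prepended on the \emph{left} of $g_n=h_{n-1}\cdots h_1\,g_1$, the sets $g_nA$ do not form a monotone nested family that one could simply intersect. Turning the informal ``aligned contractions compose'' principle into a proof requires a uniform quantitative contraction estimate for maps $h\in G$ satisfying $h(B)\subset B'$ (depending only on the fixed pair $B\supset B'$) together with control of how such estimates accumulate along the composition $h_{n-1}\cdots h_1$; this is the step that genuinely uses the compact containment $B'\subset B^\o$ rather than merely $B'\subset B$, and it is where essentially all of the work lies.
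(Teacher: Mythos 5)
Your proposal stops exactly where the proof has to start, and the mechanism you sketch for the missing step is not available at this level of generality. The reduction to \Cref{prop:regulairty_two} and the iterated containment $g_ng_m^{-1}(B)\subset B'$ are fine, but they only place $g_nA$ inside the fixed compactum $B'$; the decisive claim $\diam(g_nA)\to 0$ is left unproved, and you concede that essentially all the work lies there. Moreover, the principle you propose to supply it --- that every $h\in G$ with $h(B)\subset B'$ contracts by a definite amount depending only on the pair $(B,B')$, and that such ``aligned contractions'' compose --- fails for a general compact $B\subset\Fn$ with nonempty interior: an element with $h(B)\subset B'$ may act with a bounded, rotational component in directions along which $B$ is saturated. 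For instance, for $G=\PSL(2,\R)\times\PSL(2,\R)$ acting on $S^1\times S^1$, with $B=B_1\times S^1$ and $B'=B_1'\times S^1$, any $h=(a,k)$ with $aB_1\subset B_1'$ and $k$ a rotation satisfies $h(B)\subset B'$, yet $\diam\bigl(h(B)\bigr)$ is as large as $\diam(B')$ and compositions of such maps do not shrink $B$. The Birkhoff/Hilbert-metric contraction you are implicitly appealing to needs a properly convex structure (an interval in the projective line, a diamond, etc.), which an arbitrary compact subset of a higher-rank flag manifold does not carry; and in any case your plan aims at a conclusion (shrinking of $g_nA$, hence flag convergence via \Cref{prop:regulairty}) that is strictly stronger than the $\nmod$-regularity the lemma asserts.

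The paper's proof is organized entirely differently and never proves any shrinking. It normalizes $h_n=g_ng_1^{-1}$, argues by contradiction, and from the assumed failure of regularity extracts an $\eta_{\rm mod}$-pure subsequence with $\nmod\not\subset\eta_{\rm mod}$; by \cite[Proposition 9.5]{MR3811766} such a subsequence converges, on a dense open subset, to a map onto a star $\St(\eta_+)$. Passing to the F\"urstenberg thickenings $\tilde B=\bigcup_{\nu\in B}\St(\nu)$ and $\tilde B'$ in the full flag manifold, the hypothesis gives $h_{n_k}h_{n_{k-1}}^{-1}(\tilde B)\subset\tilde B'$, so suitably chosen points $\sigma_k\in\St(h_{n_k}\eta)\cap\partial\tilde B$ are displaced by at least $d(\tilde B',\partial\tilde B)>0$, whereas \cite[Lemma 1.8]{DK22} forces these displacements to tend to $0$ along a pure sequence --- a contradiction. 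In short, the uniform higher-rank contraction input your sketch would need is exactly what the Kapovich--Leeb--Porti purity theory provides, and the paper deploys it inside a proof by contradiction rather than through the direct ``compose the contractions'' scheme you outline; as it stands, your argument has a genuine gap at its central step.
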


\begin{proof}
After replacing the sequence $(g_n)$ by $(h_n)$, where $h_n\coloneqq g_n g_1^{-1}$, we have that  
\begin{equation}\label{eqn:one:mainlemma}
 h_n(B)\subset B \quad\text{and}\quad h_{n+1}h_n^{-1} (B) \subset B'.
\end{equation}

  Suppose, to the contrary, that $(g_n)$ is not $\nmod$-regular. Then $(h_n)$ is also not $\nmod$-regular.
 Moreover, by the right side of \eqref{eqn:one:mainlemma}, it follows that the sequence $(h_n)$ has no accumulation points in $G$.
  Therefore, after extraction, $(h_n)$ is {\em $\eta_{\rm mod}$-pure} for some face $\eta_{\rm mod} \subset \smod$ such that $\nmod\not\subset\eta_{\rm mod}$.
 By \cite[Proposition 9.5]{MR3811766}, after further extraction, there exist $\eta_+\in{\rm Flag}(\eta_{\rm mod})$ and $\eta_-\in{\rm Flag}(\iota\eta_{\rm mod})$, and a surjective algebraic map $\phi : C_{\rm Fu}(\eta_-) \to \St(\eta_+)$ such that
 $h_{n_k}\vert_{C_{\rm Fu}(\eta_-)}  \to \phi$
uniformly on compacts.
Here $\St(\eta_+) \coloneqq \pi^{-1}(\eta_+)$ and $C_{\rm Fu}(\eta_-) \coloneqq \pi^{-1}(C(\eta_-))$, where $\pi$ denotes the unique  $G$-equivariant projection $\Fs \to {\rm Flag}(\eta_{\rm mod})$ mapping $\smod\mapsto \eta_{\rm mod}$.
 
 By \eqref{eqn:one:mainlemma}, we obtain that $h_{n_k}{\tilde B} \subset {\tilde B}$, where
 \[
  {\tilde B} = \bigcup_{\nu\in B} \St(\nu).
 \]
 Consider any point $\eta\in C(\eta_-) \subset {\rm Flag}(\eta_{\rm mod})$ such that $\St(\eta)\cap ({\tilde B})^\o$ is nonempty.\footnote{Note that since $C_{\rm Fu}(\nu_-)$ is open dense in $\Fs$ and ${\tilde B}$ has nonempty interior in $\Fs$, $C_{\rm Fu}(\nu_-)\cap \tilde B$ is nonempty.} 
  By \cite[Lemma 1.8]{DK22},
\begin{equation}\label{eqn:mainlemma}
   \lim_{k\to\infty} \max_{\sigma\in\St(h_{n_k}\eta)} d(h_{n_k}h_{n_{k-1}}^{-1}\sigma,\sigma) = 0.
\end{equation}

 Since, for all $k\in\N$, $h_{n_k}({\tilde B})^\o \subset ({\tilde B})^\o$,
 $\St(h_{n_k}\eta)\cap ({\tilde B})^\o \ne\emptyset$.
 Moreover, since $\nmod\not\subset\eta_{\rm mod}$, by \cite[Lemma 1.3]{DK22}, $\St (h_{n_k}\eta)$ is not contained in $C_{\rm Fu}(\nu)$ for any $\nu\in \Fn$.
 As $B\subset C(\nu)$ for some $\nu\in \Fn$, it follows that $\St (h_{n_k}\eta)\not\subset \tilde B$.
 However, by \cite[Lemma 1.2]{DK22}, $\St(h_{n_k}\eta)$ is connected.
 So, $\St(h_{n_k}\eta)$ intersects $\partial {\tilde B}$.
 Pick $\sigma_k \in \St(h_{n_k}\eta)\cap \partial {\tilde B}$ for each $k\in\N$.
 By \eqref{eqn:mainlemma}, we have that
\begin{equation}\label{eqn:two:mainlemma}
   \lim_{k\to\infty} d(h_{n_k}h_{n_{k-1}}^{-1}\sigma_{k},\sigma_{k}) = 0.
\end{equation}
 
Since $h_{n+1}h_n^{-1} (B) \subset B'$ (by \eqref{eqn:one:mainlemma}), we have that
 $h_{m}h_n^{-1} (B) \subset B'$ for all $m>n$.
 In particular, $h_{n_k}h_{n_{k-1}}^{-1} (B) \subset B'$, for all $k\in\N$, which implies that
 \[
  h_{n_k}h_{n_{k-1}}^{-1} (\tilde B) \subset \tilde B' \coloneqq \bigcup_{\nu\in B'} \St(\nu).
 \]
 Since $\sigma_k\in \tilde B$, by above, we get $h_{n_k}h_{n_{k-1}}^{-1}(\sigma_k)\in \tilde B'$.
 However, $\tilde B' \subset \tilde B^\o$, which shows that
 \[
  d(h_{n_k}h_{n_{k-1}}^{-1}(\sigma_k), \sigma_k) \ge d(\tilde B', \partial\tilde B) >0, \quad
  \text{for all } k\in\N,
 \]
 contradicting \eqref{eqn:two:mainlemma}.
\end{proof}

\begin{lemma}\label{lem:regularity_four}
 Suppose that $(g_n)$ is $\nmod$-regular.
 If there exist compact subsets $A,B \subset \Fn$ with nonempty interior such that for all $n\in\N$, $g_n B\subset A$,
  then all the $\nmod$-limit-points of $(g_n)$ lie in $A$.
\end{lemma}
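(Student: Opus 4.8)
The plan is to exploit $\nmod$-regularity to get convergence on a suitable Schubert cell and then use the containment $g_n B \subset A$ to pin the limit inside $A$. First I would take an arbitrary $\nmod$-limit point $\nu$ of $(g_n)$; by definition of $\nmod$-regularity there is a subsequence $(g_{n_k})$ and a simplex $\nu_-\in\mathrm{Flag}(\iota\nmod)$ such that $g_{n_k}|_{C(\nu_-)}\to\nu$ uniformly on compacts, where $C(\nu_-)\subset\Fn$ is the open dense Schubert cell of simplices antipodal to $\nu_-$.

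\begin{proof}[Proof of \Cref*{lem:regularity_four}]
Let $\nu\in\Fn$ be an arbitrary $\nmod$-limit point of $(g_n)$. By the definition of $\nmod$-convergence, after passing to a subsequence, there exists $\nu_-\in\mathrm{Flag}(\iota\nmod)$ such that
\[
 g_{n_k}\vert_{C(\nu_-)} \to \nu, \quad\text{uniformly on compacts.}
\]
Since $C(\nu_-)$ is open and dense in $\Fn$ and $B$ has nonempty interior, the set $C(\nu_-)\cap B^\o$ is nonempty; fix a point $\xi\in C(\nu_-)\cap B^\o$. On the one hand, $g_{n_k}\xi \to \nu$ because $\xi\in C(\nu_-)$ and the convergence above is uniform on compacts (in particular pointwise). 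On the other hand, $\xi\in B$, so $g_{n_k}\xi \in g_{n_k}B \subset A$ for all $k$. Since $A$ is compact, hence closed, the limit $\nu = \lim_{k\to\infty} g_{n_k}\xi$ lies in $A$. As $\nu$ was an arbitrary $\nmod$-limit point of $(g_n)$, all $\nmod$-limit points of $(g_n)$ lie in $A$.
\end{proof}

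There is essentially no obstacle here: the only point requiring a word of care is the nonemptiness of $C(\nu_-)\cap B^\o$, which follows from openness-and-density of the Schubert cell together with the nonempty-interior hypothesis on $B$ — exactly the same observation used in the footnote in the proof of \Cref{lem:mainlemma}. Everything else is the definition of flag convergence plus closedness of $A$. I would present the argument in the three short moves above: (1) unwind $\nmod$-regularity to a convergent subsequence on a Schubert cell, (2) pick an interior point of $B$ lying in that cell, (3) compare the two descriptions of its limit.
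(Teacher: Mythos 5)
Your proof is correct and is essentially the same as the paper's: both unwind the definition of a $\nmod$-limit point to get a subsequence with $g_{n_k}\vert_{C(\nu_-)}\to\nu$ uniformly on compacts, pick a point of $B$ in the cell $C(\nu_-)$ (the paper takes $\nu_1\in C(\nu_-)\cap B$, you take it in $C(\nu_-)\cap B^\o$ and justify nonemptiness explicitly), and conclude $\nu\in A$ from $g_{n_k}\nu_1\in A$ and compactness of $A$. No issues.
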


\begin{proof}
 Let $\nu\in\Fn$ be a $\nmod$-limit point of $(g_n)$. Then, there exists $\nu_-\in {\rm Flag}(\iota\nmod)$ and a subsequence $(g_{n_k})$ such that $g_{n_k}\vert_{C(\nu_-)}$ converges to the constant map $C(\nu_-) \to \{\nu\}$ uniformly on compacts, as $k\to\infty$.
 Let $\nu_1\in C(\nu_-) \cap B$.
 Then, the sequence $(g_{n_k} \nu_1)$, which lies in $A$, converges to $\nu$.
 Thus, $\nu\in A$.
\end{proof}

\begin{lemma}\label{lem:regularity_two}
 Suppose that 
 $g_n \tof \nu \in\Fn$.
 Let $A_{-}\subset {\rm Flag}(\iota\nmod)$  be a subset containing all the 
 $\iota\nmod$-limit points of $(g_n^{-1})$.
 If $A \subset \Fn$ is any compact subset antipodal to $A_{-}$,
 then $(g_n A)$ converges to $\{\nu\}$.
\end{lemma}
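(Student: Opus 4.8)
The plan is to exploit uniform convergence on the antipodal cell $C(\nu_-)$ for whatever $\iota\nmod$-limit point $\nu_-$ of $(g_n^{-1})$ controls a given subsequence. First I would reduce to subsequences: it suffices to show that every subsequence of $(g_n A)$ has a further subsequence converging to $\{\nu\}$. So fix a subsequence; since $(g_n)$ is $\nmod$-convergent with limit $\nu$ and $(g_n^{-1})$ is $\iota\nmod$-regular, after passing to a further subsequence $(g_{n_k})$ we may assume $g_{n_k}^{-1} \tof \nu_-$ for some $\nu_- \in {\rm Flag}(\iota\nmod)$, and, by definition of $\nmod$-convergence, that $g_{n_k}|_{C(\nu_-')} \to \nu$ uniformly on compacts for some $\nu_-' \in {\rm Flag}(\iota\nmod)$. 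The point is that $\nu_-$ is, by hypothesis, an element of $A_-$, hence (since $A$ is antipodal to $A_-$) every point of $A$ lies in $C(\nu_-)$; so $A$ is a compact subset of the open cell $C(\nu_-)$.

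Next I would identify $\nu_-'$ with $\nu_-$, or rather bypass that identification. The cleanest route: the definition of $g_{n_k} \tof \nu$ says that for \emph{some} antipodal cell the restrictions converge uniformly to the constant map $\nu$; but in fact for a $\nmod$-convergent sequence the uniform convergence to $\nu$ holds on the cell $C(\nu_-)$ associated to the $\iota\nmod$-limit $\nu_-$ of $(g_{n_k}^{-1})$ — this is the standard compatibility between the flag limit of $(g_n)$ and the flag limit of $(g_n^{-1})$ (see \cite[\S 4]{MR3736790}). Using that, since $A \subset C(\nu_-)$ is compact, uniform convergence on compacts of $C(\nu_-)$ gives $g_{n_k}|_A \to \nu$ uniformly, i.e. $\sup_{a \in A} d(g_{n_k} a, \nu) \to 0$, which says exactly that $g_{n_k}(A) \to \{\nu\}$ in the sense defined before Lemma \ref{prop:regulairty_two}. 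As this holds for every subsequence (after passing to a further one), $(g_n A) \to \{\nu\}$.

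The main obstacle is the bookkeeping around \emph{which} antipodal cell witnesses the uniform convergence: the bare definition of $\nmod$-convergence only asserts uniform convergence to $\nu$ on \emph{some} $C(\nu_-')$, and one must know this $\nu_-'$ can be taken to be (the limit of $(g_{n_k}^{-1})$, which lies in) $A_-$. I would resolve this by invoking the standard structural fact that for a $\nmod$-convergent sequence $(g_{n_k})$ whose inverse sequence $\iota\nmod$-converges to $\nu_-$, the convergence $g_{n_k}|_{C(\nu_-)} \to \nu$ holds uniformly on compacts — equivalently, the "repelling" data of $(g_{n_k})$ is exactly $\nu_-$. Once that is in hand, the rest is the immediate observation that antipodality of $(A, A_-)$ places $A$ inside the good cell $C(\nu_-)$, together with compactness of $A$ to upgrade "uniformly on compacts" to "$g_{n_k}(A)$ shrinks onto $\nu$." No further estimates are needed.
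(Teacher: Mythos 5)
Your proposal is correct and follows essentially the same route as the paper: extract a subsequence with $g_{n_k}^{-1}\tof\nu_-\in A_-$, note antipodality places the compact $A$ inside $C(\nu_-)$, and invoke the Kapovich--Leeb--Porti compatibility result (the paper cites \cite[Lemma 4.18]{KLP:Morse} for exactly the fact you flag as the key structural input) to get $g_{n_k}(A)\to\nu$. The only difference is organizational --- you argue directly via the subsequence principle, while the paper phrases it as a proof by contradiction --- which is immaterial.
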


\begin{proof}
 Suppose not.
 Then, there exists a sequence $(\nu_k)$ in $A$ and a subsequence $(g_{n_k})$ of $(g_n)$ such that $(g_{n_k}\nu_k)$ converges to some point $\nu'\in\Fn$ different from $\nu$.
 After further extraction of $(g_{n_k})$, we may assume that $(g_{n_k}^{-1})$ $\iota\nmod$-converges to some point $\nu_-\in A_-$.
 Since, by hypothesis, $A \subset C(\nu_-)$, by 
 \cite[Lemma 4.18]{KLP:Morse}, we
 get that
 $g_{n_k} (A) \to \nu$,  
 as $n\to\infty$.
 Since, for each $k$, $\nu_k\in A$, we also obtain that $g_{n_k}\nu_k \to \nu$, which implies that $\nu' = \nu$. This is a contradiction.
\end{proof}

The following result follows from \Cref{lem:regularity_two}:

\begin{lemma}\label{cor:regulairty}
  Suppose that $(g_n)$ is $\nmod$-regular. 
  Let $A_-\subset  {\rm Flag}(\iota\nmod)$  be a subset containing all the $\iota\nmod$-limit points of $(g_n^{-1})$.
 If $A \subset \Fn$ is any compact subset antipodal to $A_-$,
 then $(g_n A)$ shrinks.
\end{lemma}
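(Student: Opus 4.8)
The plan is to argue by contradiction and reduce everything to \Cref{lem:regularity_two} after passing to suitable subsequences. Suppose $(g_n A)$ does not shrink; then there exist $\e > 0$ and a subsequence $(g_{n_k})$ with $\diam(g_{n_k} A) \ge \e$ for all $k$. First, since $(g_n)$ is $\nmod$-regular, after passing to a further subsequence I may assume $g_{n_k} \tof \nu$ for some $\nu \in \Fn$. Next, since $\nmod$-regularity of $(g_n)$ implies $\iota\nmod$-regularity of $(g_n^{-1})$, after passing to yet another subsequence (along which $g_{n_k}\tof\nu$ still holds, as flag-convergence of a sequence is inherited by all of its subsequences) I may assume that $(g_{n_k}^{-1})$ $\iota\nmod$-converges to some $\nu_- \in {\rm Flag}(\iota\nmod)$.

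The key point is that $\nu_-$, being the $\iota\nmod$-limit point of the subsequence $(g_{n_k}^{-1})$ of $(g_n^{-1})$, is in particular an $\iota\nmod$-limit point of $(g_n^{-1})$, hence $\nu_- \in A_-$ by hypothesis. Since $A$ is antipodal to $A_-$, it is antipodal to the singleton $\{\nu_-\}$, i.e.\ $A \subset C(\nu_-)$. Moreover, because $(g_{n_k}^{-1})$ is itself $\iota\nmod$-convergent and its $\iota\nmod$-limit point is unique, the singleton $\{\nu_-\}$ contains all $\iota\nmod$-limit points of $(g_{n_k}^{-1})$.

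Finally, I would apply \Cref{lem:regularity_two} to the sequence $(g_{n_k})$, with the role of $A_-$ there played by $\{\nu_-\}$ and with $A$ as given: the lemma yields that $(g_{n_k} A)$ converges to $\{\nu\}$, and in particular $\diam(g_{n_k} A) \to 0$, contradicting $\diam(g_{n_k} A) \ge \e$ for all $k$. This contradiction proves that $(g_n A)$ shrinks. There is no real obstacle here: the argument is a routine nested-subsequence extraction feeding into \Cref{lem:regularity_two}, and the only thing to watch is that the flag-convergence $g_{n_k}\tof\nu$ survives the second extraction, which it does since flag-convergence passes to subsequences. No geometric input beyond \Cref{lem:regularity_two} and the basic facts about regular and flag-convergent sequences recalled in \S\ref{sec:regularsequences} is needed.
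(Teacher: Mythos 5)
Your proof is correct and is essentially the argument the paper intends: the paper simply states that the lemma ``follows from \Cref{lem:regularity_two},'' and your contradiction-plus-subsequence-extraction (extracting $g_{n_k}\tof\nu$ and $g_{n_k}^{-1}\tof\nu_-\in A_-$, then applying \Cref{lem:regularity_two} with $A_-$ replaced by $\{\nu_-\}$) is exactly the routine reduction being invoked. No issues.
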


\subsection{Anosov subgroups}\label{sec:anosov_subgroups}
We fix once and for all an $\iota$-invariant face $\tmod$  of $\smod$.
We focus on the special class of discrete subgroups of $G$, called {\em $\tmod$-Anosov subgroups}.
This class of subgroups has several different characterizations.
For our purpose, we use the following characterization of $\tmod$-Anosov subgroups:

\begin{definition}[Asymptotically embedded]\label{def:asymptotically_embedded}
 A subgroup $\G$ of $G$ is {\em $\tmod$-asymptotically embedded} if
\begin{enumerate}[(i)]
 \item $\G$, as an abstract group, is hyperbolic, and
 \item there exists a $\G$-equivariant {antipodal map}\footnote{That is, $\xi$ maps every distinct pair of points to a  pair of antipodal points.} $\xi : \geo \G \to \Ft$, which preserves the {\em convergence dynamics}: That is, for every sequence $(\g_n)$ in $\G$ and every point $\e \in \geo \G$,
\begin{equation}\label{eqn:limitmap_continuity}
   \text{if } \g_n\toC \e, 
   \text{ then } \g_n \tof \xi(\e).
\end{equation}
\end{enumerate}
\end{definition}

The image of the map $\xi$ in the above definition is precisely $\LT{\G}$, the limit set of $\G$ in $\Ft$: Clearly, the image of $\xi$ is contained in $\LT{\G}$.
To show that opposite inclusion, let $\tau\in\LT{\G}$.
By definition, there exists a sequence $(\g_n)$ in $\G$ such that $\g_n\tof \tau$.
After extraction, $(\g_n)$ converges in $\bar\G$ to some point $\e\in\geo\G$.
By \eqref{eqn:limitmap_continuity}, $\xi(\e) = \tau$.

\begin{remark}\label{rem:asmp}
The above definition of $\tmod$-asymptotically embedded subgroups, which is a slight variation of the one given by Kapovich-Leeb-Porti in \cite[Definition 5.12]{MR3736790}, 
is more straightforward to verify  in this paper due to a certain ``ping-pong'' type arguments we use here.
 The equivalence between these two definitions can be checked as follows (cf. \cite[\S 5]{DK22}):
 
Suppose that $\G<G$ is  $\tmod$-asymptotically embedded in the sense of \Cref{def:asymptotically_embedded}.
By the first condition (i), $\G$ is hyperbolic.

We first verify that $\G$ is $\tmod$-regular: If $(\g_n)$ is any sequence in $\G$ without repeated entries, then, after extraction, $\g_n\toC \e$, for some $\e\in\geo\G$.
Thus, by condition (ii), $(\g_n)$ $\tmod$-converges to $\xi(\e)$.
Thus, $(\g_n)$ is $\tmod$-regular.

We next verify that $\xi$ is an embedding. Indeed, since $\geo \G$ is a compact and $\Ft$ is Hausdorff, it is enough to establish that $\xi$ is an injective continuous map: 
 Since $\xi$ is an antipodal map, $\xi$ is injective.
 To verify continuity, pick $x\in X$ and
 consider the map
\begin{equation*}
   \phi : \bG \to {\bar X}^\tmod = X\sqcup \Ft,
\end{equation*}
 whose restriction to $\geo\G$ is $\xi$ and whose restriction to $\G$ is the orbit map $\g \mapsto \g \cdot x$.
 The space $X\sqcup \Ft$ is topologized with the topology of $\tmod$-convergence. The restriction of this topology to $X$ or $\Ft$ coincides with their respective manifold topologies.
 By \eqref{eqn:limitmap_continuity}, $\phi$ is continuous and, hence, so is the restriction $\xi = \phi\vert_{\geo \G}$.
 
 Therefore, $\xi$ is a $\G$-equivariant homeomorphism between $\geo\G$ and its image, $\LT{\G}$.
 Finally, since $\xi$ is an antipodal map, $\LT{\G}$ is an antipodal subset of $\Ft$.
 Therefore, $\G$ is {\em $\tmod$-antipodal} in the sense of \cite{MR3736790}.
 Hence, $\G$ is $\tmod$-asymptotically embedded in the sense of \cite[Definition 5.12]{MR3736790}.
 
 The other direction is straightforward.
\end{remark}

 \subsection{Interactive pairs and triples}\label{sec:interactive}

Let $\tmod$ be an $\iota$-invariant face of $\smod$.
Following Maskit \cite{maskit:book}, we define the notion of ``interactive pairs'' and ``interactive triples."

\subsubsection{Interactive pairs}

Let $\G_A$ and $\G_B$ be discrete subgroups of $G$, and let $\H \coloneqq \G_A\cap \G_B$.
We denote this data by the triple $(\G_A,\G_B;\, \H)$.

\begin{definition}[Interactive pair]\label{defn:ping-pong}
A pair $(A,B)$ of compact subsets of $\Ft = G/P_\tmod$ is called an {\em interactive pair} for $(\G_A,\G_B;\, \H)$ if the following conditions are satisfied:
\begin{enumerate}[(i)]
 \item The interiors $A^\o$ of $A$ and $B^\o$ of $B$ are nonempty and disjoint.
 \item  $\H$ leaves the sets $A$ and $B$ {\em precisely invariant}, i.e., $\H A = A$, $\H B = B$, and, for all elements $\alpha\in \G_A \setminus \H$ and $\beta\in \G_B \setminus \H$, we have that $\alpha {B} \subset A^\o$ and $\beta {A} \subset  B^\o$.
\end{enumerate}
\end{definition}

See \Cref{pic2} for an illustration.

\begin{proposition}\label{prop:faithful_amalgam}
 If $(\G_A,\G_B;\, \H)$ admits an interactive pair $(A,B)$ in $\Ft$,
 then the natural homomorphism
 \[
  \rho: \G_A \star_\H \G_B \to \< \G_A,\G_B\> <G
 \]
 from the abstract amalgamated free product $\G_A \star_\H \G_B$ to $G$ in injective.
 In particular, the subgroup $\G \coloneqq \< \G_A, \G_B\>$ of $G$ is naturally isomorphic to $\G_A \star_\H \G_B$.
\end{proposition}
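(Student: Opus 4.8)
The plan is to run a ping-pong argument on $\Ft$ using the interactive pair $(A,B)$ to show that the natural homomorphism $\rho: \G_A \star_\H \G_B \to \< \G_A,\G_B\>$ has trivial kernel, i.e.\ that no nonempty normal form maps to the identity. Fix a point $x_0 \in \Ft$ lying outside $A \cup B$; such a point exists since $A, B$ are compact with disjoint interiors, but actually we should be slightly careful: what we really want is a point that is moved into $A^\o$ or $B^\o$ by the relevant coset representatives, so I would instead fix $x_A \in B^\o$ and $x_B \in A^\o$ (or a single point not in $A \cup B$ and track where it goes). The key mechanism is condition (ii) of \Cref{defn:ping-pong}: for $\alpha \in \G_A \setminus \H$ we have $\alpha B \subset A^\o$, and for $\beta \in \G_B \setminus \H$ we have $\beta A \subset B^\o$, while $\H$ preserves both $A$ and $B$.

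The main step is the following ping-pong lemma: if $\g = \g_1 \g_2 \cdots \g_l$ is a normal form in the sense of \eqref{eqn:reduced} with $l \ge 2$ (so no $\g_i \in \H$ and successive letters alternate between $\G_A$ and $\G_B$), then $\rho(\g) \ne 1_G$. To see this, suppose first $\g_l \in \G_B \setminus \H$ (the other case is symmetric), and evaluate $\rho(\g)$ on a point $y \in A^\o$. Reading from right to left: $\g_l y \subset B^\o$ since $\g_l \in \G_B \setminus \H$; then $\g_{l-1}(\g_l y) \subset A^\o$ since $\g_{l-1} \in \G_A \setminus \H$; and so on, so that $\rho(\g) y = \g_1 \cdots \g_l y$ lies in $A^\o$ if $\g_1 \in \G_A \setminus \H$ and in $B^\o$ if $\g_1 \in \G_B \setminus \H$. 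In either case, choosing $y$ in the interior of whichever of $A, B$ is \emph{not} the target (which is possible since $A^\o, B^\o$ are disjoint and nonempty), we get $\rho(\g) y \ne y$, hence $\rho(\g) \ne 1_G$. One has to handle the four combinations of (first letter type, last letter type); when the first and last letters are of the same type one needs to start the ping-pong from a point in the opposite set, and when they differ one can start from a point in either — the disjointness $A^\o \cap B^\o = \emptyset$ is what makes the conclusion $\rho(\g) y \ne y$ go through in every case. This shows $\rho$ is injective on elements of relative length $\ge 2$.

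It remains to deal with $l = 1$, i.e.\ $\g \in \G_A$ or $\g \in \G_B$. For such $\g$, injectivity of $\rho$ restricted to $\G_A$ (resp.\ $\G_B$) is immediate: $\G_A$ and $\G_B$ are given as subgroups of $G$, so $\rho|_{\G_A}$ and $\rho|_{\G_B}$ are the inclusion maps, which are injective. Combining: any element of $\ker\rho$ has a normal form, which cannot have length $\ge 2$ by the ping-pong argument and cannot have length $1$ by injectivity on the factors, so $\ker\rho$ is trivial. The ``in particular'' clause is then just the statement that an injective homomorphism onto its image $\<\G_A,\G_B\>$ is an isomorphism onto that image, so $\G \cong \G_A \star_\H \G_B$ via $\rho$.

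The part requiring the most care — the ``main obstacle'' — is not any deep idea but the bookkeeping of the four cases for the endpoints of the normal form, together with the observation that the conclusion genuinely uses $A^\o \cap B^\o = \emptyset$ (not merely $A \ne B$): when $\g_1$ and $\g_l$ are of the same type, one needs $\rho(\g)$ to map the \emph{opposite} interior into the target interior and then invoke disjointness; when they are of opposite type one could alternatively argue by looking at $\rho(\g)$ and $\rho(\g)^{-1}$ or by a conjugation trick, but the cleanest uniform treatment is just to always feed in a point of the interior opposite to the target. I would present the ping-pong lemma as a short self-contained claim inside the proof and then conclude in two lines.
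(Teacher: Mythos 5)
Your overall strategy is the same as the paper's (ping-pong on $\Ft$ with the interactive pair, plus the observation that length-one elements are honest nontrivial elements of $\G_A\cup\G_B\subset G$), but the key step has a genuine gap in the case where the normal form $\g=\g_1\cdots\g_l$ begins and ends with letters from \emph{different} factors (equivalently, $l$ is even). Say $\g_l\in\G_B\setminus\H$ and $\g_1\in\G_A\setminus\H$. The interactive-pair condition only controls $\beta A$ for $\beta\in\G_B\setminus\H$ and $\alpha B$ for $\alpha\in\G_A\setminus\H$; it says nothing about $\g_l(B^\o)$. So the point $y$ you feed in is forced by the last letter to lie in $A$, and then the chain ends in $A^\o$ — the \emph{same} set you started in — so disjointness of $A^\o$ and $B^\o$ gives you nothing, and $\rho(\g)y\ne y$ does not follow. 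Your prescription ``always feed in a point of the interior opposite to the target'' is exactly what is unavailable here, and your case bookkeeping is inverted: it is the same-type-ends case that works automatically (start set and target set differ), while the mixed-ends case is the one needing extra input. The fallback you mention (pass to $\rho(\g)^{-1}$ or conjugate) does not repair this as stated: $\g^{-1}$ again has mixed ends, and conjugating by a single letter of $\g$ just cyclically rotates an even-length alternating word, which still has mixed ends; making the ends match by conjugating with an auxiliary letter requires choosing it outside two $\H$-cosets, which can fail when $[\G_A:\H]=[\G_B:\H]=2$.

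The fix is to argue with sets rather than a single generic interior point, which is what the paper does: the chain gives $\rho(\g)A\subset A^\o$ (or $\subset B^\o$ in the odd case), so if $\rho(\g)=1_G$ then $A\subset A^\o$ (or $A\subset B^\o$). The second inclusion contradicts $A^\o\cap B^\o=\emptyset$; the first would make the nonempty compact set $A$ open, hence clopen and equal to the connected flag manifold $\Ft$, again contradicting disjointness of the interiors. Equivalently, you can keep a pointwise argument by feeding in a point $y\in A\setminus A^\o$ (nonempty for the same reason): then $\rho(\g)y\in A^\o$ while $y\notin A^\o$, so $\rho(\g)y\ne y$. With this correction in the mixed-ends case, the rest of your argument (the length-one case via injectivity of the inclusions $\G_A,\G_B\hookrightarrow G$, and the conclusion that $\ker\rho$ is trivial) goes through and coincides with the paper's proof.
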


\begin{proof}
 Let $\g\in\G_A \star_\H \G_B$ be any nontrivial element.
 We want to show that $\rho(\g)$ is nontrivial.
 Clearly, if $\g\in(\G_A\cup\G_B) \setminus \{1_\G\}$, then $\rho(\g) = \g$ and, hence, $\rho(\g)$ is nontrivial.
 So, we may assume that $\g\not\in \G_A\cup\G_B$.
 Choose a normal form $\g = \g_1 \cdots \g_l$ of $\g$ (see \S\ref{sec:reduced}).
 Since $\g\not\in\G_A\cup\G_B$, we have $l\ge 2$.
 Assume that $\g_l\in\G_B$ (the other possibility that $\g_l\in\G_A$ can be similarly analyzed).
 Then, $\rho(\g)A \subset A^\o\cup B^\o$ and, in particular, $\rho(\g)A \ne A$.
 Thus, $\rho(\g)$ is a nontrivial element of $G$.
\end{proof}

Compare with \cite[Theorem VII.A.10]{maskit:book}.

\begin{remark}
 With a little more effort, one can also show that the image of the homomorphism $\rho$ 
 in \Cref{prop:faithful_amalgam} is discrete.
 However, we do not need this result to prove our main theorems.
\end{remark}

\subsubsection{Interactive triples}

Let $\P$ be a discrete subgroup of $G$, and let $f\in G$. 
Define 
 \[
 \H_- \coloneqq ({f^{-1}\P f}) \cap \P, \quad
 {\H_+} \coloneqq \P \cap (f\P f^{-1}).\]
 Clearly, $f\H_- f^{-1} = {\H_+}$.
 We denote this data by the quadruple $(\P;\,\H_\pm;\,f)$.

\begin{definition}[Interactive triples]\label{def:interactive_triple}
 A triple $(A, B_\pm)$ of compact subsets  of $\Ft$
 is called an {\em interactive triple} for $(\P;\,\H_\pm;\,f)$ if the following conditions are met:
 \begin{enumerate}[(i)]
 \item The interiors $A^\o$, $B^\o_-$, $B_+^\o$ of $A$, $B_-$, $B_+$, respectively, are nonempty and pairwise disjoint.
 Furthermore, $B_-\cap B_+ = \emptyset$.

  \item $\H_\pm$ leaves $B_\pm$ {\em precisely invariant}, i.e.,
   $\H_\pm B_\pm = B_\pm$ and 
   $\p(B_\pm) \subset A^\o$ for all $\mu\in\P\setminus \H_\pm$. 
  \item  $f^{\pm1}(A) \subset B_\pm$ and $f^{\pm1} (B_\pm) \subset B_\pm^\o$.
 \end{enumerate}
\end{definition}

See \Cref{pic3}  for an illustration.

\begin{proposition}\label{prop:faithful_HNN}
 If $(\P;\,\H_\pm;\,f)$ admits an interactive triple $(A, B_\pm)$ in $\Ft$,
 then the natural homomorphism
 \[
  \Pf \to G
 \]
 is injective. In particular, the subgroup $\G \coloneqq \< \P,f\>$ of $G$ is naturally isomorphic to the HNN extension of $\P$ by the isomorphism $\phi :\H_- \to{\H_+}$ is given by $\phi(\eta) = f\eta f^{-1}$, for all $\eta\in\H_-$.
\end{proposition}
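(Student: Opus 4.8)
The plan is to mimic the proof of \Cref{prop:faithful_amalgam} using Britton's Lemma as the group-theoretic input and the ping-pong dynamics guaranteed by the interactive triple. Let $\g \in \Pf$ be an arbitrary nontrivial element; I want to show $\rho(\g) \ne 1_G$, where $\rho: \Pf \to G$ is the natural homomorphism sending $\P$ into $\P < G$ and the stable letter to $f$. If $\rl(\g) = 0$, then $\g \in \P \setminus \{1\}$ and $\rho(\g) = \g \ne 1_G$, so I may assume $\rl(\g) = n \ge 1$. Choose a normal form $\g = \p_0 f^{\epsilon_1} \p_1 \cdots f^{\epsilon_n}\p_n$ in the sense of the normal form conditions recorded after \eqref{eqn:normal_form_HNN}; Britton's Lemma guarantees this expression does not reduce, i.e., there is no subword of the form $f\eta f^{-1}$ with $\eta \in \H_-$ (when $\epsilon_i = 1, \epsilon_{i+1} = -1$) or $f^{-1}\eta' f$ with $\eta' \in \H_+$ (when $\epsilon_i = -1, \epsilon_{i+1} = 1$).

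The key step is to track the image of $A^\o$ (or, after a reduction, of $A$) under $\rho(\g)$ by feeding it through the letters of the normal form from right to left and showing it lands inside $B_+^\o \cup B_-^\o \subset \Ft \setminus A^\o$, hence $\rho(\g) \ne 1_G$. The basic moves are: applying $f^{\pm 1}$ to $A$ lands in $B_\pm$ and to $B_\pm$ lands in $B_\pm^\o$ by condition (iii); applying $\p \in \P$: if $\p \notin \H_\pm$ it sends $B_\pm$ into $A^\o$ by condition (ii), and if $\p \in \H_\pm$ it preserves $B_\pm$. The normal-form (non-reduction) condition is exactly what prevents a bad pattern like $f\,\p\,f^{-1}$ with $\p \in \H_-$: in that scenario $f^{-1}$ sends $A$ into $B_-$, then $\p \in \H_- = \H_-$ would preserve $B_-$ rather than push into $A^\o$, and then $f$ would send $B_-$ into... $B_-^\o$, not $A^\o$ — so one has to check carefully that the induction still closes. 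The cleanest bookkeeping is to show by downward induction on $j$ that $f^{\epsilon_{j+1}}\p_{j+1}\cdots f^{\epsilon_n}\p_n (A) \subset B_{\sigma_j}$ for an appropriate sign $\sigma_j \in \{+,-\}$ determined by $\epsilon_{j+1}$ (and, when $\p_{j}\in\H_{\pm}$, the sign is forced to persist, which is precisely guaranteed by the normal form conditions (i)--(ii) after \eqref{eqn:normal_form_HNN}); at the final step one gets $\rho(\g)(A) \subset A^\o \cup B_+^\o \cup B_-^\o$ with the inclusion landing in $B_+^\o$ or $B_-^\o$ when $n \ge 1$ and $\epsilon_1$ has the appropriate sign, and one deals with the leftover factor $\p_0$ separately: if $\rho(\g)(A) \subset B_\pm^\o$ then $\p_0 \rho'(\g)(A)$ where $\rho'(\g) = \rho(f^{\epsilon_1}\p_1 \cdots)$ lands in $B_\pm$ or in $A^\o$, which in all cases is a proper subset of $\Ft$ distinct from $A$, forcing $\rho(\g)(A) \ne A$.

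The main obstacle I anticipate is the careful handling of the interface between the normal-form conditions and the precise-invariance conditions — specifically, making sure that whenever a $\p_i \in \H_+$ (resp.\ $\H_-$) appears, the sign pattern dictated by Britton's Lemma is exactly the one that keeps the ping-pong set "trapped" in $B_+$ (resp.\ $B_-$) rather than escaping to $A^\o$, and that this trapped state is then correctly pushed forward by the next $f^{\pm 1}$. This is the HNN analogue of the subtlety that is essentially invisible in the amalgamated free product case (\Cref{prop:faithful_amalgam}), where the two-coloring of letters makes the ping-pong trivial. Once the downward induction is set up with the correct invariant — "$f^{\epsilon_{j+1}}\p_{j+1}\cdots\p_n(A) \subset B_{\sigma}$ where $\sigma = \mathrm{sign}(\epsilon_{j+1})$ if $\p_{j+1}\notin \H_{\mathrm{sign}(\epsilon_{j+1})}$ and $\sigma$ is unconstrained-but-consistent otherwise" — the verification is routine, and the conclusion that $\rho$ is injective follows immediately. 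Finally, the last sentence of the proposition (identifying $\langle \P, f\rangle$ with the HNN extension via $\phi(\eta) = f\eta f^{-1}$) is then a formal consequence: the natural surjection onto $\langle \P, f \rangle$ is injective, so it is an isomorphism, and $\phi$ is the isomorphism $\H_- \to \H_+$ induced by conjugation by $f$ since $f\H_- f^{-1} = \H_+$ by construction.
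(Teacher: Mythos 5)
Your overall strategy is exactly the one the paper intends (its own proof of \Cref{prop:faithful_HNN} is omitted as ``similar to \Cref{prop:faithful_amalgam}''): take a Britton normal form $\g=\p_0f^{\epsilon_1}\p_1\cdots f^{\epsilon_n}\p_n$ with $n\ge 1$ and play ping-pong with the interactive triple. Your key mechanism is also correct: if $\p_j\in\H_{\epsilon_{j+1}}$, the normal-form conditions after \eqref{eqn:normal_form_HNN} force $\epsilon_j=\epsilon_{j+1}$, so condition (iii) of \Cref{def:interactive_triple} keeps the set trapped ($f^{\epsilon_j}B_{\epsilon_{j+1}}\subset B_{\epsilon_j}^\o$), while if $\p_j\notin\H_{\epsilon_{j+1}}$, condition (ii) pushes it into $A^\o$ and then $f^{\epsilon_j}(A)\subset B_{\epsilon_j}$. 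In particular the sign in your invariant is not ``unconstrained-but-consistent'': it is always $\sigma_j=\epsilon_{j+1}$, i.e., the partial product beginning with $f^{\epsilon_{j+1}}$ carries $A$ into $B_{\epsilon_{j+1}}$, and after the final $\p_0$ the image of $A$ lies in $A^\o$ or in $B_{\epsilon_1}$; in either case $\rho(\g)A\ne A$ (for the $B_{\epsilon_1}$ case: a nonempty open subset of the closed set $B_{\epsilon_1}$ lies in $B_{\epsilon_1}^\o$, contradicting disjointness of interiors), exactly as in the amalgam case.

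The one step that fails as written is the base of your downward induction. Your stated invariant is $f^{\epsilon_{j+1}}\p_{j+1}\cdots f^{\epsilon_n}\p_n(A)\subset B_{\sigma_j}$, and its first instance requires knowing where $\p_n(A)$ sits. But the interactive-triple axioms control only $\p(B_\pm)$ for $\p\in\P$ and $f^{\pm1}(A)$, $f^{\pm1}(B_\pm)$; they say nothing about $\p(A)$ for a general $\p\in\P$, and nothing forces $\P$ to map $A$ into $A\cup B_+\cup B_-$ (in the surface-group examples it does not). So $f^{\epsilon_n}\p_n(A)\subset B_{\sigma}$ is unjustified, and your phrase ``after a reduction'' does not address this. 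The fix is cheap and standard: $\rho(\g)=1_G$ if and only if $\rho(\p_n\g\p_n^{-1})=1_G$, and $\p_n\g\p_n^{-1}=(\p_n\p_0)f^{\epsilon_1}\p_1\cdots\p_{n-1}f^{\epsilon_n}$ is again a normal form of the same relative length whose rightmost $\P$-letter is trivial; now the induction starts legitimately with $f^{\epsilon_n}(A)\subset B_{\epsilon_n}$ and runs exactly as you describe (your separate treatment of the leftover $\p_0$ on the left is fine; only the rightmost letter needs this conjugation). With that repair the argument is complete, and the identification of $\<\P,f\>$ with $\Pf$ via $\phi(\eta)=f\eta f^{-1}$ follows formally, as you say.
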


\begin{proof}
 The proof of this result is similar to the one of \Cref{prop:faithful_amalgam}.
 Hence, we omit the details.
\end{proof}

Compare with \cite[Theorem VII.D.12]{maskit:book}.

\section{Proof of \Cref{mainthm:amalgam}} 
\label{sec:CT_A}

In this section, we work under the hypothesis of \Cref{mainthm:amalgam}.
To simplify our situation, we replace $A$ and $B$ by the closure of their respective interiors.
It is easy to see that $(\cl(A^\o),\cl(B^\o))$
is still an interactive pair for $(\G_A,\G_B;\,\H)$.
However, the main advantage of the interactive pair $(\cl(A^\o),\cl(B^\o))$ is a stronger antipodality
hypothesis:

\begin{lemma}\label{lem:replacei}
$\cl(A^\o)$, resp. $\cl(B^\o)$, is  antipodal to $B^\o$, resp. $A^\o$.
\end{lemma}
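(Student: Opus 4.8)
The plan is to play the precise‑invariance of the ping‑pong sets against the dynamics of $\tmod$‑regular sequences, so as to move the problem into a configuration in which the \emph{openness} of the antipodality relation can be exploited.

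\emph{Reductions.} We may assume $\H\subsetneq\G_A$ and $\H\subsetneq\G_B$, since otherwise $\G=\G_A$ or $\G=\G_B$ and there is nothing to prove. Since $\H A=A$, $\H B=B$, and $\H$ acts on $\Ft$ by homeomorphisms, each of $A^\o,\,B^\o,\,\cl(A^\o),\,\cl(B^\o)$ is $\H$‑invariant. By \Cref{defn:ping-pong}(i) we have $A^\o\cap B^\o=\emptyset$, so $\cl(A^\o)\cap B^\o=\emptyset$ as well (a point of the intersection would force the open set $B^\o$ to meet $A^\o$). By the symmetry between $A$ and $B$ it is enough to show that $\cl(A^\o)$ is antipodal to $B^\o$. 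Write $\cl(A^\o)=A^\o\sqcup\partial A$. For $\nu_+\in A^\o$ antipodality to $B^\o$ is precisely hypothesis~(i) of \Cref{mainthm:amalgam}, so the whole content lies in the boundary points $\nu_+\in\partial A$. One records here that $\LT{\G_A}\subseteq\cl(A^\o)$ (the $\G_A$‑translates of $B$ lie in $A^\o$ and accumulate onto $\LT{\G_A}$) and that $\LT{\H}\subseteq\partial A\cap\partial B$ (apply powers of a loxodromic element of $\H$ to a point of $A^\o$, resp. $B^\o$, antipodal to its repelling flag); and that, by hypothesis~(ii), every point of $\LT{\G_A}\setminus\LT{\H}$ is already antipodal to $B\supseteq B^\o$. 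The genuine difficulty is a boundary point $\nu_+\in\partial A$ that is not (visibly) a limit point of $\G_A$.

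\emph{The argument.} Fix $\nu_+\in\partial A$ and $\eta_-\in B^\o$, and assume for contradiction that $\nu_+$ is not antipodal to $\eta_-$. Pick $\nu_+^{(k)}\in A^\o$ with $\nu_+^{(k)}\to\nu_+$; each $\nu_+^{(k)}$ is antipodal to $\eta_-$ by~(i). To promote this to antipodality of $\nu_+$ itself, one produces a divergent sequence $\g_n$ in $\G=\<\G_A,\G_B\>$ — an alternating word obtained by iterating the precise‑invariance inclusions, so that $(\g_n\cl(A^\o))$ is a nested, shrinking family contained in $A^\o$ — and checks, by means of \Cref{lem:mainlemma}, \Cref{prop:regulairty_two}, and \Cref{lem:regularity_four}, that $(\g_n)$ is $\tmod$‑regular with $\tmod$‑limit $\nu\in\cl(A^\o)$, and that $\g_n\nu_+\to\nu$ (and $\g_n\nu_+^{(k(n))}\to\nu$ for a suitable $k(n)\to\infty$). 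The role of hypothesis~(ii) is to arrange, using the $\G_A$‑ and $\G_B$‑dynamics, that $\eta_-$ is in general position with respect to the $\iota\tmod$‑limit points of $(\g_n^{-1})$, so that by \Cref{lem:regularity_two} the sequence $\g_n\eta_-$ converges to a flag $\nu'$ with $\nu'\ne\nu$ and $\nu'$ antipodal to $\nu$. Since the set of antipodal pairs in $\Ft\times\Ft$ is open and contains $(\nu,\nu')$, the pair $(\g_n\nu_+,\g_n\eta_-)$ is antipodal for all large $n$; applying $\g_n^{-1}$ and using the $G$‑invariance of antipodality, $\nu_+$ is antipodal to $\eta_-$ — a contradiction. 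Interchanging $A$ and $B$ yields the statement for $\cl(B^\o)$ and $A^\o$.

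\emph{The crux.} The hard part is the construction of $\g_n$ so that $\g_n\nu_+$ and $\g_n\eta_-$ converge to \emph{different}, antipodal limit flags: a careless choice sends both points to the same limit and gives no information, so one must use the interactive structure to ``separate'' $\nu_+\in A$ from $\eta_-\in B$ dynamically, and invoke hypothesis~(ii) precisely to certify that the two limits are antipodal. Making this rigorous — in particular handling by hand the subcase $\nu_+\in\LT{\H}\subseteq\partial A\cap\partial B$ — is where the regularity criteria of \S\ref{sec:regularsequences} and the finer structure of $\tmod$‑Anosov limit sets are indispensable.
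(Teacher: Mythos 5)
Your plan has a genuine gap at exactly the point you call ``the crux,'' and it is not a technicality that can be patched by care: the mechanism you propose cannot produce the configuration it needs. You want a sequence $(\g_n)$ with $\g_n\nu_+\to\nu$ and $\g_n\eta_-\to\nu'$ for \emph{distinct antipodal} flags $\nu,\nu'$, and you cite \Cref{lem:regularity_two} to control $\g_n\eta_-$. But that lemma requires $\eta_-$ to be antipodal to the $\iota\tmod$-limit points of $(\g_n^{-1})$ and then forces $\g_n\eta_-\to\nu$, the \emph{same} $\tmod$-limit of $(\g_n)$ to which $\g_n\nu_+$ converges; so the openness-of-antipodality step yields nothing, and no construction achieving the required separation is given (nor is it clear one exists). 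There is also a circularity hazard: the assertion that an alternating sequence makes $(\g_n\cl(A^\o))$ a shrinking family is, in the paper, the content of \Cref{lem:shrinks}, whose proof invokes \Cref{rem:replace_hypothesis_amalgam}, i.e.\ precisely the strengthened antipodality of the closures that the present lemma is supposed to establish. Finally, the opening reduction ``if $\H=\G_A$ there is nothing to prove'' is unfounded: the lemma is a statement about the sets $A,B$ and does not trivialize in that case, and the facts you record about $\LT{\G_A}$, $\LT{\H}$ and hypothesis~(ii) of \Cref{mainthm:amalgam} play no role in the statement.

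The actual proof is far more elementary and uses only hypothesis~(i) (antipodality of the interiors) together with point-set topology of $\Ft$: given $\tau_+\in\cl(A^\o)$ and $\tau_-\in B^\o$, pick $\tau_n\in A^\o$ with $\tau_n\to\tau_+$ and a closed ball $B_\epsilon(\tau_-)\subset B^\o$. For $\tau\in\Ft$ let $\E(\tau)$ be the compact set of flags not antipodal to $\tau$; since each $\tau_n$ is antipodal to all of $B^\o$, the set $\E(\tau_n)$ misses $B_\epsilon(\tau_-)$, so $d(\E(\tau_n),\tau_-)\ge\epsilon$ uniformly in $n$. Because $\tau\mapsto\E(\tau)$ is continuous in the Hausdorff metric (by homogeneity of the $G$-action), $d(\E(\tau_+),\tau_-)\ge\epsilon>0$, i.e.\ $\tau_+$ is antipodal to $\tau_-$. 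No regularity, no group dynamics, and no hypothesis~(ii) are needed; the only subtlety, which your scheme does not exploit, is that $\tau_-$ has a whole neighborhood inside $B^\o$, which converts interior-to-interior antipodality into the uniform bound that survives the limit $\tau_n\to\tau_+$.
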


\begin{proof}
Recall that $A^\o$ is antipodal to $B^\o$ under the hypothesis of \Cref{mainthm:amalgam}.
Let $\tau_+\in \cl(A^\o)$ and $\tau_-\in B^\o$ be any points.
We show that $\tau_\pm$ are antipodal.

Fix an auxiliary distance function $d$ on $\Ft$ compatible with its manifold topology. For $\tau\in\Ft$, let $\E(\tau)$ denote the  (compact) subset of $\Ft$ consisting of all points which are not antipodal to $\tau$.
Let $(\tau_n)$ be a sequence in $A^\o$ converging to $\tau_+$ and let $B_\epsilon(\tau_-)$ be a closed metric ball in $\Ft$ centered at $\tau_-$ of radius $\epsilon>0$ small enough such that
$B_\epsilon(\tau_-) \subset B^\o$.
By hypothesis, $\tau_n$ and $B^\o$ are antipodal. Hence, $d(\E(\tau_n),B_\epsilon(\tau_-)) > 0$, implying that $d(\E(\tau_n),\tau_-) \ge \epsilon$.
Since the Hausdorff distance between $\E(\tau_n)$ and $\E(\tau_+)$ converges to zero as $n\to\infty$,
it follows that $d(\E(\tau_+),\tau_-)\ge \epsilon>0$.
So, $\tau_-\not\in\E(\tau_+)$.
\end{proof}

\begin{assumption}\label{rem:replace_hypothesis_amalgam}
After replacing $(A,B)$ by  $(\cl(A^\o),\cl(B^\o))$ in the hypothesis of \Cref{mainthm:amalgam}, in place of (i), we will assume the following:
{\em
\begin{enumerate}[(i)]
 \item [(i)'] The pairs of subsets $(A,B^\o)$ and $(A^\o,B)$ of $\Ft$ are antipodal to each other.
\end{enumerate}}
This replacement does not affect the conclusion of \Cref{mainthm:amalgam}.
\end{assumption}

In \S\ref{sec:regularity_amalgam}, we will take additional advantage given by hypothesis (i)' above; see, for instance, the proof of \Cref{lem:shrinks}.

\begin{lemma}\label{lem:ping-pong}
Under the hypothesis of \Cref{mainthm:amalgam}
 \[
  \LT{\G_A} \subset  A, \quad
  \LT{\G_B} \subset  B, \quad \text{and} \quad
  \LT{\H} \subset A\cap B.
 \]
\end{lemma}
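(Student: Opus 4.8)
The statement asserts three containments for the $\tmod$-limit sets. The plan is to exploit the precise invariance properties of the interactive pair $(A,B)$ together with the fact that, for $\tmod$-Anosov (hence $\tmod$-regular) subgroups, every $\tmod$-limit point arises as the flag-limit of some sequence of group elements, and flag-limits can be detected by the shrinking behavior of iterates on sets with nonempty interior.

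\textbf{First I would prove $\LT{\G_A}\subset A$.} Let $\tau\in\LT{\G_A}$; by definition of the $\tmod$-limit set there is a sequence $(\a_n)$ in $\G_A$ with $\a_n\tof\tau$. After passing to a subsequence I may assume (since $\G_A$ is $\tmod$-Anosov, hence discrete) that the $\a_n$ are pairwise distinct, and after a further extraction that all $\a_n$ lie outside $\H$ (if infinitely many lie in $\H$, then $\tau\in\LT{\H}\subset\LT{\G_A}$, and I will handle $\LT\H$ last; alternatively $\H$ is finite-index in $\G_A$ only in degenerate cases, so I can simply split into the two cases). Assuming $\a_n\in\G_A\setminus\H$ for all $n$, the precise-invariance condition (ii) of \Cref{defn:ping-pong} gives $\a_n B\subset A^\o\subset A$ for every $n$. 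Since $\a_n\tof\tau$, the restriction of (a subsequence of) $\a_n$ to $C(\nu_-)$, for a suitable $\nu_-\in\mathrm{Flag}(\iota\tmod)$, converges uniformly on compacts to the constant $\tau$; choosing any point $\nu_1\in C(\nu_-)\cap B^\o$ (nonempty because $C(\nu_-)$ is open dense and $B^\o\ne\emptyset$) we get $\a_n\nu_1\to\tau$, and since $\a_n\nu_1\in A$ for all $n$ and $A$ is closed, $\tau\in A$. This settles $\LT{\G_A}\subset A$, and symmetrically $\LT{\G_B}\subset B$.

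\textbf{Then $\LT{\H}\subset A\cap B$ follows.} Since $\H\leq\G_A$ we have $\LT{\H}\subset\LT{\G_A}\subset A$, and since $\H\leq\G_B$ likewise $\LT{\H}\subset\LT{\G_B}\subset B$; hence $\LT{\H}\subset A\cap B$. (This also closes the gap left above: in the case where infinitely many $\a_n\in\H$, the limit point lies in $\LT{\H}\subset A$ by this same monotonicity, using that we have already established $\LT{\G_A}\subset A$ and $\H\leq\G_A$ — or more cleanly, one simply observes $\LT\H\subset A$ directly because $\H$ preserves $A$: for $\eta_n\in\H$ with $\eta_n\tof\tau$ and $\eta_n B^\o$ of the form... no — here the clean argument is monotonicity of $\Lambda$ under inclusion of subgroups, which is immediate from the definition of the flag-limit set, so I would state that as the first observation.)

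\textbf{The main obstacle.} The only genuine subtlety is the extraction step: ensuring that a flag-convergent sequence in $\G_A$ realizing a given $\tau\in\LT{\G_A}$ can be taken to consist of elements \emph{outside} $\H$ (so that condition (ii) of the interactive pair applies and pushes $B$ into $A$), or else that $\tau$ already lies in $\LT{\H}$. This is handled by the dichotomy above together with the elementary monotonicity $\LT{\H}\subset\LT{\G_A}$, so no delicate argument is actually required; the rest is a direct application of the uniform-on-compacts convergence underlying $\tof$ and the closedness of $A$ and $B$. Note I do not need the antipodality hypotheses (i), (ii) of \Cref{mainthm:amalgam} for this lemma — only the interactive-pair axioms and $\tmod$-regularity of $\G_A,\G_B$ — which is consistent with the lemma being an early structural step.
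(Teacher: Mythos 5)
Your reduction of the first two containments to the flag-convergence argument is fine, and your observation that only the interactive-pair axioms (not the antipodality hypotheses) are needed matches the paper. The problem is the branch of your dichotomy in which the realizing sequence $(\a_n)$ has infinitely many (possibly all) terms in $\H$. There you conclude $\tau\in\LT{\H}$ and then justify $\LT{\H}\subset A$ by ``monotonicity, using that we have already established $\LT{\G_A}\subset A$'' --- but $\LT{\G_A}\subset A$ is exactly what is being proved, and this branch is precisely the part of it that is still open; monotonicity only gives $\LT{\H}\subset\LT{\G_A}$, which returns you to the starting point. Note also that when $\H=\G_A$ there is no sequence outside $\H$ at all, so the ``good'' branch of your dichotomy is empty and the circular branch is the whole statement. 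As finally written (you start the correct direct argument and then discard it with ``no --- here the clean argument is monotonicity''), the proposal does not close this case, so there is a genuine gap.

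The gap is easy to fill, and the paper does it in two ways you nearly wrote down. If $\H=\G_A$, then $\H A=A$, so applying \Cref{lem:regularity_four} with both compacts equal to $A$ (i.e.\ your own $\nu_1\in C(\nu_-)\cap A^\o$ argument, which you abandoned) gives $\LT{\G_A}=\LT{\H}\subset A$. If $\H\neq\G_A$, fix $\a\in\G_A\setminus\H$ and replace every term $\a_n\in\H$ by $\a_n\a\notin\H$; right multiplication by a fixed element does not change the flag limit, so the modified sequence still $\tmod$-converges to $\tau$, now lies entirely in $\G_A\setminus\H$, and your main argument ($\a_n B\subset A^\o$, pick $\nu_1\in C(\nu_-)\cap B$, use closedness of $A$, i.e.\ \Cref{lem:regularity_four}) applies. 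With either repair the rest of your proof, including $\LT{\H}\subset\LT{\G_A}\cap\LT{\G_B}\subset A\cap B$ at the end, is correct and coincides with the paper's argument.
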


\begin{proof}
 It will be enough to show that $\LT{\G_A} \subset  A$.
 
 If $\H = \G_A$, then $\G_A$ preserves $A$, and since $A$ has a nonempty interior, all $\tmod$-limit points of $\G$ must lie in $ A$ (see \Cref{lem:regularity_four}).
 
 So, we can assume that $\H$ is a proper subgroup of $\G_A$, i.e., there exists some element $\a\in\G_A$ which is not an element of $\H$.
 For any point $\tau_+\in\LT{\G_A}$, consider a sequence $(\a_n)$ in $\G_A$ such that $\a_n \tof \tau_+$. 
 We may (and will) also assume that no elements of $(\a_n)$ lie in $\H$:
 Indeed, for every $n\in\N$, if $\a_n\in\H$, then replace the $n$-th entry $\a_n$ in $(\alpha_n)$  by $\a_n\a$.
 After replacing all such entries, the resulting sequence, again denoted by $(\a_n)$, does not share any elements with $\H$ but still satisfies $\a_n \tof \tau_+$.
 
Since for all $n\in\N$, $\alpha_n B \subset A$, \Cref{lem:regularity_four} implies that $\tau_+\in A$.
\end{proof}

\begin{remark}\label{rem:relaxation_amalgam}
 If $\LT{\G_A}\cap \partial A = \LT{\H}$, then, by \Cref{lem:ping-pong}, $\LT{\G_A}\setminus\LT{\H}$ lies in $A^\o$.
 Then, $\LT{\G_A}\setminus\LT{\H}$ is antipodal to $B$; see \Cref{rem:replace_hypothesis_amalgam}.
 Similarly, if $\LT{\G_B}\cap \partial B = \LT{\H}$, then $\LT{\G_B}\setminus\LT{\H}$ is antipodal to $A$.
 
 Consequently, when it is known that $\LT{\G_A}\cap \partial A = \LT{\H}=\LT{\G_B}\cap \partial B$, then the second condition (ii) in \Cref{mainthm:amalgam} is automatically satisfied.
\end{remark}

\begin{corollary}\label{cor:malnormal}
Under the hypothesis of \Cref{mainthm:amalgam},
the subgroup $\H$ is weakly malnormal in both $\G_A$ and $\G_B$.
\end{corollary}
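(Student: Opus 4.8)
The plan is to deduce weak malnormality from the precise-invariance and antipodality hypotheses, working with the $\tmod$-limit sets in $\Ft$. I will prove that $\H$ is weakly malnormal in $\G_A$; the assertion for $\G_B$ is symmetric, with the roles of $A,\G_A$ and $B,\G_B$ interchanged. First I would pass, via \Cref{rem:replace_hypothesis_amalgam}, to the replaced interactive pair, so that $(A,B^\o)$ and $(A^\o,B)$ are antipodal. The consequence I will use is that $A^\o\cap B=\emptyset$ (and, symmetrically, $A\cap B^\o=\emptyset$), since a simplex of $\tits X$ is never antipodal to itself.

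Now suppose toward a contradiction that some $\g\in\G_A\setminus\H$ has $N\coloneqq\g\H\g^{-1}\cap\H$ infinite. The key point is that $N$ is an infinite subgroup of the $\tmod$-regular group $\G_A$, hence is itself $\tmod$-regular, and therefore has nonempty $\tmod$-limit set: picking an injective sequence in $N$ and extracting a $\tmod$-convergent subsequence, its $\tmod$-limit point lies in $\LT{N}$. I then chase inclusions of limit sets. From $N<\H$ together with \Cref{lem:ping-pong} I get $\LT{N}\subset\LT{\H}\subset B$. From $N<\g\H\g^{-1}$, i.e.\ $\g^{-1}N\g<\H$, together with $G$-equivariance of the $\tmod$-limit set I get $\LT{N}=\g\,\LT{\g^{-1}N\g}\subset\g\,\LT{\H}$; and since $\LT{\H}\subset B$ by \Cref{lem:ping-pong} and $\g\in\G_A\setminus\H$, the precise-invariance axiom \Cref{defn:ping-pong}(ii) gives $\g B\subset A^\o$, hence $\LT{N}\subset A^\o$. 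Combining the two inclusions yields $\emptyset\neq\LT{N}\subset A^\o\cap B=\emptyset$, a contradiction. Thus $N$ is finite, as desired; the mirror argument for $\G_B$ uses instead $\LT{N}\subset\LT{\H}\subset A$, $\LT{N}\subset\g\,\LT{\H}\subset\g A\subset B^\o$, and $A\cap B^\o=\emptyset$.

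I do not anticipate a genuine obstacle here. The only step requiring a moment of care is the claim that an infinite $\tmod$-regular subgroup has a nonempty $\tmod$-limit set, and this is immediate from the definitions in \S\ref{sec:regularsequences}; in particular the argument uses neither the quasiconvexity hypothesis on $\H$ nor any Anosov property of $\H$ itself, only that $\G_A$ and $\G_B$ are $\tmod$-regular and the combinatorics of the interactive pair.
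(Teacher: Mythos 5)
Your proof is correct and takes essentially the same route as the paper's: both arguments place the limit set of the infinite intersection $\g\H\g^{-1}\cap\H$ inside two sets ($\g B\subset A^\o$ and $B\supset\LT{\H}$) that precise invariance and the strengthened antipodality of \Cref{rem:replace_hypothesis_amalgam} force to be disjoint. The only (minor) difference is how nonemptiness of that limit set is obtained: the paper extracts an infinite-order element $\eta$ and uses $\LT{\<\eta\>}$, while you invoke $\tmod$-regularity of $\G_A$ to get $\LT{\g\H\g^{-1}\cap\H}\neq\emptyset$ directly, which sidesteps the torsion fact.
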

\proof Since $\LT{\H}\subset A\cap B$, the interactive pair assumption implies that  for all  
$\alpha\in \G_A\setminus \H$ and $\beta\in \G_B \setminus \H$,  
$$
\alpha(\LT{\H})\cap \LT{\H}= \beta  (\LT{\H})\cap \LT{\H}=\emptyset. 
$$
If, say, $\alpha \H \alpha^{-1}\cap \H$ were infinite, it would contain an infinite order element $\eta\in \H$;\footnote{Every torsion subgroup of a hyperbolic group is finite; see \cite[Ch. 8, Corollary 36]{GdlH90}.} hence, 
$$
\alpha^{-1}(\LT{\< \eta\>})=\LT{\< \alpha^{-1}\eta\alpha\>}
$$
would be nonempty subsets of $\LT{\H}$. That would be a contradiction. \qed 

\begin{corollary}\label{cor:hyp_amalgam}
 Under the hypothesis of \Cref{mainthm:amalgam}, the subgroup $\G$ of $G$ generated by $\G_A$ and $\G_B$ in $G$ is hyperbolic.
\end{corollary}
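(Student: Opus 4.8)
The plan is to realize $\G$ as the amalgam $\G_A\star_\H\G_B$ via \Cref{prop:faithful_amalgam} and then invoke the Bestvina--Feighn Combination Theorem \Cref{thm:Bestvina-Feign}(i). Since $\G_A$ and $\G_B$ are $\tmod$-Anosov, they are word-hyperbolic; and by \Cref{cor:malnormal}, $\H$ is weakly malnormal in each of $\G_A$ and $\G_B$. Hence the only hypothesis of \Cref{thm:Bestvina-Feign}(i) that still needs to be checked is that $\H$ is quasiconvex in \emph{both} $\G_A$ and $\G_B$, whereas the hypothesis of \Cref{mainthm:amalgam} grants this only in one of the two factors.

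So I would argue as follows. Suppose, say, that $\H$ is quasiconvex in $\G_A$ (the other case is symmetric). Then $\H$ is finitely generated, and, fixing word metrics, the inclusion $\H\hookrightarrow\G_A$ is a quasi-isometric embedding. Next I would use the fact that $\tmod$-Anosov subgroups of $G$ are undistorted, i.e., the orbit inclusions $\G_A\hookrightarrow G$ and $\G_B\hookrightarrow G$ are quasi-isometric embeddings (see \cite{KLP:Morse,MR3736790}). Composing with the previous embedding, $\H\hookrightarrow G$ is a quasi-isometric embedding; combining this with the facts that $\G_B\hookrightarrow G$ is a quasi-isometric embedding and that $\H<\G_B$, a direct comparison of the three word metrics on $\H$, $\G_B$, and $G$ shows that $\H\hookrightarrow\G_B$ is a quasi-isometric embedding as well. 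Since finitely generated undistorted subgroups of a word-hyperbolic group are quasiconvex, $\H$ is quasiconvex in $\G_B$.

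At this point $\H$ is quasiconvex and weakly malnormal in both $\G_A$ and $\G_B$, so \Cref{thm:Bestvina-Feign}(i) shows that $\G_A\star_\H\G_B$ is word-hyperbolic, and \Cref{prop:faithful_amalgam} identifies this amalgam with $\G=\<\G_A,\G_B\>$; this yields the corollary.

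The step I expect to require the most care is the metric-comparison argument promoting quasiconvexity of $\H$ in $\G_A$ to quasiconvexity in $\G_B$. It is elementary, but it is exactly where the asymmetric ``quasiconvex in $\G_A$ \emph{or} in $\G_B$'' hypothesis enters, and one must check that all the quasi-isometry constants involved depend only on the ambient data --- ultimately on the undistortion of the Anosov subgroups $\G_A$ and $\G_B$ in $G$ and on the quasiconvexity constant of $\H$ in the one given factor --- and not on the chosen element of $\H$.
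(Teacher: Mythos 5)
Your proposal is correct and follows essentially the same route as the paper: identify $\G$ with $\G_A\star_\H\G_B$ via \Cref{prop:faithful_amalgam}, promote quasiconvexity of $\H$ from one factor to both, and then apply \Cref{thm:Bestvina-Feign}(i) together with the weak malnormality supplied by \Cref{cor:malnormal}. The only (minor) difference is in the promotion step: the paper invokes the fact that a subgroup of a $\tmod$-Anosov group is quasiconvex if and only if it is itself $\tmod$-Anosov (via the URU characterization), whereas you compare word metrics directly using the undistortion of the Anosov subgroups $\G_A,\G_B$ in $G$ --- both arguments rest on the same undistortion property, and yours is equally valid.
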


\begin{proof}
 If $\H = \G_A\cap\G_B$ is quasiconvex in one of $\G_A$ or $\G_B$, then it is quasiconvex in both of them: This follows from the general fact that if $\G$ is a $\tmod$-Anosov subgroup of $G$ and $\H<\G$ is a subgroup, then $\H$ is quasiconvex in $\G$ if and only if $\H$ is a $\tmod$-Anosov subgroup of $G$.
 This general fact is a consequence of the {\em $\tmod$-URU} characterization of $\tmod$-Anosov subgroups; see \cite[Equivalence Theorem 1.1 \& Remark 1.2(i)]{MR3736790}.
 
 Thus, under the hypothesis \Cref{mainthm:amalgam}, $\H$ is quasiconvex in $\G_A$ and $\G_B$. 
 Moreover, by \Cref{prop:faithful_amalgam}, $\<\G_A,\G_B\>$ is naturally isomorphic to $\G_A \star_\H \G_B$.
 Then, together with \Cref{cor:malnormal}, \Cref{thm:Bestvina-Feign}(i) implies that $\<\G_A,\G_B\>$ is hyperbolic.
\end{proof}

\subsection{Alternating sequences}\label{sec:regularity_amalgam}

Recall the notion of alternating sequences from \Cref{def:alternating}.

The main result of this subsection is as follows:

\begin{proposition}\label{lem:shrinks}
 If $(\omega_n)$ is a type A alternating sequence in $\G = \G_A \star_\H \G_B$, then the nested sequence of compact subsets of $\Ft$,
 \[
  \omega_1 B \supset \omega_2 A \supset \omega_3 B \supset \omega_4 A \supset \cdots,
 \]
 converges to a point $\tau \in \Ft$.
 
 Similarly, if $(\omega_n)$ is a type B alternating sequence in $\G$, then the nested sequence of compact subsets of $\Ft$,
 \[
  \omega_1 A \supset \omega_2 B \supset \omega_3 A \supset \omega_4 B \supset \cdots,
 \]
 converges to a point $\tau\in \Ft$.
\end{proposition}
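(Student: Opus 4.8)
By symmetry we treat the type A case; the type B case is verbatim the same with the roles of $A,B$ and $\G_A,\G_B$ interchanged. Let $(\a_n)$ in $\G_A\setminus\H$ and $(\b_n)$ in $\G_B\setminus\H$ be the defining sequences, so that $\omega_{2n}=\a_1\b_1\cdots\a_n\b_n$, and write $C_n=B$ for odd $n$, $C_n=A$ for even $n$. First I would record that the nestedness is immediate and in fact strict: $\omega_{n+1}C_{n+1}\subset(\omega_nC_n)^\o$, e.g.\ $\omega_2A=\a_1(\b_1A)\subset\a_1B^\o=(\omega_1B)^\o$ since $\b_1A\subset B^\o$, and similarly at each step, using only the interactive pair axioms and the strengthened antipodality of \Cref{rem:replace_hypothesis_amalgam}. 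Hence $K:=\bigcap_n\omega_nC_n$ is a nonempty compact subset of $\Ft$, and since $(\diam(\omega_nC_n))$ is non-increasing it suffices to produce a subsequence of sets whose diameters tend to $0$; I will work with the even subfamily $(\omega_{2n}A)$ and set $g_n:=\omega_{2n}$.

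The preliminary point is that the whole family $\{\omega_nC_n\}$ is unchanged under the normal-form moves $\a_i\mapsto\a_i\eta$, $\b_i\mapsto\eta^{-1}\b_i$ (and their mirror images), $\eta\in\H$, because $\H$ leaves $A$ and $B$ invariant, so these moves produce literally the same translates of $A$ and $B$. Using the nearest-point projections to the quasiconvex subgroup $\H$ (\Cref{lem:proj}) I would normalize the letters and then, after passing to a subsequence of the \emph{indices}, reduce to the following dichotomy: either (a) the normalized sequences $(\a_n),(\b_n)$ take only finitely many values, or (b) one of them, say $(\a_n)$, escapes to infinity in $\G_A$ while diverging away from $\H$. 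Next I would establish $\tmod$-regularity of $(g_n)$. In case (a) the increments $g_{n+1}g_n^{-1}=\a_{n+1}\b_{n+1}$ range over a finite subset of $(\G_A\cup\G_B)\setminus\H$, so $A':=\bigcup\a_{n+1}\b_{n+1}A$ is a finite union of compacta each inside $A^\o$ (as $\b_{n+1}A\subset B^\o$ and $\a_{n+1}B^\o\subset A^\o$); thus $A'$ is compact, $A'\subset A^\o$, and $g_{n+1}g_n^{-1}(A)\subset A'$, so \Cref{lem:mainlemma} applies. In case (b) the same argument handles the blocks with fixed $\a$-letter, while along the escaping blocks one invokes the $\tmod$-regularity of the Anosov subgroup $\G_A$ together with \Cref{lem:proj}; in every case $(g_n)$ (restricted to the chosen subsequence of indices) is $\tmod$-regular.

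Finally I would prove shrinking via \Cref{lem:regularity_two}, applied along any $\tmod$-convergent subsequence $g_{n_k}\tof\tau_+$ and then transferred to the whole sequence by monotonicity of diameters: it remains to check that $A$ is antipodal to every $\iota\tmod$-limit point of $(g_n^{-1})$. Telescoping the inclusions $\a_i^{-1}B\subset A^\o$ and $\b_i^{-1}A^\o\subset B^\o$ (valid since $\a_i^{-1}\in\G_A\setminus\H$, $\b_i^{-1}\in\G_B\setminus\H$) gives $g_n^{-1}B\subset B^\o$, and in fact $g_n^{-1}B\subset\b_n^{-1}(A^\o)$. In case (a) (and in the bounded blocks of case (b)) the sets $\b_n^{-1}(A^\o)$ lie in one fixed compact subset of $B^\o$, antipodal to $A$ by hypothesis (i)$'$; in the escaping blocks of case (b) the set $\b_{n_k}^{-1}(A)$ shrinks to a point of $\LT{\G_B}\setminus\LT{\H}$ (using \Cref{lem:ping-pong} and divergence away from $\H$), which is antipodal to $A$ by condition (ii) of \Cref{mainthm:amalgam}. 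Either way $A$ is antipodal to the $\iota\tmod$-limit set of $(g_n^{-1})$, so \Cref{lem:regularity_two} yields $g_nA\to\tau$ for a single point $\tau\in\Ft$; together with nestedness the whole chain $\omega_1B\supset\omega_2A\supset\cdots$ converges to $\tau$. I expect the main obstacle to be exactly the bookkeeping around the amalgamating subgroup $\H$: the letters $\a_n,\b_n$ can be ``short modulo $\H$'', destroying the uniformity needed to feed \Cref{lem:mainlemma} and \Cref{lem:regularity_two}, and it is precisely condition (ii) that rescues the genuinely divergent case.
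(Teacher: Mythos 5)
Your overall architecture matches the paper's proof (normalize the last letters modulo $\H$ via nearest-point projections, split into a bounded case handled by \Cref{lem:mainlemma} and a divergent case handled by the Anosov property of $\G_B$ together with hypothesis (ii), deduce shrinking from antipodality, and finish by nestedness), but the regularity step contains a concrete error. For $g_n=\omega_{2n}$ you assert $g_{n+1}g_n^{-1}=\a_{n+1}\b_{n+1}$ and feed this into \Cref{lem:mainlemma}. Since $g_{n+1}=g_n\a_{n+1}\b_{n+1}$, what is true is $g_n^{-1}g_{n+1}=\a_{n+1}\b_{n+1}$, whereas $g_{n+1}g_n^{-1}=g_n(\a_{n+1}\b_{n+1})g_n^{-1}$ is a conjugate by the long prefix: it does not range over a finite set in your case (a), and there is no reason it should carry $A$ into a fixed compact subset of $A^\o$ (the ping-pong inclusions give no control on inverses applied to $A$; e.g. $\a^{-1}A\supset B$). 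So \Cref{lem:mainlemma} does not apply to $(g_n)$ as written. The repair is to run the increment argument on the inverse sequence $u_n\coloneqq g_n^{-1}$ --- exactly the paper's ``special sequences'' --- whose increments are $u_{n+1}u_n^{-1}=\b_{n+1}^{-1}\a_{n+1}^{-1}$ and do satisfy $u_{n+1}u_n^{-1}(B)\subset\b_{n+1}^{-1}A\subset B^\o$; with finitely many normalized letter values this yields a fixed compact $B'\subset B^\o$, \Cref{lem:mainlemma} gives $\tmod$-regularity of $(g_n^{-1})$, hence of $(g_n)$ because $\tmod$ is $\iota$-invariant. This is precisely \Cref{lem:special1} and \Cref{cor:alt} in the paper.

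Two further points are left too vague to stand as is. First, the letter that matters for the even subsequence is the rightmost letter $\b_n$ of $\omega_{2n}$ (the leftmost letter of $g_n^{-1}$), not $\a_n$, and the normalization must be two-sided, $\hb_n=\hat\eta_n\b_n\eta_n^{-1}$, so that \emph{both} $(\hb_n)$ and $(\hb_n^{-1})$ avoid accumulating on $\geo\H$: you need the limits of $(\hb_n^{-1})$ to lie in $\LT{\G_B}\setminus\LT{\H}$ to invoke hypothesis (ii) against $A$, and you separately need the flag limits of $(\hb_n)$ to be antipodal to $A$ in order to conclude that $\hb_n^{-1}A$ shrinks --- your phrase ``using \Cref{lem:ping-pong} and divergence away from $\H$'' hides this second use of (ii), since points of $\LT{\H}\subset A$ are not antipodal to $A$. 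Second, the translates $g_nA$ are unchanged only under right multiplication of $g_n$ by elements of $\H$, so the antipodality check must be made against the limit points of the normalized inverse sequence $(\eta_n g_n^{-1})$ rather than of $(g_n^{-1})$ itself; this is harmless because $g_nA=(g_n\eta_n^{-1})A$, but it needs to be said. With these corrections your argument coincides with the paper's proof of \Cref{lem:shrinks}.
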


For a (type A or B) alternating sequence $\underline{\omega} = (\omega_n)$, let us denote the point $\tau\in\Ft$ obtained as a limit in the above by $\tau_{\underline{\omega}}$.
As a direct corollary of the above and \Cref{prop:regulairty}, we obtain the following:

\begin{corollary}\label{cor:equivariance_amalgam}
 If $\underline{\omega}=(\omega_n)$ is an alternating sequence in $\G = \G_A \star_\H \G_B$, then, for all $\g\in \G$, 
 \[
 \g\omega_n \tof \g\tau_{\underline{\omega}},\quad \text{as } n\to\infty.
 \]
\end{corollary}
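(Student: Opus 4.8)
The plan is to read this off from \Cref{lem:shrinks} by feeding the resulting nested sequence into the regularity criterion \Cref{prop:regulairty}; observe that the case $\g = 1_\G$ already yields $\omega_n\tof\tau_{\underline\omega}$, which is what legitimizes the notation $\tau_{\underline\omega}$ in the first place, so it is only the equivariance that needs an argument. Assume first that $\underline\omega = (\omega_n)$ is type A alternating; the type B case is symmetric. By \Cref{lem:shrinks}, the nested sequence of compact subsets
\[
 \omega_1 B\supset\omega_2 A\supset\omega_3 B\supset\omega_4 A\supset\cdots
\]
of $\Ft$ converges to the point $\tau_{\underline\omega}$; writing $C_n \coloneqq B$ when $n$ is odd and $C_n\coloneqq A$ when $n$ is even, this says precisely that $\omega_n C_n\to\tau_{\underline\omega}$ in the sense of shrinking subsets of $\Ft$.

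Next I would fix $\g\in\G$. Since $\g$ acts on the compact manifold $\Ft$ as a homeomorphism, it is uniformly continuous for any compatible distance function, hence it carries sequences of subsets whose diameters tend to $0$ to sequences of subsets whose diameters tend to $0$; applying this to the sets $\omega_n C_n\cup\{\tau_{\underline\omega}\}$ gives
\[
 (\g\omega_n)(C_n) = \g(\omega_n C_n)\longrightarrow \g\tau_{\underline\omega}.
\]
Now split along parity. The even-indexed subsequence satisfies $(\g\omega_{2m})(A)\to\g\tau_{\underline\omega}$, and $A$ is a compact subset of $\Ft$ with nonempty interior, so \Cref{prop:regulairty} applies and shows that $(\g\omega_{2m})_m$ is $\tmod$-regular with $\g\omega_{2m}\tof\g\tau_{\underline\omega}$. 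The identical argument for the odd-indexed subsequence, now using the compact set $B$ (which also has nonempty interior), gives $\g\omega_{2m-1}\tof\g\tau_{\underline\omega}$. Since a sequence $\tmod$-converges to a given flag as soon as both its even-indexed and odd-indexed subsequences do, we conclude $\g\omega_n\tof\g\tau_{\underline\omega}$. In the type B case one argues verbatim after interchanging $A$ and $B$ in the nested sequence supplied by \Cref{lem:shrinks}.

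Being a corollary of \Cref{lem:shrinks}, this statement poses no real difficulty; the only subtlety worth flagging is that the nested sets produced by \Cref{lem:shrinks} alternate between translates of $A$ and of $B$, so one cannot directly feed a single fixed compact set into \Cref{prop:regulairty} for the whole sequence $(\g\omega_n)$ at once — indeed $\omega_n A$ for $n$ of the "wrong" parity need not even lie in $A\cup B$ — which is exactly why the argument is organized along the two parity subsequences.
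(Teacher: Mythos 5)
Your proof is correct and follows essentially the same route as the paper, which deduces the corollary directly from \Cref{lem:shrinks} together with \Cref{prop:regulairty}. The parity-splitting observation is a reasonable way of spelling out the detail that the nested sets alternate between translates of $A$ and $B$, and it is exactly how the implicit argument is meant to be completed.
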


\subsubsection{Regularity}

We begin by showing that alternating sequences are $\tmod$-regular (see \Cref{cor:alt}). Instead of directly proving the $\tmod$-regularity of an alternating sequence, our approach is to study the dynamics of the corresponding inverse sequence and establish its $\tmod$-regularity, which appears to be easier. This leads us to the following definition:

\begin{definition}[Special sequences]
 A sequence $(\tg_n)$ in $\G = \G_A\star_\H\G_B$ is {\em special} if there exist sequences $(\a_n)$ in $\G_A\setminus\H$, $(\b_n)$ in $\G_B\setminus \H$, and an increasing function $F : \N\to\N$ such that
\begin{equation}\label{eqn:special_form}
  \tg_n = \b_{F(n)}\a_{F(n)}\b_{F(n)-1}\a_{F(n)-1}\cdots\b_1\a_1.
\end{equation}
\end{definition}

The advantage of this definition is that subsequences also inherit the property of being special. 
This characteristic will be essentially used in the proof of the following lemma:

\begin{lemma}\label{lem:special1}
 Special sequences in $\G$ are $\tmod$-regular.
\end{lemma}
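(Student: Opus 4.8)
The plan is to reduce to \Cref{lem:shrinks} by passing to inverse sequences. Since the face $\tmod$ is $\iota$-invariant, a sequence $(g_n)$ in $G$ is $\tmod$-regular if and only if $(g_n^{-1})$ is $\tmod$-regular, so it is enough to prove that $(\tg_n^{-1})$ is $\tmod$-regular. The key is an elementary identity: if $\tg_n = \b_{F(n)}\a_{F(n)}\cdots\b_1\a_1$ is special, put $\ha_i \coloneqq \a_i^{-1}$ and $\hb_i \coloneqq \b_i^{-1}$; since $\H$ is a subgroup these again lie in $\G_A\setminus\H$, resp.\ $\G_B\setminus\H$, and
\[
 \tg_n^{-1} = \ha_1\hb_1\ha_2\hb_2\cdots\ha_{F(n)}\hb_{F(n)}.
\]
In other words $\tg_n^{-1} = \omega_{2F(n)}$, where $(\omega_k)$ is the type~A alternating sequence in $\G$ determined by $(\ha_i)$ in $\G_A\setminus\H$ and $(\hb_i)$ in $\G_B\setminus\H$ (see \Cref{def:alternating}). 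As $F$ is increasing, $(\tg_n^{-1})_n$ is a subsequence of $(\omega_k)_k$.

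Next I would invoke \Cref{lem:shrinks} for the type~A alternating sequence $(\omega_k)$: the nested family
\[
 \omega_1 B \supset \omega_2 A \supset \omega_3 B \supset \omega_4 A \supset \cdots
\]
converges to a single point $\tau\in\Ft$; in particular its diameters tend to $0$ and $\tau$ lies in every term. The terms with even index are exactly the sets $\omega_{2m}A$, so $\bigl(\tg_n^{-1}(A)\bigr)_n = \bigl(\omega_{2F(n)}(A)\bigr)_n$ is a subsequence of this nested family and hence also converges to $\tau$. Since $A$ is compact with nonempty interior $A^\o$, \Cref{prop:regulairty}, applied to the sequence $(\tg_n^{-1})$ and the set $A$, gives that $(\tg_n^{-1})$ is $\tmod$-regular (indeed $\tg_n^{-1}\tof\tau$). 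Therefore $(\tg_n)$ is $\iota\tmod$-regular, i.e.\ $\tmod$-regular.

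All the substance is thus carried by \Cref{lem:shrinks}, which I expect to be the main obstacle. Nestedness there is immediate from the precise invariance $\a B\subset A^\o$, $\b A\subset B^\o$; the content is that the diameters actually shrink to $0$. The difficulty is that a subsequence of the alternating letters may diverge towards the amalgamating subgroup $\H$, in which case $A$ (resp.\ $B$) need not be antipodal to the relevant $\iota\tmod$-limit points --- these lie in $\LT{\H}\subset A\cap B$ by \Cref{lem:ping-pong} --- so the shrinking criteria of \S\ref{sec:regularsequences} cannot be applied directly. I would resolve this by peeling off the $\H$-parts of the letters with \Cref{lem:proj}, factoring a letter-sequence $(\b_i)$ as $(\hb_i\eta_i^{-1})$ with $\eta_i\in\H$ and $(\hb_i^{-1})$ having no accumulation points in $\geo\H$, and then exploiting the sharpened antipodality hypothesis~(i)$'$ of \Cref{rem:replace_hypothesis_amalgam}: since $A^\o$ and $B^\o$ are antipodal to all of $B$ and $A$, they are antipodal to $\LT{\H}$, so the $\H$-factors still act contractingly on the interior sets produced by the preceding letters, and the composition shrinks.
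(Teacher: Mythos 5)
Your reduction has a circularity problem: you prove \Cref{lem:special1} by quoting \Cref{lem:shrinks}, but in the paper \Cref{lem:shrinks} is proved \emph{after} and \emph{from} this lemma --- its proof repeatedly invokes \Cref{cor:alt} (regularity of alternating sequences), which is obtained by applying \Cref{lem:special1} to the inverse sequences. Your opening observation is correct and is exactly the paper's link between the two statements, just run backwards: since $\tmod$ is $\iota$-invariant, $(\tg_n)$ is $\tmod$-regular iff $(\tg_n^{-1})$ is, and $\tg_n^{-1}=\omega_{2F(n)}$ for a type~A alternating sequence. But this only shows that ``special sequences are regular'' and ``alternating sequences are regular'' are equivalent; it does not produce a proof of either unless you can prove the shrinking statement independently.

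Your sketch of an independent proof of \Cref{lem:shrinks} does not close the gap, because all of the tools that convert antipodality into shrinking (\Cref{lem:regularity_two}, \Cref{cor:regulairty}) take $\tmod$-regularity (or flag convergence) of the sequence as a \emph{hypothesis} --- which is precisely what is to be proved. The only mechanism in \S\ref{sec:regularsequences} that yields regularity with no a priori convergence information is \Cref{lem:mainlemma}, whose hypothesis $g_{n+1}g_n^{-1}(B)\subset B'\subset B^\o$ must be arranged by hand, and this is where the real work lies. The paper's proof of \Cref{lem:special1} splits according to whether the leftmost-letter inverses $(\b_{F(n)}^{-1})$ diverge away from $\H$: in the divergent case it peels off the $\H$-part as you suggest (via \Cref{lem:proj} and hypothesis (ii) of \Cref{mainthm:amalgam}, then \Cref{prop:regulairty}); but in the complementary case --- where the letters stay within a bounded set of $\H$-cosets, so that the $\iota\tmod$-limit points of the letters may lie in $\LT{\H}\subset A\cap B$ and no antipodality to $A$ or $B$ is available --- it extracts a subsequence lying in a single coset $\tilde\beta\H$ and feeds the resulting uniform inclusion $\tg_{n_{k+1}}\tg_{n_k}^{-1}(B)\subset\tilde\beta(A)\subset B^\o$ into \Cref{lem:mainlemma}. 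Your heuristic that ``the $\H$-factors still act contractingly'' does not cover this case (elements of $\H$ merely preserve $A$ and $B$, and need not diverge at all), so as written the proposal defers the entire difficulty to \Cref{lem:shrinks} and then leaves its hardest case unaddressed.
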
 

\begin{proof}
 Consider any special sequence $(\tg_n)$ in $\G$, and assume that the normal form of $\tg_n$ is given by \eqref{eqn:special_form}.
 Consider any subsequence of $(\tg_n)$, again denoted by $(\tg_n)$.
 
 If the inverse sequence $(\beta_{F(n)}^{-1})$ corresponding to the leftmost letter sequence diverges away from $\H$, then let $\hb_{F(n)} \coloneqq \hb_{F(n)}\pr_\H(\hb_{F(n)}^{-1})$.
 Since $(\hb_{F(n)})$ is $\tmod$-regular,\footnote{This follows by the assumption that $\G_B$ is $\tmod$-Anosov and $(\hb_{F(n)})$ is a divergent sequence in $\G_b$.} after extraction, there exists $\tau_\pm\in\Ft$ so that
 \[
  \hb_{F(n_k)}^{\pm1} \tof \tau_\pm,
 \]
 as $k\to\infty$.
 Note that $\tau_-\not\in \LT{\H}$ (cf. \Cref{lem:proj}(i)).
 Therefore,
 by the antipodality hypothesis (ii) of \Cref{mainthm:amalgam}, $A$ is a compact subset of $C(\tau_-)$.
 Hence, $\hb_{F(n_k)} A \tof \tau_+$ as $k\to\infty$.
 Note that for all $n\in\N$, $\tg_n B \subset \hb_n A$.
 Thus,
 $ \tg_{n_k} B \tof \tau_+$,
  as  $k\to \infty$.
 By \Cref{prop:regulairty}, $(\tg_{n_k})$ is $\tmod$-regular.
 
 Otherwise, after passing to a further subsequence $(\tg_{n_k})$ of $(\tg_n)$ we can (and will) assume that there exists an element $\tilde\beta \in \G_B \setminus \H$ and a sequence $(\eta_k)$ in $\H$ such that
 \begin{equation}\label{eqn:betahat}
  \beta_{F(n_k)} = \tilde\beta \eta_k, \quad \forall n\in\N.
 \end{equation}
  Set $B' = \tilde\beta (A) \subset B^\o$.
  Note that for all $k\in\N$, $\tg_{n_{k+1}}\tg^{-1}_{n_k} B \subset B'$.
  So, by \Cref{lem:mainlemma}, $(\tg_{n_k})$ is $\tmod$-regular.
  
  Therefore, every subsequence of the original sequence contains a $\tmod$-regular subsequence, showing that the original sequence is $\tmod$-regular.
   This concludes the proof.
\end{proof}

For an alternating sequence $(\omega_n)$, applying \Cref{lem:special1} to $(\omega_n^{-1})$, we obtain the following result.

\begin{corollary}\label{cor:alt}
 Alternating sequences in $\G = \G_A \star_\H \G_B$ are $\tmod$-regular.
\end{corollary}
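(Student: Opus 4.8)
The plan is to deduce this from \Cref{lem:special1} by passing to inverse sequences. First I would fix an alternating sequence $(\omega_n)$ in $\G = \G_A\star_\H\G_B$ and note that, since every subsequence of $(\omega_n)$ contains a further subsequence all of whose indices have the same parity, it suffices to prove that the even-indexed subsequence $(\omega_{2m})_m$ and the odd-indexed subsequence $(\omega_{2m-1})_m$ are each $\tmod$-regular. I would also reduce to the case of \emph{type A} alternating sequences (\Cref{def:alternating}): both the hypotheses of \Cref{mainthm:amalgam} and the interactive-pair conditions of \Cref{defn:ping-pong} are symmetric under simultaneously interchanging the roles of $\G_A$ with $\G_B$ and of $A$ with $B$, and under this symmetry a type B alternating sequence becomes a type A one, so the type B case follows formally.

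Assuming $(\omega_n)$ is type A, with letter sequences $(\a_n)$ in $\G_A\setminus\H$ and $(\b_n)$ in $\G_B\setminus\H$, I would treat the two parities separately. For the even terms, $\omega_{2m}^{-1} = \b_m^{-1}\a_m^{-1}\b_{m-1}^{-1}\a_{m-1}^{-1}\cdots\b_1^{-1}\a_1^{-1}$, which (with $F=\mathrm{id}$) is exactly a special sequence for $(\G_A,\G_B;\,\H)$; hence $(\omega_{2m}^{-1})_m$ is $\tmod$-regular by \Cref{lem:special1}. Since inverting a $\tmod$-regular sequence yields an $\iota\tmod$-regular sequence and $\tmod$ is $\iota$-invariant, $(\omega_{2m})_m$ is $\tmod$-regular. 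For the odd terms, $\omega_{2m-1}^{-1} = \a_m^{-1}\b_{m-1}^{-1}\a_{m-1}^{-1}\cdots\b_1^{-1}\a_1^{-1}$, so $\omega_{2m-1}^{-1}\a_1 = \a_m^{-1}\b_{m-1}^{-1}\a_{m-1}^{-1}\cdots\a_2^{-1}\b_1^{-1}$, which after relabeling is a special sequence for the system $(\G_B,\G_A;\,\H)$ obtained by swapping the two factors. By the version of \Cref{lem:special1} for that swapped system (legitimate by the same $\G_A\leftrightarrow\G_B$ symmetry of the hypotheses), $(\omega_{2m-1}^{-1}\a_1)_m$ is $\tmod$-regular; multiplying on the right by the fixed element $\a_1^{-1}$, and then inverting once more, shows $(\omega_{2m-1})_m$ is $\tmod$-regular. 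Combining the two parities proves the corollary.

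The only substantive ingredient is \Cref{lem:special1}; the rest is bookkeeping. The two routine facts I rely on — that $\tmod$-regularity passes to inverse sequences after replacing $\tmod$ by $\iota\tmod$ (and here $\iota\tmod=\tmod$), and that it is preserved under left or right multiplication by a fixed element of $G$ — are immediate from the definition of $\tmod$-convergence, since these operations postcompose or precompose each restriction $g_n|_{C(\nu_-)}$ with a fixed homeomorphism between flag manifolds. The place where I expect care is genuinely needed is confirming that the antipodality hypotheses of \Cref{mainthm:amalgam} are symmetric under interchanging $\G_A$ and $\G_B$, so that the swapped form of \Cref{lem:special1} is legitimately available for the odd-indexed and type B cases, together with the index juggling that identifies the word-reversals of alternating sequences as special sequences of the correct type.
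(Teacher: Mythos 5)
Your proposal is correct and follows essentially the same route as the paper, whose proof of this corollary is precisely to apply \Cref{lem:special1} to the inverse sequence $(\omega_n^{-1})$; you merely make explicit the bookkeeping the paper leaves implicit (splitting into even/odd subsequences, invoking the $\G_A\leftrightarrow\G_B$, $A\leftrightarrow B$ symmetry of the hypotheses for the type B and odd-index cases, and the harmless right multiplication by $\a_1^{-1}$ and passage to inverses using $\iota\tmod=\tmod$).
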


\subsubsection{Proof of \Cref*{lem:shrinks}}

Let us assume that $(\omega_n)$ is of type A; the other possibility can be similarly treated.
 Let $(\b_n)$ denote the rightmost letter sequence corresponding to $(\omega_{2n})$, see \Cref{def:alternating}.
 Consider the special sequence $(\omega_{2n}^{-1})$.
 
 We first show that there exists a sequence $(\eta_n)$ in $\H$ such that $(\ho_{2n}^{-1})$, where $\ho_{2n} = \omega_{2n}\eta_n^{-1}$, has a subsequence that is $\tmod$-regular and $\tmod$-converges to some point $\tau_-$ antipodal to $A$:
 By  \Cref{lem:proj},
 there exist sequences $(\eta_n)$ and $(\hat\eta_n)$ in $\H$ such that both the sequences $(\hb_n)$ and $(\hb^{-1}_n)$ accumulate outside $\geo \H$ in ${\bar\G}_B = \G_B \sqcup \geo\G_B$, where $\hb_n = \hat\eta_n \b_n\eta_n^{-1}$.
 Note that $(\ho_{2n})$, where $\ho_{2n} = \omega_{2n}\eta_n^{-1}$, is still alternating\footnote{Indeed, the corresponding sequences in $\G_A$ and $\G_B$ for $(\ho_n)$ can be taken to be $(\eta_{n-1}\a_n)$ and $(\b_n\eta_n^{-1})$, respectively.} and, hence, by \Cref{cor:alt}, is a $\tmod$-regular sequence. 
 Passing to a subsequence, $(\hb_{n_k})$ is (i) either a  constant sequence, $\hb \in \G_B \setminus \H$, (ii) or 
\begin{equation}\label{eqn:shrinks}
  \hb_{n_k}^{\pm1} \tof \tau_\pm \in \LT{\G_B}\setminus\LT{\H}, 
 \quad \text{as }k\to\infty.
\end{equation}
 If (i) holds, then notice that $\ho^{-1}_{2n_k} B \subset \hb^{-1} A$. 
 In this case, by \Cref{lem:regularity_four}, all the $\tmod$-limit points of $(\ho^{-1}_{2n_k})$ must lie in $\hb^{-1} A \subset B^\o$. By \Cref{rem:replace_hypothesis_amalgam}, $\hb^{-1} A$ is antipodal to $A$.
  If the (ii) holds, then we observe that $\hat{\omega}^{-1}_{2n_k}(B)\subset \hat{\beta}^{-1}_{n_k}(A)$.
  Then, 
 the sequence $(\hat{\beta}^{-1}_{n_k}A)$, and hence $(\hat{\omega}^{-1}_{2n_k}B)$, converges to the $\tmod$-limit point $\tau_-$ (see \eqref{eqn:shrinks}) of the sequence $\hat\beta_{n_k}^{-1}$. By the antipodality assumption (ii)  in the hypothesis of \Cref{mainthm:amalgam}, $\tau_-$ is antipodal to $A$. 
 
 Let $(\eta_n)$ be a sequence in $\H$ as in the preceding paragraph.
 Let $\ho_{2n} = \omega_{2n}\eta_n^{-1}$.
 By \Cref{cor:alt}, $(\ho_{2n})$ is $\tmod$-regular (cf. the preceding paragraph).
 By above, we can (and will) extract a subsequence $(\ho_{2k_n})$ of $(\ho_{2n})$, such that the inverse sequence $(\ho_{2k_n}^{-1})$ $\tmod$-converges to some point $\tau_-$, which is antipodal to $A$.
 After further extraction, we may assume that $(\ho_{2k_n})$ $\tmod$-converges to some point $\tau\in\Ft$.
 Since $A\subset C(\tau_-)$ is compact, the sequence of subsets
 $(\ho_{2k_n} A)$ converges to $\tau$.
 However, for all $n\in\N$, $\ho_{2n} A = \ho_{2n}\eta_n A = \omega_{2n} A$.
 So, $\omega_{2k_n}A \to \tau$.
 
 Thus, the nestedness of the subsets $(\omega_{2n} A)$ implies $\omega_{2n} A \to \tau$. Then, we also obtain $\omega_{2n-1} (B) \to \tau$ by using $\omega_{2n+1} B \subset \omega_n A$.
 This completes the proof.
\qed

\subsection{The boundary map}\label{sec:brdymap}

Recall from \S\ref{sec:boundary} that there is a $\G$-invariant decomposition of 
 $\geo\G$ as the disjoint union $\vi\G \sqcup \vii\G$, where $\vii\G$ admits an equivariant continuous bijection to the Gromov boundary of the Bass-Serre tree $T$ associated with the amalgamated free product $\G = \G_A \star_\H \G_B$.
 
 In this subsection,
we construct a $\G$-equivariant {\em boundary map}
from the Gromov boundary $\geo \G$ of $\G = \G_A \star_\H \G_B$ to $\Ft$,
\begin{equation}\label{eqn:bdmap}
 \xi : \geo\G \to \Ft.
\end{equation}
We define this map $\xi$ separately on $\vi\G$ and $\vii\G$; see \S\ref{sec:def_vi} for the definition of $\xi\vert_{\vi\G}$ and 
\S\ref{sec:xiii} for the definition of $\xi\vert_{\vii\G}$.

In \S \ref{sec:antipodality_amalgam}, we verify that $\xi$ is an antipodal map. Finally, in 
\S\ref{sec:continuity_amalgam},
we show that the map $\xi$ is {dynamics preserving}.

\subsubsection{Definition of the boundary map for type I points}\label{sec:def_vi}

For $\e\in \vi\G$, pick $\g\in\G$ such that $\g^{-1}\e\in \geo\G_A \cup \geo\G_B$. If $\g^{-1}\e\in \geo\G_A$ (resp. $\g^{-1}\e\in \geo\G_B$), then define
\begin{equation}\label{eqn:xii}
 \xi(\e) \coloneqq \g \xi_A (\g^{-1} \e),\quad
  (\text{resp. } \xi(\e) \coloneqq \g \xi_B (\g^{-1} \e)).
\end{equation}

We first show that the map $\xi: \vi\G\to \Ft$ is well-defined.

\begin{lemma}\label{lem:conjugateempty}
 For $\g\in\G$, if $\g(\geo\G_A) \cap \geo\G_A \ne \emptyset$, then $\g\in\G_A$. The same conclusion holds when $A$ is replaced by $B$.
\end{lemma}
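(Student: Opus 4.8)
The plan is to use the Bass--Serre tree $T$ for $\G = \G_A \star_\H \G_B$ together with the disjointness statement \eqref{boundary_disjoint} for the limit sets of distant vertex-subgroups. Recall that the vertex $v_0 := \G_A \in V(T)$ has $\G$-stabilizer $\G_A$, and that $\G_A$ acts on $\geo \G_A \subset \vi\G$ as the restriction of the $\G$-action. The key observation is that $\geo \G_A$ is precisely the accumulation set $\geo v_0$ of the vertex-subgroup $\G_{v_0} = \G_A$ inside $\geo\G$, and that $\g(\geo\G_A) = \geo(\g v_0)$ for every $\g\in\G$, since conjugation by $\g$ intertwines the actions of $\G_A$ and $\g\G_A\g^{-1} = \G_{\g v_0}$.

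First I would argue that if $\g\notin\G_A$, then $\g v_0 \ne v_0$ as vertices of $T$, because $\G_A$ is exactly the $\G$-stabilizer of $v_0$ (this is part of the description of $T$ in \S\ref{sec:BStree}). Now there are two cases according to the distance $d_T(v_0, \g v_0)$. If $d_T(v_0,\g v_0) \ge 2$, then \eqref{boundary_disjoint} directly gives $\geo\G_{v_0}\cap\geo\G_{\g v_0} = \emptyset$, i.e. $\geo\G_A\cap \g(\geo\G_A) = \emptyset$, contradicting the hypothesis. The remaining case is $d_T(v_0,\g v_0) = 1$: then $\g v_0$ is a neighbor of $v_0$ in $T$, hence a vertex of the other color, i.e. $\g v_0 = \eta\G_B$ for some $\eta\in\G_A$ (using that every neighbor of $\G_A$ has the form $\alpha\G_B$ with $\alpha\in\G_A$, as in \S\ref{sec:BStree}). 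In this subcase $\geo\G_{v_0}\cap\geo\G_{\g v_0}$ is $\eta(\geo\G_A\cap \geo\G_B)$ up to translation, and $\geo\G_A\cap \geo\G_B = \geo\H$ (the edge-subgroup boundary), which, because $\H$ is weakly malnormal (\Cref{cor:malnormal}) and infinite cases are what matter, is handled: a nonempty intersection $\g(\geo\G_A)\cap\geo\G_A$ would force an infinite-order element to lie in a conjugate $\g\G_A\g^{-1}\cap\G_A$ beyond what malnormality of $\H$ in $\G_A$ allows. More precisely, $\g(\geo\G_A)\cap\geo\G_A\ne\emptyset$ with $d_T(v_0,\g v_0)=1$ means $\g\G_A\g^{-1}\cap\G_A$ is infinite; but an infinite intersection of two distinct conjugates of a vertex-group along an edge is contained in a conjugate of the edge-group $\H$, and one then invokes \Cref{cor:malnormal} to reach a contradiction, or more cleanly one notes $\g\G_A\g^{-1}\cap\G_A \subset \G_{v_0}\cap\G_{\g v_0} = \G_e$ for the edge $e=[v_0,\g v_0]$, a conjugate of $\H$, which is a \emph{proper} quasiconvex subgroup of $\G_A$ with $\geo\G_e\subsetneq\geo\G_A$; since $\geo\G_e=\geo\G_A\cap\g(\geo\G_A)$ would have to be nonempty, this is consistent, so instead I would directly use the precise-invariance statement of \Cref{defn:ping-pong} combined with \Cref{lem:ping-pong}.

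Actually the cleanest route, which I would take in the writeup, avoids the case split on $d_T$ and argues as follows. Suppose $\g\notin\G_A$ but there is $\e\in\g(\geo\G_A)\cap\geo\G_A$. Write $\g=\g_1\cdots\g_l$ in normal form with $l\ge 1$; if $\g_l\in\G_B$, then for the point $\xi_A(\g^{-1}\e)$ we have... hmm, this is getting complicated. I will go with the dynamical/ping-pong argument: pick sequences $(\alpha_n),(\alpha_n')$ in $\G_A$ with $\alpha_n\toC\e$ and $\g\alpha_n'\toC\e$, so $\alpha_n$ and $\g\alpha_n'$ fellow-travel; then $\alpha_n^{-1}\g\alpha_n'$ has bounded... no. The decisive input is really \eqref{boundary_disjoint}: I would prove $d_T(v_0,\g v_0)\le 1$ is impossible together with $\g\notin\G_A$ by the edge-stabilizer/weak malnormality argument, and $d_T(v_0,\g v_0)\ge 2$ is impossible by \eqref{boundary_disjoint}. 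The main obstacle is the distance-one case: here I will show that $\g(\geo\G_A)\cap\geo\G_A\ne\emptyset$ forces $\geo\G_e\ne\emptyset$ where $\G_e=\g\G_A\g^{-1}\cap\G_A$ contains an infinite-order element, hence $\g\G_A\g^{-1}\cap\G_A$ is infinite; but $\g\notin\G_A$ with $d_T(v_0,\g v_0)=1$ means, after translating, $\g\in\G_A\cdot\beta\cdot\G_A$ for some $\beta\in\G_B\setminus\H$, and then $\g\G_A\g^{-1}\cap\G_A$ is conjugate into $\beta\G_A\beta^{-1}\cap\G_A \subset \beta(\G_A)\beta^{-1}\cap\G_A$, which by the precise invariance of $(A,B)$ (using $\beta A\subset B^\o$ and $\LT{\G_A}\subset A$ from \Cref{lem:ping-pong}) forces $\LT{\beta\G_A\beta^{-1}\cap\G_A}\subset A\cap\beta A\subset A\cap B^\o=\emptyset$, so this intersection is finite, a contradiction. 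I expect the bookkeeping identifying the distance-one neighbors of $v_0$ with cosets $\beta\G_B$, $\beta\in\G_A$, and translating the conjugation question accordingly, to be the fiddly part, but no single step is deep; everything reduces to \eqref{boundary_disjoint}, \Cref{cor:malnormal}, and the interactive-pair axioms.
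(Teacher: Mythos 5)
Your overall route is the paper's: translate the hypothesis into the statement that the vertices $\G_A$ and $\g\G_A$ of the Bass--Serre tree have intersecting vertex-group boundaries, and invoke \eqref{boundary_disjoint} to force $d_T(\G_A,\g\G_A)\le 1$. The case $d_T\ge 2$ is handled correctly and exactly as in the paper. The gap is in what you do with the remaining case: the one observation you are missing is that $T$ is \emph{bipartite} and both $\G_A$ and $\g\G_A$ are vertices of the same color ($\G_A$-cosets), so distance exactly $1$ is impossible; hence $d_T(\G_A,\g\G_A)\le 1$ already gives $\g\G_A=\G_A$, i.e.\ $\g\in\G_A$, and the lemma is finished in one line. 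Instead, your treatment of the ``distance-one case'' is incorrect as written: you assert that $\g v_0$ is then ``a vertex of the other color, i.e.\ $\g v_0=\eta\G_B$'', but $\g v_0=\g\G_A$ is an $A$-type vertex and can never equal a $\G_B$-coset in $V(T)$ --- this is precisely the contradiction that closes the case, not a description of it. Later you recast the same case as ``$\g\in\G_A\beta\G_A$ with $\beta\in\G_B\setminus\H$'', but such elements move $\G_A$ to a vertex at distance $2$ (path $\G_A$, $\alpha_1\G_B$, $\alpha_1\beta\G_A$), so that discussion addresses a case already disposed of by \eqref{boundary_disjoint}, not the distance-one case.

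Two further steps in that portion are unsupported: you pass from ``$\g(\geo\G_A)\cap\geo\G_A\ne\emptyset$'' to ``$\g\G_A\g^{-1}\cap\G_A$ is infinite'', which needs the limit-set intersection theorem for quasiconvex subgroups of hyperbolic groups (nowhere stated in the paper), and the whole weak-malnormality/precise-invariance detour (via \Cref{cor:malnormal}, \Cref{lem:ping-pong} and $A\cap B^\o=\emptyset$) is machinery the lemma does not need. The fix is simply to delete the distance-one discussion and note bipartiteness: same-colored vertices at distance $\le 1$ coincide, so $\g$ stabilizes the vertex $\G_A$ and therefore lies in $\G_A$; the argument for $\G_B$ is identical.
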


\begin{proof}
 Note that $\g(\geo\G_A) = \geo(\g\G_A\g^{-1})$.
 By \eqref{boundary_disjoint}, the nonemptyness of $\g(\geo\G_A) \cap \geo\G_A$ implies that $d_T(\g\G_A,\G_A)\le 1$.
 Thus, $\g\in\G_A$.
\end{proof}

 By \Cref{lem:conjugateempty}, the element $\g$ in the definition of $\xi$ is unique up to the right multiplication by  elements of $\G_A$ (resp. $\G_B$). Since the maps $\xi_A, \xi_B$ are equivariant for $\G_A, \G_B$, respectively, it follows that the definition of $\xi(\e)$ in \eqref{eqn:xii} does not depend on the choice of $\g\in\G$. 
 
 Finally, note that, by definition, we have the following result:

\begin{lemma}\label{lem:equiv_xii}
  The map $\xi: \vi\G\to \Ft$ in \eqref{eqn:xii} is $\G$-equivariant. 
\end{lemma}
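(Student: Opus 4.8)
The plan is to read off equivariance directly from the definition \eqref{eqn:xii}, exploiting the freedom in the choice of the auxiliary element $\g$. So I would fix $h\in\G$ and $\e\in\vi\G$; since $\vi\G$ is $\G$-invariant, $h\e\in\vi\G$ as well, so that both $\xi(\e)$ and $\xi(h\e)$ are defined. Choose $\g\in\G$ with $\g^{-1}\e\in\geo\G_A\cup\geo\G_B$; I will treat the case $\g^{-1}\e\in\geo\G_A$, the other case being verbatim the same with $B$ in place of $A$. By definition, $\xi(\e)=\g\,\xi_A(\g^{-1}\e)$.

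The key observation is that $h\g$ is an admissible auxiliary element for the point $h\e$: indeed $(h\g)^{-1}(h\e)=\g^{-1}\e\in\geo\G_A$. Therefore, computing $\xi(h\e)$ by means of $h\g$ gives
\[
 \xi(h\e)=(h\g)\,\xi_A\bigl((h\g)^{-1}(h\e)\bigr)=h\g\,\xi_A(\g^{-1}\e)=h\,\xi(\e),
\]
which is the desired identity. The legitimacy of this computation rests on the already-established fact (a consequence of \Cref{lem:conjugateempty} together with the $\G_A$-, resp. $\G_B$-equivariance of $\xi_A$, resp. $\xi_B$) that the right-hand side of \eqref{eqn:xii} is independent of the choice of $\g$; in particular the value of $\xi(h\e)$ is the same whether one computes it using $h\g$ or using some other valid auxiliary element, even one for which $h\e$ happens to land in $\geo\G_B$ rather than $\geo\G_A$.

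There is, in effect, no genuine obstacle here: the definition of $\xi$ on $\vi\G$ was arranged precisely so that $\G$-equivariance is automatic, and the only points requiring a moment's care — that $h\e$ remains a type I point, and that the formula \eqref{eqn:xii} is choice-independent — have already been settled before the statement. So the proof amounts to no more than substituting $h\g$ for $\g$ in the defining formula, as above.
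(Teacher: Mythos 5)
Your proof is correct and is essentially the paper's argument: the paper simply records this lemma as immediate from the definition, and your computation $\xi(h\e)=(h\g)\,\xi_A((h\g)^{-1}h\e)=h\,\xi(\e)$, justified by the choice-independence already established via \Cref{lem:conjugateempty} and the equivariance of $\xi_A,\xi_B$, is exactly what that assertion amounts to.
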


 \subsubsection{Definition of the boundary map for type II points}\label{sec:xiii}
 
 For $\e\in\vii\G$, we choose an alternating sequence $(\omega_n)$ given by \Cref{thm:alternating_one} such that
 $
  \omega_n \toC \e.
 $
 If $(\omega_n)$ is of type A, resp. type B, then define 
\begin{equation}\label{eqn:xiii}
   \xi(\e) \coloneqq \bigcap_{n\in\N} \omega_{2n} A, \quad
  \text{resp. } \xi(\e) \coloneqq \bigcap_{n\in\N} \omega_{2n} B.
\end{equation}
 (cf. \Cref{lem:shrinks}).
 
 A consequence of the following result is that $\xi$ is well-defined on $\vii\G$:
 
\begin{lemma}\label{lem:cont_ii}
 For $\e\in\vii\G$ and a sequence $(\g_n)$ in $\G$, if $\g_n\toC \e$, then
 $
  \g_n \tof \xi(\e).
 $
\end{lemma}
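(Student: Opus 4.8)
The plan is to fix $\e\in\vii\G$ and the alternating sequence $(\omega_n)$ furnished by \Cref{thm:alternating_one}, which converges to $\e$ in $\bG$ and has the property that any sequence $(\g_n)$ converging to $\e$ eventually admits normal forms containing the left subwords $\omega_{F(n)}$ for a function $F\to\infty$. We may assume $(\omega_n)$ is of type A (type B is symmetric). We first record that $\omega_n\tof \xi(\e)$: indeed $\xi(\e)=\bigcap_n\omega_{2n}A$ by \eqref{eqn:xiii}, and by \Cref{lem:shrinks} the nested sequence $\omega_{2n}A$ converges to this point, so by \Cref{prop:regulairty} applied to the compact set $A$ (with nonempty interior) we get $\omega_{2n}\tof\xi(\e)$, and likewise $\omega_{2n-1}\tof\xi(\e)$ using $\omega_{2n+1}B\subset\omega_{2n}A$. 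The goal is to upgrade this from the specific sequence $(\omega_n)$ to an arbitrary sequence $(\g_n)$ with $\g_n\toC\e$.

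The main step is to write, for $n\ge n_1$, a normal form $\g_n = \omega_{F(n)}\cdot \delta_n$ where $\delta_n = \omega_{F(n)}^{-1}\g_n$, using \Cref{thm:alternating_one}: the word $\omega_{F(n)}\delta_n$ is a normal form, so after a possible amalgamation move I may take $\delta_n$ to start with a letter not in $\H$ whose group-factor ($\G_A$ or $\G_B$) is forced by the parity of $F(n)$ (opposite to the last letter of $\omega_{F(n)}$), and I will record that $\delta_n B\subset A$ or $\delta_n A\subset B$ accordingly; in either case $\delta_n$ maps the relevant set into a uniformly bounded region. Concretely, since $(\omega_n)$ is type A, $\omega_{2m}$ ends with a $\G_B\setminus\H$-letter, so for $F(n)=2m$ the first letter of $\delta_n$ lies in $\G_A\setminus\H$ (or $\delta_n\in\G_A$), giving $\delta_n B\subset A$; for $F(n)=2m-1$ the first letter of $\delta_n$ lies in $\G_B\setminus\H$, giving $\delta_n A\subset B\subset$ (after one more step) $A$ once we premultiply by $\omega_{F(n)}$. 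Thus in all cases $\g_n B'\subset \omega_{F(n)} A$ for the appropriate $B'\in\{A,B\}$ with nonempty interior, equivalently $\g_n B'\subset \omega_{F(n)-1}B$ or $\omega_{F(n)}A$.

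Now I combine two facts. First, $(\g_n)$ is $\tmod$-regular: this follows because $(\g_n)$ is a sequence in the $\tmod$-Anosov — hence $\tmod$-regular — group $\G$ (by \Cref{cor:hyp_amalgam} together with the conclusion, already available in the ambient argument, that $\G$ is asymptotically embedded; but to avoid circularity I instead argue directly) — more safely, I apply \Cref{prop:regulairty_two}: since $\g_n$ maps the fixed compact set $B'$ (nonempty interior) into $\omega_{F(n)}A$ (type A case) and $(\omega_{F(n)}A)$ shrinks by \Cref{lem:shrinks} as $F(n)\to\infty$, the sequence $(\g_n B')$ shrinks, so $(\g_n)$ is $\tmod$-regular and moreover any subsequential $\tmod$-limit point of $(\g_n)$ is an accumulation point of the shrinking sets $\omega_{F(n)}A$, which is exactly $\xi(\e)$. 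Hence every subsequence of $(\g_n)$ has a further subsequence $\tmod$-converging to $\xi(\e)$, which is precisely the statement $\g_n\tof\xi(\e)$.

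The main obstacle is the bookkeeping in the second paragraph: passing from "$\g_n$ has a normal form with $\omega_{F(n)}$ as a left subword" to the clean containment "$\g_n(\text{fixed compact set})\subset\omega_{F(n)}(\text{fixed compact set})$". One must handle the amalgamation ambiguity (the suffix $\delta_n$ may start with an $\H$-letter; absorbing it into $\omega_{F(n)}$ changes $\omega_{F(n)}$ by a right $\H$-multiple, which does not affect $\omega_{F(n)}A$ or $\omega_{F(n)}B$ since $\H A=A$, $\H B=B$), and the parity/type dependence ($A$ versus $B$, and whether to use $\omega_{F(n)}$ or $\omega_{F(n)\pm1}$), so that in all cases the right-hand compact set is a member of a shrinking nested family. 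Once that is set up, the regularity input \Cref{prop:regulairty_two} and the identification of the limit via \Cref{lem:regularity_four} (or directly via the shrinking of $\omega_{F(n)}A$ to $\xi(\e)$) finish the proof immediately.
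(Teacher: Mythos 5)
Your overall route is the same as the paper's: invoke \Cref{thm:alternating_one} to get $\omega_{F(n)}$ as a left subword of a normal form of $\g_n$, push an appropriate compact set through $\g_n$ into $\omega_{F(n)}A$, use \Cref{lem:shrinks} (so $\omega_{2n}A\to\xi(\e)$), and finish with the regularity lemmas. However, the concrete ping-pong bookkeeping, which you yourself identify as the main obstacle, is mis-stated in a way that matters. Writing $\g_n=\omega_{F(n)}\delta_n$, the choice of which set to apply $\delta_n$ to is governed by the \emph{last} letter of $\delta_n$ (equivalently, the rightmost letter of the normal form of $\g_n$), not by the parity of $F(n)$, i.e.\ not by the \emph{first} letter of $\delta_n$. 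Your claim ``for $F(n)=2m$ the first letter of $\delta_n$ lies in $\G_A\setminus\H$, giving $\delta_n B\subset A$'' fails already for $\delta_n=\a\b$ with $\a\in\G_A\setminus\H$, $\b\in\G_B\setminus\H$: the interactive-pair axioms control $\a B$ and $\b A$ but say nothing about $\b B$, so $\delta_n B$ is not controlled; what is true there is $\delta_n A\subset A^\o$. The same defect affects your odd-$F(n)$ case.

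The repair is exactly what the paper does: assume (harmlessly) that $F$ takes only even values, and split $(\g_n)$ into the subsequence whose normal forms end with a letter of $\G_A$ (for which $\g_n B\subset \omega_{F(n)}A$) and the one whose normal forms end with a letter of $\G_B$ (for which $\g_n A\subset \omega_{F(n)}A$); since $\omega_{2n}A\to\xi(\e)$, \Cref{prop:regulairty} applied to each subsequence gives $\g_n\tof\xi(\e)$. Note also that this splitting is not optional for the regularity input: \Cref{prop:regulairty} and \Cref{prop:regulairty_two} require a single fixed compact set along the (sub)sequence, whereas your $B'$ varies with $n$; passing to the two subsequences (and using the subsequence characterization of $\tmod$-convergence, as you do at the end) resolves this. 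With that correction your argument coincides with the paper's proof.
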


\begin{proof}
 Let $(\omega_n)$ be an alternating sequence as above, which we suppose to be of type A (the type B case can be dealt with similarly).
 Then, by \Cref{thm:alternating_one}, there exists a function $F:\N\to\N$ diverging to infinity and $n_0\in\N$ such that for all $n\ge n_0$, we may (and will) choose a normal form of $\g_n$ containing $\omega_{F(n)}$ as a left subword of that form.
 Furthermore, we may also assume that the function $F$ takes only even values.
 
 We split $(\g_n)_{n\ge n_0}$ into two subsequences: The first subsequence $(\g_{k_n})$ contains all elements of $(\g_n)_{n\ge n_0}$
 with the rightmost letter contained in $\G_A$, and the complementary subsequence $(\g_{l_n})$ includes all elements of $(\g_n)_{n\ge n_0}$
 with rightmost letter contained in $\G_B$.
 Notice that for all $n\in\N$,
 \[
  \g_{k_n} B \subset \omega_{F(k_n)} A \quad \text{and} \quad
  \g_{l_n} A \subset \omega_{F(l_n)} A.
 \]
 Since $\omega_{2n} A \to \xi(\e)$, we obtain that both sequences $(\g_{k_n} B)$ and $(\g_{l_n} A)$ of subsets of $\Ft$ converge to $\xi(\e)$.
 Therefore, by \Cref{prop:regulairty}, $\g_{k_n} \tof \xi(\e)$ and $\g_{l_n} \tof \xi(\e)$, yielding the conclusion.
\end{proof}

Together with \Cref{cor:equivariance_amalgam}, we obtain:

\begin{corollary}\label{cor:equiv_xiii}
 The map in \eqref{eqn:xiii} is $\G$-equivariant.
\end{corollary}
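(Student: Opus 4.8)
The plan is to obtain \Cref{cor:equiv_xiii} as a formal consequence of \Cref{cor:equivariance_amalgam} and \Cref{lem:cont_ii}, together with the uniqueness of the $\tmod$-limit point of a $\tmod$-convergent sequence (recalled in \S\ref{sec:regularsequences}). First I would fix $\g\in\G$ and $\e\in\vii\G$; since $\vi\G$ and $\geo\G$ are $\G$-invariant, so is $\vii\G$, whence $\g\e\in\vii\G$ and both $\xi(\e)$ and $\xi(\g\e)$ are defined. The goal is the identity $\xi(\g\e)=\g\,\xi(\e)$.

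Next I would pick an alternating sequence $\underline{\omega}=(\omega_n)$ with $\omega_n\toC\e$ of the kind used in \S\ref{sec:xiii} to define $\xi$, so that $\xi(\e)=\tau_{\underline{\omega}}$ by the definition \eqref{eqn:xiii} and \Cref{lem:shrinks}. Applying \Cref{cor:equivariance_amalgam} to this alternating sequence and to the element $\g$ then gives
\[
 \g\omega_n\tof\g\,\tau_{\underline{\omega}}=\g\,\xi(\e).
\]
On the other hand, left translation $x\mapsto\g x$ is an isometry of $(\G,d_\G)$, hence extends to a homeomorphism of $\bG$ whose restriction to $\geo\G$ is the standard $\G$-action; therefore $\omega_n\toC\e$ implies $\g\omega_n\toC\g\e$. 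Since $\g\e\in\vii\G$, \Cref{lem:cont_ii} applies to the sequence $(\g\omega_n)$ and yields $\g\omega_n\tof\xi(\g\e)$. Comparing this with the displayed convergence and invoking uniqueness of $\tmod$-limit points, I would conclude $\xi(\g\e)=\g\,\xi(\e)$, as desired.

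I do not expect a genuine obstacle here: the argument is essentially bookkeeping once \Cref{cor:equivariance_amalgam} and \Cref{lem:cont_ii} are available. The only points deserving an explicit (if routine) remark are the $\G$-invariance of $\vii\G$ and the fact that left translations of $\G$ extend to the Gromov compactification $\bG$ inducing the given boundary action — both standard for hyperbolic groups. One should also bear in mind that $\xi$ is well-defined on $\vii\G$ independently of the chosen alternating sequence, but this has already been recorded as a consequence of \Cref{lem:cont_ii} in \S\ref{sec:xiii}, so no additional work is required.
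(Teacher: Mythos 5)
Your argument is correct and is essentially the paper's: the corollary is obtained precisely by combining \Cref{cor:equivariance_amalgam} (which gives $\g\omega_n\tof\g\,\xi(\e)$) with \Cref{lem:cont_ii} applied to $\g\e$ and the sequence $(\g\omega_n)$, and then using uniqueness of $\tmod$-limit points. The routine remarks you flag ($\G$-invariance of $\vii\G$ and that left translations induce the boundary action) are exactly the implicit bookkeeping in the paper's one-line proof.
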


\subsubsection{The boundary map preserves convergence dynamics}\label{sec:continuity_amalgam}

 The following result is an analog of \Cref{lem:cont_ii} for type I boundary points.
 
\begin{lemma}\label{lem:cont_i}
Let $\e\in\vi\G$ and let $(\g_n)$ be a sequence in $\G$.
If $\g_n\toC \e$, then $\g_n\tof \xi(\e)$.
\end{lemma}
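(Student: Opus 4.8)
The plan is to reduce the statement for an arbitrary type I boundary point $\e\in\vi\G$ to the already-known convergence-dynamics property of the boundary maps $\xi_A,\xi_B$ of the Anosov subgroups $\G_A,\G_B$, using the nearest-point projection machinery developed in \S\ref{sec:prelim_grp} (especially \Cref{lem:proj}) to ``peel off'' the vertex-group part of $(\g_n)$. Write $\e = \g_0\e_0$ with $\e_0\in\geo\G_A$ (the case $\e_0\in\geo\G_B$ being symmetric), so that $\xi(\e)=\g_0\xi_A(\e_0)$. Replacing $(\g_n)$ by $(\g_0^{-1}\g_n)$, which is harmless by $\G$-equivariance of $\xi$ (\Cref{lem:equiv_xii}), we may assume $\e\in\geo\G_A$ and must show $\g_n\tof\xi_A(\e)$.

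\emph{Key steps.} First, decompose each $\g_n$ along the Bass--Serre tree $T$. Since $\g_n\toC\e$ and $\e\in\geo\G_A=\geo v_0$ where $v_0=\G_A\in V(T)$, the projections $d_T(\g_n v, v_0)$ --- more precisely, the nearest-point projection of $\g_n\G_A$ (or $\g_n\G_B$) to the vertex $v_0$ --- should stabilize; concretely, I expect that for $n$ large the geodesic in $T$ from $v_0$ to $\g_nv_0$ has bounded length, so $\g_n\in\G_A\cdot(\text{bounded set})$ is false in general, but the correct statement is that $\g_n$ lies in a fixed coset $\G_A$ up to a controlled correction. The cleaner route: set $\a_n\coloneqq\pr_{\G_A}(\g_n)$ and $\hg_n\coloneqq\a_n^{-1}\g_n$, using the nearest-point projection to the quasiconvex subgroup $\G_A<\G$ (quasiconvex by \Cref{thm:mj}). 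By \Cref{lem:ft} and the hypothesis $\g_n\toC\e\in\geo\G_A$, \Cref{cor:proj_fellow_travel} gives that $(\a_n)$ is unbounded and $\a_n\toC\e$ as well, and moreover $\a_n$ fellow-travels $\g_n$, so $|\g_n|-|\a_n|$ is bounded, i.e., $\hg_n$ is a bounded sequence in $\G$. Passing to a subsequence, $\hg_n=\hg$ is constant. Then $\g_n=\a_n\hg$ with $\a_n\in\G_A$, $\a_n\toC\e$ in $\geo\G_A$.

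Now apply the convergence-dynamics property of $\xi_A$: since $\G_A$ is $\tmod$-Anosov (asymptotically embedded), $\a_n\toC\e$ in $\geo\G_A$ implies $\a_n\tof\xi_A(\e)$. It remains to pass from $\a_n$ to $\g_n=\a_n\hg$. Because $\hg$ is a fixed element of $G$ (via $\rho$) and right multiplication by a fixed element is a homeomorphism of $G$ that is compatible with $\tmod$-convergence --- precisely, $g_n\tof\nu$ and $h\in G$ fixed implies $g_nh\tof\nu$, since $C(\nu_-)$-convergence is preserved under precomposing with the homeomorphism $h$ of $\Ft$ (one uses that $h^{-1}C(\nu_-)$ still has nonempty interior; cf. \Cref{prop:regulairty}) --- we get $\g_n\tof\xi_A(\e)=\xi(\e)$. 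Finally, since every subsequence of the original $(\g_n)$ has a further subsequence along which the above argument runs and yields the same limit $\xi_A(\e)$, the full sequence satisfies $\g_n\tof\xi_A(\e)$, completing the proof.

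\emph{Main obstacle.} The delicate point is the first step: showing that after projecting to $\G_A$ the ``error'' $\hg_n=\pr_{\G_A}(\g_n)^{-1}\g_n$ stays bounded in $\G$. This is where the hypotheses that $\G_A$ (hence $\G_B$, $\H$) is quasiconvex in $\G$ and that $\e\in\geo\G_A$ (a type I point, \emph{not} a limit point of an alternating sequence straying to infinity in $T$) are both essential; the boundary-disjointness relation \eqref{boundary_disjoint} and \Cref{prop:reln} are what prevent the projection from being bounded when $\e$ is type II. I would need to argue carefully that the geodesic from $1_\G$ to $\g_n$ in $(\G,d_\G)$ enters and then stays in a bounded neighborhood of $\G_A$ from some uniformly bounded point onward --- equivalently that the $T$-geodesic from $v_0$ to $\g_nv_0$ does not leave $v_0$ by more than a bounded amount --- which follows from $\g_n\toC\e\in\geo v_0$ together with the fact (from \Cref{lem:ft} and \Cref{cor:proj_fellow_travel} applied to $Y=\G_A$) that $\pr_{\G_A}(\g_n)$ is unbounded and fellow-travels $\g_n$. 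Once this boundedness is in hand, everything else is the formal transfer of convergence dynamics from $\G_A$ to $\G$ via $\xi_A$ and right translation.
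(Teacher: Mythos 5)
Your reduction breaks at its central step: the claim that $\hg_n=\pr_{\G_A}(\g_n)^{-1}\g_n$ is bounded (equivalently, that $d_\G(\g_n,\G_A)$ stays bounded, or that the sequence eventually lies in $\G_A\cdot(\text{finite set})$) does not follow from $\g_n\toC\e\in\geo\G_A$, and it is false in general. \Cref{cor:proj_fellow_travel} only tells you that $(\pr_{\G_A}(\g_n))$ is unbounded, and fellow-travelling (divergence of Gromov products) gives no control whatsoever on $|\g_n|-|\pr_{\G_A}(\g_n)|$. A concrete counterexample already occurs for a free product: take $\G=\G_A\star\G_B$ with $\G_A=\langle a\rangle$, $\G_B=\langle b\rangle$ infinite cyclic and $\H$ trivial, and set $\g_n=a^{n^2}b^n$. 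Then $(\g_n,a^{n^2})_{1_\G}=n^2\to\infty$, so $\g_n\toC a^{+\infty}\in\geo\G_A$, while $d_\G(\g_n,\G_A)=n\to\infty$, so $\hg_n\approx b^n$ is unbounded (note that here even the Bass--Serre displacement $d_T(\G_A,\g_n\G_A)=2$ stays bounded, so no reformulation in terms of the tree rescues the claim). Consequently the passage to a subsequence with $\hg_n\equiv\hg$ constant, and with it the entire transfer ``$\a_n\tof\xi_A(\e)$ plus right multiplication by a fixed $\hg$'', collapses. (The transfer itself is fine: right multiplication by a fixed element of $G$ does preserve $\tmod$-convergence; the problem is purely that $\hg_n$ need not stabilize.)

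The gap is not cosmetic. If you keep the decomposition $\g_n=\a_n\hg_n$ with $\hg_n$ unbounded, you must show that $\hg_n$ maps some fixed compact set with nonempty interior into a region antipodal to the $\iota\tmod$-accumulation points of $(\a_n^{-1})$; those accumulation points lie in $\LT{\G_A}$ and may well lie in $\LT{\H}\subset A\cap B$, which is not antipodal to $B$, so the naive observation ``the first letter of $\hg_n$ lies in $\G_B\setminus\H$, hence $\hg_n A\subset B^\o$'' does not close the argument. This is exactly where the paper's proof does its real work: it argues by contradiction, equips each $\g_n$ with a $D$-normal form (\Cref{prop:good_form}), and runs a dichotomy on the leftmost-letter sequence $(\a_n)$ --- if $(\a_n^{-1})$ diverges away from $\H$, then $\g_n B\subset\a_n B\to\xi(\e)$ and \Cref{prop:regulairty} gives the desired flag convergence, contradicting the assumed failure; otherwise the leftmost letter stabilizes along a subsequence, and iterating produces subsequences sharing arbitrarily long common leftmost subwords, which by \Cref{prop:reln} forces bounded nearest-point projections to $\G_A$ and $\G_B$ and hence contradicts $\e\in\geo\G_A\cup\geo\G_B$. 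You would need an argument of this kind (or some other mechanism handling the $\LT{\H}$ directions) at the point where you assert boundedness; as written, your proof establishes the lemma only for sequences that remain within bounded distance of $\G_A$.
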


\begin{proof}
 Using the equivariance of the $\G$-action, it will be enough to prove the result when $\e\in\geo\G_A\cup\geo\G_B$. We argue by contradiction: 
 
If the assertion is false, then there exists $\e\in\geo\G_A\cup\geo\G_B$ and a sequence $(\g_n)$ in $\G$ such that
\begin{equation}\label{eqn:convergenceam}
   \g_n\toC \e, \text{ but $\xi(\e)$ is not a $\tmod$-accumulation point of }(\g_n).
\end{equation}
By \Cref{prop:good_form}, let us choose $D$-normal forms for each element of $(\g_n)$, for some $D\ge 0$.
After extraction, we may assume that the leftmost and rightmost letters of those forms come from the same group, say $\G_A$ and $\G_*$, respectively, where $*$ is either $A$ or $B$; let $(\a_n)$ be the sequence of those leftmost letters.
We also assume that $* = A$; the other choice can be similarly analyzed. 

Clearly, $(\a_n^{-1})$ cannot diverge away from $\H$: Otherwise, since $(\a_n)$ fellow travels $(\g_n)$, $\a_n\toC \e$ and, if $(\a_n^{-1})$ diverges away from $\H$, then
\[
 \g_n B \subset \a_n B \to \tau,
\]
where $\tau$ must be $\xi(\e)$. But, in this case, \Cref{prop:regulairty} shows that $\g_n\tof \tau$, a disagreement with \eqref{eqn:convergenceam}.

Therefore, after passing to a subsequence, we may assume that the elements of $(\g_n)$ have normal forms with a common leftmost letter $\alpha_1$.
Repeating the same argument to the sequence $(\alpha_1^{-1}\g_n)$ yields another subsequence whose elements have normal forms with a common leftmost letter $\beta_1$.
Thus, the original sequence $(\g_n)$ has a subsequence whose elements have normal forms with two leftmost common letters.
Proceeding inductively, for every $l\in\N$, we can find a subsequence $(\g_n')$ of the original sequence  $(\g_n)$ such that the elements of $(\g_n')$ have normal forms with a common leftmost subword of length at least $l$.
For $l = 3$, \Cref{prop:reln} shows that $(\g_n')$ has bounded nearest-point projections to $\G_A$ and $\G_B$.
Thus, $(\g_n')$ cannot have any accumulation  points in the boundary of $\G_A$ and $\G_B$.
This contradicts our initial assumption that $\e\in\geo\G_A\cup\geo\G_B$.
\end{proof}

Combining \Cref{lem:cont_ii,lem:cont_i}, we obtain the following:

\begin{corollary}\label{cor:dynamics_preserving_amalgam}
 The map $\xi:\geo\G\to \Ft$  preserves convergence dynamics.
\end{corollary}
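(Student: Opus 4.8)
The plan is to read the statement off directly from the two lemmas that immediately precede it, via a dichotomy on the type of the limit point. Let $(\g_n)$ be a sequence in $\G=\G_A\star_\H\G_B$ and let $\e\in\geo\G$ with $\g_n\toC\e$; we must show $\g_n\tof\xi(\e)$. By the $\G$-invariant decomposition $\geo\G=\vi\G\sqcup\vii\G$ recalled in \S\ref{sec:boundary}, either $\e$ is a type I point or $\e$ is a type II point, and these two alternatives are mutually exclusive and exhaustive.

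If $\e\in\vi\G$, then \Cref{lem:cont_i} gives exactly $\g_n\tof\xi(\e)$. If $\e\in\vii\G$, then \Cref{lem:cont_ii} gives exactly $\g_n\tof\xi(\e)$. Together with the $\G$-equivariance of $\xi$ established in \Cref{lem:equiv_xii} (on $\vi\G$) and \Cref{cor:equiv_xiii} (on $\vii\G$), this is precisely the assertion that $\xi$ preserves convergence dynamics in the sense of \Cref{def:asymptotically_embedded}(ii), so we are done.

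There is no genuine obstacle at this stage: all the content has already been spent in proving \Cref{lem:cont_ii} and \Cref{lem:cont_i}. It is worth flagging that these two lemmas also implicitly carry the burden of well-definedness of $\xi$ — on $\vii\G$ because a priori $\xi(\e)$ depends on the choice of alternating sequence $(\omega_n)$ furnished by \Cref{thm:alternating_one}, and \Cref{lem:cont_ii} forces any sequence converging to $\e$ (in particular a second such alternating sequence) to $\tmod$-converge to the same point; on $\vi\G$ this was already handled via \Cref{lem:conjugateempty}. The real engine behind the type II case is the nesting/convergence statement \Cref{lem:shrinks} together with the regularity of alternating sequences (\Cref{cor:alt}), while the type I case rests on the $D$-normal form machinery and the projection bound \Cref{prop:reln}. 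With both lemmas in hand the corollary reduces to a one-line case split.
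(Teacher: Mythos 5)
Your proposal is correct and coincides with the paper's own argument: the corollary is obtained precisely by combining \Cref{lem:cont_i} (type I points) and \Cref{lem:cont_ii} (type II points) via the decomposition $\geo\G = \vi\G \sqcup \vii\G$. The additional remarks on equivariance and well-definedness are accurate but not needed for the statement itself.
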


It follows that $\G$ is $\tmod$-regular; cf. \Cref{rem:asmp}.

\subsubsection{The boundary map is antipodal}\label{sec:antipodality_amalgam}

\begin{proposition}\label{prop:antipodal_amalgam}
  The map $\xi : \geo\G \to \Ft$ in \eqref{eqn:bdmap} (obtained by combining \eqref{eqn:xii} and \eqref{eqn:xiii}) is {\em antipodal}: That is
for every pair of distinct points $\e_\pm \in\geo\G$, the points $\tau_\pm \coloneqq \xi(\e_\pm)\in\Ft$ are antipodal to each other.
\end{proposition}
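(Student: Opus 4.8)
The plan is to argue by cases on the position of $\varepsilon_+$ and $\varepsilon_-$ in, or on the boundary of, the Bass--Serre tree $T$, normalizing throughout by the $\G$-action and the equivariance of $\xi$ (\Cref{lem:equiv_xii,cor:equiv_xiii}). For $\varepsilon\in\vi\G$ let $S_\varepsilon\subset V(T)$ denote the set of vertices $v$ with $\varepsilon\in\geo\G_v$; by \eqref{boundary_disjoint} this is either a single vertex or the two endpoints of a single edge. The trivial case is when some vertex $v$ satisfies $\varepsilon_+,\varepsilon_-\in\geo\G_v$ --- which forces both points to be type I: translating, we may take $\G_v=\G_A$, so $\xi(\varepsilon_\pm)=\xi_A(\varepsilon_\pm)$ by \eqref{eqn:xii}, and these are antipodal because $\xi_A$, being the limit map of the $\tmod$-Anosov group $\G_A$, is an antipodal map; the subcase $\G_v=\G_B$ is identical.

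In the remaining case $S_{\varepsilon_+}\cap S_{\varepsilon_-}=\emptyset$ (reading a type II point as carried by no vertex). Here one produces an edge $e$ of $T$ separating the two loci: using $S_{\varepsilon_\pm}$ for type I points and the images in $\geo T$ under the canonical bijection $\vii\G\to\geo T$ for type II points, one picks an edge on the $T$-geodesic, ray, or line joining the loci, and checks --- via \eqref{boundary_disjoint} and the fact that type II points lie in no edge-boundary --- that $\varepsilon_+$ and $\varepsilon_-$ lie in the two distinct components of $T\setminus e^\circ$. After normalizing $e=[\G_A,\G_B]$ (so $\G_e=\H$), write $T_A\ni\G_A$, $T_B\ni\G_B$ for these components; then $S_{\varepsilon_+}\subset T_A$, $S_{\varepsilon_-}\subset T_B$ for type I points, the corresponding $\geo T$-points lie in $\geo T_A$, resp. $\geo T_B$, for type II points, and in all cases $\varepsilon_\pm\notin\geo\H=\geo\G_e$.

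The core is a location lemma: \emph{if $\varepsilon$ is type I with $S_\varepsilon\subset T_A$ and $S_\varepsilon\neq\{\G_A\}$, or $\varepsilon$ is type II with corresponding point in $\geo T_A$, then $\xi(\varepsilon)\in A^\o$; while if $\varepsilon$ is type I with $S_\varepsilon=\{\G_A\}$ then $\xi(\varepsilon)=\xi_A(\varepsilon)\in\LT{\G_A}\setminus\LT{\H}$} (together with the mirror statement for $T_B$). Granting this, in the first alternative $\xi(\varepsilon_+)$ is antipodal to all of $B$ by the strengthened hypothesis \Cref{rem:replace_hypothesis_amalgam}(i)$'$, and in the second by hypothesis (ii) of \Cref{mainthm:amalgam}; the mirror statement gives $\xi(\varepsilon_-)\in B^\o\cup(\LT{\G_B}\setminus\LT{\H})\subset B$, using \Cref{lem:ping-pong} for the last inclusion. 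Hence $\xi(\varepsilon_+)$ and $\xi(\varepsilon_-)$ are antipodal, which completes the proof.

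The location lemma itself I would derive from an elementary iterated ping-pong: \emph{if $g=g_1\cdots g_l$ is a normal form with $g_1\in\G_A\setminus\H$ and $l\geq1$, then $g(B)\subset A^\o$ when $g_l\in\G_A$ and $g(A)\subset A^\o$ when $g_l\in\G_B$} (induction on $l$, using only the interactive-pair axioms). For a type I $\varepsilon$ with some vertex $v\neq\G_A$ in $S_\varepsilon\subset T_A$, take $v=g\G_A$ or $g\G_B$ with $g$ the reduced word along the $T$-geodesic $[\G_A,v]$: since that geodesic does not traverse $e$ and $v\neq\G_A$, $g$ begins with a letter in $\G_A\setminus\H$, and parity forces its last letter to have the color opposite to $v$, so $\xi(\varepsilon)=g\,\xi_\bullet(g^{-1}\varepsilon)\in g\,\LT{\G_\bullet}\subset g(A\text{ or }B)\subset A^\o$. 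The case $S_\varepsilon=\{\G_A\}$ is the second alternative: $\xi(\varepsilon)=\xi_A(\varepsilon)$, and $\varepsilon\notin\geo\H$ yields $\xi_A(\varepsilon)\notin\LT{\H}=\xi_A(\geo\H)$. For a type II $\varepsilon$ with point in $\geo T_A$, its defining alternating sequence $(\omega_n)$ from \eqref{eqn:xiii} must be of type A --- a type B sequence produces a $T$-ray starting with $e$, hence a limit point on the $T_B$-side --- and then each $\omega_{2n}A$ lies in the \emph{open} set $A^\o$ by the same ping-pong, so $\xi(\varepsilon)=\bigcap_n\omega_{2n}A\subset A^\o$. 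The main obstacle I anticipate is not these estimates but the combinatorial bookkeeping: converting ``$S_\varepsilon\subset T_A$'' into statements about leftmost letters of normal forms, using parity to control rightmost letters, handling the edge-carried type I points, and invoking the identifications $\xi|_{\geo\G_v}=\xi_{\G_v}$, $\xi_A(\geo\H)=\LT{\H}$, and the convergence of vertex sequences in $\overline T$, for which I would appeal to \cite[7.3]{Kapovich-Sardar}.
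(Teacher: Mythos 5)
Your proposal is correct, and it organizes the argument differently from the paper. The paper splits into three cases: for two type I points it uses \Cref{lem:antipodality_typei} (limit sets of vertex groups at tree-distance $\ge 2$ are antipodal, proved by essentially your iterated ping-pong on normal forms); for two type II points it avoids the tree boundary altogether, instead locating the first index $n_0$ where the two alternating sequences leave a common $\H$-coset (\Cref{lem:two:antipodality_typei}) and translating by $\omega^+_{n_0}$ so that the two images fall into $A^\o$ and $B^\o$; and for the mixed case it cites the corresponding argument of \cite[\S 4.2]{DK22}. Your route replaces all of this by a single separation-by-an-edge argument plus a ``location lemma'' ($\xi(\e)\in A^\o\cup(\LT{\G_A}\setminus\LT{\H})$ on the $T_A$-side, mirrored on the $T_B$-side), after which antipodality drops out of \Cref{rem:replace_hypothesis_amalgam}(i)$'$, hypothesis (ii), and \Cref{lem:ping-pong}; in particular you handle the mixed case internally rather than by citation, and you get the sharper localization of $\xi(\vii\G)$ in $A^\o\cup B^\o$ as a byproduct. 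The trade-off is that your argument leans harder on the tree-of-spaces description of $\geo\G$: you need not just the continuous bijection $\vii\G\to\geo T$ that the paper quotes, but the compatibility that a sequence converging to a type II point has vertex projections converging to the corresponding end of $T$ (equivalently, that the type A/B of an alternating sequence converging to $\e$ is detected by which side of the separating edge the end lies on). This is exactly the kind of bookkeeping you flag; it is available from \cite[7.3]{Kapovich-Sardar}, and part of it can even be recovered internally --- e.g.\ a given type II point cannot be the limit of both a type A and a type B alternating sequence, since by \Cref{lem:cont_ii} and \Cref{lem:shrinks} (with your ping-pong giving $\omega_{2n}A\subset A^\o$, $\omega_{2n}'B\subset B^\o$) its image would then lie in $A^\o\cap B^\o=\emptyset$ --- but tying that type to the side of $\geo T$ determined by your chosen edge does require the quoted compatibility, so make that appeal explicit if you write this up.
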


We recall that {\em the action $G\acts G/P$ preserves antipodality}: That is, if $\hat\tau_\pm \in G/P$ is an antipodal pair, then, for all $g\in G$, $g\hat\tau_\pm$ is also an antipodal pair.

\begin{lemma}\label{lem:antipodality_typei}
 Let $v,w$ be any vertices in the Bass-Serre tree $T$ such that $d_T(v,w) \ge 2$.
 Then, $\xi(\geo\G_v) = \LT{\G_v}$ is antipodal to $\xi(\geo\G_{w}) = \LT{\G_{w}}$.
\end{lemma}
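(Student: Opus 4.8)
The plan is to reduce, by equivariance, to a normalized position of the pair $(v,w)$ in the Bass--Serre tree $T$, and then to propagate the ping-pong inclusions along the geodesic joining $v$ to $w$. First I would record the identification $\xi(\geo\G_v)=\LT{\G_v}$: writing $v=\g\G_A$ (the $\G_B$-colored case is identical), we have $\G_v=\g\G_A\g^{-1}$ and $\geo\G_v=\g\cdot\geo\G_A$, so by the definition of $\xi$ on type I points (\Cref{sec:def_vi}) together with the $\G_A$-equivariance of $\xi_A$ one gets $\xi(\geo\G_v)=\g\,\xi_A(\geo\G_A)=\g\LT{\G_A}=\LT{\g\G_A\g^{-1}}$. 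Since the $\G$-action on $\Ft$ preserves antipodality, it suffices to prove the lemma after translating $(v,w)$ by a single element of $\G$. The $\G$-action on $T$ is color-preserving and transitive on the oriented edges of each color; hence, after applying a suitable $g\in\G$ and, if $v$ is $\G_B$-colored, interchanging the (symmetric) roles of $(\G_A,A)$ and $(\G_B,B)$, we may assume $v=\G_A$ and that the geodesic $[v,w]$ in $T$ begins $\G_A,\G_B,\dots$. In particular $\LT{\G_v}=\LT{\G_A}\subset A$ by \Cref{lem:ping-pong}.

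Next I would follow the non-backtracking geodesic $\G_A=v_0,\ \G_B=v_1,\ v_2,\dots,v_k=w$, where $k=d_T(v,w)\ge 2$. Non-backtracking of geodesics in $T$ forces $v_k=\omega\G_*$ with $*\in\{A,B\}$ and $\omega=c_1c_2\cdots c_{k-1}$ an alternating word whose letters lie alternately in $\G_B\setminus\H$ and $\G_A\setminus\H$, beginning with $c_1\in\G_B\setminus\H$; moreover the edge structure forces $*=B$ when $c_{k-1}\in\G_A\setminus\H$ and $*=A$ when $c_{k-1}\in\G_B\setminus\H$. Then $\G_w=\omega\G_*\omega^{-1}$, so $\LT{\G_w}=\omega\LT{\G_*}$, and by \Cref{lem:ping-pong} the set $\LT{\G_*}$ lies in $B$ (when $*=B$) or in $A$ (when $*=A$). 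A short induction on the length of $\omega$, using only the interactive-pair inclusions $\alpha B\subset A^\o$ (hence $\alpha B^\o\subset A^\o$) for $\alpha\in\G_A\setminus\H$ and $\beta A\subset B^\o$ (hence $\beta A^\o\subset B^\o$) for $\beta\in\G_B\setminus\H$, then shows that applying the letters of $\omega$ to $\LT{\G_*}$ from the inside out lands in $A^\o$ when $c_1\in\G_A\setminus\H$ and in $B^\o$ when $c_1\in\G_B\setminus\H$; the matching between the parity of $c_{k-1}$ and the factor $*$ is exactly what makes the base case of the induction work. Since here $c_1\in\G_B\setminus\H$, we conclude $\LT{\G_w}\subset B^\o$.

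Finally, $\LT{\G_v}\subset A$ and $\LT{\G_w}\subset B^\o$ are antipodal by hypothesis (i)$'$ of \Cref{rem:replace_hypothesis_amalgam}; in the case $v$ was originally $\G_B$-colored one obtains instead $\LT{\G_v}\subset B$ and $\LT{\G_w}\subset A^\o$, antipodal by the other half of (i)$'$. I expect the only genuinely delicate point to be the inductive propagation of the ping-pong inclusions along $\omega$: one must keep careful track both of the parity of the letters and of which of $A,B$ the relevant limit set sits in, and must respect the fact that $\alpha A$ and $\beta B$ are \emph{not} controlled by the interactive-pair hypothesis (only $\alpha B$ and $\beta A$ are), so the induction hypothesis has to be phrased to consume the letters of $\omega$ in the correct outward order.
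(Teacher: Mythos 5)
Your proof is correct and follows essentially the same route as the paper: both reduce by equivariance to a normalized position and then propagate the interactive-pair inclusions $\alpha B\subset A^\o$, $\beta A\subset B^\o$ along the tree geodesic (equivalently, a normal form), concluding with $\LT{\G_A}\subset A$, $\LT{\G_B}\subset B$ and the strengthened antipodality (i)$'$. The only cosmetic difference is that you anchor $v$ at the base vertex and expand $[v,w]$ as an alternating word, whereas the paper anchors $w$ and works with a normal form of the coset representative of $v$, conjugating away a leftmost $\G_A$-letter when needed.
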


\begin{proof}
 By by equivariance, it is enough to assume that $w = \G_A$ or $w = \G_B$.
 We assume the former, i.e., $w = \G_A$; the possibility of $w = \G_B$ can be analyzed similarly.
 Since $d_T(v,w) \ge 2$, we have that $\geo\G_A \cap \geo\G_v = \emptyset$ (see \eqref{boundary_disjoint}).
 
 Suppose first that $v$ is a coset of $\G_B$; so let $\g\in\G$ be any element such that $\g \G_B = v$.
 It is easy to see that such an element $\g$ must satisfy $\rl(\g) \ge 2$.
 We may also choose $\g$ so that it has a normal form whose rightmost letter lies in $\G_A\setminus \H$.
 If the leftmost letter $\alpha$ of that normal form also lies in $\G_A\setminus \H$, then $\rl(\g) \ge 3$, and $\LT{\G_{\alpha^{-1}v}} \subset \b\alpha' A \subset B^\o$, where $\beta$ and $\a'$ are the second and third letters from the left in that normal form of $\g$, respectively.
 Since $A$ and $B^\o$ are antipodal to each other (see \Cref{rem:replace_hypothesis_amalgam}), and $\LT{\G_A} \subset A$, we have that $\LT{\G_A}$ and $\LT{\G_{\alpha^{-1}v}}$ are antipodal to each other, hence so is the pair $\LT{\G_A} = \alpha\LT{\G_A}$ and $\LT{\G_{v}} =\a\LT{\G_{\alpha^{-1}v}}$. 
 If the leftmost letter  of the normal form of $\g$ is some element $\b\in \G_B\setminus \H$, then, by a similar argument, it follows that $\LT{\G_v} \subset B^\o$, which is antipodal to $\LT{\G_A} \subset A$.
 
 Suppose now that $v$ is a coset of $\G_A$.
 In this case, we can choose an element $\g\in\G$ such that $\g \G_A = v$, $\rl(\g)\ge 1$, and the rightmost letter of a normal form of $\g$ is an element of $\G_B\setminus\H$.
 Adapting a similar argument as above, the result follows in this case as well.
\end{proof}

\begin{proof}[Proof of \Cref*{prop:antipodal_amalgam}]
 Combining the following cases, it would follow that the map $\xi : \geo\G \to \Ft$ is antipodal.
 Recall that $\xi$ is $\G$-equivariant (by \Cref{lem:equiv_xii} and \Cref{cor:equiv_xiii}).

\setcounter{case}{0}
\begin{case}
  {\em Suppose that both points $\e_\pm\in\geo\G$ are of type I.}
Since $\xi$ is $\G$-equivariant, it is enough to assume that $\e_-\in \geo\G_A \cup \geo\G_B$.
Let us also assume that $\e_-\in\geo\G_A$; the case $\e_-\in\geo\G_B$ can be treated similarly.

 If $\e_+ \in \geo\G_A \cup (\G_A(\geo\G_B))$, then finding a suitable element $\alpha\in\G_A$, we obtain that $\a\e_+\in \geo\G_A \cup \geo\G_B$.
Since, by the hypothesis of \Cref{mainthm:amalgam},
$\xi(\geo\G_A \cup \geo\G_B) \subset \Ft$ is an antipodal subset, 
$\xi(\alpha\e_\pm)$ is an antipodal pair and, hence, so is $\xi(\e_\pm)$.

 If $\e_+ \not\in \geo\G_A \cup (\G_A(\geo\G_B))$, then 
 $\e_+$ lies in the boundary of a vertex group $\G_v$ of the Bass-Serre tree $T$ such that $d_T(\G_A,v) \ge 2$.
 By \Cref{lem:antipodality_typei}, $\xi(\e_\pm)$ are antipodal.
\end{case}

\begin{case}
 {\em Suppose that both points $\e_\pm\in\geo\G$ are of type II.}
 Consider a pair of alternating sequences $(\omega^\pm_n)$ converging (in $\bG$) to $\e_\pm$ (see \Cref{prop:Dalternating}).
 
\begin{lemma}\label{lem:two:antipodality_typei}
 There exists $n\in\N$ such that $\omega^+_n\not\in \omega_n^- \H$.
\end{lemma}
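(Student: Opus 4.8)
The plan is to transport the statement into the Bass--Serre tree $T$ of $\G=\G_A\star_\H\G_B$ and exploit that two geodesic rays of $T$ asymptotic to distinct ends must eventually fork. The point is that each coset $\omega^{\pm}_n\H$ names an edge of $T$. Writing the length-$n$ normal form $\omega^{\pm}_n=\g_1\cdots\g_n$ furnished by \Cref{def:alternating}, the no-backtracking discussion preceding \Cref{lem:reln} shows that the associated sequence of cosets \eqref{eqn:relnnormalform} (or its mirror image when $\g_1\in\G_B$) is a geodesic segment of $T$; since an alternating sequence has a fixed underlying pair $(\a_n),(\b_n)$, the word $\omega^{\pm}_{n+1}$ extends $\omega^{\pm}_n$ by one letter, so these segments are nested in $n$. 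Letting $v^{\pm}_n$ be the terminal vertex of the $n$-th segment, the $v^{\pm}_n$ trace out geodesic rays $\rho_{\pm}$ of $T$, both issuing from the vertex $\G_A$. A short coset computation — the same one proving there is no backtracking — identifies $\omega^{\pm}_n\H$ with an edge of $\rho_{\pm}$ at distance $n-1$ or $n$ from $\G_A$ in $T$ (according to the type of the alternating sequence); in particular $d_T(\G_A,\omega^{\pm}_n\H)\to\infty$.

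Next I would pin down the ideal endpoints of $\rho_{\pm}$. Since $\e_{\pm}\in\vii\G$ and $\omega^{\pm}_n\toC\e_{\pm}$, the coarse projection $\G\to T$ sends $(\omega^{\pm}_n)$ to a sequence that stays uniformly close to $\rho_{\pm}$, escapes to infinity along it, and converges in $\geo T$ to the image $\overline{\e}_{\pm}$ of $\e_{\pm}$ under the $\G$-equivariant bijection $\vii\G\to\geo T$ recalled in \S\ref{sec:boundary} (this is how that bijection arises; see \cite[7.3]{Kapovich-Sardar}). Hence $\rho_{\pm}$ is asymptotic to $\overline{\e}_{\pm}$, and since $\e_+\ne\e_-$ and the map $\vii\G\to\geo T$ is injective, $\overline{\e}_+\ne\overline{\e}_-$.

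To conclude: the rays $\rho_+$ and $\rho_-$ issue from the common vertex $\G_A$ and converge to distinct ends, so in the tree $T$ they agree only along a compact initial segment, of length $R$ say. Picking any $n$ with $d_T(\G_A,\omega^{+}_n\H)>R$, the edge $\omega^{+}_n\H$ of $\rho_+$ lies strictly beyond the forking vertex of $\rho_+$ and $\rho_-$, so it is not an edge of $\rho_-$; as $\omega^{-}_n\H$ is an edge of $\rho_-$, we get $\omega^{+}_n\H\ne\omega^{-}_n\H$, i.e.\ $\omega^{+}_n\notin\omega^{-}_n\H$, which is the assertion. The one step I expect not to be pure bookkeeping — and would write out with care — is the identification of the endpoint of $\rho_{\pm}$ with $\overline{\e}_{\pm}$: this is exactly where the description of the boundary of an amalgam from \S\ref{sec:boundary} enters, through the fact that the projection $\G\to T$ carries a sequence converging to a type II boundary point to a sequence converging to the corresponding end of $T$. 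Everything else reduces to manipulating normal forms and incidences in $T$.
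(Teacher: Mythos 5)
Your argument is correct, but it takes a genuinely different route from the paper. The paper argues by contradiction entirely on the group side: assuming $\omega^+_n\in\omega^-_n\H$ for all $n$, it uses quasiconvexity of $\H$ in $\G$ together with the nearest-point projection lemma (\Cref{lem:proj}) and the fact that $(\omega_n^-)$ diverges away from $\H$ (via \Cref{prop:reln}) to conclude that the cosets $\omega_n^-\H$ converge to $\e_-$ in $\bG$, whence $\omega^+_n\toC\e_-$ and $\e_+=\e_-$, a contradiction. You instead pass to the Bass--Serre tree: the partial products of an alternating sequence label the edges $\omega_n^\pm\H$ of a geodesic ray $\rho_\pm$ in $T$ (this bookkeeping is accurate, up to the minor point that a type A ray issues from the vertex $\G_B$ and a type B ray from $\G_A$, which does not affect the forking argument), and distinctness of the ends forces the edges to differ for all large $n$ -- a slightly stronger conclusion than the lemma asks for, and a very transparent combinatorial picture. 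The trade-off is that your load-bearing step is exactly the one you flag: you need not just the existence of the continuous $\G$-equivariant bijection $\vii\G\to\geo T$ stated in \S\ref{sec:boundary}, but its concrete characterization -- that a sequence $\g_n\toC\e\in\vii\G$ projects to a sequence in $T$ converging to the corresponding end -- which the paper never states and which must be imported from \cite[7.3]{Kapovich-Sardar} (or proved, e.g.\ with \Cref{thm:alternating_one} and \Cref{lem:reln}). The paper's proof deliberately avoids this by staying within the projection lemmas already established, so it is self-contained; yours is shorter and more geometric provided that identification is written out or properly cited, as you indicate you would do.
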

\begin{proof}
 If this is false, then for all $n\in\N$, $\omega^+_n\in \omega_n^- \H$.
 Consider the sequence $(\ho_n^{-1})$, where $\ho_n \coloneqq \omega_n^-\pr_\H((\omega_n^-)^{-1})$.
 By \Cref{lem:proj}, $(\ho_n^{-1})$ has no accumulation points in $\geo\H$.
 Moreover, $(\omega_n^-)$ diverges away from $\H$ (cf. \Cref{prop:reln}).
 Thus, $(\ho_n(\geo\H))$ converges to $\e_-$, see \Cref{lem:proj}(ii).
 It follows that the sequence of uniformly quasiconvex subsets $(\ho_n(\H))$ of $\G$ converges to $\e_-$.
 Since we have assumed that $\omega^+_n\in \omega_n^- \H = \ho_n \H$, we obtain that $\omega^+_n \toC \e_-$, which shows that $\e_+ = \e_-$. This is a contradiction. 
\end{proof}

Let $n_0\in\N$ be the smallest number such that
 $\omega^+_{n_0}\not\in \omega_{n_0}^- \H$.
 Consider the alternating sequences $((\omega^+_{n_0})^{-1}\omega_n^+)$ and $((\omega^+_{n_0})^{-1}\omega_n^-)$ converging to $(\omega^+_{n_0})^{-1}\e_+$ and $(\omega^+_{n_0})^{-1}\e_-$, respectively.
 By choice of $n_0$, it is evident that the first element of those sequences lie in different groups $\G_A$ and $\G_B$.
 Therefore, one of the points $\xi((\omega^+_{n_0})^{-1}\e_\pm)$ lies in the interior of $A$ while the other one lies in the interior of $B$ (cf. \Cref{lem:shrinks}) and thus they are antipodal.
 Consequently, $\xi(\e_\pm)$ are antipodal.
\end{case}

\begin{case}
 {\em Suppose that $\e_-\in \geo\G$ is of type I and $\e_+\in \geo\G$ is of type II.}
 By the same argument as in the third case in \cite[\S 4.2]{DK22}, it follows that $\xi(\e_\pm)$ are antipodal.\qedhere
\end{case}
\end{proof}

\subsection{Proof of Theorem \ref*{mainthm:amalgam}}\label{sec:proof_amalgam}

By \Cref{prop:faithful_amalgam}, the subgroup $\G = \< \G_A,\G_B\>$ of $G$ is naturally isomorphic to $\G_A\star_\H \G_B$.
We show that $\G$ is a $\tmod$-Anosov subgroup or, equivalently, a $\tmod$-asymptotically embedded subgroup (see \Cref{def:asymptotically_embedded}) of $G$:

\begin{enumerate}[(i)]
\item By \Cref{cor:hyp_amalgam}, $\G$ is hyperbolic.

\item Finally, the boundary map $\xi : \geo\G \to \Ft$ in \Cref{eqn:bdmap} is
 $\G$-equivariant (by \Cref{lem:equiv_xii} and \Cref{cor:equiv_xiii}), antipodal (by \Cref{prop:antipodal_amalgam}), and preserves convergence dynamics (by \Cref{cor:dynamics_preserving_amalgam}).
\end{enumerate}

This concludes the proof of the \Cref{mainthm:amalgam}.
\qed

\section{Proof of \Cref{mainthm:HNN}}
\label{sec:CT_B}

Throughout this section, we work under the hypothesis of \Cref{mainthm:HNN}.

\begin{assumption}\label{rem:replacei}
 In the proof of \Cref{mainthm:HNN}, we will replace $(A,B_\pm)$ by $(\cl (A^\o), \cl(B^\o_\pm))$,
 which is again an interactive triple for $(\P; \,\H_\pm; \, f)$.
 In doing so, we may replace the condition (i) in the hypothesis of \Cref{mainthm:HNN} by the following stronger one (cf. \Cref{lem:replacei}):
 {\em\begin{enumerate}[(i)]
 \item [(i)'] The pairs of subsets $(A,B_\pm^\o)$, $(A^\o,B_\pm)$ of $\Ft$ are antipodal. 
 Moreover, $B_-$ is antipodal to $B_+$.
 \end{enumerate}}
\end{assumption}

We begin the proof of \Cref{mainthm:HNN} with the following observation:

\begin{lemma}\label{lem:limitsetsHNN}
Under the hypothesis of \Cref{mainthm:HNN},
\begin{enumerate}[(i)]
 \item  $\LT{\<f\>}$ consists of two points, one of them lies in the interior of $B_+$ and the other one lies in the interior of $B_-$,
 and $\< f\>$ is a cyclic $\tmod$-Anosov subgroup of $G$.
 \item $\LT{\P} \subset A$ and $\LT{{\H_\pm}} \subset A\cap B_\pm$.
\end{enumerate}
\end{lemma}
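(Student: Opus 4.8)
The plan is to treat the two parts separately, both as ping-pong consequences of the interactive triple axioms in Definition \ref{def:interactive_triple}, exactly as in the amalgamated free product case (Lemma \ref{lem:ping-pong}). For part (i), I would start from axiom (iii): $f^{\pm1}(A) \subset B_\pm$ and $f^{\pm 1}B_\pm \subset B_\pm^\o$. Iterating the second inclusion gives a nested sequence $B_+ \supset f B_+ \supset f^2 B_+ \supset \cdots$ of compacta with nonempty interiors; since $f B_+ \subset B_+^\o$, one gets from Lemma \ref{lem:mainlemma} (applied with the single element $f$) that $(f^n)$ is $\tmod$-regular, and then Lemma \ref{lem:regularity_four} forces every $\tmod$-limit point of $(f^n)$ to lie in $f B_+ \subset B_+^\o$; the same argument with $f^{-1}$ puts the $\tmod$-limit points of $(f^{-n})$ in $B_-^\o$. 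To see that $\LT{\<f\>}$ is exactly two points, I would use that $\<f\>$ is infinite cyclic (so $\partial_\infty \<f\>$ is two points) — note $\<f\>$ is infinite because $f B_+ \subsetneq B_+$ forces $f$ to have infinite order — and that a $\tmod$-regular infinite cyclic group has a $\tmod$-limit set of cardinality at most two, with the attracting and repelling points distinct since $B_+^\o$ and $B_-^\o$ are disjoint. Cyclic $\tmod$-regularity together with a $\<f\>$-equivariant antipodal limit map onto these two points (they are antipodal by hypothesis (i), since one lies in $B_+$ and the other in $B_-$, and $B_+$ is antipodal to $B_-$) gives that $\<f\>$ is $\tmod$-asymptotically embedded, hence $\tmod$-Anosov, by Definition \ref{def:asymptotically_embedded}.

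For part (ii), the argument mirrors Lemma \ref{lem:ping-pong}. It suffices to show $\LT{\P} \subset A$. If $\H_+ = \P$ (or $\H_- = \P$), then $\P$ preserves $B_+$ (resp. $B_-$), which has nonempty interior, so by Lemma \ref{lem:regularity_four} all $\tmod$-limit points of $\P$ lie in $B_\pm$; but then axiom (ii) would be vacuous — more to the point, in this degenerate case one argues directly as in Lemma \ref{lem:ping-pong} that $\LT{\P} \subset A$ using that $\P$ preserves $A$, since $f^{\pm1}A\subset B_\pm$ forces $\p A \subset A$ would fail; I would instead simply note that $\H_\pm$ are proper in $\P$ in the interesting case and pick, for a given $\tau \in \LT{\P}$, a sequence $(\p_n)$ in $\P$ with $\p_n \tof \tau$ and no $\p_n \in \H_+$ (replacing any offending $\p_n$ by $\p_n \p$ for a fixed $\p \in \P \setminus \H_+$, which does not change flag-convergence). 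Since $\p_n B_+ \subset A$ for all $n$ by axiom (ii), Lemma \ref{lem:regularity_four} gives $\tau \in A$. For $\LT{\H_\pm} \subset A \cap B_\pm$: the inclusion $\LT{\H_\pm} \subset A$ follows from $\LT{\H_\pm} \subset \LT{\P} \subset A$ since $\H_\pm$ is a subgroup of $\P$; the inclusion $\LT{\H_\pm} \subset B_\pm$ follows because $\H_\pm$ preserves $B_\pm$ (axiom (ii)), which has nonempty interior, so Lemma \ref{lem:regularity_four} applies to any sequence in $\H_\pm$.

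The two degenerate cases $\H_+ = \P$ or $\H_- = \P$ deserve a moment's care — in the HNN setting $\H_+ = \P$ means $\P \subset f\P f^{-1}$, which need not force $\P = f\P f^{-1}$, so I would handle it by noting that if $\H_+ = \P$ then $f\P f^{-1} \supset \P$, and then axiom (iii) together with precise invariance still lets the ping-pong run (one uses $f^{-1}$ and $B_-$ instead); alternatively, the case $\H_+ = \H_- = \P$ makes $\G$ an ascending HNN extension and the statement is immediate from $\LT{\P} \subset B_+ \cap B_-$ combined with $f B_+ \subset B_+^\o$. The main obstacle I anticipate is not any single step but making sure the limit-set arguments via Lemma \ref{lem:regularity_four} are invoked with genuinely $\tmod$-regular sequences: $\P$ is $\tmod$-Anosov hence $\tmod$-regular by hypothesis, so every sequence in $\P$ is automatically $\tmod$-regular, and $(f^n)$ is $\tmod$-regular by Lemma \ref{lem:mainlemma}; once regularity is in hand, every inclusion above is a direct application of Lemma \ref{lem:regularity_four}.
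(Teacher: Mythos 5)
Your part (ii) and the regularity step of part (i) are essentially the paper's (partly omitted) argument: $(f^{\pm n})$ is $\tmod$-regular by \Cref{lem:mainlemma}, its limit points are confined to $fB_+\subset B_+^\o$, resp.\ $f^{-1}B_-\subset B_-^\o$, by \Cref{lem:regularity_four}, and $\LT{\P}\subset A$, $\LT{\H_\pm}\subset A\cap B_\pm$ follow by the same substitution trick as in \Cref{lem:ping-pong}. The degenerate cases you agonize over are harmless: if both $\H_\pm=\P$ then $\P$ preserves both $B_+$ and $B_-$, so $\LT{\P}\subset B_+\cap B_-=\emptyset$ and the claims are vacuous; if only one of $\H_\pm$ equals $\P$, you run the ping-pong with the other side, as you eventually say (the clause about ``$\P$ preserves $A$'' should simply be deleted, since nothing in \Cref{def:interactive_triple} gives that).

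The genuine gap is in part (i), at the sentence ``a $\tmod$-regular infinite cyclic group has a $\tmod$-limit set of cardinality at most two.'' This is exactly where work is needed, and you neither prove it nor cite anything for it; the fact that $\geo\<f\>$ consists of two points gives no bound on $\LT{\<f\>}$ unless one already knows $\<f\>$ is asymptotically embedded, which is what is being proved. \Cref{lem:regularity_four} only locates the limit points of $(f^{n})$ inside $B_+^\o$; it does not say there is just one, and without single-point convergence of $(f^{n})$ you also cannot define the dynamics-preserving map $\geo\<f\>\to\Ft$ required by \Cref{def:asymptotically_embedded}. The assertion you lean on is true, but within this paper's toolkit it is closed by the following short argument, which is the paper's: all $\iota\tmod$-limit points of $(f^{-n})$ lie in $B_-^\o$, and $B_+$ is antipodal to $B_-$ (hypothesis (i), cf. \Cref{rem:replacei}), so \Cref{cor:regulairty} shows that $(f^{n}B_+)$ shrinks; since this sequence is nested, it converges to a single point $\tau_+\in fB_+\subset B_+^\o$, and \Cref{prop:regulairty} then gives $f^{n}\tof\tau_+$; symmetrically $f^{-n}\tof\tau_-\in B_-^\o$. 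As every divergent sequence in $\<f\>$ splits into subsequences of $(f^{n})$ and $(f^{-n})$, this pins down $\LT{\<f\>}=\{\tau_+,\tau_-\}$, after which your direct verification of asymptotic embeddedness (or, as the paper does, an appeal to \cite[Lemma 5.38]{MR3736790}) goes through.
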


\begin{proof}
 Since, for all $n\in\N$, $f^{n+1}f^{-n}({B_+}) = f({B_+}) \subset B_+^\o$ (by the second condition of \Cref{def:interactive_triple}), applying \Cref{lem:mainlemma}, it follows that $\<f\>$ is $\tmod$-regular.
 Since, for all $n\in\N$, $f^{-n} (f^{-1}B_-) \subset f^{-1}B_- \subset B_-^\o$, all the $\tmod$-limit points of the sequence $(f^{-n})_{n\in\N}$ lie in $B_-^\o$.
 Since $B_+$ is antipodal to $B_-^\o$ (see \Cref{rem:replacei}),
 by \Cref{cor:regulairty}, $(f^nB_+)_{n\in\N}$ shrinks.
 Therefore, since the sequence $(f^n B_+)_{n\in\N}$ is nested, $(f^n B_+)_{n\in\N}$ must converge to some point $\tau_+\in B_+^\o$.
 Similarly, the nested sequence $(f^{-n}B_-)_{n\in\N}$ of compact subsets of $\Ft$ converges to some point $\tau_-\in B_-^\o$.
 In particular, the limit set of $\< f\>$ is $\{\tau_\pm\}$, is antipodal, and has cardinality two.
 That $\< f\>$ is $\tmod$-Anosov follows from \cite[Lemma 5.38]{MR3736790}.
 This proves (i).

 Proof (ii) is similar to that of \Cref{lem:ping-pong}.
 Hence, we omit the details.
\end{proof}

\begin{remark}\label{rem:relaxation_HNN}
 Suppose that it is known that $\LT{\P}\cap \partial A = \LT{\H_+}\cup\LT{\H_-}$.
 Then, by \Cref{lem:limitsetsHNN}(ii), the subset $\LT{\P} \setminus \LT{\H_+}$ lies in $A^\o\cup B_-$ and, hence, it is antipodal to $B_+$; see \Cref{rem:replacei}.
 Similarly, $\LT{\P} \setminus \LT{\H_-}$ is antipodal to $B_-$.
 Thus, in this situation, the second condition (ii) of \Cref{mainthm:HNN} is automatically satisfied.
\end{remark}

\begin{corollary}\label{cor:malnormalHNN}
Under the hypothesis of \Cref{mainthm:HNN},
the subgroups $\H_\pm$ are weakly malnormal in $\P$, and, for all $\p\in\P$, the intersection $\H_-\cap \p\H_+\p^{-1}$ is finite.
\end{corollary}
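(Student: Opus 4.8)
The plan is to transcribe the proof of \Cref{cor:malnormal} to the HNN setting, using \Cref{lem:limitsetsHNN}(ii) in place of \Cref{lem:ping-pong} and the axioms of an interactive triple (\Cref{def:interactive_triple}) together with the strengthened antipodality of \Cref{rem:replacei}. Three elementary facts drive the argument: (a) since $\P$ is $\tmod$-Anosov, hence hyperbolic and $\tmod$-regular, every infinite subgroup of $\P$ contains an element $\eta$ of infinite order, and $\LT{\<\eta\>}$ is then a nonempty subset of $\Ft$ (it contains any $\tmod$-limit point of the sequence $(\eta^n)$), contained in $\LT{\H'}$ whenever $\eta\in\H'<\P$; (b) the $\tmod$-limit set is equivariant under conjugation, $\p\LT{\<\eta\>} = \LT{\<\p\eta\p^{-1}\>}$; and (c) an antipodal pair of subsets of $\Ft$ is disjoint, so \Cref{rem:replacei} gives $A^\o\cap B_+ = A^\o\cap B_- = B_-\cap B_+ = \emptyset$. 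As a first consequence, $\H_-\cap\H_+$ is finite: an infinite-order $\eta$ in it would yield $\emptyset\ne\LT{\<\eta\>}\subset\LT{\H_-}\cap\LT{\H_+}\subset B_-\cap B_+$ by \Cref{lem:limitsetsHNN}(ii), contradicting (c).

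Next I would prove weak malnormality of $\H_+$ in $\P$. Suppose $\p\in\P\setminus\H_+$ and $\p\H_+\p^{-1}\cap\H_+$ is infinite, so it contains an infinite-order $\eta$. From $\eta\in\H_+$ we get $\LT{\<\eta\>}\subset\LT{\H_+}\subset B_+$ (via \Cref{lem:limitsetsHNN}(ii)), while from $\p^{-1}\eta\p\in\H_+$ we get $\p^{-1}\LT{\<\eta\>} = \LT{\<\p^{-1}\eta\p\>}\subset\LT{\H_+}\subset B_+$, i.e.\ $\LT{\<\eta\>}\subset\p B_+$. But $\p\in\P\setminus\H_+$, so condition (ii) of \Cref{def:interactive_triple} forces $\p B_+\subset A^\o$; hence $\emptyset\ne\LT{\<\eta\>}\subset A^\o\cap B_+ = \emptyset$, a contradiction. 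The proof that $\H_-$ is weakly malnormal in $\P$ is word-for-word the same with every subscript $+$ replaced by $-$, invoking $A^\o\cap B_- = \emptyset$ instead.

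Finally, fix $\p\in\P$ and suppose $\H_-\cap\p\H_+\p^{-1}$ is infinite; pick an infinite-order $\eta$ in it. Then $\eta\in\H_-$ gives $\LT{\<\eta\>}\subset\LT{\H_-}\subset B_-$, and $\p^{-1}\eta\p\in\H_+$ gives $\p^{-1}\LT{\<\eta\>} = \LT{\<\p^{-1}\eta\p\>}\subset\LT{\H_+}\subset B_+$, i.e.\ $\LT{\<\eta\>}\subset\p B_+$. If $\p\notin\H_+$, then $\p B_+\subset A^\o$ and we reach $\emptyset\ne\LT{\<\eta\>}\subset A^\o\cap B_- = \emptyset$; if $\p\in\H_+$, then $\p\H_+\p^{-1} = \H_+$, so $\H_-\cap\p\H_+\p^{-1} = \H_-\cap\H_+$ is finite by the first paragraph, contradicting that $\eta$ has infinite order. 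In both cases we are done. The only point requiring care --- and the ``obstacle'', such as it is --- is bookkeeping: keeping straight which of the three disjointness relations among $A^\o$, $B_+$, $B_-$ is used in each case, and disposing of the degenerate sub-case $\p\in\H_+$ in the last assertion via the preliminary finiteness of $\H_-\cap\H_+$ (which has no counterpart in the amalgam argument).
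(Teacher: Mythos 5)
Your proof is correct and takes essentially the same approach the paper intends: the paper's proof of \Cref{cor:malnormalHNN} is just ``similar to \Cref{cor:malnormal}, details omitted,'' and your write-up is precisely that transcription, using $\LT{\H_\pm}\subset A\cap B_\pm$, conjugation-equivariance of limit sets, precise invariance $\p B_\pm\subset A^\o$ for $\p\notin\H_\pm$, and the disjointness coming from antipodality. A minor simplification: the separate case $\p\in\H_+$ in the last assertion is unnecessary, since $\H_+B_+=B_+$ then gives $\LT{\<\eta\>}\subset \p B_+\cap B_-=B_+\cap B_-=\emptyset$ directly, so the preliminary finiteness of $\H_-\cap\H_+$ can be dispensed with.
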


\begin{proof}
 The proof is similar to the one of \Cref{cor:malnormal}; we omit the details.
\end{proof}

\begin{corollary}\label{cor:hyp_HNN}
 Under the hypothesis of \Cref{mainthm:HNN},
the subgroup $\G$ of $G$ generated by $\P$ and $f$ in $G$ is hyperbolic.
\end{corollary}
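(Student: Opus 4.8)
The plan is to mimic the proof of \Cref{cor:hyp_amalgam} and deduce hyperbolicity of $\G$ from the Bestvina--Feighn Combination Theorem in the form \Cref{thm:Bestvina-Feign}(ii). Its hypotheses are: $\P$ is hyperbolic, which holds since $\P$ is $\tmod$-Anosov; $\H_\pm$ are isomorphic via $\phi$, which is built into the statement of \Cref{mainthm:HNN} (and the identification $\G\cong\Pf$ is supplied by \Cref{prop:faithful_HNN}); $\H_\pm$ are weakly malnormal in $\P$ and $\H_-\cap\p\H_+\p^{-1}$ is finite for every $\p\in\P$, which is exactly \Cref{cor:malnormalHNN}. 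Thus the only remaining point is to verify that \emph{both} $\H_+$ and $\H_-$ are quasiconvex in $\P$, whereas the hypothesis of \Cref{mainthm:HNN} only assumes that one of them is.

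To upgrade quasiconvexity of one of $\H_\pm$ to quasiconvexity of both, I would invoke the same general principle used in the proof of \Cref{cor:hyp_amalgam}: since $\P$ is $\tmod$-Anosov, a subgroup $\H<\P$ is quasiconvex in $\P$ if and only if $\H$ is itself a $\tmod$-Anosov subgroup of $G$, by the $\tmod$-URU characterization of Anosov subgroups (\cite[Equivalence Theorem 1.1 \& Remark 1.2(i)]{MR3736790}). Combining this with the conjugation-invariance of the $\tmod$-Anosov property in $G$ and the identity $\H_- = f^{-1}\H_+ f$ (valid as subgroups of $G$, directly from the definitions $\H_+ = \P\cap f\P f^{-1}$, $\H_- = f^{-1}\P f\cap \P$): if $\H_+$ is quasiconvex in $\P$, then $\H_+$ is $\tmod$-Anosov, hence so is its $G$-conjugate $\H_-$, and since $\H_-<\P$ it is quasiconvex in $\P$; the case in which $\H_-$ is the one assumed quasiconvex is symmetric, conjugating by $f$ instead. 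Hence both $\H_\pm$ are quasiconvex in $\P$.

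With all the hypotheses of \Cref{thm:Bestvina-Feign}(ii) verified and the isomorphism $\G\cong\Pf$ from \Cref{prop:faithful_HNN} in hand, I would conclude that $\G$ is hyperbolic.

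I do not expect a genuine obstacle here: the argument is an assembly of facts already established in the excerpt. The one step deserving a moment's care, and the one I would single out, is the transfer of quasiconvexity between $\H_+$ and $\H_-$ — these are conjugate inside $G$, but the conjugating element $f$ need not normalize $\P$, so the transfer must be routed through the Anosov characterization, which is a property relative to $G$ rather than to $\P$.
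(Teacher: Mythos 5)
Your proposal is correct and follows essentially the same route as the paper: both quasiconvexity statements are obtained from the $\tmod$-URU/Anosov characterization of quasiconvex subgroups of Anosov groups (transferring between $\H_+$ and $\H_-$ via conjugation by $f$ in $G$), and hyperbolicity then follows from \Cref{thm:Bestvina-Feign}(ii) together with \Cref{prop:faithful_HNN} and \Cref{cor:malnormalHNN}. In fact, you make explicit the conjugation-transfer step that the paper's proof leaves implicit in its reference to the argument of \Cref{cor:hyp_amalgam}.
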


\begin{proof}
 Arguing similarly to the first paragraph of the proof of \Cref{cor:hyp_amalgam}, it follows that $\H_\pm$ are both quasiconvex in $\P$.
 Then, the claim follows from \Cref{thm:Bestvina-Feign}(ii), \Cref{prop:faithful_HNN}, and \Cref{cor:malnormalHNN}. Compare this with the second paragraph of the proof of \Cref{cor:hyp_amalgam}.
\end{proof}

For convenience, we introduce the following notation, which is frequently used in this section.

\begin{notation}\label{rem:epsilon}
 If $\epsilon = 1$, then $\H_\epsilon$ will denote $\H_+$ and $B_\epsilon$ will denote $B_+$.
 Similarly, if $\epsilon = -1$, then $\H_\epsilon$ will denote $\H_-$ and $B_\epsilon$ will denote $B_-$.
\end{notation}

\subsection{Alternating sequences}
Recall the notion of alternating sequences from \Cref{defn:alt_HNN}.
The main result of this subsection is as follows:

\begin{proposition}\label{prop:convHNN}
 Let $(\omega_n)$ be an alternating sequence in $\G = \Pf$ in the normal forms given by \eqref{eqn:alt_HNN}:
\begin{equation}\label{eqn:alt_HNNt}
   \omega_n = \p_0 f^{\epsilon_1} \p_1 f^{\epsilon_2}\p_2 \cdots f^{\epsilon_{n-1}}\p_{n-1}f^{\epsilon_n}.
\end{equation}
 Then, the nested sequence of compact subsets $(\omega_n (A\cup B_{\epsilon_{n}}))_n$ of $\Ft$ converges to a point.
\end{proposition}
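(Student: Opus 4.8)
The plan is to adapt the proof of \Cref{lem:shrinks} to the HNN setting. First I would record the nestedness: writing $\omega_{n+1}=\omega_n\p_nf^{\epsilon_{n+1}}$ and combining axiom (iii) of \Cref{def:interactive_triple} (which gives $f^{\epsilon_{n+1}}(A\cup B_{\epsilon_{n+1}})\subseteq B_{\epsilon_{n+1}}$) with axiom (ii)---so that $\p_nB_{\epsilon_{n+1}}\subseteq A^\o$ if $\p_n\notin\H_{\epsilon_{n+1}}$, while $\p_nB_{\epsilon_{n+1}}=B_{\epsilon_{n+1}}$ if $\p_n\in\H_{\epsilon_{n+1}}$, in which case the sign conditions \eqref{eqn:alt_HNNsign} force $\epsilon_{n+1}=\epsilon_n$---one obtains $\omega_{n+1}(A\cup B_{\epsilon_{n+1}})\subseteq\omega_n(A^\o\cup B_{\epsilon_n})$, and in particular $\omega_n(A\cup B_{\epsilon_n})\subseteq\omega_{n-1}\p_{n-1}(B_{\epsilon_n})$ for every $n$. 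Since the compact sets $C_n\coloneqq\omega_n(A\cup B_{\epsilon_n})$ are nested, $\bigcap_nC_n\neq\emptyset$ and $\diam(C_n)$ is non-increasing; so it is enough to find a subsequence with $\diam(C_{n_k})\to0$, and, passing to a subsequence, I may assume $\epsilon_{n_k}=\delta$ is constant.

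Next I would reduce to a shrinking statement for one fixed set. From $C_{n_k}\subseteq\omega_{n_k-1}\p_{n_k-1}(B_\delta)$ and the invariance of $B_\delta$ under $\H_\delta$, \Cref{lem:proj} lets me replace $\p_{n_k-1}$ on the right by $\hat\p_k\coloneqq\p_{n_k-1}\pr_{\H_\delta}(\p_{n_k-1}^{-1})$ without changing $\p_{n_k-1}(B_\delta)$, in such a way that $(\hat\p_k^{-1})$ has no accumulation points in $\geo\H_\delta$; a symmetric adjustment on the left---absorbing into $\omega_{n_k-1}$ an element of the other relevant edge subgroup $\H_{-\epsilon_{n_k-1}}$, which only modifies the last $\P$-letter of $\omega_{n_k-1}$ and leaves it an alternating initial segment---arranges in addition that the adjusted $\P$-sequence avoids $\geo\H_{-\epsilon_{n_k-1}}$. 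It thus suffices to show that $(g_k(B_\delta))_k$ shrinks, where $g_k\coloneqq\omega_{n_k-1}\hat\p_k$.

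The core, parallel to the proof of \Cref{lem:shrinks}, is to apply \Cref{cor:regulairty}, for which one needs (a) $(g_k)$ is $\tmod$-regular, and (b) every $\iota\tmod$-limit point of $(g_k^{-1})$ is antipodal to $B_\delta$. Both rest on the elementary induction $\omega_m^{-1}(B_{-\epsilon_1})\subseteq B_{-\epsilon_m}$ for all $m$ (a consequence of axioms (ii), (iii) and \eqref{eqn:alt_HNNsign}), which gives $g_k^{-1}(B_{-\epsilon_1})\subseteq\hat\p_k^{-1}(B_{-\epsilon_{n_k-1}})$. Item (a) is established exactly as the regularity of (special) alternating sequences in the amalgamated case (\Cref{lem:special1}, \Cref{cor:alt}), via the dichotomy: either the leading $\P$-letter $\hat\p_k^{-1}$ of $g_k^{-1}$ escapes its edge subgroup---in which case, using that $\P$ is $\tmod$-Anosov (\Cref{lem:limitsetsHNN}), $g_k^{-1}(B_{-\epsilon_1})$ converges to a point and \Cref{prop:regulairty} applies---or it is bounded modulo that subgroup, where one reduces to the regularity of a subsequence of $(\omega_n)$ (and \Cref{lem:mainlemma} is the relevant tool). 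For (b): in the escaping case the limit is a point $\mu\in\LT{\P}\setminus\LT{\H_\delta}$---the right-hand adjustment ensures $\mu\notin\LT{\H_\delta}$, while the left-hand adjustment together with \Cref{lem:regularity_two} gives $\hat\p_k^{-1}(B_{-\epsilon_{n_k-1}})\to\mu$---and $\mu$ is antipodal to $B_\delta$ by hypothesis (ii) of \Cref{mainthm:HNN}; in the bounded case $g_k^{-1}(B_{-\epsilon_1})$ is eventually contained in a fixed compact subset of $A^\o$ or in $B_{-\delta}$---the sign conditions ruling out the only configuration that would place it inside $B_\delta$ itself---each antipodal to $B_\delta$ by \Cref{rem:replacei}, and \Cref{lem:regularity_four} finishes. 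In either case \Cref{cor:regulairty} yields that $(g_k(B_\delta))_k$ shrinks, hence $\diam(C_{n_k})\to0$, and the proposition follows.

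The main obstacle, beyond the bookkeeping, is the simultaneous presence of the two generally distinct edge subgroups $\H_+,\H_-$ together with the stable letter $f$: unlike the amalgamated case, where a single subgroup $\H$ lies in both vertex groups and preserves both $A$ and $B$, here one must keep track of which of $\H_\pm$ is ``active'' at the end of an alternating word---this is what forces the case analysis on $\epsilon_{n_k-1}$ versus $\epsilon_{n_k}$ and the systematic use of \eqref{eqn:alt_HNNsign} to exclude backtracking inside the word---one must perform the projection adjustment of $\p_{n_k-1}$ with respect to \emph{different} edge subgroups on its two sides, and one must exploit the asymmetry of the interactive-triple axioms, namely that only $B_\pm$ (and not $A$) is $\H_\pm$-invariant, which dictates that $B_\delta$ rather than $A$ be the set fed into the ping-pong. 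The degenerate case in which the last $\P$-letter lies in the active edge subgroup---which forces the adjusted $\P$-letters to be bounded, so that one passes to a constant---also requires a separate, though easier, treatment.
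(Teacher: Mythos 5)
Your proposal is correct and follows essentially the same route as the paper's proof: the same nestedness inclusion, reduction to a shrinking subsequence, nearest-point-projection adjustment of the last $\P$-letter via \Cref{lem:proj}, and the same dichotomy between the adjusted letter escaping the relevant edge subgroup (settled by hypothesis (ii) of \Cref{mainthm:HNN} together with \Cref{cor:regulairty} and \Cref{prop:regulairty}) and staying in a fixed coset (settled by the ping-pong inclusions, \Cref{lem:mainlemma} and \Cref{lem:regularity_four}), with your ``degenerate'' case $\p_{n_k-1}\in\H_\delta$ playing the role of the paper's Case 1. The one point to spell out is the simultaneous two-sided adjustment (right projection to $\H_\delta$, left absorption of an element of $\H_{-\epsilon_{n_k-1}}$): performed naively in sequence, the second adjustment can spoil the avoidance of $\geo\H_\delta$ achieved by the first, so you should either take a representative of nearly minimal length in the double coset $\H_{-\epsilon_{n_k-1}}\p_{n_k-1}\H_\delta$, or, as the paper does, use two separately adjusted letters for the two purposes, noting that they act identically on $B_{-\epsilon_{n_k-1}}$.
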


We remind our reader that we are using the notation introduced in \Cref{rem:epsilon}.

By the above proposition, it follows that an alternating sequence $\underline{\omega} = (\omega_n)$ in $\G$ $\tmod$-converges to the  point 
\begin{equation}\label{eqn:defnconvHNN}
 \tau_{\underline{\omega}} \coloneqq \bigcap_{n\in\N} \omega_n(A\cup B_{\epsilon_n}).
\end{equation}
As a corollary, we obtain:

\begin{corollary}\label{cor:convHNN}
 If $\underline{\omega} = (\omega_n)$ is an alternating sequence in $\G$, then the sequence $(\omega_n A)$ of compact subsets of $\Ft$ converges to the $\tmod$-limit point $\tau_{\underline{\omega}}$ of $(\omega_n)$.
\end{corollary}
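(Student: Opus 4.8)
The plan is to deduce \Cref{cor:convHNN} as an essentially immediate consequence of \Cref{prop:convHNN}. Recall that $A$ has nonempty interior, hence so does each of the compact sets $A \cup B_{\epsilon_n}$; by \Cref{prop:convHNN} the nested sequence $\bigl(\omega_n(A\cup B_{\epsilon_n})\bigr)_n$ converges to the point $\tau_{\underline{\omega}}$ of \eqref{eqn:defnconvHNN}, i.e., the diameters of the sets $\omega_n(A\cup B_{\epsilon_n}) \cup \{\tau_{\underline{\omega}}\}$ tend to $0$. Since $A \subseteq A\cup B_{\epsilon_n}$, we have the inclusion of nonempty compact sets
\[
 \omega_n A \cup \{\tau_{\underline{\omega}}\} \subseteq \omega_n(A\cup B_{\epsilon_n}) \cup \{\tau_{\underline{\omega}}\},
\]
so the diameter of the left-hand side is dominated by that of the right-hand side and therefore also tends to $0$. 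Hence $\omega_n A \to \tau_{\underline{\omega}}$, which is precisely the assertion of the corollary.

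It remains only to check that $\tau_{\underline{\omega}}$ is indeed the $\tmod$-limit point of $(\omega_n)$, so that the statement of \Cref{cor:convHNN} (and the discussion preceding it) is correctly phrased. For this I would apply \Cref{prop:regulairty} directly with the fixed compact set $A$, which has nonempty interior: since $\omega_n A \to \tau_{\underline{\omega}}$ as just shown, \Cref{prop:regulairty} yields that $(\omega_n)$ is $\tmod$-regular and $\omega_n \tof \tau_{\underline{\omega}}$. I do not expect any genuine obstacle here; all the substantive work — establishing the nested convergence in the first place — is carried out in \Cref{prop:convHNN}, and the only points to handle with care are the monotonicity of the diameter under the inclusion above and the use of a single fixed compact set (namely $A$) when invoking \Cref{prop:regulairty}.
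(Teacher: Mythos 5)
Your proposal is correct and matches the paper's (implicit) argument: the paper also treats this as an immediate consequence of \Cref{prop:convHNN}, using the inclusion $\omega_n A \subset \omega_n(A\cup B_{\epsilon_n})$ to transfer convergence to the fixed compact set $A$ (which has nonempty interior) and then \Cref{prop:regulairty} to identify $\tau_{\underline{\omega}}$ as the $\tmod$-limit point of $(\omega_n)$. Your explicit handling of the diameter monotonicity and of the fact that the moving sets $A\cup B_{\epsilon_n}$ must be replaced by the fixed set $A$ before invoking \Cref{prop:regulairty} is exactly the right care to take.
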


However, we remark that the sequence $(\omega_n A)$ is possibly not nested.
For example, consider an alternating sequence $(\omega_n)$ given by $\omega_n = (\eta f)^{n}$ for $n\in\N$, where $\eta\in \H_+$.
Then, for all $m\ge n+2$, we have $\omega_m(A)\cap \omega_n(A) = (\eta f)^{n} ( ((\eta f)^{m-n} A) \cap A) \subset f(\eta  f)^{n} (B^\o \cap A) = \emptyset$.

\begin{corollary}\label{cor:equivariance_HNN}
If $\underline{\omega} = (\omega_n)$ is an alternating sequence in $\G$, then, for all $\g\in\G$, 
 \[
  \g\omega_n \tof \g\tau_{\underline{\omega}},
  \quad\text{as } n\to\infty.
 \]
\end{corollary}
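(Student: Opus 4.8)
The plan is to derive this immediately from Corollary~\ref{cor:convHNN} together with Lemma~\ref{prop:regulairty}. By Corollary~\ref{cor:convHNN}, the sequence of compact subsets $(\omega_n A)$ of $\Ft$ converges to the point $\tau_{\underline{\omega}}$; that is, fixing a distance function $d$ on $\Ft$ compatible with its manifold topology, the diameters of the sets $\omega_n A \cup \{\tau_{\underline{\omega}}\}$ tend to $0$ as $n\to\infty$.

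First I would fix $\g\in\G$ and note that $\g$ acts on $\Ft = G/\Pt$ as a homeomorphism of a compact manifold, hence is uniformly continuous with respect to $d$. Consequently $\diam\bigl(\g\omega_n A \cup \{\g\tau_{\underline{\omega}}\}\bigr)\to 0$, i.e., $\g\omega_n A \to \g\tau_{\underline{\omega}}$ in the sense of \S\ref{sec:regularsequences}. (Alternatively, one may simply observe that the notion of a sequence of subsets converging to a point, as defined in \S\ref{sec:regularsequences}, is intrinsic to the compact topological space $\Ft$ and is therefore automatically preserved by the homeomorphism $\g$.) Next I would apply Lemma~\ref{prop:regulairty} to the sequence $(\g\omega_n)$, taking the compact subset there to be $A$, which has nonempty interior by condition (i) of \Cref{def:interactive_triple}. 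Since $(\g\omega_n A)$ converges to the point $\g\tau_{\underline{\omega}}$, the lemma yields that $(\g\omega_n)$ is $\tmod$-regular and that $\g\omega_n \tof \g\tau_{\underline{\omega}}$, which is the assertion.

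I do not expect any genuine obstacle in this argument: all the substantive work has already been carried out in Proposition~\ref{prop:convHNN} and Corollary~\ref{cor:convHNN}, and the only additional ingredient is the trivial remark that a homeomorphism of the compact space $\Ft$ carries a sequence of sets shrinking to a point to a sequence of sets shrinking to the image point. Thus the proof amounts to combining these two facts with Lemma~\ref{prop:regulairty}.
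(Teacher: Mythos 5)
Your proposal is correct and follows essentially the same route as the paper: invoke \Cref{cor:convHNN} to get that $(\g\omega_n A)$ converges to $\g\tau_{\underline{\omega}}$ and then apply \Cref{prop:regulairty}. The only difference is that you spell out the (trivial) step that the homeomorphism $\g$ of the compact space $\Ft$ carries a shrinking-to-a-point sequence of sets to a shrinking-to-a-point sequence, which the paper leaves implicit.
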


\begin{proof}
By  \Cref{cor:convHNN}, it follows that $(\g\omega_n A)_n$ converges to $ \g\tau_{\underline{\omega}}$. Then, \Cref{prop:regulairty} yields the conclusion.
\end{proof}

\subsubsection{Regularity}\label{sec:specialHNN}

\begin{definition}[Special sequences]\label{def:specialHNN}
 A sequence $(\tg_n)$ in $\G = \Pf$ is called {\em special} if 
there exist sequences $(\epsilon_n)$ in $\{\pm1\}$, $(\p_n)$ in $\P$ satisfying
\[
\epsilon_n=1,\p_n \in \H_+\implies \epsilon_{n+1} =1
\quad\text{and}\quad
\epsilon_n=-1,\p_n \in \H_-\implies \epsilon_{n+1} =-1,
\]
and an element $\p_0\in\P$, and an increasing function $F : \N\to\N$ such that for all $n\in\N$, 
\begin{align}\label{eqn:specialHNN}
  \tg_n = f^{\epsilon_{F(n)}}\p_{F(n)-1}f^{\epsilon_{F(n)-1}} \cdots \p_1f^{\epsilon_1}\p_0.
\end{align}
\end{definition}

Note that special sequences are subsequences of the inverse sequence of some alternating sequence in $\G$. Compare this with \Cref{defn:alt_HNN}.

\begin{lemma}\label{lem:special_HNN}
 Special sequences in $\G$ are $\tmod$-regular.
\end{lemma}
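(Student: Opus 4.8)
The plan is to mirror the proof of \Cref{lem:special1} (the amalgamated free product case), with the variable leftmost letter $\b_{F(n)}\in\G_B\setminus\H$ there replaced here by the pair consisting of the (after passing to a subsequence) constant power $f^{\epsilon_{F(n)}}$ and the variable group element $\p_{F(n)-1}\in\P$. Let $(\tg_n)$ be a special sequence as in \Cref{def:specialHNN}; since $\tmod$-regularity is unchanged under right multiplication by the fixed element $\p_0$, I may assume $\p_0=1_\G$, so that $\tg_n=\sigma_{F(n)}$ with $\sigma_k\coloneqq f^{\epsilon_k}\p_{k-1}f^{\epsilon_{k-1}}\cdots\p_1 f^{\epsilon_1}$. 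Given any subsequence of $(\tg_n)$, extract a further subsequence along which the two values $\epsilon_{F(n)}=\epsilon$ and $\epsilon_{F(n)-1}=\delta$ are constant, and set $q_n\coloneqq\p_{F(n)-1}$. Throughout I work with the interactive triple $(\cl A^\o,\cl B^\o_\pm)$ of \Cref{rem:replacei} and use that $\H_\pm$ is quasiconvex, hence $\tmod$-Anosov, in $\P$ (cf. \Cref{cor:malnormalHNN} and the proof of \Cref{cor:hyp_HNN}).

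The technical heart is a ping-pong propagation estimate. Reading the word $\sigma_k$ from the right and combining the interactive-triple conditions ($f^{\pm1}(A)\subset B_\pm$, $f^{\pm1}B_\pm\subset B^\o_\pm$, and $\p B_\pm=B_\pm$ for $\p\in\H_\pm$ while $\p B_\pm\subset A^\o$ for $\p\notin\H_\pm$) with the normal-form conditions of \Cref{defn:alt_HNN} — crucially, that $\p_i\in\H_{\epsilon_i}$ forces $\epsilon_{i+1}=\epsilon_i$, which keeps the sign bookkeeping consistent — I would show by induction on $k$ that $\sigma_k(A\cup B_{\epsilon_1})\subset B^\o_{\epsilon_k}$ for all $k\ge 2$. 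Isolating the two leftmost letters $f^\epsilon$ and $q_n$, this yields, for all large $n$,
\[
\tg_n(C)\subset f^\epsilon q_n(B_\delta),\qquad C\coloneqq A\cup B_{\epsilon_1},
\]
and the same reading applied to $\tg_{n+1}\tg_n^{-1}=f^\epsilon\,\p_{F(n+1)-1}f^{\epsilon_{F(n+1)-1}}\cdots f^{\epsilon_{F(n)+1}}\p_{F(n)}$ acting on $B_\epsilon$ gives
\[
\tg_{n+1}\tg_n^{-1}(B_\epsilon)\subset f^\epsilon q_{n+1}(B_\delta)\subset B^\o_\epsilon .
\]

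Now split into two cases exactly as in \Cref{lem:special1}. Applying \Cref{lem:proj} to the quasiconvex subgroup $\H_\delta<\P$, put $\hat q_n\coloneqq q_n\,\pr_{\H_\delta}(q_n^{-1})$; then $(\hat q_n^{-1})$ has no accumulation point in $\geo\H_\delta$, while $q_n(B_\delta)=\hat q_n(B_\delta)$ because $\pr_{\H_\delta}(q_n^{-1})\in\H_\delta$ preserves $B_\delta$. In \textbf{Case~A}, after extraction $|\hat q_n|\to\infty$; then $(\hat q_n)$ is a divergent sequence in the $\tmod$-Anosov group $\P$, hence $\tmod$-regular, and — transporting \Cref{lem:proj}(i) to $\Ft$ via the Anosov limit map of $\P$, which carries $\geo\H_\delta$ onto $\LT{\H_\delta}$ — the $\iota\tmod$-limit points of $(\hat q_n^{-1})$ lie in $\LT{\P}\setminus\LT{\H_\delta}$, which by hypothesis~(ii) of \Cref{mainthm:HNN} is antipodal to $B_\delta$. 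By \Cref{cor:regulairty}, $(\hat q_n B_\delta)$ shrinks, so $(q_n B_\delta)$, and hence $(\tg_n C)$, shrink; by \Cref{prop:regulairty_two} this subsequence is $\tmod$-regular. In \textbf{Case~B}, after extraction $\hat q_n=\hat q$ is constant, so $q_{n+1}(B_\delta)=\hat q(B_\delta)$ and therefore $\tg_{n+1}\tg_n^{-1}(B_\epsilon)\subset B'$, where $B'\coloneqq f^\epsilon\hat q(B_\delta)$ is a fixed compact set with $B'\subset B^\o_\epsilon$ (if $\hat q\notin\H_\delta$ then $\hat q(B_\delta)\subset A^\o$ and $f^\epsilon(A^\o)\subset B^\o_\epsilon$; if $\hat q\in\H_\delta$ the normal-form condition forces $\delta=\epsilon$, whence $\hat q(B_\delta)=B_\epsilon$ and $f^\epsilon(B_\epsilon)\subset B^\o_\epsilon$). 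Then \Cref{lem:mainlemma}, with $B_\epsilon$ in place of $B$, shows this subsequence is $\tmod$-regular.

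Since every subsequence of $(\tg_n)$ thus contains a $\tmod$-regular sub-subsequence, $(\tg_n)$ is $\tmod$-regular. I expect the ping-pong propagation estimate to be the delicate point: one must carry the interactive-triple/normal-form bookkeeping through the whole word and verify that after the first letter-pair the images land in the \emph{interiors} $B^\o_{\epsilon_j}$, which is precisely what makes $B'\subset B^\o_\epsilon$ in Case~B so that \Cref{lem:mainlemma} applies; one must also handle the borderline situation $F(n+1)=F(n)+1$ (where $\delta=\epsilon$) and the harmless finitely many small $n$ with $F(n)\le 2$.
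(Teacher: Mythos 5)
Your proposal is correct and follows essentially the same route as the paper's proof: after normalizing $\p_0$ and extracting constant signs, you run the same dichotomy on the leftmost $\P$-letter via the nearest-point projection to the edge group, using \Cref{lem:proj}, hypothesis~(ii) and \Cref{cor:regulairty} together with \Cref{prop:regulairty_two} in the divergent case, and \Cref{lem:mainlemma} with a fixed $B'\subset B^\o_\epsilon$ in the bounded case; the ping-pong propagation you flag as delicate does go through exactly as you sketch (the special-sequence sign condition keeps the bookkeeping consistent, and openness of $f^{\epsilon}(A^\o)$, $\p(B_\pm)\subset A^\o$ give the interior containments). The only organizational difference is that you absorb the paper's separate case $\p_{F(n)-1}\in\H_{+}$ (handled there by a rewriting trick with $F(n+1)-F(n)\ge 10$) into your bounded Case~B with $\hat q\in\H_\delta$, which works and is slightly cleaner.
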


\begin{proof}
Let $(\tg_n)$ be a special sequence as above. 
We will assume that $\p_0 = 1_\P$; such a change is a bounded perturbation of the original sequence. Hence the property of being $\tmod$-regular remains unaffected.
To show that $(\tg_n)$ is $\tmod$-regular, it would be enough to show that every subsequence of $(\tg_n)$ contains a $\tmod$-regular subsequence.

So, consider any subsequence of $(\tg_n)$, again denoted by $(\tg_n)$.
After extraction,\footnote{Note that, by definition, subsequences of special sequences are special.} we may assume that $\epsilon_{F(n)-1}$ are all the same, say 
\[
\epsilon_{F(n)-1}=1, \quad\text{for all }n\in\N.
\]

\setcounter{case}{0}
\begin{case}
{\em Suppose that after passing to a subsequence, it holds for all $n\in\N$ that $\p_{F(n)-1} \in \H_+$.}
Passing to another subsequence, we will also assume that $F(n+1) - F(n) \ge 10$, for all $n\in\N$.
Since $\p_{F(n)-1} \in \H_+$ and $\epsilon_{F(n)-1}=1$, we have 
\[
\epsilon_{F(n)}=1,  \quad\text{for all }n\in\N.
\]
 (see \Cref{def:specialHNN}).
So,
\begin{align}\label{eqn:specialHNN:two}
\begin{split}
   \tg_n 
   &= f\p_{F(n)-1}f\p_{F(n)-2}f^{\epsilon_{F(n)-2}} \cdots \p_1f^{\epsilon_1} \\
   &= ff(f^{-1}\p_{F(n)-1}f\p_{F(n)-2}) f^{\epsilon_{F(n)-2}} \cdots \p_1f^{\epsilon_1}.
\end{split}
\end{align}
Moreover,
\[
\tg_{n+1} \tg_n^{-1} = ff(f^{-1}\p_{F(n)-1}f\p_{F(n)-2}) f^{\epsilon_{F(n)-1}} \cdots f^{\epsilon_{F(n-1)+1}}\p_{F(n-1)}.
\]

Observe that if $\epsilon_{F(n-1)+1} = -1$, then $\p_{F(n-1)}\not\in\H_+$, since we observed in the preceding paragraph that $\epsilon_{F(n-1)} =1$.
Else, $\epsilon_{F(n-1)+1} =1$.
Consequently, in both cases (i.e., $\epsilon_{F(n-1)+1} =1$ or $-1$), it holds that
\[
\tg_{n+1} \tg_n^{-1} (B_+) \subset f(B_+) \subset B^\o_+.
\]
Since the above is true for all $n\in\N$, \Cref{lem:mainlemma} applies to show that 
$(\tg_n)$ is $\tmod$-regular.
\end{case}

\begin{case}
{\em Suppose that, after passing to a subsequence, it holds for all $n\in\N$ that $\p_{F(n)-1} \not\in \H_+$.}
Consider the sequence $(\hg_n)$, where
\[
  \hg_n \coloneqq \p_{F(n)-1}f^{\epsilon_{F(n)-1}} \cdots \p_1f^{\epsilon_1} = f^{-\epsilon_{F(n)}}\tg_n.
\]
We show that $(\hg_n)$ is $\tmod$-regular since this would imply that $(\tg_n)$ is $\tmod$-regular.

For all $n\in\N$,
\begin{equation}\label{eqn:specialHNN:one}
 \hg_n A \subset \p_{F(n)-1}B_{+} \subset A.
\end{equation}
If the sequence $(\p_{F(n)-1}^{-1})$ remains at a bounded distance away from $\H_+$, then
after further extraction, we may assume that $(\p_{F(n)-1})$ lies in a single coset $\p\H_+$, for some $\p\not\in\H_+$.
So, it holds that
\[
 \hg_{n+1}\hg_n^{-1}(A) \subset \p B_+ \subset A^\o,
\]
for all $n\in\N$.
With \Cref{lem:mainlemma}, the above implies
that $(\hg_n)$ is $\tmod$-regular.

Otherwise, after further extraction, $(\p_{F(n)-1}^{-1})$ diverges away from $\H_+$.
Consider the sequence $(\hp_{F(n)-1})$, where $\hp_{F(n)-1} \coloneqq \p_{F(n)-1}\pr_{\H_+}(\p_{F(n)-1}^{-1})$.
By \Cref{lem:proj}, $(\hp_{F(n)-1}^{-1})$ accumulates in $\geo\P\setminus \geo\H_+$.
Thus, all $\tmod$-flag accumulation points of $(\hp_{F(n)-1}^{-1})$ are antipodal to $B_+$ (see the second condition in the hypothesis of Theorem \ref{mainthm:HNN}).
By \Cref{cor:regulairty}, the sequence $(\hp_{F(n)-1} B_+)$ of compact subsets of $\Ft$
 shrinks.
 Since $\hp_{F(n)-1} B_+ = \p_{F(n)-1} B_+$, $(\p_{F(n)-1} B_+)$ shrinks.
Consequently, it follows from \eqref{eqn:specialHNN:one} that $(\hg_n A)$ shrinks.
By \Cref{prop:regulairty_two}, $(\hg_n)$ is $\tmod$-regular.
Hence, $(\tg_n)$ is $\tmod$-regular.\qedhere
\end{case}
\end{proof}

By definition, the inverse sequence corresponding to an alternating sequence is special so that \Cref{lem:special_HNN} directly implies:

\begin{corollary}\label{cor:altregHNN}
 Alternating sequences in $\G = \Pf$ are $\tmod$-regular.
\end{corollary}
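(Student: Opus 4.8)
The plan is to reduce the statement directly to \Cref{lem:special_HNN} by passing to inverse sequences. So let $(\omega_n)$ be an alternating sequence in $\G = \Pf$, equipped with the normal forms \eqref{eqn:alt_HNN},
\[
 \omega_n = \p_0 f^{\epsilon_1}\p_1 f^{\epsilon_2}\p_2 \cdots f^{\epsilon_{n-1}}\p_{n-1}f^{\epsilon_n},
\]
where $\p_0\in\P$, $(\p_n)$ is a sequence in $\P$, and $(\epsilon_n)$ is a sequence in $\{\pm1\}$ subject to the sign condition \eqref{eqn:alt_HNNsign}. First I would write out the inverse word
\[
 \omega_n^{-1} = f^{-\epsilon_n}\p_{n-1}^{-1} f^{-\epsilon_{n-1}}\p_{n-2}^{-1}\cdots \p_1^{-1} f^{-\epsilon_1}\p_0^{-1},
\]
and record that, upon setting $\hat\epsilon_k \coloneqq -\epsilon_k$, $\hat\p_k \coloneqq \p_k^{-1}$ for $0\le k\le n-1$, and $F \coloneqq \mathrm{id}_{\N}$, this word has precisely the shape \eqref{eqn:specialHNN} occurring in \Cref{def:specialHNN}.

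Next I would verify that the data $(\hat\p_n),(\hat\epsilon_n)$ meet the sign constraints required of a special sequence in \Cref{def:specialHNN}. Since $\hat\epsilon_n = 1 \iff \epsilon_n = -1$, $\hat\p_n\in\H_+\iff \p_n\in\H_+$, and $\hat\epsilon_{n+1}=1\iff\epsilon_{n+1}=-1$, the implication ``$\hat\epsilon_n=1$ and $\hat\p_n\in\H_+\Rightarrow\hat\epsilon_{n+1}=1$'' is literally the first half of \eqref{eqn:alt_HNNsign}; symmetrically, ``$\hat\epsilon_n=-1$ and $\hat\p_n\in\H_-\Rightarrow\hat\epsilon_{n+1}=-1$'' is the second half of \eqref{eqn:alt_HNNsign}. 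Hence $(\omega_n^{-1})$ is a special sequence in $\G$, and \Cref{lem:special_HNN} shows that $(\omega_n^{-1})$ is $\tmod$-regular.

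Finally, recalling from \S\ref{sec:regularsequences} that the inverse of a $\nmod$-regular sequence is $\iota\nmod$-regular, and that $\tmod$ is $\iota$-invariant, we conclude that $(\omega_n)=\bigl((\omega_n^{-1})^{-1}\bigr)$ is $\iota\tmod$-regular, i.e.\ $\tmod$-regular, as desired. There is no genuine obstacle in this argument; the only point requiring a moment of care is the index bookkeeping in the second step — matching the ``output'' indices of \eqref{eqn:alt_HNNsign} against the corresponding indices in \Cref{def:specialHNN} after reversing the word — together with the single-step index shift between $(\p_k)$ and $(\hat\p_k)$.
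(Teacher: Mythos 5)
Your argument is correct and is essentially the paper's own proof: the paper also deduces \Cref{cor:altregHNN} by observing that the inverse of an alternating sequence is a special sequence and invoking \Cref{lem:special_HNN}, using that $\tmod$ is $\iota$-invariant. Your version merely makes the index and sign bookkeeping explicit, which the paper leaves as a one-line remark.
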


\subsubsection{Proof of \Cref*{prop:convHNN}}
It is a straightforward verification that for all $n\in\N$,
 $\p_n (B_{\epsilon_{n+1}}) \subset A \cup B_{\epsilon_n}$, yielding that
\begin{align}\label{eqn:alt_HNNfive}
\begin{split}
 \omega_{n+1} (A\cup B_{\epsilon_{n+1}})  &= \omega_{n}\p_n f^{\epsilon_{n+1}}  (A\cup B_{\epsilon_{n+1}})
 \subset \omega_{n}\p_n B_{\epsilon_{n+1}}
 \subset \omega_{n} (A \cup B_{\epsilon_n}).
\end{split}
\end{align}
Therefore, the sequence $(\omega_{n} (A \cup B_{\epsilon_n}))_{n}$ is nested.

\medskip
Thus, to prove that $(\omega_{n}(A \cup B_{\epsilon_n}))_{n}$ converges to a point, it will be enough to show that this sequence contains a subsequence that shrinks.
This is what we show.

\setcounter{case}{0}
\begin{case}
{\em Suppose that for infinitely many $n \in\N$, $\p_{n-1} \in \H_{\epsilon_{n}}$.}
Let $\PP\subset \N$ denote an infinite subset such that for all $n \in\PP$, $\p_{n-1} \in \H_{\epsilon_{n}}$ and, for all $n\in\PP$, $\epsilon_{n}$ are the same, say $\epsilon_{n} = 1$.
Therefore, for all $n\in\PP$, $\epsilon_{n-1} = 1$ (see \Cref{defn:alt_HNN}).

Let us first show that $(\omega_{n-1} B_+)_{n\in\PP}$ shrinks:
We observe that for $n\in\PP$,
\[
 \omega_{n-1}^{-1} (\p_0 A) \subset B_-,
\]
which shows that all the $\tmod$-limit points of $(\omega_{n-1}^{-1})_{n\in\PP}$ lie in $B_-$ (see \Cref{lem:regularity_four}).
By \Cref{cor:altregHNN}, we know that $(\omega_{n-1})_{n\in\PP}$ is $\tmod$-regular.
Since $B_+$ is antipodal to $B_-$, by \Cref{prop:regulairty_two}
$(\omega_{n-1} B_+)_{n\in\PP}$ shrinks.

Since, for all $n\in\PP$, $\epsilon_{n} = 1$,  we get that $B_{\epsilon_{n}} = B_+$.
Moreover, since $\p_{n-1}\in\H_+$, 
\[
 \omega_{n} (A \cup B_{\epsilon_n}) = \omega_{n-1}\p_{n-1} f  (A \cup B_+)
 \subset \omega_{n-1}\p_{n-1} B_+
 = \omega_{n-1} B_+.
\]
Therefore, by the previous paragraph, $(\omega_{n} A)_{n\in\PP}$ shrinks.
\end{case}

Therefore, we may assume now the complementary case to the above one, which is that for at most finitely many $n\in\N$, $\p_{n-1}\in\H_{\epsilon_{n}}$.
In fact, it would be enough to assume:

\begin{case}
{\em For all $n\in\N$, $\p_{n-1} \not\in \H_{\epsilon_{n}}$.}
Suppose that for infinitely many $n\in\N$, $\epsilon_{n} = 1$ (the case $\epsilon_{n}=-1$ can be dealt with in a similar way).
Let $\PP\subset\N$ be a subset such that for all distinct $n,n'\in \PP$, $|n - n'|\ge 10$, and $\epsilon_{n} =1$, for all $n\in \PP$.
For $n\in \PP$, let us consider the normal form of $\omega_{n+1}$ given by
\begin{equation}\label{eqn:alt_HNNeight}
 \omega_{n+1} = \p_0 f^{\epsilon_1} \p_1 \cdots f^{\epsilon_{n-2}}\p_{n-2}  f^{\epsilon_{n-1}}\hp_{n-1}f\hp_{n}f^{\epsilon_{n+1}},
\end{equation}
where 
\begin{equation}\label{eqn:alt_HNNtwo}
 \hp_{n-1} = \p_{n-1}\pr_{\H_+}(\p_{n-1}^{-1})
 \quad\text{and}\quad
 \hp_{n} = f^{-1}(\pr_{\H_+}(\p_{n-1}^{-1}))^{-1}f \p_n.
\end{equation}
We observe that since $\p_{n-1}\not\in\H_+$, $\hp_{n-1}$ is also not an element of $\H_+$, for all $n\in\PP$.

For $n\in \PP$, let 
\begin{equation}\label{eqn:alt_HNNsix}
 \ho_{n} \coloneqq \p_0 f^{\epsilon_1} \p_1 \cdots f^{\epsilon_{n-2}}\p_{n-2}  f^{\epsilon_{n-1}}\hp_{n-1}f = \omega_{n+1} (\hp_{n} f^{\epsilon_{n+1}})^{-1}.
\end{equation}
Since $(\ho_{n})_{n\in\PP}$ is a subsequence of an alternating sequence, by \Cref{cor:altregHNN}, it is $\tmod$-regular.
One directly checks (cf. \eqref{eqn:alt_HNNfive}),
\begin{equation}\label{eqn:alt_HNNfour}
 \omega_{n+1} (A \cup B_{\epsilon_{n+1}}) \subset \ho_{n} A , \quad\text{for all } n\in\PP.
\end{equation}
 After extraction, we may assume that for all $n\in\PP$,
 $\epsilon_{n-1}$ is constant, $\epsilon = \pm1$.
 After another extraction, we also assume that
 $(\hp_{n-1}^{-1})_{n\in\PP}$ either (i) diverges away 
from $\H_{-\epsilon}$, or (ii) remains in a fixed coset $\hp\H_{-\epsilon}$, for some $\hp\in\P$.
 So, for $n\in\PP$,
 \[
  f\ho_n^{-1}(\p_0 A) = \hp_{n-1}^{-1}f^{-\epsilon}\p_{n-2}^{-1}\cdots f^{-\epsilon_1}\p_0^{-1}(\p_0 A).
 \]

In the first case (i), since we are assuming that $(\hp_{n-1})_{n\in\PP}$ diverges away from $\H_{-\epsilon}$, by \Cref{lem:proj_one}, it follows that $(\tp_{n-1})_{n\in\PP}$ is $\tmod$-regular and its $\tmod$-limit points lie only in $\LT{\P}\setminus \LT{\H_{-\epsilon}}$, where $\tp_{n-1} \coloneqq \pr_{\H_{-\epsilon}}(\hp_{n-1})^{-1} \hp_{n-1}$.
Since $\LT{\P}\setminus \LT{\H_{-\epsilon}}$ is antipodal to $B_{-\epsilon}$, by \Cref{cor:regulairty}, $(\tp_{n-1}^{-1} B_{-\epsilon})_{n\in\PP} = (\hp_{n-1}^{-1} B_{-\epsilon})_{n\in\PP}$ shrinks.
So, $(f\ho_{n-1}^{-1}(\p_0 A))_{n\in\PP}$ shrinks as well.
By definition of $\hp_{n-1}$ in \eqref{eqn:alt_HNNtwo}, all the $\tmod$-limit points of $(\hp_{n-1}^{-1})_{n\in\PP}$ lie in $\LT{\P}\setminus \LT{\H_+}$.
Thus, after further extraction, we may assume that
\[
 \hp_{n-1}^{-1} \tof \tau_- \in \LT{\P}\setminus \LT{\H_+},
 \quad\text{as $n\to\infty$ in } \PP.
\]
Therefore, it holds that $f\ho_{n-1}^{-1} (\p_0 A)\to \tau_-$, as ${n\to\infty}$ in $\PP$.
Thus, by \Cref{prop:regulairty} the sequence $(f\ho_{n-1}^{-1})_{n\in\PP}$ $\tmod$-flag converges to $\tau_-$.
Since $\tau_-$ is antipodal to $B_+$, by \Cref{cor:regulairty}, $(\ho_{n-1}f^{-1}B_+)_{n\in\PP}$ shrinks.
Since $fA\subset B_+$, it follows that $(\ho_{n-1}A)_{n\in\PP}$ shrinks.
Thus, by \eqref{eqn:alt_HNNfour}, $(\omega_{n+1} (A \cup B_{\epsilon_{n+1}}))_{n\in\PP}$ shrinks.

In the second case (ii), suppose first that $\epsilon = -1$. Therefore, by the assumption of Case 2 and \eqref{eqn:alt_HNNtwo}, we have that $\hp\not\in\H_{+}$.
Thus, 
\[
f\ho_n^{-1}(\p_0 A) \subset \hp_{n-1}^{-1} B_{+} \subset \hp B_{+} \subset A^\o,
\] for all $n\in \PP$.
So, in this case,  $(f\ho_{n-1}^{-1})_{n\in\PP}$ has no $\tmod$-limit points in $B_+$, since, by above, all of them lie in the interior of $A$ (cf. \Cref{lem:regularity_four}).
Therefore, since $(\ho_{n-1}f^{-1})_{n\in\PP}$ is $\tmod$-regular,\footnote{This follows by the observation above that $(\ho_{n-1})_{n\in\PP}$ is $\tmod$-regular.} by \Cref{cor:regulairty}, $(\ho_{n-1} f^{-1}B_+)_{n\in\PP}$ shrinks.
Thus, by \eqref{eqn:alt_HNNfour}, $(\omega_{n+1} (A \cup B_{\epsilon_{n+1}}))_{n\in\PP}$ shrinks.

Still assuming (ii), suppose now that $\epsilon = 1$.
If $\hp\not\in\H_-$, then proceeding as in the previous paragraph, it follows that $(\omega_{n+1} (A \cup B_{\epsilon_{n+1}}))_{n\in\PP}$ shrinks.
Else, we must have $\hp_{n-1}\in\H_-$, for all $n\in\PP$.
Observing a different normal form of $\ho_{n}$,
\begin{equation*}
 \ho_{n} \coloneqq \p_0 f^{\epsilon_1} \p_1 \cdots f^{\epsilon_{n-2}}(\underbrace{\p_{n-2}  
 f\hp_{n-1}f^{-1}}_{\in\P})f f.
\end{equation*}
it follows by Case 1 that $(\ho_n A)$ shrinks.
Thus, by \eqref{eqn:alt_HNNfour}, $(\omega_{n+1} (A \cup B_{\epsilon_{n+1}}))_{n\in\PP}$ shrinks.\qedhere
\end{case}

\subsection{The boundary map}\label{sec:brdy map:HNN}

We construct a $\G$-equivariant map from the Gromov boundary of $\G = \Pf$ to the flag manifold $\Ft$:
\begin{equation}\label{eqn:bdmapHNN}
  \xi : \geo \G \to \Ft.
\end{equation}
Recall that $\geo\G$ decomposes into $\vi\G \sqcup \vii\G$,
where $\vi \G = \G \cdot (\geo\P )$.
As in the case of amalgamated free products (\S\ref{sec:brdymap}),
we define $\xi$ separately on $\vi\G$ and $\vii\G$;
see \S\ref{sec:def_vi:HNN} for the definition of $\xi\vert_{\vi\G}$ and 
\S\ref{sec:def_vii:HNN} for the definition of $\xi\vert_{\vii\G}$.

\subsubsection{Definition of the boundary map for type I points}\label{sec:def_vi:HNN}
For every point $\e\in\vi\G$, we may pick some element $\g\in\G$ such that
$\g^{-1}\e \in \geo\P$.
Since $\P$ is $\tmod$-Anosov, we have a $\P$-equivariant boundary embedding $\xi: \geo\P \to \LT{\P} \subset \Ft$.
We define
\begin{equation}\label{eqn:def_vi:HNN}
 \xi(\e) \coloneqq \g \xi(\g^{-1}\e).
\end{equation}

We check that $\xi(\e)$ is well-defined, i.e., does not depend on the choice of $\g\in\G$ in \eqref{eqn:def_vi:HNN}:
If $\g_1\in\G$ is any other element such that $\g_1^{-1}\e \in \geo\P$,
then the intersection $(\g^{-1}\g_1)\geo\P \cap \geo\P$ is nonempty since $\g^{-1}\e$ is a common point.
Thus, in the Bass-Serre tree $T$, $\P$ and $(\g^{-1}\g_1) \P$ are equal or adjacent vertices, showing that $\rl(\g^{-1}\g_1)\le 1$ (see \Cref{lem:relnHNN}).
If $\rl(\g^{-1}\g_1)=0$, then $\g_1 \in \g\P$, and, in this case, the well-definedness of \eqref{eqn:def_vi:HNN} follows by $\P$-equivariance of $\xi: \geo\P \to \Ft$.
So, let us assume that $\rl(\g^{-1}\g_1)= 1$.
In this case, $\g^{-1}\g_1 = \p_0f^{\epsilon}\p_1$, where $\p_0,\p_1\in \P$ and $|\epsilon|=1$.
Let us also assume that $\epsilon=1$, since the case $\epsilon=-1$ is similar.
So, $\g_1 = \g \p_0f\p_1$ and $\g^{-1}\e\in \p_0f\p_1(\geo\P)\cap \geo\P = \p_0(\geo\H_+)$.
Hence, 
\begin{equation}\label{eqn:one:def_vi:HNN}
 \p_0^{-1}\g^{-1}\e\in\geo\H_+.
\end{equation}
Since $f\in G$ conjugates $\H_-$ and $\H_+$, i.e., $f\H_-f^{-1} = \H_+$,
for all $\e'\in\geo\H_+$,
\begin{equation}\label{eqn:two:def_vi:HNN}
  \xi\vert_{\geo\H_-}(f\e') = f\xi\vert_{\geo\H_+}(\e').
\end{equation}
Thus,
\begin{align*}
  \g_1\xi(\g_1^{-1}\e) &= \g_1\xi(\p_1^{-1}f^{-1}\p_0^{-1}\g^{-1}\e)\\
 &= \g_1\p_1^{-1}\xi(f^{-1}\p_0^{-1}\g^{-1}\e)\\
 &= \g_1\p_1^{-1}f^{-1}\xi(\p_0^{-1}\g^{-1}\e)\\
 &= \g_1\p_1^{-1}f^{-1}\p_0^{-1}\xi(\g^{-1}\e) = \g\xi(\g^{-1}\e),
\end{align*}
where the second equality is valid because $(f^{-1}\p_0^{-1}\g^{-1}\e)\in\geo\P$,
the third equality is verified by \eqref{eqn:one:def_vi:HNN} and \eqref{eqn:two:def_vi:HNN},
and the fourth equality is valid because $\g^{-1}\e\in\geo\P$.

The following result is immediate from the definition of $\xi$ above:

\begin{lemma}\label{cor:equiv_xiiHNN}
 The map $\xi: \vi\G \to\Ft$ defined by \eqref{eqn:def_vi:HNN} is $\G$-equivariant.
\end{lemma}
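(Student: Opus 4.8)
The plan is to read off the $\G$-equivariance of $\xi\vert_{\vi\G}$ directly from its defining formula \eqref{eqn:def_vi:HNN}, using only the well-definedness of that formula already established in the paragraphs just above (that is where the real work took place, invoking $f\H_-f^{-1}=\H_+$ and the $\P$-equivariance of the boundary embedding $\geo\P\to\Ft$ of the vertex group $\P$).

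Concretely, I would fix $\g'\in\G$ and $\e\in\vi\G$, and choose, as permitted by \eqref{eqn:def_vi:HNN}, some $\g\in\G$ with $\g^{-1}\e\in\geo\P$, so that $\xi(\e)=\g\,\xi(\g^{-1}\e)$. The key observation is that $\g'\g$ is then an admissible auxiliary element for the point $\g'\e\in\vi\G$, since $(\g'\g)^{-1}(\g'\e)=\g^{-1}\e\in\geo\P$. Applying \eqref{eqn:def_vi:HNN} to $\g'\e$ with this choice and using that its value does not depend on the chosen auxiliary element, I obtain $\xi(\g'\e)=(\g'\g)\,\xi\big((\g'\g)^{-1}(\g'\e)\big)=\g'\g\,\xi(\g^{-1}\e)=\g'\,\xi(\e)$, which is precisely the asserted equivariance.

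I do not anticipate any real obstacle: once the independence of \eqref{eqn:def_vi:HNN} from the choice of $\g$ is in hand, the proof is a single substitution, so the ``hard part'' has already been carried out before the statement. If one preferred a self-contained argument, one would simply rerun the preceding computation: for two admissible choices $\g,\g_1$ one has $\rl(\g^{-1}\g_1)\le 1$, and the two prescribed values agree by $\P$-equivariance of $\xi\vert_{\geo\P}$ when $\rl(\g^{-1}\g_1)=0$, and by the intertwining relation $\xi\vert_{\geo\H_-}(f\e')=f\,\xi\vert_{\geo\H_+}(\e')$ together with $\P$-equivariance when $\rl(\g^{-1}\g_1)=1$.
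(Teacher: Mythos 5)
Your proof is correct and matches the paper's intent: the paper declares the lemma ``immediate from the definition,'' and your argument is exactly the one-line substitution that this phrase encodes, namely that $\g'\g$ is an admissible auxiliary element for $\g'\e$ and well-definedness (established just before the lemma) yields $\xi(\g'\e)=\g'\xi(\e)$. Nothing is missing.
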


\subsubsection{Definition of the boundary map for type II points}\label{sec:def_vii:HNN}
For $\e\in\vii\G$, consider an alternating sequence (see \Cref{defn:alt_HNN})
$\omega_n \toC \e$ given by \Cref{thm:alternating_one}.
Define
\begin{equation}\label{eqn:def_vii:HNN}
 \xi(\e) \coloneqq \bigcap_{n\in\N} \omega_n(A\cup B_{\epsilon_n}),
\end{equation}
see \Cref{prop:convHNN}.
The following result shows that $\xi(\e)$ is well-defined.

\begin{lemma}\label{prop:convviiHNN}
 For $\e\in\vii\G$ and for any sequence $(\g_n)$ in $\G$, if $\g_n \toC \e$, then $\g_n \tof \xi(\e)$.
\end{lemma}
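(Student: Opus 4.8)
The statement is the exact HNN-analog of \Cref{lem:cont_ii}, so I would mirror that proof, using \Cref{thm:alternating_one} and \Cref{prop:convHNN} in place of their amalgamated-free-product counterparts. Fix $\e\in\vii\G$ and an alternating sequence $(\omega_n)$ converging to $\e$, equipped with normal forms $\omega_n = \p_0 f^{\epsilon_1}\p_1\cdots f^{\epsilon_n}$ as in \eqref{eqn:alt_HNN}. Let $(\g_n)$ be any sequence with $\g_n\toC\e$. By \Cref{thm:alternating_one}, there is a function $F:\N\to\N$ diverging to infinity and $n_0\in\N$ such that for all $n\ge n_0$ we may choose a normal form of $\g_n$ that contains $\omega_{F(n)}$ as a left subword; write this normal form as $\g_n = \omega_{F(n)}\cdot r_n$ for some "remainder" $r_n\in\G$.

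First I would control the remainder $r_n$. Since $\omega_{F(n)}$ ends in the letter $f^{\epsilon_{F(n)}}$ and the concatenation with $r_n$ is still a normal form, Britton's Lemma forces the leading $\P$-letter $\rho_0$ of $r_n$ to avoid $\H_{-\epsilon_{F(n)}}$ in the appropriate sense; more precisely, a normal-form left-subword argument (exactly as in the amalgam case, where one splits according to the rightmost letter) shows that $r_n(\,\cdot\,)$ maps a suitable fixed compact set into $A\cup B_{\epsilon_{F(n)}}$. Concretely: if $r_n$ is trivial or its first letter already pins things down, one checks $\g_n A \subset \omega_{F(n)}(A\cup B_{\epsilon_{F(n)}})$, and in the remaining cases one splits $(\g_n)$ into finitely many subsequences according to the first symbol of $r_n$ (a $\P$-letter in or outside $\H_{\pm}$, or $f^{\pm1}$) and in each case uses clauses (ii)–(iii) of \Cref{def:interactive_triple} to obtain an inclusion of the form
\[
 \g_n C \subset \omega_{F(n)}(A\cup B_{\epsilon_{F(n)}})
\]
for a fixed compact set $C\subset\Ft$ with nonempty interior (which of $A$, $B_+$, $B_-$ one uses for $C$ depends on the subsequence).

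Now \Cref{prop:convHNN} gives that the nested sequence $\bigl(\omega_m(A\cup B_{\epsilon_m})\bigr)_m$ converges to the single point $\xi(\e)=\bigcap_m \omega_m(A\cup B_{\epsilon_m})$. Since $F(n)\to\infty$, the sets $\omega_{F(n)}(A\cup B_{\epsilon_{F(n)}})$ converge to $\xi(\e)$ as well, hence so does the smaller sequence $(\g_n C)$. As $C$ has nonempty interior, \Cref{prop:regulairty} applies to each of the finitely many subsequences and yields $\g_n\tof\xi(\e)$ along each; combining the finitely many subsequences gives $\g_n\tof\xi(\e)$. Taking $(\g_n)=(\omega_n)$ itself shows the definition \eqref{eqn:def_vii:HNN} is consistent (independent of the chosen alternating sequence), so $\xi$ is well-defined on $\vii\G$.

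**Main obstacle.** The delicate point is the bookkeeping in the second paragraph: verifying that, after truncating off the left subword $\omega_{F(n)}$, the remaining word $r_n$ still acts on a fixed compact set $C$ so as to land inside $A\cup B_{\epsilon_{F(n)}}$, uniformly in $n$. Unlike the amalgam case, where the rightmost letter lies in $\G_A$ or $\G_B$ and the dichotomy is clean, here the interface between $\omega_{F(n)}$ (ending in $f^{\epsilon_{F(n)}}$) and $r_n$ (starting with a $\P$-letter) interacts with the precise-invariance condition (ii) and the sign-compatibility constraints \eqref{eqn:alt_HNNsign} built into what it means for the concatenation to be a normal form. One must be careful that the choice $C=B_{\epsilon_{F(n)}}$ versus $C=A$ is made consistently and that clause (iii), $f^{\pm1}(A)\subset B_\pm$ and $f^{\pm1}B_\pm\subset B_\pm^\o$, together with $\p(B_\pm)\subset A^\o$ for $\p\notin\H_\pm$, indeed forces $r_n C \subset A\cup B_{\epsilon_{F(n)}}$ regardless of how long $r_n$ is. This is routine once set up correctly — it is the HNN translation of the "$\g_{k_n}B\subset\omega_{F(k_n)}A$, $\g_{l_n}A\subset\omega_{F(l_n)}A$" step in the proof of \Cref{lem:cont_ii} — but it requires splitting into a handful of cases rather than two.
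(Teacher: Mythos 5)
Your proposal is correct and follows essentially the same route the paper intends: the paper omits this proof, noting it is "similar to \Cref{lem:cont_ii}", and your argument is precisely that adaptation, combining \Cref{thm:alternating_one}, the nested convergence from \Cref{prop:convHNN}, and \Cref{prop:regulairty}, with the ping-pong bookkeeping (clauses (ii)--(iii) of \Cref{def:interactive_triple} plus the normal-form sign conditions) giving $r_n C\subset A\cup B_{\epsilon_{F(n)}}$ for $C\in\{B_+,B_-\}$ chosen along finitely many subsequences. Your phrase that Britton's Lemma forces $\rho_0$ to ``avoid'' $\H_{-\epsilon_{F(n)}}$ is slightly imprecise (the condition only constrains the next sign when $\rho_0$ lies in the relevant subgroup), but the inclusion you actually use is the correct one and does follow by the induction from the right, exactly as in the proof of \Cref{prop:convHNN}.
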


\begin{proof}
 The proof is similar to \Cref{lem:cont_ii}. We omit the details.
\end{proof}

By \Cref{cor:equivariance_HNN}, we have:

\begin{corollary}\label{cor:equiv_xiiiHNN}
 The map $\xi: \vii\G \to\Ft$ defined by \eqref{eqn:def_vii:HNN} is $\G$-equivariant.
\end{corollary}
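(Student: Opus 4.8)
The plan is to derive this corollary as a formal consequence of \Cref{cor:equivariance_HNN} together with \Cref{prop:convviiHNN}, the statement that was just used to see that $\xi$ is well-defined on $\vii\G$. First I would recall that the decomposition $\geo\G = \vi\G \sqcup \vii\G$ is $\G$-invariant, so that for any $\e\in\vii\G$ and any $\g\in\G$ the point $\g\e$ again lies in $\vii\G$ and $\xi(\g\e)$ is indeed defined by \eqref{eqn:def_vii:HNN}.

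Next, fix $\e\in\vii\G$ and $\g\in\G$, and choose an alternating sequence $\underline{\omega}=(\omega_n)$ in $\G$ with $\omega_n\toC\e$, as produced by \Cref{thm:alternating_one}; by construction $\xi(\e)=\tau_{\underline{\omega}}$. Since $\G$ acts on the compactification $\bG$ by homeomorphisms, we have $\g\omega_n\toC\g\e$. The sequence $(\g\omega_n)$ need not itself be alternating, so one cannot simply read off $\xi(\g\e)$ from it using \eqref{eqn:def_vii:HNN} directly; this is the one point that requires a little care. However, \Cref{prop:convviiHNN}, applied to the point $\g\e\in\vii\G$ and the sequence $(\g\omega_n)$, gives $\g\omega_n\tof\xi(\g\e)$. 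On the other hand, \Cref{cor:equivariance_HNN} applied to $\underline{\omega}$ and $\g$ gives $\g\omega_n\tof\g\tau_{\underline{\omega}}=\g\xi(\e)$. Since $\tmod$-limit points of $\tmod$-convergent sequences are unique, we conclude $\xi(\g\e)=\g\xi(\e)$, which is exactly the asserted $\G$-equivariance.

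I do not expect any genuine obstacle here, since the machinery of the preceding subsection already does all the work. The only subtlety worth flagging in the write-up is precisely that $(\g\omega_n)$ is in general not an alternating sequence, so the equivariance cannot be obtained merely by manipulating normal forms of the $\omega_n$; it is essential to invoke the stronger statement \Cref{prop:convviiHNN}, valid for arbitrary sequences converging to a type II boundary point, rather than only \Cref{prop:convHNN} about alternating sequences.
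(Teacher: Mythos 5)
Your proof is correct and is essentially the paper's argument: the paper also obtains this corollary by combining \Cref{cor:equivariance_HNN} with the well-definedness statement \Cref{prop:convviiHNN} (applied to the sequence $(\g\omega_n)$ converging to $\g\e$), together with uniqueness of $\tmod$-limit points. Your remark that $(\g\omega_n)$ need not be alternating, so that \Cref{prop:convviiHNN} rather than \Cref{prop:convHNN} must be invoked, is exactly the point implicit in the paper's one-line proof.
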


\subsubsection{The boundary map preserves convergence dynamics}\label{sec:continuity_HNN}

The following result is reminiscent of \Cref{prop:convviiHNN}.

\begin{lemma}\label{prop:convviHNN}
Let $\e\in\vi\G$ and let $(\g_n)$ be a sequence in $\G$.
If $\g_n\toC \e$, then $\g_n\tof \xi(\e)$.
\end{lemma}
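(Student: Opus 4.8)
The plan is to mimic the proof of \Cref{lem:cont_i}, with the role played there by the free factors $\G_A,\G_B$ and the amalgamated subgroup $\H$ now played by the single vertex subgroup $\P$ and the two edge subgroups $\H_\pm$, the relevant one at each step being selected by the sign of the first stable-letter exponent. First I would invoke the $\G$-equivariance of $\xi$ (\Cref{cor:equiv_xiiHNN}) and of the $\G$-action on $\Ft$ to reduce to the case $\e\in\geo\P$, and then argue by contradiction: assume $\g_n\toC\e$ but $\xi(\e)$ is not a $\tmod$-accumulation point of $(\g_n)$. Using \Cref{prop:good_form} I would fix $D$-normal forms $\g_n=\p_0^{(n)}f^{\epsilon_1^{(n)}}\p_1^{(n)}\cdots f^{\epsilon_{r_n}^{(n)}}\p_{r_n}^{(n)}$ with $r_n=\rl(\g_n)$. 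If $r_n=0$ along a subsequence, then $\g_n\in\P$ converges to $\e\in\geo\P$ and the Anosov property of $\P$ gives $\g_n\tof\xi(\e)$, a contradiction; so we may assume $r_n\ge1$, and after extraction $\epsilon_1^{(n)}=\epsilon$ and $\epsilon_{r_n}^{(n)}=\epsilon'$ are constant.

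The core of the argument is a dichotomy for the leftmost $\P$-letter sequence $(\p_0^{(n)})$. If $((\p_0^{(n)})^{-1})$ diverges away from $\H_\epsilon$, then $|\p_0^{(n)}|\to\infty$, so the left subwords $\p_0^{(n)}$ fellow-travel $\g_n$, and hence $\p_0^{(n)}\toC\e$ by \Cref{lem:ft}; as $\p_0^{(n)}\in\P$ this gives $\p_0^{(n)}\tof\xi(\e)$, and combining \Cref{lem:proj} applied to $\H_\epsilon<\P$ with the Anosov property of $\P$ and hypothesis (ii) of \Cref{mainthm:HNN} (antipodality of $\LT{\P}\setminus\LT{\H_\epsilon}$ and $B_\epsilon$), I would deduce that the compact sets $\p_0^{(n)}B_\epsilon$ converge to $\xi(\e)$. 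On the other hand, the interactive-triple relations (ii),(iii) of \Cref{def:interactive_triple} together with the normal-form conditions \eqref{eqn:alt_HNNsign} give, by a telescoping computation exactly as in \eqref{eqn:alt_HNNfive}, the inclusion $\g_n B_{\epsilon'}\subset\p_0^{(n)}B_\epsilon$; hence $\g_n B_{\epsilon'}$ converges to $\xi(\e)$ and \Cref{prop:regulairty} yields $\g_n\tof\xi(\e)$, a contradiction. Therefore $((\p_0^{(n)})^{-1})$ cannot diverge away from $\H_\epsilon$, so after extraction $\p_0^{(n)}=\p_0\bar\eta_n$ for a fixed $\p_0\in\P$ and $\bar\eta_n\in\H_\epsilon$; conjugating $\bar\eta_n$ across the stable letter (as in the substitutions of \S\ref{sec:specialHNN}) makes all the $\g_n$ share a common left subword $\omega_1=\p_0 f^{\epsilon}$ of relative length one in a possibly re-chosen normal form.

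Iterating — at each stage replacing $\g_n$ by $\omega_k^{-1}\g_n$ (which still converges to $\omega_k^{-1}\e$), re-choosing $D$-normal forms, and running the same dichotomy on the new leftmost $\P$-letter — I would reach a contradiction in one of two ways. Either the divergence alternative occurs at some stage $k$, and then, exactly as above, $\g_n\tof\xi(\e)$ (here one uses that $\omega_k^{-1}\e$, being the limit of a sequence in $\P$, lies in $\geo\P$, so $\xi(\omega_k^{-1}\e)$ is defined and $\xi(\e)=\omega_k\xi(\omega_k^{-1}\e)$ by \eqref{eqn:def_vi:HNN}); or, after finitely many extractions, all the $\g_n$ share a common left subword $\omega$ of relative length $3$ in their normal forms, whence \Cref{prop:reln}(ii) gives $d_\G(\pr_\P(\g_n),1_\G)\le D(\omega)$ for all $n$, so by \Cref{cor:proj_fellow_travel} the sequence $(\g_n)$ has no accumulation point in $\geo\P$, contradicting $\e\in\geo\P$. (If $r_n$ stays bounded, so the iteration terminates before reaching relative length $3$, then stripping $\g_n$ all the way down leaves a sequence in $\P$ or an eventually constant sequence, and either $\g_n\tof\xi(\e)$ as before, or $\g_n$ cannot converge to a boundary point, or else $\e\in\geo\P\cap\omega\geo\P$ with $\rl(\omega)\ge2$, which is impossible by \eqref{boundary_disjoint} and \Cref{lem:relnHNN}.)

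I expect the main obstacle to be the bookkeeping in the iteration rather than any new idea: one must check that each stripped-and-conjugated expression is again a genuine normal form (so that \Cref{prop:reln}(ii) is applicable at the end), keep track of which edge subgroup $\H_\pm$ is relevant at each stage, and carry the telescoping estimate $\g_n B_{\epsilon'}\subset\p_0^{(n)}B_\epsilon$ carefully through the final $\P$-letter $\p_{r_n}^{(n)}$, which — unlike the interior letters — is unconstrained by \eqref{eqn:alt_HNNsign} but still satisfies $\p_{r_n}^{(n)}B_{\epsilon'}\subset A\cup B_{\epsilon'}$. These verifications are the HNN analogues of the manipulations already performed in \S\ref{sec:specialHNN} and in the proof of \Cref{prop:convHNN}.
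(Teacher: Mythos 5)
Your proposal follows essentially the same route as the paper's proof: reduce to $\e\in\geo\P$ by equivariance, argue by contradiction using $D$-normal forms, run the dichotomy on the leftmost $\P$-letter (if its inverse diverges away from $\H_\epsilon$, the telescoping inclusion $\g_n B_{\epsilon'}\subset \p_{n,0}B_\epsilon\to\xi(\e)$ forces $\g_n\tof\xi(\e)$, a contradiction; otherwise freeze that letter after extraction), iterate to obtain three common leftmost letters, and apply \Cref{prop:reln} together with \Cref{cor:proj_fellow_travel} to contradict $\e\in\geo\P$. The additional verifications you flag (that the conjugated expressions remain normal forms, the bounded relative-length case, and that the stripped limit point lies in $\geo\P$ in the divergence branch) are points the paper leaves implicit, and you treat them correctly.
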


\begin{proof}
Using the action of $\G$ on $\vi\G$, it will be enough to prove the proposition for $\e\in \geo\P$.
We argue by contradiction:

If the result is false, then there exists a sequence $(\g_n)$ in $\G$ and $\e\in\geo\P$  such that 
\begin{equation}\label{eqn:convergenceHNN}
   \g_n\toC \e,\text{ but $\xi(\e)$ is not a $\tmod$-accumulation point of }(\g_n).
\end{equation}
Equip each $\g_n$ with a $D$-normal form (see \Cref{prop:good_form}),
\begin{equation}\label{eqn:convergenceHNNtwo}
  \g_n = \p_{n,0} f^{\epsilon_{n,1}}\p_{n,1}\cdots f^{\epsilon_{n,l_n}}\p_{n,l_n}.
\end{equation}
Passing to a subsequence, we may assume that $\epsilon_{n,1}$ are the same, $\epsilon$, for all $n$.
In this situation, $(\p_{n,0}^{-1})$ cannot diverge away from $\H_\epsilon$: 
Otherwise, after extraction, $\p_{n,0}\toC \e$ and
\[
 \g_n B_{\epsilon_{n,l_n}}\subset\p_{n,0} B_\epsilon \to \xi(\e),
\]
showing that $\g_n\tof \xi(\e)$.
However, since $(\p_{n,0})$ fellow-travels $\g_n$, $\g_n\toC\e$.
Thus, $\e=\e$ and, therefore, the assumption \eqref{eqn:convergenceHNN} is violated.

So, after extraction, $\p_{n,0}$ are all the same, $\p_0$.
We may now repeat the same procedure to the sequence $(f^{-1}\p_{0}^{-1}\g_n)_n$ to
show that after another extraction $\p_{n,1}$ are all the same.
We can continue doing this procedure an arbitrary number of times: Thus, for all $l\in\N$, there exists a subsequence $(\g_n')$ of $(\g_n)$ such that 
 suitable normal forms of the elements of $(\g_n')$ share at least $l$ common leftmost letters.
For $l=3$, 
 \Cref{prop:reln} shows that $(\g'_n)$ has no accumulation points in the boundary of $\P$. This is a contradiction with the  assumption that $\g_n\toC \e\in\geo\P$.
\end{proof}

Combining \Cref{prop:convviiHNN,prop:convviHNN}, we obtain:

\begin{corollary}\label{cor:dynamics_preserving_HNN}
 The map $\xi:\geo\G\to \Ft$  preserves convergence dynamics.
\end{corollary}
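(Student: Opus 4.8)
The plan is to read the statement off directly from the $\G$-invariant decomposition $\geo\G = \vi\G \sqcup \vii\G$ of \S\ref{sec:boundary} together with the two lemmas just proved. Unwinding \Cref{def:asymptotically_embedded}, the assertion that $\xi : \geo\G \to \Ft$ preserves convergence dynamics means precisely that for every sequence $(\g_n)$ in $\G$ and every point $\e \in \geo\G$ one has: if $\g_n \toC \e$, then $\g_n \tof \xi(\e)$. So I would fix such a sequence $(\g_n)$ and such a point $\e$, and note that $\e$ is either of type I or of type II, these alternatives being exhaustive and mutually exclusive.

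In the first case, $\e \in \vi\G$, so $\xi(\e)$ is given by \eqref{eqn:def_vi:HNN}, and \Cref{prop:convviHNN} applied to $(\g_n)$ and $\e$ yields $\g_n \tof \xi(\e)$ directly. In the second case, $\e \in \vii\G$, so $\xi(\e)$ is given by \eqref{eqn:def_vii:HNN}, and \Cref{prop:convviiHNN} applied to $(\g_n)$ and $\e$ yields $\g_n \tof \xi(\e)$. Since these two cases cover all of $\geo\G$, the corollary is established.

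There is no genuine obstacle at this last step: all the substantive content sits in \Cref{prop:convviHNN,prop:convviiHNN}, and the only additional point to keep in mind --- already settled during the constructions in \S\ref{sec:def_vi:HNN} and \S\ref{sec:def_vii:HNN} --- is that the two partial definitions of $\xi$ fit together into one well-defined, $\G$-equivariant map on $\geo\G$ (cf. \Cref{cor:equiv_xiiHNN,cor:equiv_xiiiHNN}). As in the amalgamated free product case (cf. the remark following \Cref{cor:dynamics_preserving_amalgam}), combining this with equivariance of $\xi$ shows in particular that $\G$ is $\tmod$-regular, which is one of the ingredients needed to conclude that $\G$ is $\tmod$-asymptotically embedded.
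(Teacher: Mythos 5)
Your proposal is correct and coincides with the paper's argument: the corollary is obtained exactly by combining \Cref{prop:convviHNN} (type I points) and \Cref{prop:convviiHNN} (type II points) over the decomposition $\geo\G = \vi\G \sqcup \vii\G$, which is all the paper does. The closing remark that $\tmod$-regularity of $\G$ follows (cf. \Cref{rem:asmp}) matches the paper's subsequent comment as well.
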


In particular, $\G$ is $\tmod$-regular; see \Cref{rem:asmp}.

\subsubsection{The boundary map is antipodal}\label{sec:antipodality_HNN}

\begin{proposition}\label{prop:antipodal_HNN}
  The map $\xi : \geo\G \to \Ft$ in \eqref{eqn:bdmapHNN} (obtained by combining \eqref{eqn:def_vi:HNN} and \eqref{eqn:def_vii:HNN}) is {\em antipodal}: That is,
for every pair of distinct points $\e_\pm \in\geo\G$, the points $\tau_\pm \coloneqq \xi(\e_\pm)\in\Ft$ are antipodal to each other.
\end{proposition}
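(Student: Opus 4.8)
The plan is to mirror the structure of the proof of \Cref{prop:antipodal_amalgam}, splitting into the three cases according to the types (I or II) of the pair $\e_\pm\in\geo\G$, and reducing each case to the antipodality hypothesis (i)$'$ and (ii) of \Cref{mainthm:HNN} (as strengthened in \Cref{rem:replacei}) together with the already-established facts $\LT{\P}\subset A$, $\LT{\H_\pm}\subset A\cap B_\pm$ (\Cref{lem:limitsetsHNN}) and the $\G$-equivariance of $\xi$ (\Cref{cor:equiv_xiiHNN,cor:equiv_xiiiHNN}). Throughout I would use that the $G$-action on $\Ft$ preserves antipodality, so that after translating by a suitable group element it suffices to check antipodality for a normalized pair of boundary points.

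\textbf{Case 1: both $\e_\pm$ of type I.} Using equivariance, I would normalize so that $\e_-\in\geo\P$. If $\e_+\in\geo\P$ as well, then $\xi(\e_\pm)\in\LT{\P}$ is an antipodal pair because $\LT{\P}$ is antipodal (it is the image of the antipodal boundary embedding of the $\tmod$-Anosov group $\P$). Otherwise $\e_+$ lies in the boundary $\geo\G_v$ of a vertex-subgroup $\G_v=\g\P\g^{-1}$ with $d_T(\P,v)\ge 1$; here the HNN tree is edge-transitive, so the relevant geometric configuration is captured by a single $f^{\pm1}$-step. One shows, using a $D$-normal form of $\g$ and the precise-invariance relations $\p(B_\pm)\subset A^\o$ for $\p\notin\H_\pm$ together with $f^{\pm1}(A)\subset B_\pm$, $f^{\pm1}B_\pm\subset B_\pm^\o$, that $\LT{\G_v}$ is contained in $B_+^\o$ or $B_-^\o$ (or, if $d_T(\P,v)=1$ and the stable letter appears with the opposite sign, inside $f^{\pm1}(\LT{\P})\subset B_\pm^\o$); since $B_\pm^\o$ is antipodal to $A\supset\LT{\P}$ by (i)$'$, the pair $\xi(\e_\pm)$ is antipodal. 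This is the analog of \Cref{lem:antipodality_typei}, and its HNN version should be isolated as a lemma asserting $\LT{\G_v}$ is antipodal to $\LT{\G_w}$ whenever $d_T(v,w)\ge 2$, using \eqref{boundary_disjoint} and weak malnormality (\Cref{cor:malnormalHNN}).

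\textbf{Case 2: both $\e_\pm$ of type II.} Take alternating sequences $\omega_n^\pm\toC\e_\pm$ given by \Cref{prop:Dalternating}, with normal forms $\omega_n^\pm=\p_0^\pm f^{\epsilon_1^\pm}\cdots f^{\epsilon_n^\pm}$. The key combinatorial point is that the two infinite strings associated with $\e_+$ and $\e_-$ eventually ``branch'' in the Bass--Serre tree; concretely, there is a smallest $n_0$ such that $\omega_{n_0}^+$ and $\omega_{n_0}^-$ represent normal-form prefixes that do not agree up to an element of the appropriate edge-subgroup $\H_{\epsilon}$. Proving this requires an argument like \Cref{lem:two:antipodality_typei}: if the prefixes always coincided modulo $\H_\epsilon$, then applying \Cref{lem:proj} to the sequence $\ho_n=\omega_n^-\pr_{\H_\epsilon}((\omega_n^-)^{-1})$ and using that $(\omega_n^-)$ diverges away from the edge-subgroup (by \Cref{prop:reln}) would force $\e_+=\e_-$. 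Having found $n_0$, translate by $(\omega_{n_0}^+)^{-1}$; the translated alternating sequences for $\e_\pm$ then start with either different-sign stable letters, or the same-sign stable letter but with $\P$-parts in different cosets of $\H_\epsilon$. In the first subcase $\xi$ of one point lands in $B_+^\o$ and of the other in $B_-^\o$, which are antipodal by (i)$'$. In the second subcase, one point's image lies in $B_\epsilon^\o$ and the other, after the differing $\P$-letter, lies in $A^\o$ (via $\p(B_{\pm})\subset A^\o$ for $\p\notin\H_\pm$), and $A^\o$ is antipodal to $B_\epsilon$ by (i)$'$; here I would invoke \Cref{prop:convHNN}/\eqref{eqn:defnconvHNN} to locate $\xi(\e_\pm)$ in the respective interiors. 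Either way $\xi(\e_\pm)$ is antipodal, hence so is the original pair by $G$-invariance of antipodality.

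\textbf{Case 3: $\e_-$ type I, $\e_+$ type II.} This runs exactly as the third case in \cite[\S4.2]{DK22}: normalize so that $\e_-\in\geo\P$, so $\xi(\e_-)\in\LT{\P}\subset A$; since $\e_+$ is type II, a \Cref{prop:Dalternating} alternating sequence $\omega_n^+\toC\e_+$ has relative length tending to infinity, so by \Cref{prop:reln} the nested sets $\omega_n^+(A\cup B_{\epsilon_n})$ are eventually disjoint from $A$, whence $\xi(\e_+)\notin A$; more precisely (again possibly after a normalizing translation trivial on $\geo\P$ but moving $\e_+$ into a region governed by a single $f^{\pm1}$ or $\P\setminus\H_\pm$ step) $\xi(\e_+)$ lands in $B_\pm^\o$ or in $A^\o$ at a location antipodal to $\LT{\P}$ via hypothesis (ii) (which says $\LT{\P}\setminus\LT{\H_\pm}$ is antipodal to $B_\pm$) combined with (i)$'$. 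I expect Case 2, and within it the branching lemma (the HNN analog of \Cref{lem:two:antipodality_typei} and the careful bookkeeping of which coset of $\H_\epsilon$ the $\P$-letters lie in), to be the main obstacle, since the two-sided nature of the stable letter forces one to track signs $\epsilon_n$ carefully and to use \eqref{eqn:two:def_vi:HNN}-type compatibility of $\xi$ under conjugation by $f$. Assembling the three cases completes the proof that $\xi$ is antipodal.
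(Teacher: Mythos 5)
Your overall architecture (three cases by type, equivariance, normalizing translations) matches the paper, and your Case 2 is in the same spirit as the paper's (which itself defers the bookkeeping to the amalgam case, branching on cosets of $\P$ rather than of $\H_\epsilon$ as you do). But there are two genuine gaps. The first is in Case 1, in the adjacent-vertex subcase $d_T(\P,v)=1$: your claimed inclusion $f^{\pm1}(\LT{\P})\subset B_\pm^\o$ is false. Indeed $f\,\LT{\H_-}=\LT{\H_+}\subset A\cap B_+$, and $A\cap B_+^\o=\emptyset$ because $A$ is antipodal to $B_+^\o$ by (i)$'$; so $f\,\LT{\P}$ meets $B_+\setminus B_+^\o$. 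More fundamentally, limit sets of adjacent vertex groups share the sets $\LT{\H_\pm}$ (up to translation), so no set-level statement ``$\LT{\G_v}$ is antipodal to $\LT{\G_w}$'' can cover $d_T(v,w)=1$, and hypothesis (i)$'$ alone cannot close this case. The paper's argument here is pointwise and uses hypothesis (ii): writing $\g=\p_0 f^{\epsilon_1}\p_1$ with $\g^{-1}\e_+\in\geo\P$ and using $\e_+\notin\geo\P$, one gets $f^{-\epsilon_1}\p_0^{-1}\xi(\e_+)\in\LT{\P}\setminus\LT{\H_{-\epsilon_1}}$, while $f^{-\epsilon_1}\p_0^{-1}\xi(\e_-)\in f^{-\epsilon_1}A\subset B_{-\epsilon_1}$, and (ii) says exactly that these two sets are antipodal. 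You never invoke (ii) in Case 1, and your isolated lemma for $d_T\ge 2$ (which is fine, modulo first stripping the leading $\P$-letter $\p_0$) does not reach this subcase.

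The second gap is in Case 3. \Cref{prop:reln} is a statement about bounded nearest-point projections to $\P$ inside the group $\G$; it does not yield that the sets $\omega_n(A\cup B_{\epsilon_n})$ in $\Ft$ become disjoint from $A$, and even the conclusion $\xi(\e_+)\notin A$ would be far too weak, since a point outside $A$ need not be antipodal to points of $A$. Your fallback, that $\xi(\e_+)$ may land ``in $A^\o$ at a location antipodal to $\LT{\P}$ via hypothesis (ii),'' is not a valid mechanism: (ii) concerns antipodality of $\LT{\P}\setminus\LT{\H_\pm}$ with $B_\pm$, and says nothing about points of $A^\o$ being antipodal to $A$ (they are not, in general). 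The correct route, as in the paper, is to strip the leading letter $\p_0$ of the alternating sequence and show directly that $\p_0^{-1}\xi(\e_+)\in B_{\epsilon_1}^\o$ in both subcases ($\p_1\in\H_{\epsilon_2}$, forcing $\epsilon_1=\epsilon_2$ and $\p_0^{-1}\omega_nA\subset f^{\epsilon_1}B_{\epsilon_1}\subset B_{\epsilon_1}^\o$; or $\p_1\notin\H_{\epsilon_2}$, giving $\p_0^{-1}\omega_nA\subset f^{\epsilon_1}\p_1B_{\epsilon_2}\subset f^{\epsilon_1}A^\o\subset B_{\epsilon_1}^\o$), and then conclude with (i)$'$ since $\p_0^{-1}\xi(\e_-)\in\LT{\P}\subset A$; hypothesis (ii) is not needed in this case at all.
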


\begin{proof} We consider the following cases.
Recall that $\xi$ is $\G$-equivariant (by \Cref{cor:equiv_xiiiHNN} and \Cref{cor:equiv_xiiHNN}).

\setcounter{case}{0}
\begin{case}
{\em Suppose that both $\e_\pm$ are type I points.}
 Using the $\G$-equivariance, we may assume that $\e_-\in\geo\P$. 
 If $\e_+$ is also in $\geo\P$, then $\xi(\e_\pm)$ are antipodal since $\xi:\geo\P\to\Ft$ is an antipodal map.
 Else, $\e_+\not\in\geo\P$ but there exists $\g\in\G$ such that $\rl(\g)\ge 1$ and $\e \coloneqq \g^{-1}\e_+\in\geo\P$.
 Let $\g = \p_{0} f^{\epsilon_{1}}\p_1 \cdots f^{\epsilon_n} \p_n$ be a normal form.
 So, 
 \[
  \p_{0}^{-1}\xi(\e_+) \in \p_{0}^{-1}\g(\LT{\P}) = \p_{0}^{-1}f^{\epsilon_{1}}\p_1 \cdots f^{\epsilon_n} (\LT{\P}).
 \]
 
 If $\rl(\g) = 1$, then $\p_{0}^{-1}\xi(\e_+) \in f^{\epsilon_{1}}(\LT{\P})$.
 Moreover, $\p_{0}^{-1}\e_+ \not\in f^{\epsilon_{1}}\geo{\H_{-\epsilon_1}}$, since we have assumed that $\e_+\not\in\geo\P$.
 Thus, it follows that $f^{-\epsilon_{1}}\p_{0}^{-1}\xi(\e_+) \in \LT{\P}\setminus\LT{\H_{-\epsilon_1}}$.
 Since $B_{-\epsilon_1}$ is antipodal to $\LT{\P}\setminus\LT{\H_{-\epsilon_1}}$,
 $f^{-\epsilon_1}\p_{0}^{-1}\xi(\e_-)$, which is an element of $ B_{-\epsilon_1}$, is antipodal to $\LT{\P}\setminus\LT{\H_{-\epsilon_1}}$.
 Thus, $\xi(\e_+)$ is antipodal to $\xi(\e_-)$.
 
 If $\rl(\g) \ge 2$, and $\p_1\not\in\H_{\epsilon_2}$, then 
 \[
  \p_{0}^{-1}\xi(\e_+) \in \p_{0}^{-1}\g A\subset f^{\epsilon_1}\p_1 B_{\epsilon_2} \subset B_{\epsilon_1}^\o. 
 \]
 If $\p_1\in\H_{\epsilon_2}$,
 then $\epsilon_1 = \epsilon_2$ and, hence,
 \[
  \p_{0}^{-1}\xi(\e_+) \in \p_{0}^{-1}\g A\subset f^{\epsilon_1}\p_1f^{\epsilon_2} (A\cup B_{\epsilon_2}) = f^{\epsilon_1}B_{\epsilon_1}\subset B^\o_{\epsilon_1},
 \]
 In both cases, $\p_{0}^{-1}\xi(\e_+)$ is antipodal to $A$ and
 in particular, to $\LT{\P}$. Thus, $\xi(\e_+)$ is antipodal to $\LT{\P}$ and, in particular, to $\xi(\e_-)$.
 \end{case}

\begin{case}
{\em Suppose that both $\e_\pm$ are type II points.}
 Let $(\omega_n^\pm)$ be alternating sequences such that
$
  \omega^\pm_n \toC \e_\pm
$
(see \Cref{prop:Dalternating}).

\begin{lemma}
 There exists $n\in\N$ such that $\omega^+_n\not\in \omega_n^- \P$.
\end{lemma}

\begin{proof}
 The proof is similar to the one of \Cref{lem:two:antipodality_typei}.
\end{proof}

Let $n_0$ be the smallest natural number such that $\omega^+_{n_0}\not\in \omega_{n_0}^- \P$.
Using an argument similar to the Case 2 in the proof of 
\Cref{prop:antipodal_amalgam}, one can show that $\xi((\omega_{n_0}^{+})^{-1}\e_\pm)$ are antipodal, which is equivalent to $\xi(\e_\pm)$ being antipodal.
\end{case}

\begin{case}
{\em Suppose that $\e_-$ is a type I point, but $\e_+$ is a type II point.}
 By $\G$-equivariance, we may assume that $\e_-\in\geo\P$.
 Let $(\omega_n)$ alternating sequence, \[\omega_n = \p_0 f^{\epsilon_1} \p_1 f^{\epsilon_2}\p_2 \cdots f^{\epsilon_{n-1}}\p_{n-1}f^{\epsilon_n},\] such that
$
  \omega_n \toC \e_+.
$
 Then, $\xi(\e_+) = \lim_{n\to\infty} \omega_n A$, see \eqref{eqn:def_vii:HNN}.
 
 If $\p_1\in\H_{\epsilon_2}$, then $\epsilon_1 = \epsilon_2$, and it follows that 
 \[
  \p_0^{-1}\omega_n A \subset f^{\epsilon_1} B_{\epsilon_1} \subset B_{\epsilon_1}^\o.
 \] 
 Thus, $\p_0^{-1}\xi(\e_+)\in B_{\epsilon_1}^\o$.
 Else, if $\p_1\not\in\H_{\epsilon_2}$, then
 $
  (\p_0 f^{\epsilon_1} \p_1)^{-1}\omega_n A \subset  B_{\epsilon_2},
 $
 also showing that $\p_0^{-1}\xi(\e_+) \in f^{\epsilon_1} \p_1 B_{\epsilon_2}\subset B_{\epsilon_1}^\o $.
 However, $\p_0^{-1}\xi(\e_-) \in A$.
 It follows that $\xi(\e_-)$ and $\xi(\e_+)$ are antipodal.\qedhere
\end{case}
\end{proof}

\subsection{Proof of Theorem \ref*{mainthm:HNN}}

We show that the subgroup $\G = \< \P,f\> < G$  in the conclusion of \Cref{mainthm:HNN},
which is naturally isomorphic to $\Pf$ (by \Cref{prop:faithful_HNN}),
 is a $\tmod$-Anosov subgroup; this is equivalent to showing that $\G$ is a $\tmod$-asymptotically embedded subgroup (see \Cref{def:asymptotically_embedded}) of $G$:

\begin{enumerate}[(i)]
\item That $\G$ is hyperbolic is the content of \Cref{cor:hyp_HNN}.

\item Finally, the boundary map $\xi : \geo\G \to \Ft$ in \Cref{eqn:bdmapHNN} is
 $\G$-equivariant (by \Cref{cor:equiv_xiiHNN} and \Cref{cor:equiv_xiiiHNN}), antipodal (by \Cref{prop:antipodal_HNN}), and preserves convergence dynamics (by \Cref{cor:dynamics_preserving_HNN}).
\end{enumerate}

This concludes the proof of the \Cref{mainthm:HNN}.
\qed

\section{An application to $\Theta$-positive representations of surface groups}\label{sec:positive}
In this section, we discuss a rich class of Anosov representations of surface groups in certain simple Lie groups $G$, called {\em $\Theta$-positive} representations.

Guichard-Wienhard \cite{GWtheta,GWthetaTwo}, generalizing Lustzig's classical notion of total positivity, introduced the notion of {\em $\Theta$-positivity} for certain flag varieties:
Let $G$ be a simple Lie group of noncompact type and with a finite center, let $P_\Theta$ be a parabolic subgroup conjugate to its opposite, and let $P^{\rm opp}_\Theta$ be an opposite parabolic subgroup.
A {\em positive structure} on the pair $(G,P_\Theta)$ is a sharp open sub-semigroup $U_\Theta^>$ of the unipotent radical $U_\Theta$ of $P_\Theta$, invariant under the conjugation action by the identity component of $P_\Theta\cap P^{\rm opp}_\Theta$.
Here, sharp means that if $u,v\in \cl( U_\Theta^>)$ and $uv=1_G$, then $u=v=1_G$.
The subset $D \coloneqq U_\Theta^>\cdot [P^{\rm opp}_\Theta] \subset G/P_\Theta$ is called a {\em diamond} and ${D^{\vee}} \coloneqq {(U_\Theta^>)}^{-1}\cdot [P^{\rm opp}_\Theta] \subset G/P_\Theta$ is called the {\em opposite diamond}.
Crucial to the discussion of $\Theta$-positivity is the fact that the subsets $D$ and ${D^{\vee}}$ are connected components of $C([P_\Theta])\cap C([P^{\rm opp}_\Theta])$, \cite[Theorem 4.7]{GWtheta}.
The sharp semigroup property of  $U_\Theta^>$ implies that $D$ and ${D^{\vee}}$ are antipodal to each other.
See \cite[\S10]{GWthetaTwo} for a detailed discussion of diamonds.

An $n$-tuple $(\sigma_1,\dots,\sigma_n)$ of distinct points in $G/P_\Theta$ is said to be {\em positive} if there exists $g\in {\rm Aut}(G)$, $u_1,\dots,u_{n-2}\in U_\Theta^>$ such that
\[
 g\cdot (\sigma_1,\dots,\sigma_n)
 = ([P_\Theta], \underbrace{u_{n-2}\cdots u_1[P_\Theta^{\rm opp}],\dots,u_1[P_\Theta^{\rm opp}]}_{\in D}, [P_\Theta^{\rm opp}]).
\]

\begin{definition}
Fix a cyclic order in the circle $S^1$.
A map $\xi:S^1 \to G/P_\Theta$ is said to be {\em positive} if $\xi$ sends every cyclically ordered $n$-tuple of distinct points in $S^1$ to a {\em positive} $n$-tuple. 
\end{definition}

Note that positive maps are not assumed to be continuous.

Suppose that $\xi:S^1 \to G/P_\Theta$ is  a positive map. 
Given a cyclically ordered triple $(\e_+,\e,\e_-)$ in $S^1$,
let
$
 D_\e(\e_+,\e_-)
$
denote the connected component of $C(\xi(\e_+))\cap C(\xi(\e_-))$ that contains $\xi(\e)$.
Then, for any other point $\e'\in S^1$ such that $(\e_+,\e',\e_-)$ is cyclically ordered, we have $D_\e(\e_+,\e_-) = D_{\e'}(\e_+,\e_-)$.
Moreover, if $(\e_1,\dots,\e_4)$ is any cyclically ordered $4$-tuple in $S^1$ and $g\in {\rm Aut}(G)$ is any element such that $g\cdot D_{\e_2}(\e_1,\e_3) = D$, then $g\cdot D_{\e_4} (\e_3,\e_1) = D^\vee$;
in particular,
\begin{equation}\label{eqn:antipodal}
 D_{\e_2}(\e_1,\e_3) \text{ is antipodal to }D_{\e_4} (\e_3,\e_1).
\end{equation}
See \cite[Proposition 2.5(3)]{Guichard:2021aa}.

An interesting characteristic property of positive maps $\xi:S^1 \to G/P_\Theta$ is the following nesting property (\cite[Corollary 3.9]{Guichard:2021aa}), which  reflects the fact that to $U^>_\Theta$ is a sharp semigroup: If $(\e_1,\dots,\e_5)$ is any cyclically ordered $5$-tuple in $S^1$, then
\begin{equation}\label{eqn:nested}
 D_{\e_3}(\e_1,\e_5) \supset \cl\left( D_{\e_3}(\e_2,\e_4) \right).
\end{equation}

\begin{definition}Let $\Gamma$ be a surface group. A representation $\rho:\G \to G$ is called {\em $\Theta$-positive} if 
there exists a $\rho$-equivariant positive map $\xi: \geo\G \to G/P_{\Theta}$. 
\end{definition}

Using the Combination Theorems,\footnote{We remark that the Combination Theorems mainly circumvent the discussion of {\em tripod metrics} in the original proof of this result in \cite{Guichard:2021aa}.} \eqref{eqn:antipodal}, and \eqref{eqn:nested}, we give a proof of the following:

\begin{theorem}[Guichard-Labourie-Wienhard, {\cite[Theorem B]{Guichard:2021aa}}]
 Let $S$ be a closed orientable surface of genus $\ge 2$.
 If $\rho:\pi_1(S) \to G$ is a $\Theta$-positive representation, then $\rho$ is a $\Theta$-Anosov representation.
\end{theorem}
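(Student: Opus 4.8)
The plan is to reduce the theorem to the Combination Theorems \Cref{mainthm:amalgam} and \Cref{mainthm:HNN} by an induction on the complexity of the surface $S$, decomposing $S$ along simple closed curves into pieces whose fundamental groups are already known to be $\Theta$-Anosov. First I would establish the base cases. Cutting $S$ along a maximal collection of disjoint, non-isotopic essential simple closed curves yields pieces that are pairs of pants (three-holed spheres); the fundamental group of a pair of pants is free of rank two. So the outermost base case is that a $\Theta$-positive representation of a free group is $\Theta$-Anosov. For a free group one uses that the restriction of $\xi$ to the limit set of each free factor is still a positive (in particular, antipodal and dynamics-preserving) map, and one invokes the free-product combination theorem from \cite{DK22} (the ``Klein combination theorem'' referenced in the introduction) — or, since a rank-two free group is itself an amalgam over the trivial group, one can even feed this into \Cref{mainthm:amalgam} with $\H$ trivial. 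Either way, the antipodality of the relevant subsets of $\Ft$ comes directly from \eqref{eqn:antipodal}, and the precise-invariance/ping-pong conditions come from the nesting property \eqref{eqn:nested}, exactly as in the illustrative examples in \S\ref{example}.

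Next I would run the inductive step. Suppose $S$ has an essential simple closed curve $s$; there are two cases. If $s$ is separating, write $\pi_1(S) = \G_A \star_\H \G_B$ with $\H = \langle \eta\rangle$ cyclic, $\G_A = \pi_1(S_A)$, $\G_B = \pi_1(S_B)$, where $S_A, S_B$ are the two complementary subsurfaces, each of strictly smaller complexity. By the inductive hypothesis, $\rho|_{\G_A}$ and $\rho|_{\G_B}$ are $\Theta$-Anosov; the cyclic subgroup $\H$ is automatically $\Theta$-Anosov hence quasiconvex in both (the quasiconvexity hypothesis of \Cref{mainthm:amalgam} is satisfied). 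Let $\sigma_\pm = \xi(\eta^{\pm})$ be the attracting/repelling fixed flags of $\rho(\eta)$; the two points $\{\sigma_+,\sigma_-\}$ divide the circle $\geo\pi_1(S)$ into two arcs, one containing $\geo\G_A$ and the other $\geo\G_B$. Define $A$, resp. $B$, to be the closure of the component of $C(\sigma_+)\cap C(\sigma_-)$ containing $\xi(\geo\G_A)$, resp. $\xi(\geo\G_B)$ — these are the diamonds $D_{\e}(\sigma_+,\sigma_-)$ of \S\ref{sec:positive} for appropriate $\e$. Both are $\H$-invariant since $\H$ fixes $\sigma_\pm$. Using the nesting property \eqref{eqn:nested} (cf. \cite[Corollary 3.9]{Guichard:2021aa}) one checks $\alpha B \subset A^\o$ and $\beta A \subset B^\o$ for $\alpha \in \rho(\G_A\setminus\H)$, $\beta\in\rho(\G_B\setminus\H)$, so $(A,B)$ is an interactive pair; antipodality of interiors and of the pairs $(A, \LT{\G_B}\setminus\LT{\H})$, $(B,\LT{\G_A}\setminus\LT{\H})$ follows from \eqref{eqn:antipodal} together with the observation that $\LT{\G_A}\cap\partial A = \LT{\H} = \LT{\G_B}\cap\partial B$ (the endpoints of the arcs are exactly $\sigma_\pm$), which by \Cref{rem:relaxation} lets us invoke the relaxed form of the hypothesis. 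Hence \Cref{mainthm:amalgam} applies and $\rho$ is $\Theta$-Anosov. If instead $s$ is non-separating, write $\pi_1(S) = \P\star_{\phi:\H_-\to\H_+}$ with $\P = \pi_1(S')$, $S'$ the surface obtained by cutting along $s$, of smaller complexity; $\rho|_\P$ is $\Theta$-Anosov by induction, $\H_\pm$ are cyclic and quasiconvex, and one builds the interactive triple $(A, B_\pm)$ from the diamonds cut out by the fixed flags of $\rho(\eta)$ and $\rho(\eta') = \rho(f\eta f^{-1})$ exactly as in the second half of \S\ref{example}, again verifying the precise-invariance conditions via \eqref{eqn:nested} and the antipodality conditions via \eqref{eqn:antipodal} and \Cref{rem:relaxation_HNN}. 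Then \Cref{mainthm:HNN} applies.

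The main obstacle is not any single combination step but the bookkeeping of the induction together with the verification that the restricted boundary map remains positive and, crucially, that the arcs of the circle line up correctly with the vertex and edge subgroups at each stage — i.e., that under the chosen cyclic order of $\geo\pi_1(S)$ (coming from the embedding $S' \hookrightarrow$ its completion, or the Nielsen convex core) the limit set of each piece lies in the expected arc, and that the limit points of the amalgamating cyclic subgroup are precisely the common endpoints. Concretely: the positive map $\xi$ is defined on $\geo\pi_1(S)$, but the inductive hypothesis needs a positive map on $\geo\G_A$ (resp. $\geo\P$); since $\G_A$ is a quasiconvex (indeed virtually free or surface) subgroup and its Gromov boundary embeds $\G_A$-equivariantly into $\geo\pi_1(S)$, one obtains $\xi|_{\geo\G_A}$, and positivity is inherited because cyclically ordered tuples in $\geo\G_A$ are cyclically ordered in $\geo\pi_1(S)$. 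One must also handle the boundary case where $S_A$ (or $S'$) already has nonempty boundary in the induction, so "surface group" should be read throughout as "fundamental group of a compact surface with boundary," with the peripheral curves playing the role of the amalgamating subgroups; the base case is then honestly the pair of pants / rank-two free group. Once these compatibility points are pinned down, every hypothesis of \Cref{mainthm:amalgam} and \Cref{mainthm:HNN} is verified by the two displayed properties \eqref{eqn:antipodal} and \eqref{eqn:nested} of positive maps, and the theorem follows by induction on, say, $-\chi(S)$ (or the number of curves in a pants decomposition).
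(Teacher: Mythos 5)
Your outline is correct and rests on the same two pillars as the paper's argument --- the antipodality property \eqref{eqn:antipodal} and the nesting property \eqref{eqn:nested} of positive maps are what verify every hypothesis of the combination theorems --- but the global organization is genuinely different. The paper does not induct over a pants decomposition and never uses \Cref{mainthm:HNN}: it cuts $S$ along a single separating geodesic, so that $\G=\G_A\star_{\H}\G_B$ with $\G_A,\G_B$ free, and it handles each free piece directly by realizing it as a Schottky group: the cyclic subgroups $\H_j=\langle\rho(\a_j)\rangle$ generated by the Schottky generators are first shown to be $\Theta$-Anosov by a nested-diamond argument (using \eqref{eqn:nested}, \eqref{eqn:antipodal} and the argument of \Cref{lem:limitsetsHNN}(i)), and these cyclic groups are then combined via the free-product combination theorem \cite[Corollary 6.3]{DK22}; a single application of \Cref{mainthm:amalgam} finishes the proof. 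Your inductive scheme over $-\chi(S)$, which treats subsurfaces with boundary and invokes \Cref{mainthm:HNN} for non-separating curves, also works and is more modular, but it buys nothing here and costs the extra bookkeeping you yourself flag (positivity of $\xi$ restricted to Cantor-set boundaries, matching arcs with vertex and edge subgroups at each stage), all of which the single-cut route avoids.

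One point you lean on but never establish: at the bottom of your induction (the pair of pants, equivalently wherever you invoke the free-product theorem of \cite{DK22} or \Cref{mainthm:amalgam} with trivial $\H$), the factors are cyclic groups $\langle\rho(\a)\rangle$, and the combination theorems require these to be $\Theta$-Anosov already. This is not supplied by any inductive hypothesis and is not automatic from positivity as stated; it is precisely the first step of the paper's Claim, proved by producing nested antipodal diamonds $A^{\pm}$ with $\rho(\a^{\pm1})A^{\pm}\subset (A^{\pm})^{\o}$ and appealing to \cite[Lemma 5.38]{MR3736790} through the argument of \Cref{lem:limitsetsHNN}(i). Your appeal to ``exactly as in the illustrative examples in \S\ref{example}'' does not cover this, since those examples start from a Hitchin, hence already Anosov, representation. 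Once this cyclic base step is supplied, your plan goes through.
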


\begin{proof}
 We will identify $\G \coloneqq \pi_1(S)$ with a Fuchsian subgroup of $\PSL(2,\R)$ and, thus, we obtain a hyperbolic structure on $S$.
 Let $c$ be a simple closed separating\footnote{Alternatively, one could also choose a non-separating one and work with  HNN extensions.} geodesic in $S$, whose free homotopy class is represented by an element $\eta\in\G$.
 Then, $\G$ can be written as an amalgamated free product (cf. \S\ref{example})
 \[
  \G = \G_A \star_{\H} \G_B,
 \]
 where $\H \coloneqq \<\eta\> \cong\Z$.
 The subgroup $\G_A$ (and, similarly, $\G_B$) of $\PSL(2,\R)$ is {\em Schottky}, i.e., there exist generators $\a_1,\dots,\a_k$ of $\G_A$ and pairwise disjoint closed intervals in $\geo{\mathbb H}^2$,
 \[
  I^+_1, I_1^-, \dots, I_k^+, I_k^-,
 \]
 such that for all $j\in\{1,\dots,k\}$, $\a_j$ maps the exterior of $I_j^-$ onto the interior of $I_j^+$. 
 
\begin{claim}
 $\rho\vert_{\G_A}$ (and, similarly, $\rho\vert_{\G_B}$) is a $\Theta$-Anosov representation.
\end{claim}

Cf. \cite[Theorem 1.3]{Burelle-Treib}.

\begin{proof}[Proof of claim]
 We first show that for all $j\in\{1,\dots,k\}$, $\H_j \coloneqq \<\rho(\a_j)\>$ is $\Theta$-Anosov. 
 Let $\e_j^\pm$ be the attractive/repulsive fixed point of $\a_j$ in $\geo\mathbb{H}^2$, which lies in the interior of  $I_j^\pm$, respectively.
 Let $x^\pm_j,y^\pm_j$ denote the endpoints of $I_j^\pm$; choose these names in a way such that each triple $(x_j^\pm,\e_j^\pm,y_j^\pm)$ is cyclically ordered.
 If follows that the $5$-tuples $(x_j^\pm,\a_j^{\pm1}x_j^\pm,\e_j^\pm,\a_j^{\pm1}y_j^\pm,y_j^\pm)$ are also cyclically ordered.
  
 Let $A_j^\pm \coloneqq \cl(D_{\e_j^\pm}(x_j^\pm,y_j^\pm))$.
 Then, \eqref{eqn:nested} implies that
 \[
  (A_j^\pm)^\o \supset \cl\left(D_{\e_j^\pm}(\a_j^{\pm1}x_j^\pm,\a_j^{\pm1}y_j^\pm)\right) = \rho(\a_j^{\pm1}) A_j^\pm,
 \]
 whereas \eqref{eqn:antipodal} and \eqref{eqn:nested} imply that $A_j^+$ is antipodal to $A_j^-$.
 Then, by the same proof as in \Cref{lem:limitsetsHNN}(i), we see that $\H_j = \<\rho(\a_j)\>$ is $\Theta$-Anosov and $\LT{\H_j} = \{\xi(\e_j^+),\xi(\e_j^-)\}$.
 
 The subsets $A_j\coloneqq A_j^+\cup A_j^-$, for $j=1,\dots,k$ are also pairwise antipodal. Then,
 \[
  \a_j^n\left(\bigcup_{i\ne j}(I_i^+\cup I_i^-) \right) \subset (I_j^+\cup I_j^-)^\o,
 \]
 for all nonzero $n\in\Z$,
 simply translates to (using \eqref{eqn:nested}) the fact that
 \[
  \rho(\a_j^n)\left(\bigcup_{i\ne j}A_i \right) \subset A_j^\o. 
 \]
 Since we already observed in the preceding paragraph that $\H_1,\dots,\H_k$ are $\Theta$-Anosov subgroups of $G$, using the above, the combination theorem \cite[Corollary 6.3]{DK22} from our previous work yields that $\rho: \G_A\to  \<\H_1,\dots,\H_k\>$ is $\Theta$-Anosov.
\end{proof}

Let $I_A$ and $I_B$ denote the closures of the connected components  of the complement of the 2-point limit set $\{ \e_+,\e_-\}$ of $\H$ in $\geo\mathbb{H}^2$.
Pick any interior points $\e_A\in I_A$ and $\e_B\in I_B$.
We chose these names so that $\Lambda(\G_A)\subset I_A$, $\Lambda(\G_B)\subset I_B$, and $(\e_A,\e_+,\e_B,\e_-)$ is cyclically ordered. Let 
\[
 A \coloneqq \cl(D_{\e_A}(\e_-,\e_+)) \quad\text{and}\quad
 B \coloneqq \cl(D_{\e_B}(\e_+,\e_-));
\]
by \eqref{eqn:antipodal}, $A^\o$ and $B^\o$ are antipodal to each other.
Moreover, since, for all $\a\in\G_A\setminus\H$ and $\beta\in\G_B\setminus\H$, 
$$
\a I_B\subset I_A^\o\quad \hbox{and}\quad  \b I_A\subset I_B^\o,$$
it follows that
\[
 \rho(\a) B\subset A^\o \quad\text{and}\quad \rho(\b) A\subset B^\o.
\]
Furthermore, $\rho(\H)$ preserves $\xi(I_A)$ and $\xi(I_B)$, so that $\rho(\H) A = A$ and $\rho(\H) B = B$.
Thus, $(A,B)$ is an interactive pair for $(\rho(\G_A),\rho(\G_B);\,\rho(\H))$.
Finally, using the antipodality property of the $\Theta$-limit sets of $\rho(\G_A)$ and $\rho(\G_B)$ and \Cref{lem:replacei,lem:ping-pong}, one may directly verify the hypothesis (ii) in the statement of \Cref{mainthm:amalgam}; see also \Cref{rem:relaxation}.
Thus, \Cref{mainthm:amalgam} implies that $\rho$ is $\Theta$-Anosov; cf. the examples in \S\ref{example}.
\end{proof}

\end{document}